\PassOptionsToPackage{pagebackref=true}{hyperref}
\documentclass[11pt,a4paper,reqno]{amsart}
\usepackage[T1]{fontenc}
\usepackage{lmodern,microtype}
\usepackage[margin=24mm,headheight=14pt,headsep=7mm]{geometry}
\usepackage{amsmath,amssymb,amsthm,mathtools,mathrsfs}
\usepackage{tikz-cd,graphicx,booktabs,array,enumitem,etoolbox,placeins}
\usepackage{mathrsfs}
\usetikzlibrary{arrows.meta,calc,positioning}
\usepackage{caption,flafter}
\usepackage[hidelinks]{hyperref}
\usepackage{bookmark}
\numberwithin{equation}{section}
\newtheorem{theorem}{Theorem}[section]
\newtheorem{proposition}[theorem]{Proposition}
\newtheorem{lemma}[theorem]{Lemma}
\newtheorem{corollary}[theorem]{Corollary}
\theoremstyle{definition}
\newtheorem{definition}[theorem]{Definition}
\newtheorem{example}[theorem]{Example}
\theoremstyle{remark}
\newtheorem{remark}[theorem]{Remark}

\usepackage{tikz}
\usetikzlibrary{calc}

\usetikzlibrary{arrows.meta,calc,decorations.markings}

\usepackage{xcolor}

\renewcommand*{\backref}[1]{}

\renewcommand*{\backrefalt}[4]{%
	\ifcase #1\relax
	\or
	\space(Cited on page~#2.)%
	\else
	\space(Cited on pages~#2.)%
	\fi
}

\newtheorem{conjecture}[theorem]{Conjecture}

\DeclareMathOperator{\add}{add}
\DeclareMathOperator{\thick}{thick}
\DeclareMathOperator{\per}{per}
\DeclareMathOperator{\Hom}{Hom}
\DeclareMathOperator{\End}{End}
\DeclareMathOperator{\Ext}{Ext}
\DeclareMathOperator{\ind}{ind}
\DeclareMathOperator{\Gr}{Gr}
\DeclareMathOperator{\CM}{CM}
\DeclareMathOperator{\GP}{GP}
\DeclareMathOperator{\GI}{GI}
\DeclareMathOperator{\proj}{proj}

\DeclareMathOperator{\rad}{rad}

\newcommand{\llbracket}{[\![}
\newcommand{\rrbracket}{]\!]}
\newcommand{\kk}{\mathbb C}
\newcommand{\ZZ}{\mathbb Z}
\newcommand{\NN}{\mathbb N}
\newcommand{\cE}{\mathcal E}

\newcommand{\cC}{\mathcal C}
\newcommand{\cD}{\mathcal D}
\newcommand{\cP}{\mathcal P}
\newcommand{\cN}{\mathcal N}
\newcommand{\cI}{\mathcal I}
\newcommand{\cS}{\mathcal S}
\newcommand{\CC}{\operatorname{CC}}

\newcommand{\Db}{\mathcal D^b}
\newcommand{\Dsg}{\mathcal D_{\mathrm{sg}}}
\newcommand{\fgmod}{\operatorname{fgmod}}

\newcommand{\bB}{\widetilde B}

\DeclareMathOperator{\Cone}{Cone}
\allowdisplaybreaks[2]
\AtBeginDocument{%
\setlength{\abovedisplayskip}{7pt plus 2pt minus 1pt}%
\setlength{\belowdisplayskip}{7pt plus 2pt minus 1pt}%
\setlength{\abovedisplayshortskip}{3pt plus 1pt}%
\setlength{\belowdisplayshortskip}{3pt plus 1pt}}
\title[]{Quasi-equivalence of relabeled positroid cluster structures}
\author{Yilin Wu}
\date{}
\subjclass[2020]{13F60, 14M15, 16G20, 18G80}
\keywords{Positroid, plabic graph, Grassmannlike necklace, cluster character, derived equivalence}
\hypersetup{pdftitle={Quasi-equivalence of relabeled positroid cluster structures},
 pdfsubject={Research manuscript; prose revision 6, 20 September 2026},pdfauthor={}}
\begin{document}
\begin{abstract}
	We prove the Fraser--Sherman-Bennett conjecture on the
	quasi-equivalence of positroid cluster structures arising from
	admissibly relabeled plabic graphs. We construct a tilting module
	from the boundary necklace and obtain a Frobenius
	categorification of the relabeled cluster structure. By identifying
	localized characters of rank-one modules with Pl\"ucker coordinates,
	we match the stable images of the mutable face modules with the
	summands of a reachable cluster-tilting object in the
	stable category. The induced derived equivalence yields a unimodular
	transformation of relative index lattices intertwining the full
	extended exchange matrices. These comparisons show that the identity
	map on the coordinate ring is a quasi-cluster isomorphism between
	the two cluster structures.
\end{abstract}
\maketitle

\setcounter{tocdepth}{1}
\tableofcontents

\section{Introduction and main theorem}

The coordinate ring of an open positroid variety admits cluster structures
whose seeds are described by reduced plabic graphs
\cite{GL}. Each graph has source and target face labellings,
and Fraser and Sherman--Bennett \cite{FSB} construct
further cluster structures by admissibly permuting its boundary labels.
These structures have the same coefficient group, but their chosen frozen
generators can differ. The natural comparison is therefore
quasi-equivalence: after mutation, corresponding mutable variables differ
by frozen Laurent monomials and the exchange ratios agree.

Fraser and Sherman--Bennett conjecture that all cluster structures obtained
in this way are quasi-equivalent. Pressland
\cite[Theorem~6.16]{Pressland} proves the source--target case using
Frobenius categorification and the categorical comparison of Fraser and
Keller \cite[Appendix~A]{KW}. 

The general relabelling problem
requires a comparison with a new boundary necklace and is not covered by
that argument; see \cite[Remark~3.8]{Pressland}. In this paper, we
prove the general conjecture by constructing a tilting module from the
new boundary necklace and comparing the resulting Frobenius model with
the standard positroid model.

We work over $\mathbb C$ and invert all frozen variables. Let $\pi$ be
a decorated permutation defining a loopless positroid
$\mathcal M_\pi\subseteq\binom{[n]}k$. Write
$\widehat\Pi_\pi^\circ$ for the corresponding open subset of the affine
Pl\"ucker cone, and let $\mathcal I^\pi$ be the set of distinct terms of
its ordinary Grassmann necklace. Put
\[
R_\pi=\mathbb C[\widehat\Pi_\pi^\circ],
\qquad
\mathscr F_\pi=\langle\Delta_I:I\in\mathcal I^\pi\rangle_{\mathbb Z}
\subseteq R_\pi^\times.
\]
For $\rho\in S_n$, set $\mu=\rho^{-1}\pi\rho$. We call $\rho$
\emph{admissible for $\pi$} if
\[
\pi\rho\leq_\circ\pi,
\qquad
\dim\widehat\Pi_\mu^\circ=\dim\widehat\Pi_\pi^\circ,
\]
where $\leq_\circ$ is the circular weak order. If $G$ is a reduced
plabic graph with trip permutation $\mu$, let $G^\rho$ be obtained by
replacing its boundary label $a$ by $\rho(a)$. By
\cite[Theorems~4.14 and~4.21]{FSB}, the target face
coordinates of $G^\rho$ form a seed for $R_\pi$, and its distinct
boundary coordinates form a basis of $\mathscr F_\pi$.

\begin{theorem}[Theorem \ref{prop:necklace-connected-comparison}, Corollary \ref{general case}]
	\label{thm:introduction-fsb}
	Let $\rho$ be admissible for $\pi$, and let $D$ be an ordinary reduced
	Postnikov diagram with trip permutation $\pi$. Then the identity of
	$R_\pi$ is a quasi-cluster isomorphism from the target-labelled cluster
	structure of $G^\rho$ to either the source-labelled or the target-labelled
	cluster structure of $D$.
\end{theorem}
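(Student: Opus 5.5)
The strategy follows the outline in the abstract. By Pressland's source--target comparison \cite[Theorem~6.16]{Pressland}, the source- and target-labelled structures of $D$ are quasi-equivalent, so it suffices to compare the target-labelled structure of $G^\rho$ with the target-labelled structure of $D$. Quasi-equivalence is transitive, so the ``either'' in the statement then follows. The comparison will be made categorically. Let $A_D$ be the frozen Jacobian (dimer) algebra of $D$, with boundary algebra $B=e A_D e$, and let $\CM(B)$ be the associated Frobenius category. Jensen--King--Su and Pressland identify $\CM(B)$ with a categorification of the positroid cluster structure on $R_\pi$: rank-one modules $M_I$ go to Plücker coordinates $\Delta_I$ under a cluster character, which is localized at the frozen variables.

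\emph{Step 1: a new Frobenius model.} From the relabelled boundary necklace of $G^\rho$, whose terms are the boundary face labels of $G^\rho$ and which by \cite{FSB} form a basis of $\mathscr F_\pi$, build the direct sum $T$ of the corresponding rank-one modules in $\CM(B)$, or in a suitable ambient module category. Then show that $T$ is a tilting module, i.e.\ that it has vanishing self-extensions and generates. Put $B'=\End(T)^{\mathrm{op}}$. Tilting gives a derived equivalence $\Db(B)\simeq\Db(B')$. This should restrict to an equivalence of singularity categories, hence of the stable categories $\underline{\CM}(B)\simeq\underline{\CM}(B')$. The face modules of $G^\rho$ then define a cluster-tilting object $T_{G^\rho}$ in $\CM(B')$, whose endomorphism algebra is the dimer algebra of $G^\rho$. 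This gives a Frobenius categorification of the relabelled structure.

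\emph{Step 2: match characters and stable images.} Identify the localized cluster character of each rank-one module in the new model with the Plücker coordinate $\Delta_I$, using the target labels of $G^\rho$. Then show that the stable images of the mutable face modules of $G^\rho$ correspond under the stable equivalence to the non-projective summands of a cluster-tilting object of $\underline{\CM}(B)$. That object must be reachable by mutation from the one given by $D$. Reachability should come from the fact that both are rank-one modules whose labels form maximal weakly separated collections in the same positroid, together with the Oh--Postnikov--Speyer connectivity under square moves.

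\emph{Step 3: compare the exchange data.} The derived equivalence induces an isomorphism of Grothendieck groups. Restricted to the relative index lattices (indices relative to the projectives), it gives a unimodular change of coordinates on the frozen part. One checks that this map intertwines the full extended exchange matrices, including the frozen rows. This is the precise form of ``exchange ratios agree''. It then follows, as in Fraser's criterion used in \cite[Appendix~A]{KW}, that corresponding mutable variables differ by frozen Laurent monomials, so the identity of $R_\pi$ is a quasi-cluster isomorphism.

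The main obstacle is Step 1, specifically proving that $T$ is tilting. Rigidity should reduce to weak separation of the necklace labels. For generation I would first try to use the admissibility hypotheses: $\pi\rho\le_\circ\pi$ gives a chain of boundary mutations relating the two necklaces, and equality of dimensions ensures the ranks match. With these I would try to build $T$ by successive Auslander--Reiten-type exchanges from the standard projective generator, checking that each step preserves the tilting property.
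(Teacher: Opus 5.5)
\textbf{The gap is in Step~2.} Oh--Postnikov--Speyer connectivity only concerns maximal weakly separated collections $\mathcal C$ with $\mathcal I^\pi\subseteq\mathcal C\subseteq\mathcal M_\pi$, and square moves never change the necklace terms. So this route can reach $\bigoplus_{J\in\cS\setminus\cI}qM_J$ only if every mutable relabelled label is weakly separated from every ordinary necklace term. That fails in general.

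For example, take $\pi$ as in Example~\ref{ex:fsb-aligned-toggle}, so $\mathcal M_\pi=\binom{[5]}{2}\setminus\{23\}$ and $\mathcal I^\pi=\{12,24,34,45,15\}$. The admissible relabelling $\rho=\pi^{-1}\epsilon_2$ can give $\cS=\{12,13,34,45,15,35\}$. This is a maximal weakly separated collection in $\mathcal M_\pi$, but its mutable label $35$ is not weakly separated from $24$. Hence no plabic collection for $\pi$ contains it. Yet $\Delta_{35}=\Delta_{25}\Delta_{34}/\Delta_{24}$ is a frozen multiple of a target cluster variable. So square-move connectivity cannot see the required reachability, even in the source--target special case.

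The paper uses an algebraic mechanism instead:
\begin{enumerate}
\item \cite[Theorem~8.9]{MMMSV} writes each nonzero $\Delta_J$ as a source cluster monomial times an element of $\mathscr F_\pi$.
\item Irreducibility of the mutable face coordinates (Lemma~\ref{lem:fsb-mutable-irreducible}, via full column rank) forces $\Delta_J=p_Jz_J$ with a single cluster variable $z_J$.
\item Lemma~\ref{lem:fsb-pointed-recovery} and \cite{DehyKeller} give $qM_J\simeq qL_J$, where $L_J$ is the reachable object of $z_J$.
\item Weak separation gives $\Hom_{\cC}(qM_J,\Sigma qM_K)=0$. This becomes vanishing $f$-compatibility degrees, and \cite[Theorem~4.18]{FuGyoda} then puts all the $z_J$ in one cluster.
\end{enumerate}
The cluster-tilting property of the face object in the tilted model, which you assert in Step~1, is itself deduced from this reachability (Proposition~\ref{prop:fsb-common-face-cluster}). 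So Step~1 as stated also depends on the missing step.

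\textbf{Further points to repair.}
\begin{itemize}
\item \emph{Tilting (Step~1).} Do not try to preserve the tilting property along the toggle path. Intermediate unit necklaces need not be weakly separated, so intermediate sums need not be rigid. The right invariant is the thick closure. Start from the ordinary target necklace, whose rank-one modules span $\add B^\vee$, and use $\thick(B^\vee)=\per B$, which holds because $B$ is Iwanaga--Gorenstein. Transport this along the sequences $0\to M_{Suw}\to M_{Suv}\oplus M_{Swx}\to M_{Svx}\to0$. Then get $\operatorname{pd}\le1$ from finite projective dimension plus Gorenstein-projectivity of syzygies, and use weak separation only at the final necklace.
\item \emph{Frobenius models.} For a general positroid $\CM(B)$ is not Frobenius: its projectives $\add B$ and injectives $\add B^\vee$ differ. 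The models are $\GP B$ and $\GP B_{\cI}$, and the latter requires proving that $B_{\cI}$ is Iwanaga--Gorenstein. Rank-one modules need not be Gorenstein-projective.
\item \emph{Character normalization.} In the new model the face objects go to $\Delta_J$ by definition. What makes the final ring map the identity of $R_\pi$, frozen variables included, is $\Phi^{\mathrm s}(M_J)=\Delta_J$ in the \emph{standard} model, for objects of $\Db(\fgmod B)$ lying outside its Frobenius category. The paper proves this (Theorem~\ref{thm:fsb-rank-one-normalization}) by extending Pressland's source--target and twist comparisons to the derived category and invoking the rank-one twist formula of \cite{CKP}.
\item \emph{Dimer algebra.} Only the extended exchange matrix of the face object is identified with $\bB_{G^\rho}$, as a corner of an ambient Grassmannian endomorphism algebra. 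The whole dimer algebra is not identified, and this suffices.
\item \emph{Disconnected positroids.} The boundary algebra is a $Z$-order only for connected positroids, while the statement covers every loopless one. You need that admissibility restricts to connected components (Lemma~\ref{lem:fsb-local-components}). You also need that the component comparisons glue through $R_\pi\simeq R_1\mathbin{\#}\cdots\mathbin{\#}R_c$ (Lemma~\ref{lem:fsb-amalgamation}).
\end{itemize}
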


Thus these cluster structures quasi-coincide on the same coordinate
ring. In particular, the target comparison proves
\cite[Conjecture~1.3]{FSB}.

We explain the main steps for a connected positroid. Let $B$ be the
boundary order associated with $D$, and let $M_J$ be the rank-one
$B$-lattice corresponding to $J\in\mathcal M_\pi$. We first show that
\[
\Phi_s(M_J)=\Delta_J,
\]
where $\Phi_s$ is the localized character of the standard
source Frobenius categorification. The character is defined on
$D^b(\operatorname{fgmod}B)$, so this equality makes sense even when
$M_J$ does not belong to the source Frobenius category. We obtain it
from the source--target and twist formulas by extending the character
comparison to the bounded derived categories.

Let $\mathcal S$ and $\mathcal I$ be the sets of distinct target labels
of all faces and of the boundary faces of $G^\rho$, respectively.
We first compare the mutable face coordinates with the cluster
variables in the standard source structure. By
\cite[Theorem~8.9]{MMMSV}, every nonzero Pl\"ucker coordinate is a
cluster monomial in this structure. Since each mutable face coordinate
$\Delta_J$ is irreducible, we obtain
\[
\Delta_J=p_Jz_J
\qquad (J\in\mathcal S\setminus\mathcal I),
\]
where $z_J$ is a source cluster variable and $p_J$ is a frozen Laurent
monomial.

The identity $\Phi_s(M_J)=\Delta_J$ now allows us to lift this
comparison to the stable category. Indeed, full column rank of the
extended exchange matrix implies that $qM_J$ and the reachable
object corresponding to $z_J$ have the same stable index.
Since both objects are rigid, they are isomorphic by the uniqueness
of rigid objects with a prescribed index.

It remains to show that these objects belong to a common reachable
cluster-tilting object. The face labels are weakly separated, so the
stable extension spaces between the corresponding rank-one objects
vanish. Through the character formula, this vanishing implies
pairwise compatibility of the variables $z_J$ in the sense of
Fu--Gyoda. Their compatibility criterion
\cite[Theorem~4.18]{FuGyoda} therefore places these variables in one
source cluster. Since they are pairwise distinct and their number
equals the mutable rank, they form its entire mutable cluster.
Consequently,
\[
\bigoplus_{J\in\mathcal S\setminus\mathcal I}qM_J
\]
is a basic reachable cluster-tilting object in the stable category.

We next incorporate the new frozen variables. Set
\[
P_{\mathcal I}=\bigoplus_{I\in\mathcal I}M_I,
\qquad
B_{\mathcal I}=\operatorname{End}_B(P_{\mathcal I})^{\mathrm{op}}.
\]
We prove that $P_{\mathcal I}$ is a $1$-tilting $B$-module.
The circular weak-order condition supplies an aligned-toggle path from
the ordinary necklace to $\mathcal I$. Short exact sequences of
rank-one lattices along this path preserve the thick subcategory
generated by the boundary modules. Weak separation is needed only at
the final necklace to obtain rigidity. In particular, the proof does
not require the intermediate necklaces to be weakly separated.

The category
$\mathcal E_{\mathcal I}=\operatorname{GP}B_{\mathcal I}$ is Frobenius,
with cluster-tilting object
\[
T_{\mathcal I}
=\bigoplus_{J\in\mathcal S}
\operatorname{Hom}_B(P_{\mathcal I},M_J).
\]
Its projective-injective summands are indexed by $\mathcal I$.
We identify its extended exchange matrix with that of $G^\rho$ and
verify compatibility with mutation. The tilting equivalence induces
\[
\mathcal F_{\mathcal I}:
D^b(\mathcal E_{\mathcal I})
\xrightarrow{\sim}D^b(\operatorname{GP}B),
\]
and sends each face object to its rank-one realization in the common
derived category.

To complete the comparison, we use relative Grothendieck groups to
retain the projective-injective contributions. If $U$ is the matched
reachable object in the source model, naturality of relative indices
gives
\[
G=\begin{pmatrix}I_r&0\\ A&C_0\end{pmatrix},
\qquad
G\widetilde B_{G^\rho}=\widetilde B_U,
\qquad C_0\in\operatorname{GL}_f(\mathbb Z).
\]
Here $A$ records the frozen factors in the mutable coordinates, while
$C_0$ is the map on the Grothendieck groups of the perfect categories.
The matrix identity includes the frozen rows and identifies the exchange
ratios. The rank-one normalization identifies the resulting ring map
with the identity of $R_\pi$. This proves the source comparison;
Pressland's theorem then gives the target comparison.

For a disconnected positroid, we prove that admissibility induces
admissible relabellings of its connected components. We combine the
component comparisons through the common Pl\"ucker degree. This
homogeneous amalgamation identifies the global coefficient group and
preserves the global exchange ratios, completing the proof in general.

Section~2 recalls the combinatorial and cluster-algebraic setup.
Section~3 develops the relative index comparison, and identifies the common reachable face cluster.
Section~4 constructs the tilted Frobenius model and proves the main
theorem, first for connected positroids and then by homogeneous
amalgamation.

\subsection*{Use of generative AI}
The author formulated the problem and established the mathematical results, independently verified all arguments and references, and made all final decisions concerning the manuscript. GPT-5.6 Sol was used as a supporting tool for English-language and \LaTeX{} editing, manuscript organization, discussion of the presentation of proof arguments, understanding certain combinatorial definitions and constructions related to positroids, and preliminary checking of proofs and calculations. All mathematical arguments and conclusions were independently verified by the authors, who take full responsibility for the content of the manuscript.

\section{Positroid cluster structures from relabeled plabic graphs}
\label{sec:fsb-positroid-preliminaries}

We recall the cluster-algebraic conventions and the combinatorial data
used in the paper. We then state the results of Fraser--Sherman-Bennett
on relabeled plabic graphs and formulate the quasi-equivalence problem.
All coordinate rings in this section are over $\mathbb C$, and all frozen
variables are inverted. For a positive integer $n$, put
$[n]=\{1,\ldots,n\}$; permutation products are composed from right to left. For an integer $a$, define $[a]_{+}=\mathrm{max}\{a,0\}$.

\subsection{Seeds, mutation, and cluster structures}
\label{subsec:cluster-preliminaries}

Let $V$ be an irreducible rational affine variety over $\mathbb C$, let
$\mathbb C[V]$ be its coordinate ring, and let $\mathbb C(V)$ be its
field of rational functions.  Put $m=\dim V$.  We use geometric-type
seeds with inverted frozen variables. We follow
\cite[Sections~2.4--2.6]{FSB}, with the exchange-matrix sign convention
specified below.
\begin{definition}[Seed]
	\label{def:fsb-seed}
	A seed of rank $0\le r\le m$ in $\mathbb C(V)$ is a pair
	\[
	\Sigma=(X,\widetilde B),
	\qquad
	X=(x_1,\ldots,x_r,z_1,\ldots,z_f),
	\qquad
	f=m-r,
	\]
	with the following properties.
	\begin{enumerate}
		\item
		The coordinates in $X$ are algebraically independent and generate
		the ambient field:
		\[
		\mathbb C(V)=\mathbb C(x_1,\ldots,x_r,z_1,\ldots,z_f).
		\]
		
		\item
		The variables $x_1,\ldots,x_r$ are mutable, whereas
		$z_1,\ldots,z_f$ are frozen and are units of $\mathbb C[V]$.
		
		\item
		$\widetilde B=(b_{ij})\in M_{m\times r}(\mathbb Z)$ has
		skew-symmetric principal part
		$B=(b_{ij})_{1\leq i,j\leq r}$.
	\end{enumerate}
	We call $X$ the extended cluster and
	\[
	\mathbb P_\Sigma
	=\langle z_1,\ldots,z_f\rangle_{\mathbb Z}
	\subset \mathbb C[V]^\times
	\]
	the coefficient group of $\Sigma$.
\end{definition}

Equivalently, $\widetilde B$ is encoded by an ice quiver whose mutable
vertices are $1,\ldots,r$ and whose frozen vertices are
$r+1,\ldots,m$. We discard arrows between frozen vertices. Our sign
convention is
\begin{equation}
	\label{eq:compact-exchange}
	b_{ij}
	=\#\{j\longrightarrow i\}
	-\#\{i\longrightarrow j\},
	\qquad
	\widehat y_k
	=X^{\widetilde B e_k}
	=\prod_{i=1}^{m}X_i^{b_{ik}}.
\end{equation}
Here, for $\alpha=(\alpha_1,\ldots,\alpha_m)\in\mathbb Z^m$, we write
$X^\alpha=\prod_iX_i^{\alpha_i}$, and
$[a]_+=\max(a,0)$ is applied componentwise to vectors. Thus the
numerator of $\widehat y_k$ records arrows leaving $k$, and its
denominator records arrows entering $k$.

For a fixed abstract quiver, this ratio is the inverse of the one in
\cite[equation~(3)]{FSB}. Our geometric convention for a plabic graph,
specified below, also reverses every arrow of the quiver used there:
we place the white endpoint on the right, following
\cite[Definition~2.13]{Pressland}. These two reversals cancel.
Consequently, for the same labelled plabic graph, our exchange ratios
agree with those of Fraser--Sherman--Bennett. This choice is compatible
with the convention $A_T=\operatorname{End}(T)^{\mathrm{op}}$ for left
modules in the categorical sections.

For a mutable index $k\in[r]$, mutation replaces $x_k$ by $x_k'$,
where
\begin{equation}
	\label{eq:fsb-cluster-exchange-relation}
	x_kx_k'
	=X^{[\widetilde B e_k]_+}
	+X^{[-\widetilde B e_k]_+},
\end{equation}
and leaves the other variables unchanged.  The extended exchange matrix
is replaced by $\mu_k(\widetilde B)=(b'_{ij})$, where
\begin{equation}
	\label{eq:fsb-prelim-matrix-mutation}
	b'_{ij}
	=
	\begin{cases}
		-b_{ij},
		& i=k\text{ or }j=k,\\[2mm]
		b_{ij}+[b_{ik}]_+[b_{kj}]_+
		-[-b_{ik}]_+[-b_{kj}]_+,
		& i\neq k\text{ and }j\neq k.
	\end{cases}
\end{equation}
Mutation is an involution.  Two seeds are \emph{mutation-equivalent} if
they are connected by a finite sequence of mutations.

\begin{definition}[Cluster algebra and cluster structure]
	\label{def:fsb-cluster-structure}
	Let $\mathcal X(\Sigma)$ be the set of mutable variables in seeds
	obtained from $\Sigma$ by finite mutation sequences. Put
	\[
	\mathcal A(\Sigma)
	=\mathbb C[z_1^{\pm1},\ldots,z_f^{\pm1}]
	[\mathcal X(\Sigma)]\subseteq\mathbb C(V).
	\]
	We say that $\Sigma$ determines a \emph{cluster structure} on $V$ if
	$\mathcal A(\Sigma)=\mathbb C[V]$.
\end{definition}

In particular, $\mathcal A$ always denotes the algebra with inverted
coefficients. When needed, the algebra without frozen inverses will be
written
\[
\mathcal A^+(\Sigma)
=\mathbb C[z_1,\ldots,z_f][\mathcal X(\Sigma)],
\qquad
\mathcal A(\Sigma)
=\mathcal A^+(\Sigma)[z_1^{-1},\ldots,z_f^{-1}].
\]
A \emph{cluster monomial} is a monomial in the variables of one seed,
with nonnegative mutable exponents and arbitrary integral frozen
exponents.

By the Laurent phenomenon \cite[Theorem~3.1]{FZI}, a seed in a cluster
structure determines an open cluster torus in $V$: its coordinate ring
is
\[
\mathbb C[V][x_1^{-1},\ldots,x_r^{-1}]
=\mathbb C[x_1^{\pm1},\ldots,x_r^{\pm1},z_1^{\pm1},\ldots,z_f^{\pm1}].
\]
When $V$ and the seed coordinates are defined over $\mathbb R$, we write
$V_{>0}$ for the locus where all extended-cluster coordinates are
positive. The exchange relations show that this locus does not depend
on the seed in the mutation class. We use this real structure for
positroid varieties.

\subsection{Frozen rescaling and quasi-equivalence}
\label{def:fsb-quasi}

Mutation equivalence is often too rigid when the same coordinate ring is
presented with different choices of frozen variables.  The appropriate
weaker relation keeps the exchange ratios $\widehat y_k$ fixed while
allowing a change of basis in the coefficient group and a rescaling of
mutable variables by frozen Laurent monomials.  This is the form of
Fraser's quasi-homomorphism formalism \cite{Fra16} used by
Fraser--Sherman-Bennett.

\begin{definition}[Quasi-equivalence {\cite[Definition~2.17]{FSB}}]
	\label{def:fsb-quasi-equivalent-seeds}
	Let $\Sigma=(X,\widetilde B)$ and
	$\Sigma'=(X',\widetilde B')$ be seeds of the same rank in
	$\mathbb C(V)$.  After fixing a matching of their mutable indices,
	we write $\Sigma\sim\Sigma'$ if the following conditions hold.
	\begin{enumerate}
		\item
		Their coefficient groups coincide:
		$\mathbb P_\Sigma=\mathbb P_{\Sigma'}$.
		
		\item
		Corresponding mutable variables are proportional by coefficients:
		for every $j\in[r]$,
		\[
		x_j'=p_jx_j
		\qquad\text{for some }p_j\in\mathbb P_\Sigma.
		\]
		
		\item
		Their exchange ratios agree:
		\[
		\widehat y_j(\Sigma')=\widehat y_j(\Sigma)
		\qquad(j\in[r]).
		\]
	\end{enumerate}
	The seeds are related by a \emph{quasi-cluster transformation} if there
	is a mutation sequence $\boldsymbol\mu$ such that
	$\boldsymbol\mu(\Sigma)\sim\Sigma'$.  Two cluster structures are
	\emph{quasi-equivalent} if their initial seeds are related by a
	quasi-cluster transformation.
\end{definition}

Seed quasi-equivalence is stable under simultaneous mutation:
\begin{equation}
	\label{eq:fsb-quasi-mutation-stability}
	\Sigma\sim\Sigma'
	\quad\Longleftrightarrow\quad
	\mu_k(\Sigma)\sim\mu_k(\Sigma')
	\qquad(k\in[r]).
\end{equation}
Consequently, quasi-equivalence compares entire seed patterns, not only
one pair of clusters.  Geometrically, it changes the parametrization of
a cluster torus by an integral Laurent monomial transformation but does
not change its image.  If two quasi-equivalent seed patterns both give
cluster structures on $V$, then they determine the same set of cluster
monomials and the same totally positive part of $V$
\cite[Definition~2.17 and Lemma~2.19]{FSB}.

\begin{proposition}
	\label{prop:compact-coefficients}
	Let $\Sigma_X=(X,\widetilde B_X)$ and
	$\Sigma_\Xi=(\Xi,\widetilde B_\Xi)$ be seeds in the same ambient field,
	with extended clusters
	\[
	X=(x_1,\ldots,x_r,z_1,\ldots,z_f),
	\qquad
	\Xi=(\xi_1,\ldots,\xi_r,w_1,\ldots,w_f).
	\]
	Suppose that there exists an $(r+f)\times(r+f)$-integer matrix
	$G=\begin{pmatrix}I_r&0\\A&C\end{pmatrix}$
	such that
	$\Xi=X^G$, i.e. \[
	\xi_i=x_i\prod_{p=1}^f z_p^{a_{pi}}
	\quad(1\le i\le r),
	\qquad
	w_j=\prod_{p=1}^f z_p^{c_{pj}}
	\quad(1\le j\le f).
	\]

	For $\alpha=\binom{u}{v}\in\mathbb Z^r\oplus\mathbb Z^f$, the
	corresponding monomial substitution is
	\[
	\Xi^\alpha=\xi^uw^v=x^uz^{Au+Cv}=X^{G\alpha}.
	\]
	Then we have
	\begin{equation}
		\label{eq:fsb-matrix-quasi-criterion}
		\Sigma_X\sim\Sigma_\Xi
		\quad\Longleftrightarrow\quad
		C\in\operatorname{GL}_f(\mathbb Z)
		\quad\text{and}\quad
		G\widetilde B_\Xi=\widetilde B_X.
	\end{equation}
	If these conditions hold, then after every common mutation sequence
	$\boldsymbol\mu$ there is an integer matrix
	\[
	G_{\boldsymbol\mu}
	=\begin{pmatrix}I_r&0\\A_{\boldsymbol\mu}&C\end{pmatrix}
	\]
	such that
	\[
	\Xi_{\boldsymbol\mu}^{\alpha}
	=X_{\boldsymbol\mu}^{G_{\boldsymbol\mu}\alpha},
	\qquad
	G_{\boldsymbol\mu}\widetilde B_{\boldsymbol\mu(\Sigma_\Xi)}
	=\widetilde B_{\boldsymbol\mu(\Sigma_X)}.
	\]
\end{proposition}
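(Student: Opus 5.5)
The plan is to reduce both directions of \eqref{eq:fsb-matrix-quasi-criterion} to comparing exponent vectors of Laurent monomials in the algebraically independent family $X$, and then to prove the mutation statement by induction on the length of $\boldsymbol\mu$, one mutation at a time.

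\emph{The criterion.} Condition (2) of Definition~\ref{def:fsb-quasi-equivalent-seeds} holds automatically, since $\xi_i=x_i z^{Ae_i}$ and $z^{Ae_i}\in\mathbb P_{\Sigma_X}$.

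For condition (1), note that $\mathbb P_{\Sigma_\Xi}\subseteq\mathbb P_{\Sigma_X}$ always holds. The $z_p$ are algebraically independent, so $\mathbb P_{\Sigma_X}$ is free abelian on $z_1,\dots,z_f$. The inclusion is therefore the map $\mathbb Z^f\to\mathbb Z^f$ given by $C$. It is surjective exactly when $C\in\operatorname{GL}_f(\mathbb Z)$.

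For condition (3), use the monomial substitution:
\[
\widehat y_k(\Sigma_\Xi)=\Xi^{\widetilde B_\Xi e_k}=X^{G\widetilde B_\Xi e_k},
\qquad
\widehat y_k(\Sigma_X)=X^{\widetilde B_Xe_k}.
\]
Since $X$ is algebraically independent, two Laurent monomials in $X$ are equal exactly when their exponent vectors are equal. Hence (3) holds if and only if $G\widetilde B_\Xi=\widetilde B_X$, column by column. The top $r$ rows of $G$ are $(I_r\;0)$, so this identity also forces the principal parts of $\widetilde B_\Xi$ and $\widetilde B_X$ to agree. This is consistent with using a common mutation sequence.

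\emph{One mutation.} Fix $k\in[r]$ and put $b=\widetilde B_\Xi e_k$ and $b^X=\widetilde B_Xe_k=Gb$. Then
\[
\xi_k\xi_k'=X^{G[b]_+}+X^{G[-b]_+}.
\]
The top blocks of $[b]_+$ and $[b^X]_+$ agree because the principal parts agree. Since $G$ has top block $(I_r\;0)$, the vector $G[b]_+-[b^X]_+$ has zero mutable part; write it as $\binom{0}{\delta}$ with $\delta\in\mathbb Z^f$. Linearity of $G$ and $b^X=[b^X]_+-[-b^X]_+$ then give
\[
G[-b]_+-[-b^X]_+=\binom{0}{\delta}
\]
as well. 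Therefore
\[
\xi_k\xi_k'=z^\delta\bigl(X^{[b^X]_+}+X^{[-b^X]_+}\bigr)=z^\delta x_kx_k',
\]
and so $\xi_k'=z^{\delta-Ae_k}x_k'$.

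Define $G'$ from $G$ by replacing the $k$-th column of $A$ by $\delta-Ae_k$, keeping every other column of $A$ and keeping $C$. Then $\mu_k(\Xi)=\mu_k(X)^{G'}$, because the frozen variables are unchanged. Now $\Sigma_X\sim\Sigma_\Xi$ implies $\mu_k\Sigma_X\sim\mu_k\Sigma_\Xi$ by \eqref{eq:fsb-quasi-mutation-stability}. Applying the already proved criterion to this pair, with the matrix $G'$, yields $G'\widetilde B_{\mu_k\Sigma_\Xi}=\widetilde B_{\mu_k\Sigma_X}$.

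\emph{Induction.} Induction on the length of $\boldsymbol\mu$ then gives $G_{\boldsymbol\mu}$ with the same block $C$. Only the bookkeeping of $\delta$ requires care, namely checking that its mutable part vanishes.

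If one prefers not to rely on \eqref{eq:fsb-quasi-mutation-stability}, the identity $G'\widetilde B'_\Xi=\widetilde B'_X$ can be verified directly from \eqref{eq:fsb-prelim-matrix-mutation}. I expect this to be the main obstacle, since it is the only step with a genuine computation. The frozen rows involve the extra term $\delta$, so the check requires expanding $[b_{ik}]_+[b_{kj}]_+$ entrywise against $G$.
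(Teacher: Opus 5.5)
Your proposal is correct and matches the paper's proof step for step. The coefficient groups are compared through the image $C\mathbb Z^f$, and the hatted variables through $X^{G\widetilde B_\Xi e_k}=X^{\widetilde B_Xe_k}$ and algebraic independence. The mutated exponent matrix comes from the vector $G[b]_+-[b^X]_+=G[-b]_+-[-b^X]_+$, whose mutable part vanishes, giving $\xi_k'=z^{\delta-Ae_k}x_k'$.

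The only difference is in the last step. Instead of citing \eqref{eq:fsb-quasi-mutation-stability}, the paper notes directly that the hatted variables of both seeds mutate by the same rule, which depends only on the common principal part. Their equality therefore persists, and $G'\mu_k(\widetilde B_\Xi)=\mu_k(\widetilde B_X)$ follows from algebraic independence. This is exactly the stability fact you cite, so the entrywise matrix computation you expect to be the main obstacle is never needed.
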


\begin{proof}

We first compare the coefficient groups
\[
\mathbb P_X=\{z^a:a\in\ZZ^f\},
\qquad
\mathbb P_\Xi=\{w^b:b\in\ZZ^f\}.
\]

Since $z_1,\ldots,z_f$ are algebraically independent, distinct
Laurent monomials in these variables are distinct elements of the
ambient field. Thus
\[
z^a=z^{a'}
\quad\Longleftrightarrow\quad
a=a',
\]
and the map
$\ZZ^f\longrightarrow\mathbb P_X,
\quad
a\mapsto z^a,$
is an isomorphism from the additive group $\ZZ^f$ to the
multiplicative group $\mathbb P_X$. For $b=(b_1,\ldots,b_f)^t$,
the expressions for the $w_j$ give
\[
w^b
=\prod_{j=1}^f
\left(\prod_{p=1}^f z_p^{c_{pj}}\right)^{b_j}
=\prod_{p=1}^f z_p^{\sum_j c_{pj}b_j}
=z^{Cb}.
\]
Consequently, under this isomorphism, $\mathbb P_\Xi$ corresponds
to the subgroup $C\ZZ^f\subseteq\ZZ^f$. In particular, we have
\[
\mathbb P_\Xi=\mathbb P_X
\quad\Longleftrightarrow\quad
C\ZZ^f=\ZZ^f
\quad\Longleftrightarrow\quad
C\in\operatorname{GL}_f(\ZZ)\Longleftrightarrow\det C=\pm1.
\]

The formulas for the $\xi_i$ already show that the matched
mutable variables differ by coefficient monomials. It remains
to compare their hatted variables. For any exponent vector
$v\in\ZZ^{r+f}$, the relation $\Xi=X^G$ gives
\[
\begin{aligned}
	\Xi^v
	&=\prod_{j=1}^{r+f}\left(X^{Ge_j}\right)^{v_j}\\
	&=X^{\sum_j v_jGe_j}
	=X^{Gv}.
\end{aligned}
\]
Applying this identity to the $k$-th exchange column yields
\[
\widehat y_{k,\Xi}
=\Xi^{\widetilde B_\Xi e_k}
=X^{G\widetilde B_\Xi e_k},
\qquad
\widehat y_{k,X}
=X^{\widetilde B_Xe_k}.
\]
All coordinates of $X$ are algebraically independent. Hence two
Laurent monomials in $X$ are equal precisely when their exponent
vectors are equal. We therefore obtain
\[
\widehat y_{k,\Xi}=\widehat y_{k,X}
\quad\Longleftrightarrow\quad
G\widetilde B_\Xi e_k=\widetilde B_Xe_k.
\]
Requiring this equality for every mutable index $k$ is equivalent
to the matrix identity
\[
G\widetilde B_\Xi=\widetilde B_X.
\]
Thus unimodularity of $C$ identifies the coefficient groups,
while the matrix identity identifies the hatted variables.
Together with the prescribed coefficient rescalings of the
mutable variables, these are exactly the required
quasi-equivalence conditions. 

Assume these conditions and fix a mutable direction $k$. Put
	\begin{equation}
		\label{eq:fsb-prelim-complete-compatibility}
		\beta=\widetilde B_\Xi e_k,
		\qquad
		\gamma=\widetilde B_Xe_k=G\beta.
	\end{equation}
	Since $\beta=[\beta]_+-[-\beta]_+$, we have
	\[
	G[\beta]_+-[\gamma]_+
	=G[-\beta]_+-[-\gamma]_+
	=\binom{0}{d_k}
	\]
	for some $d_k\in\mathbb Z^f$. The mutable component is zero because
	$G$ has upper block $(I_r\ 0)$, so the principal exchange matrices
	coincide. The exchange relations give
	\begin{equation}
		\label{eq:fsb-prelim-mutated-factor}
		\begin{aligned}
			\xi_k\xi_k'
			&=\Xi^{[\beta]_+}+\Xi^{[-\beta]_+}\\
			&=z^{d_k}\bigl(X^{[\gamma]_+}+X^{[-\gamma]_+}\bigr)
			=z^{d_k}x_kx_k',\\
			\xi_k'&=z^{d_k-a_k}x_k'.
		\end{aligned}
	\end{equation}
	Hence the new exponent matrix is
	\[
	G'=\begin{pmatrix}I_r&0\\A'&C\end{pmatrix},
	\qquad a_j'=a_j\ (j\ne k),\qquad a_k'=d_k-a_k.
	\]
	For either seed, the hatted variables mutate by
	\[
	\widehat y_k'=\widehat y_k^{-1},\qquad
	\widehat y_j'
	=\widehat y_j\widehat y_k^{[b_{kj}]_+}
	(1+\widehat y_k)^{-b_{kj}}
	\quad(j\ne k).
	\]
	Their equality therefore persists after mutation. Algebraic independence
	of the mutated coordinates now gives
	\[
	G'\mu_k(\widetilde B_\Xi)=\mu_k(\widetilde B_X).
	\]
	Repeating this argument along the mutation sequence gives the desired result.
\end{proof}

\subsection{Grassmannians, Grassmann necklaces, and open positroid varieties}
\label{subsec:positroid-preliminaries}

Fix $0<k<n$. We work with loopless positroids, but do not assume
connectedness. Equivalently, all fixed points of the indexing decorated
permutations are colored white; see \cite[Section~2.1]{FSB}.

For a full-rank $k\times n$ matrix $M$ and a $k$-subset
$I\in\binom{[n]}k$, let $\Delta_I(M)$ be the corresponding maximal
minor.  The Pl\"ucker embedding identifies the Grassmannian with a
projective variety
\[
\operatorname{Gr}(k,n)
\hookrightarrow
\mathbb P^{\binom nk-1},
\qquad
x\longmapsto
\bigl(\Delta_I(x)\bigr)_{I\in\binom{[n]}k}.
\]
We write $\widehat{\operatorname{Gr}}(k,n)$ for the affine cone over
this embedding. For a collection
$\mathcal S\subseteq\binom{[n]}k$, put
$\Delta(\mathcal S)=\{\Delta_I:I\in\mathcal S\}$. For a necklace, this
notation means the set of its distinct Pl\"ucker coordinates.

A collection $\mathcal M\subseteq\binom{[n]}k$ is a \emph{positroid} if
it is the column matroid of a full-rank real matrix whose maximal
minors are all nonnegative. It is \emph{loopless} if every element of
$[n]$ belongs to at least one member of $\mathcal M$.  The
associated closed positroid variety is
\[
\Pi_{\mathcal M}
=
\left\{
x\in\operatorname{Gr}(k,n):
\Delta_J(x)=0\text{ for every }J\notin\mathcal M
\right\}.
\]

For $a\in[n]$, let $<_a$ be the cyclic order
\[
a<_a a+1<_a\cdots<_a n<_a1<_a\cdots<_a a-1.
\]
If
$I=\{i_1<_a\cdots<_a i_k\},
\qquad
J=\{j_1<_a\cdots<_a j_k\},$
we write $I\leq_aJ$ when $i_s\leq_aj_s$ for every $s$.

Let $\sigma\in S_n$, with every fixed point colored white. Its bounded
affine lift is the unique bijection $f_\sigma:\mathbb Z\to\mathbb Z$
satisfying $f_\sigma(a+n)=f_\sigma(a)+n$ and
\[
f_\sigma(a)\equiv\sigma(a)\pmod n,
\qquad
a<f_\sigma(a)\leq a+n.
\]
Thus a white fixed point is lifted by $f_\sigma(a)=a+n$.  We say that
$\sigma$ has type $(k,n)$ if
\[
\operatorname{av}(f_\sigma)
:=\frac1n\sum_{a=1}^n\bigl(f_\sigma(a)-a\bigr)=k.
\]

For a permutation $\sigma$ of type $(k,n)$, its forward Grassmann
necklace is $\mathbf I^\sigma=(I_a^\sigma)_{a\in[n]}$, where
\begin{equation}
	\label{eq:fsb-forward-necklace}
	I_a^\sigma
	=\{j\in[n]:j\leq_a\sigma^{-1}(j)\}.
\end{equation}
Equivalently,
\begin{equation}
	\label{eq:fsb-forward-necklace-recursion}
	a\in I_a^\sigma,
	\qquad
	I_{a+1}^\sigma
	=I_a^\sigma\setminus\{a\}\cup\{\sigma(a)\}.
\end{equation}
All necklace indices are read modulo $n$. The corresponding positroid
is recovered by the cyclic Gale inequalities
\cite[Section~2.1]{FSB}:
\begin{equation}
	\label{eq:fsb-positroid-gale}
	\mathcal M_\sigma
	=
	\left\{
	J\in\binom{[n]}k:
	I_a^\sigma\leq_aJ
	\text{ for every }a\in[n]
	\right\}.
\end{equation}
We write
$\mathcal I^\sigma
=\{I_a^\sigma:a\in[n]\}$
for the set of distinct necklace terms.

From now on, fix a permutation $\pi$ of type $(k,n)$.  The associated
open positroid variety is
\[
\Pi_\pi^\circ
=
\left\{
x\in\Pi_{\mathcal M_\pi}:
\Delta_I(x)\neq0
\text{ for every }I\in\mathcal I^\pi
\right\}.
\]
Let $\widehat\Pi_\pi^\circ$ be the corresponding open subvariety of the
affine cone.  Its localized homogeneous coordinate ring is
\begin{equation}
	\label{eq:fsb-positroid-coordinate-ring}
	\begin{split}
		R_\pi
		&=\mathbb C[\widehat\Pi_\pi^\circ]\\
		&=
		\left(
		\mathbb C[\widehat{\operatorname{Gr}}(k,n)]/
		(\Delta_J:J\notin\mathcal M_\pi)
		\right)
		[\Delta_I^{-1}:I\in\mathcal I^\pi],\\
		\mathscr F_\pi
		&=
		\langle\Delta_I:I\in\mathcal I^\pi\rangle_{\mathbb Z}
		\subset R_\pi^\times.
	\end{split}
\end{equation}
The group $\mathscr F_\pi$ is the coefficient group of the ordinary
target plabic seeds.

\subsection{Plabic graphs, weak separation, and Pl\"ucker seeds}
\label{subsec:plabic-seeds}

A plabic graph is a planar bicolored graph embedded in a disk, with
boundary vertices $1,\ldots,n$ of degree one, in clockwise order. We use
reducedness in the sense of \cite{Postnikov} and work
only with reduced graphs having no black lollipops. This is the graph
convention corresponding to loopless positroids.  A trip starting at a boundary vertex $a$ turns maximally
left at every white vertex and maximally right at every black vertex.
If it ends at $b$, we put $\pi_G(a)=b$ and call $\pi_G$ the trip
permutation of $G$.

\begin{example}
	\label{ex:fsb-square-graph}
	Let $G$ be the square graph below. Its internal vertices $v_1,v_3$
	are black and $v_2,v_4$ are white; boundary vertex $a$ is joined to
	$v_a$. The four trips are listed on the right.
	\begin{center}
		\begin{minipage}[c]{0.45\linewidth}
			\centering
			\begin{tikzpicture}[scale=0.9,
				blackvertex/.style={circle,draw,fill=black,inner sep=2.5pt},
				whitevertex/.style={circle,draw,fill=white,inner sep=2.5pt},
				boundary/.style={circle,fill=black,inner sep=1.4pt}]
				\draw (0,0) circle (2);
				\coordinate (b1) at (135:2);
				\coordinate (b2) at (45:2);
				\coordinate (b3) at (-45:2);
				\coordinate (b4) at (-135:2);
				\coordinate (v1) at (-0.65,0.65);
				\coordinate (v2) at (0.65,0.65);
				\coordinate (v3) at (0.65,-0.65);
				\coordinate (v4) at (-0.65,-0.65);
				\draw (v1)--(v2)--(v3)--(v4)--cycle;
				\foreach \a in {1,2,3,4}{\draw (b\a)--(v\a);}
				\node[blackvertex,label=above:$v_1$] at (v1) {};
				\node[whitevertex,label=above:$v_2$] at (v2) {};
				\node[blackvertex,label=above right:$v_3$] at (v3) {};
				\node[whitevertex,label=above left:$v_4$] at (v4) {};
				\node[boundary,label=above left:$1$] at (b1) {};
				\node[boundary,label=above right:$2$] at (b2) {};
				\node[boundary,label=below right:$3$] at (b3) {};
				\node[boundary,label=below left:$4$] at (b4) {};
			\end{tikzpicture}
		\end{minipage}\hfill
		\begin{minipage}[c]{0.53\linewidth}
			\[
			\begin{array}{c|l}
				\text{source}&\text{trip}\\\hline
				1&1\to v_1\to v_4\to v_3\to3\\
				2&2\to v_2\to v_3\to v_4\to4\\
				3&3\to v_3\to v_2\to v_1\to1\\
				4&4\to v_4\to v_1\to v_2\to2
			\end{array}
			\]
		\end{minipage}
	\end{center}
	Thus $\pi_G=(1\ 3)(2\ 4)$, or $3412$.
	The graph has four boundary faces and one internal face.
\end{example}

Let $G$ be a reduced plabic graph with trip permutation $\pi$ and let $F$ be a face of $G$.
Orient every trip from its source to its target.  The source and target
labels of $F$ are
\begin{equation}
	\label{eq:compact-face-labels}
	\begin{split}
		I_F^{\rm src}
		&=
		\left\{
		a\in[n]:
		F\text{ lies to the left of the trip }
		a\rightsquigarrow\pi(a)
		\right\},\\
		I_F^{\rm tgt}
		&=
		\left\{
		\pi(a):
		F\text{ lies to the left of the trip }
		a\rightsquigarrow\pi(a)
		\right\}
		=\pi(I_F^{\rm src}).
	\end{split}
\end{equation}
Every face lies to the left of exactly $k$ trips, so these labels are
$k$-subsets.  If $F_a$ is the boundary face immediately before the
boundary vertex $a$ in clockwise order, then
\begin{equation}
	\label{eq:fsb-boundary-forward-necklace}
	I_{F_a}^{\rm tgt}=I_a^\pi.
\end{equation}
Thus the boundary target labels form the forward Grassmann necklace.

\begin{example}
	\label{ex:fsb-square-face-labels}
	We continue with the square graph in
	Example~\ref{ex:fsb-square-graph}, whose trip permutation is
	$\pi=(1\ 3)(2\ 4).$ The two drawings show the source and target labels of the same graph;
	we write $ij$ for the subset $\{i,j\}$. The face $F_a$ is immediately
	before boundary vertex $a$ in clockwise order.
	
	\begin{center}
		\begin{tikzpicture}[
			scale=1.08,
			blackvertex/.style={
				circle,draw,fill=black,inner sep=2.5pt
			},
			whitevertex/.style={
				circle,draw,fill=white,inner sep=2.5pt
			},
			boundary/.style={
				circle,fill=black,inner sep=1.3pt
			},
			facelabel/.style={
				font=\large,text=blue!65!black
			},
			facename/.style={
				font=\scriptsize,text=black!65
			}
			]
			\foreach \offset/\heading/\labtype/\labLeft/\labTop/\labRight/\labBottom in {
				0/Source/src/34/14/12/23,
				5.2/Target/tgt/12/23/34/14
			}{
				\begin{scope}[xshift={\offset cm}]
					\node at (0,2.18)
					{\heading\ labels $I_F^{\mathrm{\labtype}}$};
					
					\draw (0,0) circle (1.8);
					\coordinate (b1) at (135:1.8);
					\coordinate (b2) at (45:1.8);
					\coordinate (b3) at (-45:1.8);
					\coordinate (b4) at (-135:1.8);
					
					\coordinate (v1) at (-0.62,0.62);
					\coordinate (v2) at (0.62,0.62);
					\coordinate (v3) at (0.62,-0.62);
					\coordinate (v4) at (-0.62,-0.62);
					
					\draw (v1)--(v2)--(v3)--(v4)--cycle;
					\foreach \a in {1,2,3,4}{
						\draw (b\a)--(v\a);
						\node[boundary] at (b\a) {};
					}
					
					\node[above left]  at (b1) {$1$};
					\node[above right] at (b2) {$2$};
					\node[below right] at (b3) {$3$};
					\node[below left]  at (b4) {$4$};
					
					\node[blackvertex,label=above:{$v_1$}] at (v1) {};
					\node[whitevertex,label=above:{$v_2$}] at (v2) {};
					\node[blackvertex,label=below:{$v_3$}] at (v3) {};
					\node[whitevertex,label=below:{$v_4$}] at (v4) {};
					
					\node[facelabel] at (-1.23,0.10) {$\labLeft$};
					\node[facename]  at (-1.23,-0.20) {$F_1$};
					
					\node[facelabel] at (0,1.28) {$\labTop$};
					\node[facename]  at (0,0.99) {$F_2$};
					
					\node[facelabel] at (1.23,0.10) {$\labRight$};
					\node[facename]  at (1.23,-0.20) {$F_3$};
					
					\node[facelabel] at (0,-1.08) {$\labBottom$};
					\node[facename]  at (0,-1.38) {$F_4$};
					
					\node[facelabel] at (0,0.12) {$13$};
					\node[facename]  at (0,-0.20) {$F_{\mathrm{int}}$};
				\end{scope}
			}
		\end{tikzpicture}
	\end{center}
	
	For example, $F_1$ lies to the left of the trips starting at $3$
	and $4$. Since these trips end at $1$ and $2$, respectively, we have
	\[
	I_{F_1}^{\rm src}=\{3,4\},
	\qquad
	I_{F_1}^{\rm tgt}=\{\pi(3),\pi(4)\}=\{1,2\}.
	\]
	The internal face lies to the left of the trips starting at $1$
	and $3$, whose targets are $3$ and $1$. Thus
	\[
	I_{F_{\mathrm{int}}}^{\rm src}
	=I_{F_{\mathrm{int}}}^{\rm tgt}=\{1,3\}.
	\]
	Reading the target labels of $F_1,F_2,F_3,F_4$ in clockwise order
	gives the forward Grassmann necklace
	\[
	\mathcal I^\pi
	=\bigl(\{1,2\},\{2,3\},\{3,4\},\{1,4\}\bigr).
	\]
\end{example}

\bigskip

Two $k$-subsets $I,J\subseteq[n]$ are \emph{weakly separated} if there
is no cyclically ordered quadruple $a,b,c,d$ such that
\[
a,c\in I\setminus J,
\qquad
b,d\in J\setminus I.
\]
A collection is weakly separated if its members are pairwise weakly
separated.  The target face labels of a reduced plabic graph with trip
permutation $\pi$ form a maximal weakly separated subcollection of
$\mathcal M_\pi$ containing $\mathcal I^\pi$; conversely, every such
collection is the target collection of a reduced plabic graph.  Two
such collections are connected by square moves that keep the necklace
terms fixed \cite[Theorems~2.9 and~2.11]{FSB}.

The dual ice quiver $Q(G)$ has one vertex for each face of $G$; boundary
faces are frozen and internal faces are mutable. If an edge $e$
separates faces $F$ and $F'$ and has internal endpoints of opposite
colors, we orient the dual arrow $F\to F'$ so that the white endpoint
lies on the right, and the black endpoint on the left, while crossing
$e$ from $F$ to $F'$. This is the convention of
\cite[Definition~2.13]{Pressland}. Thus our quiver is opposite to the
one attached to the same graph in \cite[Section~2.5]{FSB}.
Reversing all arrows commutes with mutation, since
\[
\mu_k(-\widetilde B)=-\mu_k(\widetilde B).
\]
It also interchanges the two exchange monomials, so the exchange
relations and the resulting cluster variables are unchanged.

For the seed, we cancel oriented $2$-cycles. The full dual quiver
will be used to construct the dimer algebra in the categorical
sections. We order the internal faces first and the boundary faces
last, and define $\widetilde B_G$ by \eqref{eq:compact-exchange}.
The target and source plabic seeds are
\begin{equation}
	\label{eq:fsb-ordinary-target-seed}
	\Sigma_G^{\rm tgt}
	=
	\left(
	(\Delta_{I_F^{\rm tgt}})_{F\in F(G)},
	\widetilde B_G
	\right),
	\qquad
	\Sigma_G^{\rm src}
	=
	\left(
	(\Delta_{I_F^{\rm src}})_{F\in F(G)},
	\widetilde B_G
	\right).
\end{equation}

The compatibility between square moves and cluster mutation is encoded
by a three-term Pl\"ucker relation.  Namely, if a square move replaces
the label $Sac$ by $Sbd$, where $a,b,c,d$ occur in cyclic order and
$S$ is a $(k-2)$-subset disjoint from them, we write
$Sab=S\cup\{a,b\}$ and similarly for the other labels. Then
\begin{equation}
	\label{eq:fsb-square-plucker}
	\Delta_{Sac}\Delta_{Sbd}
	=
	\Delta_{Sab}\Delta_{Scd}
	+
	\Delta_{Sad}\Delta_{Sbc}.
\end{equation}
This is the exchange relation at the corresponding mutable face.
Thus square moves induce seed mutations; contractions of monochromatic
edges and insertion or removal of bivalent vertices do not change the
seed data.

\begin{example}[The seed in $\operatorname{Gr}(2,4)$]
	\label{ex:fsb-square-seed}
	For the graph in Example~\ref{ex:fsb-square-graph}, the target labels
	of the boundary faces, starting just before vertex $1$, are
	\[
	\mathbf I_F^{\rm tgt}=\{12,23,34,14\},
	\]
	and the internal target label is $13$. With the order
	\[
	X=(\Delta_{13},\Delta_{12},\Delta_{23},\Delta_{34},\Delta_{14}),
	\]
	the corresponding ice quiver is given as follows
	\[
	\begin{tikzcd}
		&\color{blue}\boxed{23}\arrow[dl,blue]\arrow[dr,blue]&\\
		\color{blue}\boxed{12}\arrow[r]&\boxed{13}\arrow[u]\arrow[d]&\color{blue}\boxed{34}\arrow[l]\\
		&\color{blue}\boxed{14}\arrow[ul,blue]\arrow[ur,blue]&.
	\end{tikzcd}
	\]
	
	Hence the extended exchange matrix and exchange ratio are
	\[
	\widetilde B_G=(0,-1,1,-1,1)^{\mathsf T},
	\qquad
	\widehat y_1=\frac{\Delta_{23}\Delta_{14}}{\Delta_{12}\Delta_{34}}.
	\]
	Mutation replaces $\Delta_{13}$ by $\Delta_{24}$, with
	\[
	\Delta_{13}\Delta_{24}
	=\Delta_{12}\Delta_{34}+\Delta_{14}\Delta_{23}.
	\]
	Equivalently, with the chosen exchange ratio,
	\[
	\Delta_{24}
	=\frac{\Delta_{12}\Delta_{34}}{\Delta_{13}}
	(1+\widehat y_1).
	\]
\end{example}

For every reduced plabic graph with trip permutation $\pi$, both
$\Sigma_G^{\rm src}$ and $\Sigma_G^{\rm tgt}$ determine cluster
structures on the same ring:
\begin{equation}
	\label{eq:fsb-source-target-cluster-rings}
	\mathcal A(\Sigma_G^{\rm src})
	=R_\pi
	=\mathcal A(\Sigma_G^{\rm tgt}).
\end{equation}
All target seeds with trip permutation $\pi$ are mutation-equivalent,
and the same is true of all source seeds.  In general, however, the
source and target seeds need not be mutation-equivalent to one another.
This is the first place where quasi-equivalence is needed
\cite[Theorem~2.14 and Remark~2.16]{FSB}.

\subsection{Relabeled plabic graphs and Grassmannlike necklaces}
\label{subsec:relabelled-plabic}

Let $G$ be a reduced plabic graph with trip permutation $\mu$ of
type $(k',n)$, and let $\rho\in S_n$. We obtain the relabeled graph
$G^\rho$ by replacing each boundary label $a$ with $\rho(a)$,
while keeping the embedded bicolored graph unchanged. This gives
a canonical identification of the dual ice quivers of $G$ and
$G^\rho$. The trip permutation and target face labels change
according to
\begin{equation}
	\label{eq:fsb-relabelled-data}
	\pi=\rho\mu\rho^{-1},
	\qquad
	I_F^{\rm tgt}(G^\rho)
	=\rho\bigl(I_F^{\rm tgt}(G)\bigr).
\end{equation}
We also introduce the insertion permutation
\begin{equation}
	\label{eq:fsb-three-permutations}
	\iota=\pi\rho=\rho\mu,
	\qquad
	\mu=\rho^{-1}\pi\rho=\rho^{-1}\iota.
\end{equation}
Thus the trip starting at the boundary vertex labeled $\rho(a)$
ends at the vertex labeled $\iota(a)$.

Write $(k,n)$ for the type of $\pi$. Relabeling preserves the
cardinality of each face label, so the target labels of $G^\rho$
are $k'$-subsets of $[n]$. However, arbitrary conjugation need not
preserve type, and hence $k'$ may differ from $k$. The admissibility
conditions introduced below will ensure that $k'=k$ and that the
relabeled face coordinates define a seed for the positroid
associated with $\pi$. Until these conditions have been verified,
we use $\Sigma_{G^\rho}^{\rm tgt}$ to denote the candidate pair
consisting of the target face coordinates and the extended
exchange matrix; see \cite[Section~3.1]{FSB}.

\begin{example}[Relabeling need not preserve type]
	\label{ex:fsb-relabeling-changes-type}
	Let $G$ consist of one white internal vertex joined to four
	boundary vertices, labeled $1,2,3,4$ in clockwise order.
	This is a reduced plabic graph whose trips run from $a$ to
	$a+1$, with indices taken modulo $4$. We relabel the boundary
	by $\rho=(2\,3)$. The two graphs are shown below; the blue
	subsets are their target face labels.
	
	\begin{center}
		\begin{tikzpicture}[
			line width=0.7pt,
			boundary vertex/.style={
				circle,
				fill=black,
				inner sep=1.5pt
			},
			white vertex/.style={
				circle,
				draw,
				fill=white,
				minimum size=8pt,
				inner sep=0pt
			},
			face label/.style={
				text=blue!65!black,
				font=\small
			}
			]
			\foreach \shift/\graphname/\btwo/\bthree/\perm/\typeparam in {
				0/{G}/2/3/{\mu=2341}/{k'=1},
				5/{G^\rho}/3/2/{\pi=3421}/{k=2}
			}{
				\begin{scope}[xshift=\shift cm]
					\draw[gray!70,line width=0.5pt]
					(0,0) circle (1.55);
					
					\foreach \pos/\lab/\ang in {
						1/1/135,
						2/\btwo/45,
						3/\bthree/-45,
						4/4/-135
					}{
						\coordinate (b\pos) at (\ang:1.55);
						\draw (0,0) -- (b\pos);
						\node[boundary vertex] at (b\pos) {};
						\node at (\ang:1.83) {$\lab$};
					}
					
					\node[white vertex] at (0,0) {};
					
					\node[face label] at (-0.95,0)
					{$\{1\}$};
					\node[face label] at (0,0.95)
					{$\{\btwo\}$};
					\node[face label] at (0.95,0)
					{$\{\bthree\}$};
					\node[face label] at (0,-0.95)
					{$\{4\}$};
					
					\node at (0,2.05) {$\graphname$};
					\node at (0,-2.05)
					{$\perm,\qquad\typeparam$};
				\end{scope}
			}
		\end{tikzpicture}
	\end{center}
	
	The trip permutations are
	\[
	\mu=
	\begin{pmatrix}
		1&2&3&4\\
		2&3&4&1
	\end{pmatrix},
	\qquad
	\pi=\rho\mu\rho^{-1}=
	\begin{pmatrix}
		1&2&3&4\\
		3&4&2&1
	\end{pmatrix}.
	\]
	For a permutation $\sigma$ without fixed points, the first
	term of its forward Grassmann necklace is
	\[
	I_1^\sigma
	=\{a\in[4]:a<\sigma^{-1}(a)\}.
	\]
	Since $I_1^\mu=\{1\}$ and $I_1^\pi=\{1,2\}$, the permutations
	$\mu$ and $\pi$ have types $(1,4)$ and $(2,4)$, respectively.
	Thus $k'=1$ and $k=2$.
	
	Keeping the boundary faces in their original geometric order,
	starting just before the upper-left vertex, their target
	labels are
	\[
	\begin{aligned}
		G &: \quad
		\bigl(\{1\},\{2\},\{3\},\{4\}\bigr),\\
		G^\rho &: \quad
		\bigl(\{1\},\{3\},\{2\},\{4\}\bigr).
	\end{aligned}
	\]
	In particular, every face label of $G^\rho$ still has
	cardinality $k'=1$. By comparison, the forward Grassmann
	necklace of $\pi$ with respect to the standard cyclic order is
	\[
	\mathbf I^\pi
	=\bigl(\{1,2\},\{2,3\},\{3,4\},\{2,4\}\bigr).
	\]
	The distinction comes from the boundary order: in $G^\rho$,
	the labels occur clockwise as $1,3,2,4$, whereas the type of
	$\pi$ is computed using the standard order $1,2,3,4$.
	Hence arbitrary relabeling preserves the size of each face
	label but may change the type of the trip permutation.
\end{example}

Let $F_a$ be the boundary face immediately before $\rho(a)$ in clockwise
order.  Its target label is
\begin{equation}
	\label{eq:fsb-relabelled-boundary-label}
	J_a
	=I_{F_a}^{\rm tgt}(G^\rho)
	=\rho(I_a^\mu).
\end{equation}
Using the Grassmann-necklace recurrence, we obtain
\begin{equation}
	\label{eq:fsb-relabelled-necklace-recursion}
	\begin{split}
		J_{a+1}
		&=\rho(I_{a+1}^\mu)\\
		&=\rho\bigl(I_a^\mu\setminus\{a\}\cup\{\mu(a)\}\bigr)\\
		&=J_a\setminus\{\rho(a)\}\cup\{\iota(a)\}.
	\end{split}
\end{equation}
This leads to the following intrinsic definition.

\begin{definition}[Grassmannlike necklace]
	\label{def:compact-necklace}
	A \emph{Grassmannlike necklace} of type $(k,n)$ is an $n$-tuple
	$\mathcal J=(J_a)_{a\in[n]}$ of $k$-subsets, together with a removal
	permutation $\rho\in S_n$, such that
	\begin{equation}
		\label{eq:fsb-prelim-grassmannlike}
		\rho(a)\in J_a,
		\qquad
		J_{a+1}
		=J_a\setminus\{\rho(a)\}\cup\{\iota(a)\}
		\qquad(a\in[n]),
	\end{equation}
	Here $\iota(a)\notin J_a\setminus\{\rho(a)\}$, and the inserted
	elements define a permutation $\iota\in S_n$.  Its
	trip permutation and underlying permutation are
	\begin{equation}
		\label{eq:fsb-necklace-permutations}
		\pi=\iota\rho^{-1},
		\qquad
		\mu=\rho^{-1}\iota=\rho^{-1}\pi\rho.
	\end{equation}
	We denote the necklace by $\mathcal J_{\rho,\iota,\pi}$.
\end{definition}

Any two of $\rho,\iota,\pi$ determine the third and the necklace.
The underlying permutation $\mu$ has the type of the necklace, although
$\pi$ and $\iota$ need not. Explicitly,
\begin{equation}
	\label{eq:compact-necklace-formula}
	J_a
	=\rho(I_a^\mu)
	=
	\left\{
	j\in[n]:
	\rho^{-1}(j)\leq_a\iota^{-1}(j)
	\right\}.
\end{equation}
Conversely, every Grassmannlike necklace occurs as the clockwise list of
target labels of the boundary faces of a relabeled plabic graph
\cite[Lemmas~3.11 and~3.12]{FSB}.

Recall from \eqref{eq:fsb-positroid-coordinate-ring} that
$\mathcal I^\pi$ is the set of distinct terms of the ordinary
forward Grassmann necklace of $\pi$, and that
\[
\mathscr F_\pi
=
\left\{
\prod_{I\in\mathcal I^\pi}\Delta_I^{m_I}
:
m_I\in\mathbb Z
\right\}
\subseteq R_\pi^\times
\]
is the coefficient group of the target plabic seeds.

\begin{definition}[Unit necklace]
	\label{def:fsb-unit-necklace}
	Let $\mathcal J=(J_a)_{a\in[n]}$ be a Grassmannlike necklace
	of type $(k,n)$ with trip permutation $\pi$.
	We call $\mathcal J$ a \emph{unit necklace for $\pi$} if
	\begin{equation}
		\label{eq:fsb-unit-necklace}
		\Delta_{J_a}\in\mathscr F_\pi
		\qquad\text{for every }a\in[n].
	\end{equation}
	Thus every boundary coordinate of $\mathcal J$ is a Laurent
	monomial in the ordinary target frozen variables.
\end{definition}

\subsection{Circular weak order and admissible relabelings}
\label{subsec:circular-weak-order}

An affine permutation is a bijection $f:\mathbb Z\to\mathbb Z$ satisfying
$f(a+n)=f(a)+n$. Its average and length are
\begin{equation}
	\label{eq:fsb-affine-length}
	\begin{aligned}
		\operatorname{av}(f)&=\frac1n\sum_{a=1}^n(f(a)-a),\\
		\ell(f)&=\#\{(a,b)\in[n]\times\mathbb Z:
		a<b\text{ and }f(a)>f(b)\}.
	\end{aligned}
\end{equation}
For affine permutations $u,f$ of the same average, the right weak order
is characterized by
\begin{equation}
	\label{eq:fsb-right-weak-order}
	u\le_R f
	\quad\Longleftrightarrow\quad
	\ell(f)=\ell(u)+\ell(u^{-1}f).
\end{equation}
If $\iota,\pi$ have type $(k,n)$, with bounded affine lifts
$i=f_\iota$ and $f=f_\pi$, their circular weak order is
\begin{equation}
	\label{eq:compact-affine-order}
	\iota\le_\circ\pi\quad\Longleftrightarrow\quad i\le_R f;
\end{equation}
see \cite[Definitions~2.23--2.24]{FSB}. Put
\[
e_k(a)=a+k,\qquad
\epsilon_k(a)\equiv a+k\pmod n.
\]
The translation $e_k$ is the minimum of the right weak order in
average $k$. In particular, $\epsilon_k\le_\circ\pi$ for every $\pi$
of type $(k,n)$.

The dimension formula is
\begin{equation}
	\label{eq:fsb-positroid-dimension}
	\dim\widehat\Pi_\pi^\circ=k(n-k)+1-\ell(f_\pi).
\end{equation}
Fix $\pi$ of type $(k,n)$ and $\rho\in S_n$, and put
\[
\iota=\pi\rho,\qquad
\mu=\rho^{-1}\pi\rho=\iota^{-1}\pi\iota.
\]
If $\iota$ has type $(k,n)$ and $\iota\le_\circ\pi$, then $\mu$ also
has type $(k,n)$. Moreover, \cite[Lemma~4.17]{FSB} gives
\begin{equation}
	\label{eq:fsb-affine-admissibility}
	i\le_R f,\qquad m=i^{-1}fi,
	\qquad f=f_\pi,\quad i=f_\iota,\quad m=f_\mu.
\end{equation}

\begin{definition}[Admissible relabeling]
	\label{def:fsb-admissible}
	We call $\rho$ \emph{admissible for $\pi$} if $\iota=\pi\rho$ has
	type $(k,n)$ and
	\begin{equation}
		\label{eq:fsb-admissible-relabeling}
		\iota\le_\circ\pi,
		\qquad
		\dim\widehat\Pi_\mu^\circ=\dim\widehat\Pi_\pi^\circ.
	\end{equation}
	Under the order condition, the equality of dimensions is equivalent to
	\begin{equation}
		\label{eq:fsb-equal-length}
		\ell(m)=\ell(f).
	\end{equation}
\end{definition}

This is the terminology used in this paper for the hypotheses and
seed-size condition of \cite[Theorem~4.21]{FSB}. The order condition
is sufficient to give the correct subset size and coefficient units;
it is not asserted to be necessary for every relabeling that gives a
cluster structure.

\begin{remark}
	We explain why admissibility forces $k'=k$ in Subsection \ref{subsec:relabelled-plabic}. Let
	\[
	\mathcal J=(J_a)_{a\in[n]}
	=\mathcal J_{\rho,\iota,\pi},
	\qquad \iota=\pi\rho,
	\]
	be the boundary necklace of $G^\rho$. Since $G$ has trip
	permutation $\mu$ of type $(k',n)$, its boundary labels satisfy
	\[
	J_a=\rho(I_a^\mu),
	\qquad
	|J_a|=|I_a^\mu|=k'.
	\]
	On the other hand, admissibility requires that $\iota$ have
	type $(k,n)$ and that $\iota\le_\circ\pi$. By the Unit Necklace
	Theorem \cite[Theorem~4.14]{FSB}, the same necklace $\mathcal J$
	has type $(k,n)$, so $|J_a|=k$. Consequently, $k'=k$.
	Thus the circular weak-order condition already ensures the
	correct cardinality of every face label; the dimension condition
	is used subsequently to ensure the correct number of seed
	variables.
\end{remark}

\subsection{The Fraser--Sherman--Bennett results}
\label{subsec:fsb-results}

The circular weak-order condition already controls the boundary
coefficients; the dimension hypothesis is not needed for this part.

\begin{theorem}[Unit Necklace Theorem {
		\cite[Theorem~4.14]{FSB}}]
	\label{thm:fsb-unit-necklace}
	Let $\pi$ and $\iota$ be permutations of type $(k,n)$ satisfying
	$\iota\leq_\circ\pi$, and put $\rho=\pi^{-1}\iota$.  Then
	$\mathcal J_{\rho,\iota,\pi}$ is a Grassmannlike necklace of type
	$(k,n)$ and a unit necklace for $\pi$.  Moreover,
	\begin{equation}
		\label{eq:fsb-unit-basis}
		\Delta(\mathcal J_{\rho,\iota,\pi})
	\end{equation}
	forms a $\mathbb Z$-basis of the coefficient group $\mathscr F_\pi$,
	with repeated necklace terms counted only once.
\end{theorem}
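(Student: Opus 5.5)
The plan is to induct along a chain in the right weak order from the translation $e_k$ up to $f=f_\pi$. The ordinary forward necklace is the base case: taking $\iota=\pi$ gives $\rho=\mathrm{id}$, and $\mathcal J_{\mathrm{id},\pi,\pi}=\mathbf I^\pi$. The unit and basis properties are then tautological from \eqref{eq:fsb-positroid-coordinate-ring}.

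Since $i\le_R f$, the right-weak-order characterization \eqref{eq:fsb-right-weak-order} gives a reduced factorization $f=i\,s_{j_1}\cdots s_{j_\ell}$ with lengths adding. Reading it backwards produces a chain
\[
i=g_0<_R g_1<_R\cdots<_R g_\ell=f,
\qquad g_t=g_{t-1}s_{j_t},
\]
in which each $g_t$ has average $k$ and is the affine lift of some permutation $\iota_t$ of type $(k,n)$ with $\iota_t\le_\circ\pi$. I will prove descending induction from $g_\ell=f$ down to $g_0=i$. One must check that intermediate affine permutations remain lifts of type-$(k,n)$ permutations; this follows since $g_t\le_R f$ and $f$ is bounded. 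Each $g_t$ satisfies $a<g_t(a)$, because inversions of $g_t$ are inversions of $f$, and $g_t(a)\le a+n$ for the same reason. The averages agree because simple reflections preserve the average.

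For the inductive step, consider two adjacent necklaces $\mathcal J=\mathcal J_{\rho,\iota,\pi}$ and $\mathcal J'=\mathcal J_{\rho',\iota',\pi}$ with $\iota'=\iota s$. Here $s$ swaps the values at positions $a$ and $a+1$, with $i(a)<i(a+1)$. Using \eqref{eq:compact-necklace-formula}, all terms coincide except $J'_{a+1}$ versus $J_{a+1}$. Writing $J_a=Sxy$ in the notation of \eqref{eq:fsb-square-plucker}, the recursion \eqref{eq:fsb-prelim-grassmannlike} gives explicit four-element patterns. The old term is $J_{a+1}=S\cup\{\rho(a+1)\}\cup\{\iota(a)\}$ up to relabeling, and the new term $J'_{a+1}$ is obtained by exchanging the roles of these elements.

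The key claim is that in $R_\pi$ one of the two extra terms of the three-term Plücker relation \eqref{eq:fsb-square-plucker} vanishes because its index lies outside $\mathcal M_\pi$. The order condition $i(a)<i(a+1)\le\ldots$ against $f$ is what makes the corresponding subset fail a cyclic Gale inequality \eqref{eq:fsb-positroid-gale}. The relation then becomes a monomial identity
\[
\Delta_{J_{a+1}}\Delta_{J'_{a+1}}=\Delta_{J_a}\Delta_{J_{a+2}},
\]
with all four indices among the two necklaces. Hence $\Delta_{J'_{a+1}}=\Delta_{J_a}\Delta_{J_{a+2}}\Delta_{J_{a+1}}^{-1}$. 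This shows $\Delta(\mathcal J')\subseteq\mathscr F_\pi$, and the substitution is unimodular, so the basis property transfers. There is one degenerate case to handle: the swap can make $J'_{a+1}$ equal to a neighbour. In that case the number of distinct terms is unchanged only if a repeated term appears or disappears symmetrically, and the counting with multiplicity one must be checked directly.

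The main obstacle is the vanishing of the third Plücker term, that is, identifying exactly which subset leaves $\mathcal M_\pi$ and verifying the Gale violation from the inequality $g_{t-1}<_R g_t\le_R f$. I would first try to express the offending subset as $J_a\setminus\{y\}\cup\{z\}$ and compare it with $I_b^\pi$ for $b$ equal to the appropriate cyclic start. The inversion of $f$ created by $s$ then forces $I_b^\pi\not\le_b$ that subset. Once this is established, nonvanishing of the remaining terms on $\widehat\Pi_\pi^\circ$ follows inductively from their being units.
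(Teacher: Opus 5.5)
Your strategy matches the one the paper relies on. The paper gives no proof of its own and cites \cite[Lemma~4.13 and Theorem~4.14]{FSB}: an aligned-toggle path from $\mathbf I^\pi$, along which one term of a three-term Pl\"ucker relation vanishes (compare Lemma~\ref{lem:fsb-toggle-sequence} and Example~\ref{ex:fsb-aligned-toggle}). The step that carries the content, however, is only announced.

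Since $\iota=\pi\rho$, the two swapped steps are the chords $r_1\to\pi(r_1)$ and $r_2\to\pi(r_2)$, with $r_1=\rho(a)$ and $r_2=\rho(a+1)$. For an aligned toggle the relation reads $\Delta_{J_{a+1}}\Delta_{J'_{a+1}}=\Delta_{J_a}\Delta_{J_{a+2}}+\Delta_{Sr_1\pi(r_1)}\Delta_{Sr_2\pi(r_2)}$. You must show three things:
\begin{enumerate}
\item A cover $g_{t-1}\lessdot_R g_t\le_R f$ gives an allowed toggle. If, say, $\rho(a+1)=\iota(a)$, then $\rho(a+1)\notin J_a$ and your description $J_a=Sxy$ breaks down.
\item The toggle is aligned.
\item One of the chord sets $S\cup\{r,\pi(r)\}$ lies outside $\mathcal M_\pi$.
\end{enumerate}
Your Gale sketch does not yet establish any of these. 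There are also two smaller corrections. First, $J'_{a+1}$ can equal a neighbour only when a chord is a loop (a fixed point of $\pi$), where the step is trivial. Second, right weak order gives $\operatorname{Inv}(g_t^{-1})\subseteq\operatorname{Inv}(f^{-1})$, not containment of the inversions of $g_t$ itself. Boundedness of the $g_t$ should instead be quoted from the fact that bounded affine permutations form a lower ideal.

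The basis transfer is a genuine gap, and in the stated generality it cannot be closed. Replacing $\Delta_{J_{a+1}}$ by $\Delta_{J_a}\Delta_{J_{a+2}}\Delta_{J_{a+1}}^{-1}$ is a unimodular change only if $J_{a+1}$ occurs once in $\mathcal J$. If $J_{a+1}$ recurs elsewhere and $J'_{a+1}$ is new, the set of distinct terms grows by one.

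This actually happens. Take $\pi=(1\,3)(2\,4)(5\,6)$ from Example~\ref{ex:necklace-component-projection}, for which $\mathbf I^\pi=(125,235,345,145,125,126)$ and $\operatorname{rank}\mathscr F_\pi=5$ by Lemma~\ref{lem:fsb-boundary-count}. Let $\iota=\pi\,(4\,5)$. Then $f_\iota=f_\pi s_4$ and $f_\pi(4)=8>6=f_\pi(5)$, so $f_\iota\lessdot_R f_\pi$ and $\iota\le_\circ\pi$. The corresponding toggle interchanges steps $4$ and $5$, with chords $4\to2$ and $5\to6$, and it is aligned. It replaces the repeated term $125$ by $146$, giving $\mathcal J_{\rho,\iota,\pi}=(125,235,345,145,146,126)$ with six distinct terms. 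Moreover $\Delta_{146}\Delta_{125}=\Delta_{145}\Delta_{126}$ because $\Delta_{156}=0$. So the unit property holds, but the distinct coordinates are not a basis. This $\rho=(4\,5)$ is not admissible: $\ell(f_\mu)=2\neq4=\ell(f_\pi)$.

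What your argument does yield is generation. The monomial identity is symmetric in $J_{a+1}$ and $J'_{a+1}$, so $\langle\Delta(\mathcal J)\rangle_{\mathbb Z}=\mathscr F_\pi$ propagates along the chain. Independence needs extra input:
\begin{itemize}
\item For connected $\pi$, granting the vanishing lemma, all $n$ terms stay distinct, so your induction goes through. A coincidence $J'_{a+1}=J_b$ would give a nontrivial relation among four distinct basis elements.
\item For admissible $\rho$, the only case used later in the paper, independence comes from the algebraic independence of the frozen variables of the seed of $G^\rho$ (Theorem~\ref{thm:fsb-cluster-criterion}), not from a stepwise induction.
\end{itemize}
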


Thus an order relation in the affine symmetric group produces a new
basis of frozen Pl\"ucker coordinates.  The remaining question is
whether the full face collection has the correct cardinality and
exchange combinatorics to be a seed.

\begin{theorem}[Fraser--Sherman--Bennett {
		\cite[Theorem~4.21 and Corollary~5.18]{FSB}}]
	\label{thm:fsb-cluster-criterion}
	Let $\pi$ and $\iota=\pi\rho$ have type $(k,n)$, and assume
	$\iota\leq_\circ\pi$. Let $G$ be a reduced plabic graph with trip
	permutation $\mu=\rho^{-1}\pi\rho$.  Let $G^\rho$ be the relabeled graph, whose
	trip permutation is $\pi$.  The following conditions are equivalent.
	\begin{enumerate}
		\item
		The target-labeled pair $\Sigma_{G^\rho}^{\rm tgt}$ is a seed in
		$\operatorname{Frac}(R_\pi)$ and
		\[
		\mathcal A(\Sigma_{G^\rho}^{\rm tgt})=R_\pi.
		\]
		
		\item
		The graph has the expected number of faces:
		\[
		|F(G^\rho)|=\dim\widehat\Pi_\pi^\circ.
		\]
		Equivalently,
		\[
		\dim\widehat\Pi_\mu^\circ
		=\dim\widehat\Pi_\pi^\circ,
		\qquad\text{or equivalently}\qquad
		\ell(f_\mu)=\ell(f_\pi).
		\]
		
		\item
		The Grassmannlike necklace of boundary target labels is weakly
		separated.  Equivalently, the complete target face collection of
		$G^\rho$ is weakly separated.
		
		\item
		The open positroid varieties
		$\widehat\Pi_\mu^\circ$ and $\widehat\Pi_\pi^\circ$ are isomorphic.
	\end{enumerate}
	When these conditions hold, positivity of the face Pl\"ucker
	coordinates of $G^\rho$ cuts out the totally positive positroid cell
	$\widehat\Pi_{\pi,>0}^\circ$.
\end{theorem}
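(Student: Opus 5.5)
Since this is a cited result, the plan is to reconstruct the Fraser--Sherman-Bennett argument from the ingredients already in place. We work throughout under the standing hypothesis $\iota\le_\circ\pi$, which via Theorem~\ref{thm:fsb-unit-necklace} makes the boundary necklace $\mathcal J=\mathcal J_{\rho,\iota,\pi}$ a unit necklace whose distinct coordinates form a basis of $\mathscr F_\pi$. The proof will proceed through the cycle $(1)\Rightarrow(2)\Rightarrow(3)\Rightarrow(4)\Rightarrow(1)$, with the final positivity statement read off from the seed in (1).

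\emph{$(1)\Rightarrow(2)$.} A seed of $\operatorname{Frac}(R_\pi)$ has exactly $\dim\widehat\Pi_\pi^\circ$ algebraically independent coordinates, by Definition~\ref{def:fsb-seed}. Relabeling does not change the embedded graph, so $|F(G^\rho)|=|F(G)|$. By Postnikov's face count, $|F(G)|=\dim\widehat\Pi_\mu^\circ$. Formula~\eqref{eq:fsb-positroid-dimension}, together with $m=i^{-1}fi$ from \eqref{eq:fsb-affine-admissibility}, turns the equality of dimensions into $\ell(m)=\ell(f)$.

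\emph{$(2)\Leftrightarrow(3)$.} This is purely combinatorial, and I would do it in the affine symmetric group. Using $i\le_R f$, write $f=i\cdot(i^{-1}f)$ with lengths adding, and compare $\ell(m)=\ell(i^{-1}fi)$ with $\ell(f)$. The defect $\ell(m)-\ell(f)$ should be expressible as a count of crossings between the insertion data $\iota$ and the removal data $\rho$. By \eqref{eq:compact-necklace-formula}, such crossings are exactly the quadruples $a,c\in J_s\setminus J_t$ and $b,d\in J_t\setminus J_s$ that violate weak separation of the boundary necklace. Weak separation of the whole face collection then follows from the necklace case. The reason is that the face labels of $G^\rho$ are $\rho$ applied to a maximal weakly separated collection for $\mu$, and crossing of two labels can only be created by strands that cross the boundary relabeling, which are already detected at boundary faces.

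\emph{$(3)\Rightarrow(4)\Rightarrow(1)$.} Here I would construct an explicit isomorphism $\widehat\Pi_\mu^\circ\to\widehat\Pi_\pi^\circ$, for instance via a composition of twist maps or a boundary-measurement reparametrization adapted to the necklace. The isomorphism should pull back each relabeled face coordinate $\Delta_{\rho(I)}$ to $\Delta_I$ times a monomial in the frozen units. Transporting the known target cluster structure of $R_\mu$ along this isomorphism then gives (1), including algebraic independence and the exchange relations at square moves via \eqref{eq:fsb-square-plucker}. For the equality $\mathcal A(\Sigma^{\rm tgt}_{G^\rho})=R_\pi$, I would instead apply the starfish lemma directly. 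The ring $R_\pi$ is normal. The face coordinates are pairwise coprime irreducibles, by weak separation and irreducibility of Pl\"ucker coordinates on positroid varieties. Each one-step mutation gives a regular function, by the three-term relations. These facts give $\mathcal A^+\subseteq R_\pi$ and equality after inverting frozens.

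\emph{Main obstacle.} I expect the hardest step to be $(2)/(3)\Rightarrow(1)$, and specifically the generation statement $R_\pi\subseteq\mathcal A$. This requires knowing that the cluster torus misses only a codimension-$2$ locus outside the frozen divisors, or else an explicit isomorphism as in (4). I would first try to build that isomorphism inductively along an aligned-toggle chain from $\epsilon_k$ to $\iota$ in the right weak order. At each step, a single toggle should change one necklace term by a three-term Pl\"ucker relation with unit coefficients.
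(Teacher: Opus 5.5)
The paper does not prove this statement; it imports it from \cite{FSB}, whose key step is a twist-type isomorphism $\widehat\Pi_\mu^\circ\cong\widehat\Pi_\pi^\circ$. Your $(1)\Rightarrow(2)$ is fine: transcendence degree, Postnikov's count $|F(G)|=\dim\widehat\Pi_\mu^\circ$, and \eqref{eq:fsb-positroid-dimension} suffice. The substantive implication $(2)/(3)\Rightarrow(1)$, however, is never proved, for three reasons.

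First, the isomorphism is never constructed. Your suggested inductive construction along an aligned-toggle chain cannot work in general. The intermediate necklaces need not be weakly separated, as the paper stresses. Whenever one fails to be, the very equivalence $(2)\Leftrightarrow(3)$ you are proving says that the intermediate variety $\widehat\Pi^\circ_{\mu^{(j)}}$ has a different dimension from $\widehat\Pi^\circ_\pi$. Moreover, the toggle relations are identities among Pl\"ucker coordinates in the single ring $R_\pi$; they relate no two varieties.

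Second, even granting an isomorphism $\phi$ with $\phi^*\Delta_{\rho(I_F)}=c_F\Delta_{I_F}$ for frozen monomials $c_F$, transport is not automatic. The pair $((c_F\Delta_{I_F})_F,\widetilde B_G)$ has exchange binomials $c^+P+c^-Q$ instead of $P+Q$. You would need a compatibility such as the criterion of Proposition~\ref{prop:compact-coefficients}: a unimodular frozen change and unchanged $\widehat y$'s.

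Third, your cycle uses $(4)\Rightarrow(1)$ for an arbitrary isomorphism, which your argument does not provide. Bare (4) only gives $(4)\Rightarrow(2)$ by counting dimensions.

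The starfish fallback does not close the gap either. Even with all its hypotheses, it yields only $\mathcal A\subseteq\mathcal U\subseteq R_\pi$, not equality. The reverse inclusion $R_\pi\subseteq\mathcal A(\Sigma^{\rm tgt}_{G^\rho})$, for example that every nonzero Pl\"ucker coordinate lies in the relabeled cluster algebra, is precisely the missing idea, as your last paragraph concedes. The hypotheses are not justified as stated either. Nonzero Pl\"ucker coordinates need not be irreducible in $R_\pi$: in Example~\ref{ex:fsb-aligned-toggle}, $\Delta_{13}\in\mathscr F_\pi$ is a unit. Coprimality also does not follow from weak separation alone.

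The step $(2)\Leftrightarrow(3)$ is only conjectured. From $m=(i^{-1}f)\,i$ and $\ell(f)=\ell(i)+\ell(i^{-1}f)$ you get $\ell(m)\le\ell(f)$, with equality if and only if this product is length-additive. You would still have to show, via \eqref{eq:compact-necklace-formula}, that failures of length-additivity are exactly weak-separation violations in the necklace. Passing from a weakly separated necklace to a weakly separated face collection needs the theory of maximal weakly separated collections in the domain cut out by the necklace (Oh--Speyer, Danilov--Karzanov--Koshevoy). This paper uses that theory later through \cite{FG} and \cite[Lemma~7.3]{FSB}; a heuristic about strands crossing the relabeling does not replace it.

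Finally, the closing positivity claim needs more than the existence of the seed. For example, you would need that the totally nonnegative points of $\widehat\Pi_\pi^\circ$ form exactly the positive cell, together with connectedness of the positive part of the relabeled cluster torus.
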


For later use, let
\begin{equation}
	\label{eq:fsb-face-labels}
	\begin{split}
		\mathcal S
		&=
		\left\{
		\rho(I_F^{\rm tgt}):
		F\text{ is a face of }G
		\right\},\\
		\mathcal I
		&=
		\left\{
		\rho(I_F^{\rm tgt}):
		F\text{ is a boundary face of }G
		\right\},\\
		\Sigma_{G^\rho}^{\rm tgt}
		&=
		\left(
		(\Delta_J)_{J\in\mathcal S},
		\widetilde B_{G^\rho}
		\right).
	\end{split}
\end{equation}
Here $\mathcal S$ and $\mathcal I$ denote sets of distinct labels, ordered
with $\mathcal S\setminus\mathcal I$ first. If $\rho$ is admissible, then
\[
\mathcal S\subseteq\mathcal M_\pi,\qquad
\mathcal S\text{ is weakly separated},\qquad
\langle\Delta_J:J\in\mathcal I\rangle_{\mathbb Z}=\mathscr F_\pi.
\]
In particular, the mutable rank is
\[
|\mathcal S\setminus\mathcal I|
=\dim\widehat\Pi_\pi^\circ-\operatorname{rank}_{\mathbb Z}\mathscr F_\pi.
\]
The coefficient assertion uses only the circular weak-order condition;
the equality of dimensions is needed for the conclusion that the full
face collection is a seed.

\subsection{The quasi-equivalence conjecture}
\label{subsec:fsb-quasi-equivalence-conjecture}

The preceding results show that an admissible relabeling produces a new
Pl\"ucker seed for the same ring $R_\pi$ and that its boundary variables
form a different basis of the same coefficient group $\mathscr F_\pi$.
The natural comparison with the ordinary target cluster structure is
therefore quasi-equivalence rather than mutation equivalence.

\begin{conjecture}[Fraser--Sherman--Bennett
	{\cite[Conjecture~1.3]{FSB}}]
	\label{conj:fsb-quasi-equivalence}
	Let $\rho$ be admissible for $\pi$. Let $H$ and $G$ be reduced plabic
	graphs with trip permutations $\pi$ and $\rho^{-1}\pi\rho$, respectively.
	Then
	\[
	\Sigma_H^{\rm tgt}
	\quad\text{and}\quad
	\Sigma_{G^\rho}^{\rm tgt}
	\]
	are related by a quasi-cluster transformation. Equivalently, the two
	cluster structures on $R_\pi$ are quasi-equivalent.
\end{conjecture}

For any reduced plabic graph $H$ with trip permutation $\pi$, the
face-label identity gives
\begin{equation}
	\label{eq:fsb-source-as-relabelled-target}
	\Sigma_H^{\rm src}=\Sigma_{H^{\pi^{-1}}}^{\rm tgt}.
\end{equation}
However, $\rho=\pi^{-1}$ is not admissible in the above sense:
$\pi\rho=\mathrm{id}$ has type $(n,n)$ under our white-fixed-point
convention, not type $(k,n)$.

To obtain the source--target comparison as an admissible specialization,
we use the cyclic shift from \cite[Remark~6.1]{FSB}. Let $G$ be obtained
from $H$ by cyclically replacing each boundary label $a$ by
$\epsilon_k^{-1}(a)$; this is an ordinary cyclic relabeling, equivalently
a rotation of the marked disk. Put
\begin{equation}
	\label{eq:fsb-source-admissible-shift}
	\rho=\pi^{-1}\epsilon_k,
	\qquad
	\iota=\epsilon_k,
	\qquad
	\mu=\epsilon_k^{-1}\pi\epsilon_k.
\end{equation}
Then $G$ has trip permutation $\mu$ and
\[
\rho\mu\rho^{-1}=\pi,
\qquad
G^\rho=H^{\pi^{-1}}
\]
under the canonical identification of the embedded graphs. Moreover,
\[
\epsilon_k\le_\circ\pi,
\qquad
f_\mu=e_k^{-1}f_\pi e_k,
\qquad
\ell(f_\mu)=\ell(f_\pi).
\]
Thus $\rho$ is admissible and
$\Sigma_{G^\rho}^{\rm tgt}=\Sigma_H^{\rm src}$. This realizes the
Muller--Speyer source--target comparison as a special case of
Conjecture~\ref{conj:fsb-quasi-equivalence}.

Fraser--Sherman-Bennett proved the conjecture for open Schubert and
opposite open Schubert varieties \cite[Theorem~6.12]{FSB}. More generally,
if two admissible boundary necklaces are related by an aligned toggle,
then their seeds are related by a quasi-cluster transformation
\cite[Definitions~4.3, 4.5 and Theorem~6.3]{FSB}.

Pressland subsequently proved that the source and target cluster
structures quasi-coincide for every open positroid variety
\cite[Theorem~6.16]{Pressland}. His proof uses categorical models of
the two structures, the interpretation of perfect matchings and twists,
and the Fraser--Keller categorification of quasi-cluster morphisms.
This settles the source--target specialization, but does not by itself
identify every cluster structure obtained from an arbitrary admissible
relabeling. The latter comparison is the problem considered below.

\providecommand{\cV}{\mathcal{V}}
\providecommand{\cA}{\mathcal{A}}
\makeatletter
\@ifundefined{assumption}{%
	\theoremstyle{definition}
	\newtheorem{assumption}[theorem]{Assumption}%
}{}
\makeatother
\theoremstyle{plain}

\section{Frobenius categorifications}
\label{sec:fsb-character-input}

We fix the categorical and coefficient conventions used in the comparison
of positroid cluster structures. We first define the Frobenius models and
their cluster characters, and then extend the index and the character to
the bounded derived category. 

\subsection{Frobenius categories and cluster-tilting objects}
\label{subsec:fsb-frobenius-setup}

An \emph{exact category} is an additive category equipped with a class of
kernel--cokernel pairs satisfying Quillen's exact-category axioms. We write
such a pair as a \emph{conflation}
\[
0\longrightarrow L\xrightarrow{i}M\xrightarrow{p}N\longrightarrow0,
\]
and call $i$ an \emph{inflation} and $p$ a \emph{deflation}.
The group $\Ext^1_{\cE}(N,L)$ consists of equivalence classes of
conflations with these endpoints.
An object is projective, respectively injective, if it has the lifting
property for deflations, respectively the extension property for
inflations. The category is \emph{Frobenius} if it has enough projectives
and enough injectives and these two classes of objects coincide.

Let $\cE$ be an essentially small, $\kk$-linear, Krull--Schmidt Frobenius
exact category. Thus every object is a finite direct sum of objects with
local endomorphism rings, and this decomposition is unique up to
isomorphism and permutation. Denote by $\cP$ its full subcategory of
projective-injective objects. For objects $L,M$, let $[\cP](L,M)$ be the
subspace of morphisms factoring through an object of $\cP$.
The \emph{stable category} has the same objects as $\cE$ and morphisms
\[
\Hom_{\underline{\cE}}(L,M)
=\Hom_{\cE}(L,M)/[\cP](L,M).
\]
It is triangulated. Its suspension $\Sigma$ can be constructed by
choosing a conflation $0\to L\to P\to L^+\to0$, with $P\in\cP$,
and taking $\Sigma\underline L=\underline{L^+}$.

We assume that $\underline{\cE}$ is Hom-finite and $2$-Calabi--Yau:
its morphism spaces are finite-dimensional and there are bifunctorial
isomorphisms
\[
D_\kk\Hom_{\underline{\cE}}(L,M)
\simeq\Hom_{\underline{\cE}}(M,\Sigma^2L),
\qquad D_\kk=\Hom_\kk(-,\kk).
\]
We do not assume that the morphism spaces of $\cE$ are finite-dimensional.
For $H\in\cE$, let $\add H$ be the full subcategory of direct summands
of finite direct sums of copies of $H$. An object is \emph{basic} if
its indecomposable summands are pairwise non-isomorphic, and \emph{rigid}
if $\Ext^1_{\cE}(H,H)=0$.

\begin{definition}
	\label{def:fsb-categorical-cluster-tilting}
	A morphism $V\to H$, with $V\in\cV$, is a \emph{right
		$\cV$-approximation} if every morphism from an object of $\cV$ to $H$
	factors through it. Left approximations are defined dually. The
	subcategory $\cV$ is \emph{functorially finite} if every object admits
	both approximations.
	A basic object $T\in\cE$ is \emph{cluster-tilting} if $\add T$ is
	functorially finite and
	\[
	\add T
	=\{H\in\cE\mid\Ext^1_{\cE}(T,H)=0\}
	=\{H\in\cE\mid\Ext^1_{\cE}(H,T)=0\}.
	\]
\end{definition}

Fix such an object and write
\[
T=\bigoplus_{i=1}^rT_i\oplus\bigoplus_{p=1}^sP_p,
\qquad \cP=\add\Bigl(\bigoplus_{p=1}^sP_p\Bigr),
\]
where the $T_i$ are non-projective and the $P_p$ are indecomposable
projective-injectives. Every projective-injective belongs to $\add T$
by the defining orthogonality. The summands $T_i$ are called
\emph{mutable}, and the summands $P_p$ are called \emph{frozen}.

\subsection{Cluster structures and the coefficient ring}
\label{subsec:fsb-frobenius-cluster-structure}

The existence of $T$ alone does not specify a cluster structure. We make
the following additional assumption, in the sense of
\cite[Appendix~A.2]{KW}.

\begin{assumption}[Cluster structure]
	\label{ass:fsb-frobenius-cluster}
	The cluster-tilting objects under consideration admit mutation in every
	non-projective indecomposable summand. More precisely, if
	$U=U_{\widehat k}\oplus U_k$, there is, up to isomorphism, a unique
	$U_k^*\not\simeq U_k$ such that
	\[
	\mu_kU=U_{\widehat k}\oplus U_k^*
	\]
	is cluster-tilting. There are exchange conflations
	\begin{equation}
		\label{eq:fsb-exchange-conflations}
		\begin{gathered}
			0\longrightarrow U_k^*\longrightarrow E_{U,k}^+
			\longrightarrow U_k\longrightarrow0,\\
			0\longrightarrow U_k\longrightarrow E_{U,k}^-
			\longrightarrow U_k^*\longrightarrow0.
		\end{gathered}
	\end{equation}
	Their middle terms belong to $\add U_{\widehat k}$ and have no common
	indecomposable summand. The maps to and from the endpoints are minimal
	right and left $\add U_{\widehat k}$-approximations, and
	\[
	\dim_\kk\Ext^1_{\cE}(U_k,U_k^*)
	=\dim_\kk\Ext^1_{\cE}(U_k^*,U_k)=1.
	\]
	The extended exchange matrices defined below have skew-symmetric
	principal parts and satisfy $\bB_{\mu_kU}=\mu_k(\bB_U)$.
\end{assumption}

Here minimality of $a:V\to H$ means that $a\theta=a$ implies that
$\theta\in\End(V)$ is invertible; the dual condition defines left
minimality. A cluster-tilting object is \emph{reachable from $T$} if it
is obtained from $T$ by finitely many mutations. A stable object is
\emph{reachable rigid} if it belongs to $\add\underline U$ for one such
$U$. Multiplicities and the zero object are allowed. A rigid object is
not called reachable merely because it is rigid.

For $V\in\add U$, let $[V:U_j]$ denote the multiplicity of $U_j$ in
$V$. We order the non-projective summands first and the $P_p$ last, and
put
\[
b_{jk}=[E_{U,k}^+:U_j]-[E_{U,k}^-:U_j],
\qquad \bB_U=(b_{jk})\in M_{r+s,r}(\mathbb Z).
\]
Equivalently, in the split Grothendieck group of $\add U$,
\begin{equation}
	\label{eq:fsb-exchange-column}
	\bB_Ue_k=[E_{U,k}^+]-[E_{U,k}^-].
\end{equation}
The first $r$ rows are mutable and the last $s$ rows are frozen.
The matrix mutation convention is
\[
\mu_k(\bB)_{ij}=
\begin{cases}
	-b_{ij},&i=k\text{ or }j=k,\\
	b_{ij}+[b_{ik}]_+[b_{kj}]_+-[-b_{ik}]_+[-b_{kj}]_+,&i,j\ne k,
\end{cases}
\qquad [a]_+=\max(a,0).
\]

Choose algebraically independent variables
\[
x=(x_1,\ldots,x_r,z_1,\ldots,z_s),\qquad
z_p=x_{r+p},\qquad \mathbb F_T=\kk(x_1,\ldots,x_r,z_1,\ldots,z_s).
\]
They correspond, in this order, to the indecomposable summands of $T$.
The initial seed is $\Sigma_T=(x,\bB_T)$. Mutation in a mutable
position replaces $x_k$ by $x_k'$ through
\[
x_kx_k'=x^{[\bB_Te_k]_+}+x^{[-\bB_Te_k]_+};
\]
the frozen variables remain unchanged. The same rule is used at every
seed. Denote its mutable variables by $x_{i;t}$.

\begin{definition}[The associated cluster algebras]
	\label{def:fsb-ring-R}
	The cluster algebra with non-inverted coefficients associated with
	$(\cE,\cP,T)$ is
	\[
	R=\kk[z_1,\ldots,z_s][x_{i;t}:1\le i\le r,\ t\text{ a seed}]
	\subseteq\mathbb F_T.
	\]
	Its localization at the frozen variables is
	\[
	R_{\mathrm{loc}}=R[z_1^{-1},\ldots,z_s^{-1}].
	\]
	The coefficient group is
	$\mathbb P_T=\{z^a:a\in\mathbb Z^s\}$, where
	$z^a=\prod_pz_p^{a_p}$. A cluster monomial with frozen Laurent factors
	is a monomial $z^a\prod_i x_{i;t}^{d_i}$ for one seed $t$, with
	$a\in\mathbb Z^s$ and $d_i\in\mathbb N$.
	The upper cluster algebra with inverted coefficients is
	\[
	\mathcal U_T=
	\bigcap_t\kk[x_{1;t}^{\pm1},\ldots,x_{r;t}^{\pm1},
	z_1^{\pm1},\ldots,z_s^{\pm1}]
	\subseteq\mathbb F_T.
	\]
\end{definition}

The Laurent phenomenon gives $R_{\mathrm{loc}}\subseteq\mathcal U_T$.

\subsection{Relative Grothendieck groups and indices}
\label{subsec:fsb-relative-groups}

Put $\cD=\Db(\cE)$ and $\cN=K^b(\cP)$. Here $\Db(\cE)$ is the
bounded homotopy category of $\cE$ modulo its acyclic complexes for the
chosen exact structure. Explicitly, an acyclic complex has factorizations
$d^i:X^i\twoheadrightarrow Z^{i+1}\rightarrowtail X^{i+1}$
with conflations $0\to Z^i\to X^i\to Z^{i+1}\to0$.
We identify $K^b(\cP)$ with its fully faithful
image in $\cD$. An object of this subcategory is called \emph{perfect}.
It is a thick subcategory: it is closed under shifts, cones and direct
summands. The stable realization of the singularity quotient gives
\[
q:\cD\longrightarrow\cD/\cN\xrightarrow{\sim}\underline{\cE}.
\]
We use $[1]$ for the derived shift and $\Sigma$ for the stable
suspension; thus $q(X[1])\simeq\Sigma qX$. For $H\in\cE$, $qH$ is
its usual stable image.

\begin{definition}[Split and relative Grothendieck groups]
	\label{def:fsb-relative-grothendieck}
	For an essentially small additive category $\cA$, its split
	Grothendieck group is
	\[
	K_0^{\mathrm{sp}}(\cA)=
	\mathbb Z\{[M]:M\in\cA\}/
	\langle[M\oplus N]-[M]-[N]\rangle,
	\]
	where generators are indexed by isomorphism classes.
	For a triangulated category $\cA$ and a thick subcategory $\cN$, put
	\[
	K(\cA,\cN)=K_0^{\mathrm{sp}}(\cA)/
	\langle[P]-[X]+[Y]:P\to X\to Y\to P[1],\ P\in\cN\rangle.
	\]
	The image of $[X]$ in this quotient is denoted by $[X]_{\mathrm{rel}}$.
	In the Frobenius setting, we write
	$K_{\cE,\cP}=K(\Db(\cE),K^b(\cP))$.
\end{definition}

Only triangles with a perfect first term are imposed as relations.
For $N\in\cN$, the triangle $N\to0\to N[1]\to N[1]$ gives
$[N[1]]_{\mathrm{rel}}=-[N]_{\mathrm{rel}}$. Rotation therefore
also gives the relation $[A]_{\mathrm{rel}}-[B]_{\mathrm{rel}}
+[N]_{\mathrm{rel}}=0$ for a triangle $A\to B\to N\to A[1]$
with perfect third term. A perfect middle term alone does not give
the usual triangle relation. Thus this group is not the usual
triangulated Grothendieck group. Since $\cE$ is Krull--Schmidt,
$K_0^{\mathrm{sp}}(\cE)$ is free on its indecomposable isomorphism
classes.

\begin{proposition}[{\cite[Proposition~A.3]{KW}}]
	\label{prop:relative-split-grothendieck}
	The inclusion $\cE\to\Db(\cE)$ induces an isomorphism
	\[
	\phi:K_0^{\mathrm{sp}}(\cE)\xrightarrow{\sim}K_{\cE,\cP}.
	\]
\end{proposition}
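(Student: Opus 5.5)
The plan is to build an explicit inverse $\psi:K_{\cE,\cP}\to K_0^{\mathrm{sp}}(\cE)$. First I check that $\phi$ is well defined. A direct sum in $\cE$ is a direct sum in $\Db(\cE)$. Split triangles $P\to X\oplus P\to X\to P[1]$ with $P\in\cN$ are among the imposed relations. Hence $[M\oplus N]\mapsto[M]_{\mathrm{rel}}+[N]_{\mathrm{rel}}$ is compatible with the defining relations of $K_0^{\mathrm{sp}}$.

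Let $\chi:K_0(\cN)\to K_0^{\mathrm{sp}}(\cP)\subseteq K_0^{\mathrm{sp}}(\cE)$ be the Euler characteristic of a bounded complex of projective-injectives, $\chi(N)=\sum_i(-1)^i[N^i]$. It is well defined on $\cN=K^b(\cP)$ because $\cP$ is Krull--Schmidt and homotopy equivalences only add contractible summands.

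\textbf{Step 1 (surjectivity).} I show that every $X\in\Db(\cE)$ fits into a triangle $N\to H\to X\to N[1]$ with $H\in\cE$ and $N\in\cN$. I induct on the width of a bounded complex representing $X$. The extreme terms are moved into a single degree using conflations $0\to L\to P\to L^+\to0$ with $P\in\cP$, and their duals via enough projectives. Each such replacement changes $X$ only by a cone on a perfect complex, and the octahedral axiom composes these cones, since $\cN$ is thick.

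In $K_{\cE,\cP}$ this gives $[X]_{\mathrm{rel}}=[H]_{\mathrm{rel}}-[N]_{\mathrm{rel}}$. The brutal truncation triangles of $N$ have perfect first terms, so $[N]_{\mathrm{rel}}=\sum_i(-1)^i[N^i]_{\mathrm{rel}}$. Hence $[X]_{\mathrm{rel}}=\phi([H]-\chi(N))$, and $\phi$ is surjective.

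\textbf{Step 2 (the key lemma).} If $H\to H'\to N\to H[1]$ is a triangle with $H,H'\in\cE$ and $N\in\cN$, then $[H']-[H]=\chi(N)$ in $K_0^{\mathrm{sp}}(\cE)$. I induct on the number of nonzero terms of a minimal complex of projective-injectives representing $N$. The inputs are the vanishings $\Hom_{\cD}(H',P[i])=\Ext^i_{\cE}(H',P)=0$ and $\Hom_{\cD}(P[-i],H)=0$ for $i>0$, where $P\in\cP$ is both projective and injective. These let me split off the top or bottom term of $N$: the corresponding map either splits or is realized by a genuine conflation in $\cE$ with a projective-injective end term. In the conflation case the end term is itself projective-injective, so the conflation splits and contributes exactly $\pm[P]$.

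\textbf{Step 3 (well-definedness and relations).} Define $\psi[X]_{\mathrm{rel}}=[H]-\chi(N)$ for any triangle as in Step 1. Given two such triangles, with $H$ and $H'$, I form the homotopy pullback of $H\to X\leftarrow H'$ and use the octahedral axiom. This compares both $H$ and $H'$ with a common object whose cones are perfect. If necessary, I replace that object by an object of $\cE$ using Step 1 again, and then Step 2 shows the two definitions agree.

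Next, $\psi$ respects the relation coming from a triangle $P\to X\to Y\to P[1]$ with $P\in\cN$. Compose a presentation $N\to H\to X$ with $X\to Y$; the octahedral axiom gives a presentation of $Y$ whose perfect part is an extension of $N$ and $P$, and $\chi$ is additive on triangles in $\cN$. Additivity on direct sums is clear.

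Finally, $\psi\phi=\mathrm{id}$ by taking $N=0$, so $\psi$ inverts $\phi$ by Step 1. I expect the main obstacle to be Step 2 together with the independence argument in Step 3, since $K_{\cE,\cP}$ carries only the relations with perfect first term and every identification must be routed through those.
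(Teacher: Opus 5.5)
Your Step~1 claims more than is true, and the construction of $\psi$ in Step~3 depends on the false part. A triangle $N\to H\to X\to N[1]$ with $N\in\cN$ is the same as a morphism $H\to X$ with perfect cone. Take $X=M[-1]$ with $M\in\cE$ non-projective. Negative extensions between objects of $\cE$ vanish in $\Db(\cE)$, so $\Hom_{\cD}(H,M[-1])=0$ for every $H\in\cE$. The morphism is therefore zero, and its cone $H[1]\oplus M[-1]$ has $M[-1]$ as a direct summand. Thickness of $\cN$ then forces $M\in\cN$, hence $qM=0$ and $M\in\cP$, a contradiction. Dually, $M[1]$ admits no morphism to an object of $\cE$ with perfect cone, and $M[1]\oplus M'[-1]$ admits neither kind of morphism.

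The folding procedure is where this goes wrong. It does work for complexes concentrated in nonpositive degrees. However, lowering a term in positive degree via $0\to\Omega L\to P\to L\to0$ yields a map $X\to X'$ with perfect cone, whereas raising a term in negative degree via $0\to L\to P\to L^+\to0$ yields a map $X'\to X$. The octahedral axiom composes morphisms, not zigzags. What you actually obtain is a roof $X\xleftarrow{s}Z\xrightarrow{t}H$ with $\Cone s,\Cone t\in\cN$. Surjectivity of $\phi$ survives: for instance, $[M[-1]]_{\mathrm{rel}}=[\Omega M]-[P]$ from the triangle $P[-1]\to M[-1]\to\Omega M\to P$. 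But your $\psi$ is undefined on such $X$. The step ``replace the homotopy pullback by an object of $\cE$ using Step~1'' is also unjustified as stated.

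The repair is to define $\psi$ on roofs,
\[
\psi([X]_{\mathrm{rel}})=[H]+\chi(\Cone s)-\chi(\Cone t),
\]
which is exactly the bookkeeping of Lemma~\ref{lem:fsb-projective-correction}. Independence of the roof then needs two inputs missing from your proposal.
\begin{enumerate}
\item Fullness of $\cE\to\cD/\cN$, which you can take from the equivalence $\underline{\cE}\simeq\cD/\cN$ used to define $q$. It lifts the comparison isomorphism $qH\simeq qH'$ to a morphism $g:H\to H'$ of $\cE$ with $\Cone g\in\cN$, so your Step~2 applies to $g$.
\item The calculus of fractions. It gives a common refinement of the equivalent roofs $(s,gt)$ and $(s',t')$ by maps with perfect cones. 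Combine this with $\chi(\Cone(vu))=\chi(\Cone u)+\chi(\Cone v)$ for composable such maps, which follows from the octahedral axiom.
\end{enumerate}
The relation for a triangle $P\to X\xrightarrow{v}Y\to P[1]$ then follows by replacing $s$ with $vs$. Indeed, $\Cone(vs)$ sits in a triangle $\Cone s\to\Cone(vs)\to\Cone v\to\Cone s[1]$ with $\Cone v\simeq P[1]$.

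Your Step~2 is correct, and it has a shorter proof. For an inflation $i:H\to P$ with $P\in\cP$, the map $\binom{f}{i}:H\to H'\oplus P$ is an inflation. Its cokernel lies in $\cE\cap\cN=\cP$, so the conflation splits and $[H']-[H]=\chi(\Cone f)$.
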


For $X\in\cD$, define
\begin{equation}
	\label{eq:fsb-kappa}
	\kappa(X)=\phi^{-1}([X]_{\mathrm{rel}})
	\in K_0^{\mathrm{sp}}(\cE).
\end{equation}
Thus $\kappa(H)=[H]$ for $H\in\cE$. If $P^\bullet$ is a bounded
complex of projective-injectives, the defining relations give
\[
\kappa(P^\bullet)=\sum_j(-1)^j[P^j],
\qquad \kappa(P[1])=-[P]\quad(P\in\cP).
\]
No such shift formula is asserted for a non-projective object.

\begin{definition}
	\label{def:fsb-T-presentation}
	A \emph{$T$-presentation} of $H\in\cE$ is a conflation
	\begin{equation}
		\label{eq:fsb-T-presentation}
		0\longrightarrow T_1^H\longrightarrow T_0^H\longrightarrow H
		\longrightarrow0,\qquad T_0^H,T_1^H\in\add T.
	\end{equation}
	The index of $H$ is
	\begin{equation}
		\label{eq:fsb-index}
		\ind_T H=[T_0^H]-[T_1^H]\in L_T:=K_0^{\mathrm{sp}}(\add T).
	\end{equation}
\end{definition}

Every object admits such a presentation. Indeed, add a projective
deflation to a right $\add T$-approximation to obtain a deflation
$T_0^H\to H$. For its kernel $K$, the approximation property and
$\Ext^1(T,T_0^H)=0$ give $\Ext^1(T,K)=0$, so $K\in\add T$.
The index is independent of the presentation: the pullback of two
presentation deflations yields conflations which split by rigidity of
$T$, and hence
\[
T_1^H\oplus\widetilde T_0^H
\simeq\widetilde T_1^H\oplus T_0^H.
\]
Direct sums of presentations show that $\ind_T$ extends to a
homomorphism $K_0^{\mathrm{sp}}(\cE)\to L_T$.
We identify
\[
L_T=\bigoplus_{i=1}^r\mathbb Z[T_i]
\oplus\bigoplus_{p=1}^s\mathbb Z[P_p]\simeq\mathbb Z^{r+s}.
\]
For $\alpha=\sum_i\alpha_i[T_i]+\sum_p\alpha_{r+p}[P_p]$, set
$x^\alpha=\prod_i x_i^{\alpha_i}\prod_pz_p^{\alpha_{r+p}}$.

\begin{definition}[Localized index]
	\label{def:localized-index}
	For $X\in\cD$, its localized index is
	\begin{equation}
		\label{eq:fsb-localized-index}
		I_T(X)=\ind_T(\kappa(X))\in L_T.
	\end{equation}
	Thus $I_T$ is the composite
	$K_{\cE,\cP}\xrightarrow{\phi^{-1}}K_0^{\mathrm{sp}}(\cE)
	\xrightarrow{\ind_T}L_T$, evaluated at $[X]_{\mathrm{rel}}$.
\end{definition}

\begin{lemma}
	\label{lem:fsb-projective-correction}
	Let $X\in\cD$ and $H\in\cE$ satisfy $qX\simeq qH$.
	There are unique integers $a_p$ such that
	\begin{equation}
		\label{eq:fsb-boundary-localization}
		\kappa(X)=[H]+\sum_{p=1}^s a_p[P_p].
	\end{equation}
	Consequently,
	\begin{equation}
		\label{eq:fsb-localized-index-projective-correction}
		I_T(X)=\ind_T H+\sum_{p=1}^s a_p[P_p].
	\end{equation}
\end{lemma}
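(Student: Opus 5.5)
Since $\phi$ is an isomorphism by Proposition~\ref{prop:relative-split-grothendieck}, I will prove the statement inside $K_{\cE,\cP}$. Write $\kappa(X)=\sum_M n_M[M]$ in $K_0^{\mathrm{sp}}(\cE)$, summing over indecomposable isomorphism classes. The claim is that $\kappa(X)-[H]$ lies in the subgroup $\bigoplus_p\mathbb Z[P_p]$. Uniqueness of the $a_p$ is then immediate, because $K_0^{\mathrm{sp}}(\cE)$ is free on the indecomposable classes and the $P_p$ are among them. The index formula follows by applying the homomorphism $\ind_T$ and using $\ind_T P_p=[P_p]$; this holds because $0\to0\to P_p\to P_p\to 0$ is a $T$-presentation.

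For existence, I first reduce to an object of $\cE$. Every $X\in\cD$ is isomorphic in $\cD/\cN$ to an object of $\cE$, since the stable realization $\cD/\cN\simeq\underline{\cE}$ is an equivalence. I want this realized by a triangle with perfect cone. Concretely, represent $X$ by a bounded complex in $\cE$. Resolve it by projective-injectives on the left, in the style of Buchweitz: for a bounded complex, take a quasi-isomorphic complex $P^{\le m}\oplus$(brutal truncation) so that there is a triangle $N\to X\to H'[j]\to N[1]$ with $N$ perfect and $H'\in\cE$. The standard way is induction on the length of the complex. For a stalk $H'[j]$ with $j>0$, use the conflation $0\to\Omega H'\to P\to H'\to0$; this gives a triangle $P\to H'\to\Omega H'[1]\to$, which lowers the shift at the cost of a perfect term. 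Dually, $0\to H'\to P\to H'^+\to0$ raises the shift. The triangle relations with perfect first or third term, recorded after Definition~\ref{def:fsb-relative-grothendieck}, show that each step changes $[X]_{\mathrm{rel}}$ only by $\pm[P]_{\mathrm{rel}}$ with $P\in\cP$. Gluing along brutal truncations uses the same relations, with a cone of a map between pieces. The outcome is an object $H'\in\cE$ with $qX\simeq qH'$ and $[X]_{\mathrm{rel}}\equiv[H']_{\mathrm{rel}}$ modulo the image of $K_0^{\mathrm{sp}}(\cP)$.

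Next, $qH'\simeq qH$ in $\underline{\cE}$. In a Frobenius category, stably isomorphic objects become isomorphic after adding projective-injectives: $H'\oplus P'\simeq H\oplus P''$ with $P',P''\in\cP$. This is standard; via Krull--Schmidt, the stable category kills exactly the projective summands. Hence $[H']=[H]+[P'']-[P']$ in $K_0^{\mathrm{sp}}(\cE)$. Combining this with the previous paragraph and applying $\phi^{-1}$ gives $\kappa(X)=[H]+\sum_p a_p[P_p]$.

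The main obstacle is the first reduction. One must check carefully that every move used only triangles in which a perfect object sits at the first or third vertex, never only at the middle, since middle-term relations are not imposed in $K_{\cE,\cP}$. I would handle this by working entirely with brutal-truncation triangles $\sigma_{\ge i}X\to X\to\sigma_{<i}X\to$ and with syzygy triangles. In a syzygy triangle the perfect object is a projective-injective stalk at an endpoint, up to rotation. Brutal-truncation triangles give the genuine additivity $[X]_{\mathrm{rel}}=\sum(-1)^i[X^i]$ only after each term is moved into degree $0$ modulo $\cP$. This suggests an alternative, cleaner route: show directly that $\kappa(X)\equiv\sum_i(-1)^i[X^i]$ modulo $\cP$-classes. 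Then compare with $H$ through the equality of stable images, using the fact that the stable image of the alternating sum of a bounded complex is $qX$ in $\underline{\cE}$.
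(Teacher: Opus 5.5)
Your reduction to showing $\kappa(X)-[H]\in\bigoplus_p\ZZ[P_p]$ is sound, and it matches the paper. So are the uniqueness argument via freeness of $K_0^{\mathrm{sp}}(\cE)$ and the index formula via $\ind_TP_p=[P_p]$. The gap is in the existence step.

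The group $K_{\cE,\cP}$ imposes relations only for triangles with a perfect first or third vertex. A brutal-truncation triangle $\sigma_{\geq i}X\to X\to\sigma_{<i}X\to\sigma_{\geq i}X[1]$ has no perfect vertex in general, so ``gluing along brutal truncations'' is not available. In fact $\kappa(X)$ modulo $\bigoplus_p\ZZ[P_p]$ is not determined by the truncation pieces, since it depends on the connecting morphism.

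Your ``cleaner route'' is false for the same reason. Take a non-split conflation $0\to L\to M\to N\to0$, for instance an exchange conflation $0\to T_k^*\to E_k^+\to T_k\to0$, and let $X$ be the complex $L\to M$ in degrees $-1,0$. Then $X\simeq N$ in $\cD$, so $\kappa(X)=[N]$, whereas $\sum_i(-1)^i[X^i]=[M]-[L]$. The difference $[L]-[M]+[N]$ has coefficient $1$ at the non-projective indecomposable $T_k$, so it is not a combination of the $[P_p]$. The Euler formula for $\kappa$ is only available for complexes of projective-injectives, and ``the stable image of the alternating sum'' has no meaning.

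A smaller slip: for a stalk $Y[j]$ with $j>0$, it is the cosyzygy conflation $0\to Y\to P'\to Y^+\to0$ that lowers the shift, via the rotated triangle $P'[j-1]\to Y^+[j-1]\to Y[j]\to P'[j]$. You have the two directions interchanged.

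The missing idea is that the hypothesis $qX\simeq qH$ already supplies triangles with perfect cones, so no reduction of $X$ is needed. By the calculus of fractions, the isomorphism is a roof $X\xleftarrow{s}Z\xrightarrow{t}H$ with $\Cone(s)\in\cN$. Since $q(t)$ is then invertible and $\cN$ is thick, also $\Cone(t)\in\cN$. Rotating the two cone triangles puts a perfect object at the first vertex. This gives $[X]_{\mathrm{rel}}-[Z]_{\mathrm{rel}}=[\Cone(s)]_{\mathrm{rel}}$ and $[H]_{\mathrm{rel}}-[Z]_{\mathrm{rel}}=[\Cone(t)]_{\mathrm{rel}}$, hence $\kappa(X)-[H]=\kappa(\Cone(s))-\kappa(\Cone(t))$. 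The Euler formula for bounded complexes of projective-injectives places this in $\bigoplus_p\ZZ[P_p]$. This is the paper's argument, and it also makes your Krull--Schmidt step $H'\oplus P'\simeq H\oplus P''$ unnecessary.

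If you prefer to keep your route, replace the induction on length by a single Buchweitz-type step:
\begin{enumerate}
\item Resolve $X$ by a right-bounded complex $P$ of projective-injectives.
\item For $n\ll0$, the triangle $\sigma_{\geq n}P\to X\to Y[1-n]\to\sigma_{\geq n}P[1]$ has a perfect first term and $Y\in\cE$.
\item Lower the shift of $Y[1-n]$ with the cosyzygy triangles above.
\end{enumerate}
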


\begin{proof}
	Fix an isomorphism \(\alpha:qX\xrightarrow{\sim}qH\).
	By the calculus of fractions for the Verdier quotient, we can
	represent \(\alpha\) by a roof
	\begin{equation*}
		X\xleftarrow{s}Z\xrightarrow{t}H,
		\qquad
		\alpha=q(t)q(s)^{-1},
	\end{equation*}
	where \(N_s:=\Cone(s)\) belongs to \(\cN\).
	Since both \(\alpha\) and \(q(s)\) are invertible, so is \(q(t)\).
	Thus \(q\Cone(t)=0\). As \(\cN\) is thick, the kernel of the
	quotient functor is precisely \(\cN\), and hence
	\(N_t:=\Cone(t)\) also belongs to \(\cN\).
	
	Rotating the cone triangles gives
	\begin{equation*}
		\begin{gathered}
			N_s[-1]\longrightarrow Z\longrightarrow X
			\longrightarrow N_s,\\
			N_t[-1]\longrightarrow Z\longrightarrow H
			\longrightarrow N_t.
		\end{gathered}
	\end{equation*}
	Their first terms are perfect, so the defining relations of
	the relative Grothendieck group apply. Using
	\(
	[N[-1]]_{\mathrm{rel}}=-[N]_{\mathrm{rel}}
	\)
	for \(N\in\cN\), we obtain
	\begin{equation*}
		\begin{aligned}
			[X]_{\mathrm{rel}}-[Z]_{\mathrm{rel}}
			&=[N_s]_{\mathrm{rel}},\\
			[H]_{\mathrm{rel}}-[Z]_{\mathrm{rel}}
			&=[N_t]_{\mathrm{rel}}.
		\end{aligned}
	\end{equation*}
	Subtracting and applying \(\phi^{-1}\), we find
	\begin{equation*}
		\kappa(X)-[H]=\kappa(N_s)-\kappa(N_t).
	\end{equation*}
	
	Choose bounded complexes \(P_s^\bullet\) and \(P_t^\bullet\)
	of projective-injective objects representing \(N_s\) and \(N_t\).
	The Euler formulas give
	\begin{equation*}
		\kappa(X)-[H]
		=
		\sum_j(-1)^j\bigl([P_s^j]-[P_t^j]\bigr).
	\end{equation*}
	Each term belongs to the subgroup generated by
	\([P_1],\ldots,[P_s]\). Therefore there are integers \(a_p\)
	such that
	\begin{equation*}
		\kappa(X)=[H]+\sum_{p=1}^s a_p[P_p].
	\end{equation*}
	Since \(\cE\) is Krull--Schmidt, its split Grothendieck group
	is free on the isomorphism classes of indecomposable objects.
	In particular, \([P_1],\ldots,[P_s]\) are linearly independent,
	so the integers \(a_p\) are unique for the fixed objects
	\(X\) and \(H\). Finally, applying \(\ind_T\) and using
	\(\ind_T P_p=[P_p]\), we obtain
	\begin{equation*}
		I_T(X)=\ind_T H+\sum_{p=1}^s a_p[P_p],
	\end{equation*}
	as required.
\end{proof}

Every $qX$ has a representative $H\in\cE$, since $q$ is essentially
surjective. Thus the lemma applies to every object in $\cD$, but the
integers $a_p$ depend on the chosen representative $H$.

\subsection{Cluster characters and their localization}
\label{subsec:fsb-character-definitions}

Put
\[
\Gamma_T=\End_{\underline{\cE}}(qT),\qquad
M_T(Y)=\Hom_{\underline{\cE}}(qT,\Sigma Y)
\quad(Y\in\underline{\cE}).
\]
The algebra $\Gamma_T$ and the module $M_T(Y)$ are finite-dimensional.
We regard $M_T(Y)$ as a \emph{right} $\Gamma_T$-module by
precomposition: $m\cdot a=m\circ a$. Equivalently, it is a left
$\Gamma_T^{\mathrm{op}}$-module. This convention agrees with the
use of left modules over boundary orders below. Let $\epsilon_i$ be the
primitive idempotent for $qT_i$. Its dimension vector is
\[
\underline{\dim}M=(\dim_\kk M\epsilon_1,\ldots,
\dim_\kk M\epsilon_r)\in\mathbb N^r.
\]
There are no components for $P_p$, since $qP_p=0$.

\begin{definition}[Submodule Grassmannians and $F$-polynomials]
	\label{def:fsb-Grassmannian-polynomial}
	For a finite-dimensional right $\Gamma_T$-module $M$ and
	$e\in\mathbb N^r$, let
	\[
	\Gr_e(M)=\{N\subseteq M:N\text{ is a right }\Gamma_T
	\text{-submodule},\ \underline{\dim}N=e\}.
	\]
	This is a closed subvariety of
	$\prod_i\Gr(e_i,M\epsilon_i)$, and is empty when a component of $e$
	is too large. We use the compactly supported topological Euler
	characteristic
	$\chi(V)=\sum_j(-1)^j\dim_\mathbb Q H_c^j(V,\mathbb Q)$.
	For $y=(y_1,\ldots,y_r)$, put
	\begin{equation}
		\label{eq:fsb-F-polynomial}
		F_T^Y(y)=\sum_{e\in\mathbb N^r}\chi(\Gr_e(M_T(Y)))y^e,
		\qquad y^e=\prod_i y_i^{e_i}.
	\end{equation}
	For $H\in\cE$, we abbreviate $F_T^{qH}$ to $F_T^H$.
\end{definition}

The sum is finite, and $F_T^Y(0)=1$ because the zero submodule is
unique. The coefficient of $y^{\underline{\dim}M_T(Y)}$ is also one.
For each mutable $j$, define the hatted variable
\begin{equation}
	\label{eq:fsb-hat-y}
	\widehat y_j=x^{\bB_Te_j}.
\end{equation}

\begin{definition}[Cluster characters]
	\label{def:fsb-cluster-character-axioms}
	Let $S$ be a commutative integral domain. A \emph{cluster character}
	$\Phi:\operatorname{obj}(\cE)\to S$ is invariant under isomorphism,
	satisfies $\Phi(0)=1$ and
	$\Phi(L\oplus M)=\Phi(L)\Phi(M)$, and satisfies
	\[
	\Phi(L)\Phi(M)=\Phi(E)+\Phi(E')
	\]
	whenever $\dim_\kk\Ext^1_{\cE}(L,M)=1$ and
	$0\to L\to E\to M\to0$, $0\to M\to E'\to L\to0$ are non-split.
	The character associated with $T$ is given, in the realizations used
	here, by
	\begin{equation}
		\label{eq:fsb-full-character}
		\CC_T(H)=x^{\ind_T H}F_T^H(\widehat y)
		\in\kk[x_1^{\pm1},\ldots,x_r^{\pm1},z_1^{\pm1},\ldots,z_s^{\pm1}].
	\end{equation}
\end{definition}

\begin{assumption}[Character realization]
	\label{ass:fsb-character-realization}
	For the Frobenius models and cluster-tilting objects used below,
	\eqref{eq:fsb-full-character} is the standard cluster character and
	satisfies Definition~\ref{def:fsb-cluster-character-axioms}.
	Moreover, it gives the usual correspondence between reachable
	indecomposable non-projective objects and mutable cluster variables,
	and between reachable cluster-tilting objects and seeds.
\end{assumption}

This is the character input supplied by Frobenius categorification;
see \cite[Appendix~A.2]{KW}. In the Hom-infinite setting it requires
an appropriate character realization, not just Hom-finiteness of the
stable category. It holds for the standard positroid models recalled
below. We keep it separate from the elementary definition of a
cluster-tilting object. In particular,
\[
\CC_T(T_i)=x_i,\qquad \CC_T(P_p)=z_p.
\]
Our signs can be checked on an exchange pair:
$\ind_T T_k^*=[E_k^-]-[T_k]$ and $F_T^{T_k^*}=1+y_k$, whence
\[
\CC_T(T_k^*)=
\frac{x^{[E_k^-]}+x^{[E_k^+]}}{x_k}.
\]

\begin{proposition}[Localized character]
	\label{prop:fsb-localized-character}
	For $X\in\cD$, put
	\begin{equation}
		\label{eq:fsb-derived-character-formula}
		\CC_{T,\mathrm{loc}}(X)=x^{I_T(X)}F_T^{qX}(\widehat y).
	\end{equation}
	This extends $\CC_T$. If $qX\simeq qH$ and
	\eqref{eq:fsb-boundary-localization} holds, then
	\begin{equation}
		\label{eq:fsb-localized-character-factor}
		\CC_{T,\mathrm{loc}}(X)=\CC_T(H)\prod_{p=1}^s z_p^{a_p}.
	\end{equation}
	It is the localization of the character in
	\cite[Theorem~A.4]{KW}.
\end{proposition}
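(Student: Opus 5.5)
The plan is to verify the three assertions in turn: that the formula extends $\CC_T$, that it satisfies the factorization \eqref{eq:fsb-localized-character-factor}, and that it is the localization of the Fraser--Keller character.

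\emph{Extension of $\CC_T$.} Take $X=H\in\cE$. Then $\kappa(H)=[H]$, so $I_T(H)=\ind_T H$. Since $q$ restricted to $\cE$ is the usual stable projection, $F_T^{qH}=F_T^H$. Hence $\CC_{T,\mathrm{loc}}(H)=\CC_T(H)$, and the formula agrees with \eqref{eq:fsb-full-character} on objects of $\cE$. For well-definedness on $\cD$, I would note two invariances. First, $I_T(X)$ depends only on $[X]_{\mathrm{rel}}$, which is an isomorphism invariant. Second, $F_T^{qX}$ depends only on the isomorphism class of $M_T(qX)$, and $qX\simeq qH$ gives $M_T(qX)\simeq M_T(qH)$.

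\emph{The factorization.} Suppose $qX\simeq qH$ and \eqref{eq:fsb-boundary-localization} holds. Lemma~\ref{lem:fsb-projective-correction} gives
\[
I_T(X)=\ind_T H+\sum_p a_p[P_p].
\]
Since $\alpha\mapsto x^\alpha$ is a group homomorphism from $L_T$ into the Laurent monomials, and $x^{[P_p]}=z_p$, we get $x^{I_T(X)}=x^{\ind_T H}\prod_p z_p^{a_p}$. Combining this with $F_T^{qX}=F_T^{qH}=F_T^H$, both evaluated at the same $\widehat y$, yields \eqref{eq:fsb-localized-character-factor}. In particular, $\CC_{T,\mathrm{loc}}(X)$ lies in $\CC_T(\cE)\cdot\mathbb P_T$, so it takes values in the localized Laurent ring.

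\emph{Comparison with \cite[Theorem~A.4]{KW}.} This is the step where care is needed, because one must match conventions. I would recall that the Fraser--Keller character on $\Db(\cE)$ is defined through the same relative Grothendieck group $K_{\cE,\cP}$ and the isomorphism $\phi$ of Proposition~\ref{prop:relative-split-grothendieck}. Its exponent vector is the index of $\phi^{-1}[X]_{\mathrm{rel}}$, and its $F$-polynomial is that of the stable image. Our $I_T$ is exactly this composite by Definition~\ref{def:localized-index}. It then remains to check that the $F$-polynomial conventions agree, namely right $\Gamma_T$-modules and $M_T(Y)=\Hom(qT,\Sigma Y)$. The check on an exchange pair, $\CC_T(T_k^*)=(x^{[E_k^-]}+x^{[E_k^+]})/x_k$, fixes the signs. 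Granting this, the two characters coincide after inverting the $z_p$. Beyond unwinding definitions, the one genuine input is the independence of $F_T^{qX}$ from the choice of representative $H$, and this holds because the $F$-polynomial depends only on the stable image.
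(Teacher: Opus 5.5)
Your proposal is correct and follows the paper's proof. In both, the stable isomorphism $qX\simeq qH$ identifies $M_T(qX)$ with $M_T(qH)$, and the index identity of Lemma~\ref{lem:fsb-projective-correction} is substituted into \eqref{eq:fsb-derived-character-formula}. The identification with the Fraser--Keller extension is then taken from \cite[Theorem~A.4 and Remark~A.5]{KW}, and your separate check of the extension property via $\kappa(H)=[H]$ is just the case $X=H$, $a_p=0$ of the same computation.
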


\begin{proof}
	The stable isomorphism identifies $M_T(qX)$ with $M_T(qH)$.
	Substitute the index identity of
	Lemma~\ref{lem:fsb-projective-correction} into
	\eqref{eq:fsb-derived-character-formula}. The resulting formula is
	the extension formula of \cite[Theorem~A.4 and Remark~A.5]{KW}.
\end{proof}

More generally, let $\Phi:\cE\to S$ be a cluster character with
$\Phi(P_p)\ne0$, and set
$S_{\mathrm{loc}}=S[\Phi(P_1)^{-1},\ldots,\Phi(P_s)^{-1}]$.
For $qX\simeq qH$ and the integers in
Lemma~\ref{lem:fsb-projective-correction}, its localized extension is
\begin{equation}
	\label{eq:fsb-general-localized-character}
	\Phi_{\mathrm{loc}}(X)=\Phi(H)\prod_p\Phi(P_p)^{a_p}.
\end{equation}
This expression is independent of $H$. Indeed, an alternative expression
$\kappa(X)=[H']+\sum_pa_p'[P_p]$ has the same non-projective
indecomposable multiplicities. After decomposing $H$ and $H'$ into
their common non-projective part and their projective parts, the
equality of the remaining projective coefficients and direct-sum
multiplicativity of $\Phi$ give the same value. This argument only
inverts the values on projective-injectives.

The extension is invariant under isomorphism and satisfies
\[
\Phi_{\mathrm{loc}}(0)=1,\qquad
\Phi_{\mathrm{loc}}(X\oplus Y)
=\Phi_{\mathrm{loc}}(X)\Phi_{\mathrm{loc}}(Y).
\]
For a triangle $P\to X\to Y\to P[1]$ with $P$ perfect, it satisfies
\[
\Phi_{\mathrm{loc}}(X)=\Phi_{\mathrm{loc}}(P)\Phi_{\mathrm{loc}}(Y),
\qquad
\Phi_{\mathrm{loc}}(P^\bullet)=\prod_j\Phi(P^j)^{(-1)^j}.
\]
These identities follow from \cite[Theorem~A.4 and Remark~A.5]{KW}. In particular,
\[
\CC_{T,\mathrm{loc}}(P_p[1])=z_p^{-1}.
\]
For a non-projective object the shift has a different description:
if $0\to H\to P\to H^+\to0$ is a conflation with $P\in\cP$,
then the rotated triangle gives
\[
\Phi_{\mathrm{loc}}(H[1])=\Phi(H^+)\Phi(P)^{-1}.
\]

Under Assumption~\ref{ass:fsb-QP-realization}, the mutation theorem
below implies that it takes values in $\mathcal U_T$.
An identification $\mathcal U_T=R_{\mathrm{loc}}$
is a further statement about the cluster algebra, and will be used
only in the positroid setting.

\subsection{Triangle defects and presentations}

Let $\mathcal A$ be an algebraic triangulated category, let
$\mathcal N\subseteq\mathcal A$ be a thick subcategory, and let
$q:\mathcal A\to\mathcal A/\mathcal N$ be the quotient functor.
We use $K(\mathcal A,\mathcal N)$ as in
Definition~\ref{def:fsb-relative-grothendieck}. For a triangle
\[
\Delta=(A\longrightarrow B\longrightarrow D\longrightarrow A[1]),
\]
we put
\[
\partial\Delta
=[A]_{\mathrm{rel}}-[B]_{\mathrm{rel}}+[D]_{\mathrm{rel}}
\in K(\mathcal A,\mathcal N).
\]

\begin{lemma}
	\label{lem:fsb-defect-invariance}
	Let $\Delta$ and $\Delta'$ be triangles in $\mathcal A$. If
	$q\Delta\simeq q\Delta'$, then
	$\partial\Delta=\partial\Delta'$.
\end{lemma}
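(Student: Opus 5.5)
The plan is to show that $\partial\Delta$ depends only on the isomorphism class of the first morphism $qu$ of $q\Delta$ in the arrow category of $\mathcal A/\mathcal N$. This suffices: a triangle is determined up to isomorphism by its first morphism, $[-]_{\mathrm{rel}}$ is invariant under isomorphism, and an isomorphism $q\Delta\simeq q\Delta'$ restricts to an isomorphism of arrows $qu\simeq qu'$, where $\Delta=(A\xrightarrow{u}B\to D\to A[1])$ and $\Delta'=(A'\xrightarrow{u'}B'\to D'\to A'[1])$. We will use two elementary moves on $u$, each of which leaves $\partial$ unchanged, and then connect $u$ and $u'$ by such moves using the calculus of fractions.

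\emph{Move 1 (precomposition).} Let $s:Z\to A$ have $N_s:=\Cone(s)\in\mathcal N$, and consider $\Delta_Z=(Z\xrightarrow{us}B\to\Cone(us)\to Z[1])$. The rotated triangle $N_s[-1]\to Z\to A\to N_s$ has perfect first term, so it gives $[A]_{\mathrm{rel}}-[Z]_{\mathrm{rel}}=[N_s]_{\mathrm{rel}}$. The octahedral axiom for the composite $us$ gives a triangle $N_s\to\Cone(us)\to D\to N_s[1]$, again with perfect first term, so $[\Cone(us)]_{\mathrm{rel}}=[D]_{\mathrm{rel}}+[N_s]_{\mathrm{rel}}$. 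Substituting both relations yields $\partial\Delta_Z=\partial\Delta$.

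\emph{Move 2 (postcomposition).} Let $t:B\to W$ have $N_t:=\Cone(t)\in\mathcal N$. The octahedral axiom for $tu$ gives a triangle $D\to\Cone(tu)\to N_t\to D[1]$. Its rotation $N_t[-1]\to D\to\Cone(tu)\to N_t$ has perfect first term, so $[\Cone(tu)]_{\mathrm{rel}}=[D]_{\mathrm{rel}}+[N_t]_{\mathrm{rel}}$. The triangle for $t$ gives $[W]_{\mathrm{rel}}=[B]_{\mathrm{rel}}+[N_t]_{\mathrm{rel}}$. Hence $\partial$ is unchanged when $u$ is replaced by $tu$. Isomorphisms in $\mathcal A$ are special cases of both moves.

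\emph{Connecting $u$ and $u'$.} An isomorphism of arrows $qu\simeq qu'$ consists of isomorphisms $\alpha:qA\to qA'$ and $\beta:qB\to qB'$ with $\beta\,qu=qu'\,\alpha$. We proceed in three steps.
\begin{enumerate}
\item Represent $\alpha$ by a roof $A\xleftarrow{s}Z\xrightarrow{s'}A'$. As in the proof of Lemma~\ref{lem:fsb-projective-correction}, both $\Cone(s)$ and $\Cone(s')$ lie in $\mathcal N$.
\item Represent $\beta$ by a right roof $B\xrightarrow{t}W\xleftarrow{t'}B'$, with $\Cone(t),\Cone(t')\in\mathcal N$.
\item The relation $\beta\,qu=qu'\,\alpha$ becomes $q(tus)=q(t'u's')$ as morphisms $Z\to W$. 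Equality in the Verdier quotient means that $tus-t'u's'$ factors through an object of $\mathcal N$. Hence there is $w:W\to W'$ with $\Cone(w)\in\mathcal N$ and $w(tus-t'u's')=0$. The composite $wt$ still has cone in $\mathcal N$ by the octahedral axiom and thickness of $\mathcal N$, and the same holds for $wt'$.
\end{enumerate}
Applying Move 1 followed by Move 2 to $\Delta$, and separately to $\Delta'$, we therefore reach triangles on the same morphism $wtus=wt'u's':Z\to W'$. These triangles are isomorphic, so $\partial\Delta=\partial\Delta'$.

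The main obstacle is the third step: lifting the commutative square in the quotient to an honest equality in $\mathcal A$. This needs both the left and right Ore conditions for the localizing class of morphisms with cone in $\mathcal N$. In particular, the equality-in-the-quotient criterion ($f$ and $g$ agree in $\mathcal A/\mathcal N$ iff $w(f-g)=0$ for some $w$ with $\Cone(w)\in\mathcal N$) must be checked carefully, together with its compatibility with composing the two roofs. Care is also needed with the sign conventions in the rotated triangles, since only triangles with a perfect \emph{first} term are imposed as relations.
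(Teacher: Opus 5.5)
Your argument is correct, but it takes a different route from the paper's.

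\textbf{The paper's route.} The paper represents both components $\alpha,\beta$ by left roofs. It refines them to a single morphism $v:A_1\to B_0$ that maps to $f$ and to $f'$ through two genuinely commutative squares whose vertical maps have perfect cones. It then compares $\partial\Delta$ with $\partial\Delta_0$, where $\Delta_0$ is the cone triangle of $v$, in one step. In a dg enhancement it builds an explicit cone map $c_h$ from a homotopy. This yields a triangle of vertical cones $N_A\to N_B\to N_D\to N_A[1]$, so that $\partial\Delta-\partial\Delta_0=[N_A]_{\mathrm{rel}}-[N_B]_{\mathrm{rel}}+[N_D]_{\mathrm{rel}}=0$.

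\textbf{Your route.} You choose a left roof for $\alpha$ and a right roof for $\beta$. After one further postcomposition by $w$, this makes the two adjusted arrows literally equal, $wtus=wt'u's'$. You then split the comparison into two invariance statements: $\partial$ is unchanged under precomposition, and under postcomposition, by a map with perfect cone. Each of these is a single application of the octahedral axiom.

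\textbf{What each buys.} Your version works in any triangulated category. The paper uses the hypothesis that $\mathcal A$ is algebraic only to produce a triangle of cones for a morphism of triangles, and you no longer need it. You also avoid all homotopy bookkeeping. The paper's version handles the two components simultaneously and symmetrically, through a single morphism of triangles.

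\textbf{Your flagged obstacle.} The step you call the main obstacle is already settled by what you wrote, given the standard fact that the class of maps with cone in $\mathcal N$ admits both left and right calculi of fractions. The identity $q(tus)=q(t'u's')$ follows directly from $\alpha=q(s')q(s)^{-1}$ and $\beta=q(t')^{-1}q(t)$. For the killing map, suppose $tus-t'u's'$ factors as $Z\to N\xrightarrow{n}W$ with $N\in\mathcal N$. Then the map $w:W\to\Cone(n)$ satisfies $wn=0$, and its cone is $N[1]\in\mathcal N$. So $w$ kills the difference and has perfect cone, and no further verification is needed.
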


\begin{proof}
	Put
	\[
	S=\{s\mid\Cone(s)\in\mathcal N\}.
	\]
	The calculus of fractions gives
	\[
	q(s)\text{ is invertible}\iff s\in S,
	\qquad
	q(h)=0\implies hw=0\text{ for some }w\in S.
	\]
	Let $f:A\to B$ and $f':A'\to B'$ be the first morphisms of
	$\Delta$ and $\Delta'$. Represent the first two components of the
	isomorphism $q\Delta\simeq q\Delta'$ by roofs
	\[
	A\xleftarrow{s}A_0\xrightarrow{a}A',
	\qquad
	B\xleftarrow{t}B_0\xrightarrow{b}B',
	\qquad s,a,t,b\in S.
	\]
	Choose $A_0\xleftarrow{u}A_1\xrightarrow{v}B_0$, with $u\in S$,
	representing $(qt)^{-1}(qf)(qs)$. We have
	\[
	q(tv-fsu)=0,
	\qquad
	q(bv-f'au)=0.
	\]
	Precompose first with an element of $S$ killing the first difference,
	and then with one killing the second. The first equality remains true
	after the second precomposition. Replacing $A_1,u,v$ by this common
	refinement, both equalities hold in $\mathcal A$.
	We obtain the commutative diagram
	\begin{equation}
		\label{eq:fsb-defect-common-arrow}
		\begin{tikzcd}[column sep=large,row sep=small]
			A\arrow[r,"f"] & B\\
			A_1\arrow[u,"su"']\arrow[r,"v"]\arrow[d,"au"']
			&B_0\arrow[u,"t"]\arrow[d,"b"]\\
			A'\arrow[r,"f'"]&B',
		\end{tikzcd}
		\qquad su,au,t,b\in S.
	\end{equation}
	
	Let $\Delta_0$ be a cone triangle of $v$. We compare it with
	$\Delta$ using the upper square. In a dg enhancement, choose
	representatives $\ell=su$, $r=t$, and a homotopy
	$rv-f\ell=dh$, where $|h|=-1$. The induced cone map is
	\[
	c_h=
	\begin{pmatrix}
		r&h\\
		0&\ell[1]
	\end{pmatrix}:
	\Cone(v)\longrightarrow\Cone(f).
	\]
	Thus we have a morphism of triangles
	\[
	\begin{tikzcd}[column sep=large,row sep=small]
		A_1\arrow[r,"v"]\arrow[d,"\ell"']
		&B_0\arrow[r]\arrow[d,"r"]
		&D_0\arrow[r]\arrow[d,"c_h"]
		&A_1[1]\arrow[d,"{\ell[1]}"]\\
		A\arrow[r,"f"]&B\arrow[r]&D\arrow[r]&A[1].
	\end{tikzcd}
	\]
	For this cone map, the dg cone construction gives a triangle
	\begin{equation}
		\label{eq:fsb-defect-three-cones}
		\begin{gathered}
			N_A=\Cone(\ell),\qquad
			N_B=\Cone(r),\qquad
			N_D=\Cone(c_h),\\
			N_A\longrightarrow N_B\longrightarrow N_D\longrightarrow N_A[1].
		\end{gathered}
	\end{equation}
	Here $N_A,N_B\in\mathcal N$ because $\ell,r\in S$, and
	$N_D\in\mathcal N$ follows from this triangle and thickness.
	Rotating the vertical
	cone triangles and using $[N[-1]]_{\mathrm{rel}}=-[N]_{\mathrm{rel}}$
	for $N\in\mathcal N$, the defining relations give
	\[
	\begin{aligned}
		\partial\Delta-\partial\Delta_0
		&=([A]_{\mathrm{rel}}-[A_1]_{\mathrm{rel}})
		-([B]_{\mathrm{rel}}-[B_0]_{\mathrm{rel}})
		+([D]_{\mathrm{rel}}-[D_0]_{\mathrm{rel}})\\
		&=[N_A]_{\mathrm{rel}}-[N_B]_{\mathrm{rel}}+[N_D]_{\mathrm{rel}}
		=0.
	\end{aligned}
	\]
	Applying the same argument to the lower square gives
	$\partial\Delta'=\partial\Delta_0$.
\end{proof}


We return to $\cD=\Db(\cE)$ and $\cN=K^b(\cP)$.

\begin{lemma}
	\label{lem:fsb-presentation-index}
	Let $A\to B\to X\to A[1]$ be a triangle in $\cD$ such that
	$qA,qB\in\add(qT)$. Then
	\[
	I_T(X)=I_T(B)-I_T(A).
	\]
\end{lemma}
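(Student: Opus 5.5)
The idea is to replace the given triangle by a conflation in $\cE$ whose first two terms lie in $\add T$. Lemma~\ref{lem:fsb-defect-invariance} says the two triangles have the same defect, and for the conflation the index is computed directly by a $T$-presentation. Write the given triangle as $\Delta=(A\xrightarrow{f}B\to X\to A[1])$. Since $I_T=\ind_T\circ\phi^{-1}$ is additive on $K_{\cE,\cP}$, the claim is equivalent to
\[
\ind_T\phi^{-1}(\partial\Delta)=0,\qquad \partial\Delta=[A]_{\mathrm{rel}}-[B]_{\mathrm{rel}}+[X]_{\mathrm{rel}}.
\]

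\emph{Step 1: build the comparison conflation.} Since $qA,qB\in\add(qT)$, I choose $H_A,H_B\in\add T$ with isomorphisms $qA\simeq qH_A$ and $qB\simeq qH_B$. Transporting $q(f)$ along these isomorphisms gives a stable morphism $qH_A\to qH_B$. Stable morphisms between objects of $\cE$ are residue classes of morphisms in $\cE$, so I lift it to some $g:H_A\to H_B$. Next I choose an inflation $i:H_A\rightarrowtail P$ with $P\in\cP$; this exists because $\cE$ has enough injectives. Then $g'=\binom{g}{i}:H_A\to H_B\oplus P$ is an inflation. Let $X'$ be its cokernel, so that
\[
0\to H_A\xrightarrow{g'} H_B\oplus P\to X'\to0
\]
is a conflation. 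Its image in $\cD$ is a triangle $\Delta'$.

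\emph{Step 2: $q\Delta\simeq q\Delta'$.} The image under $q$ of a conflation triangle is the standard triangle in $\underline{\cE}$ on the morphism $q(g')$. Since $qP=0$, this morphism is identified with $q(g)$, and $q(g)$ corresponds to $q(f)$ under the chosen isomorphisms. Now $q\Delta$ is a distinguished triangle on $q(f)$, so the axiom that a triangle is determined up to isomorphism by its first morphism (TR3 together with the five lemma) gives $q\Delta\simeq q\Delta'$. This step is the main obstacle. One must check that the equivalence $\cD/\cN\simeq\underline{\cE}$ is a triangle equivalence that sends conflation triangles to standard triangles, and that the isomorphism of triangles can be chosen to extend the given isomorphisms on the first two terms. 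I would try to obtain both facts from the standard construction of $q$ in Buchweitz--Keller form. Lemma~\ref{lem:fsb-defect-invariance} then gives
\[
\partial\Delta=\partial\Delta'.
\]

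\emph{Step 3: compute the defect of $\Delta'$.} By Proposition~\ref{prop:relative-split-grothendieck}, $\phi^{-1}$ sends $[Y]_{\mathrm{rel}}$ to $[Y]$ for every $Y\in\cE$. Hence
\[
\phi^{-1}(\partial\Delta')=[H_A]-[H_B\oplus P]+[X']\in K_0^{\mathrm{sp}}(\cE).
\]
The conflation of Step 1 is a $T$-presentation of $X'$, because $H_A$ and $H_B\oplus P$ both lie in $\add T$. Therefore $\ind_T X'=[H_B\oplus P]-[H_A]$. Since $\ind_T$ restricts to the identity on $\add T$, applying it to the displayed element gives
\[
[H_A]-[H_B\oplus P]+\bigl([H_B\oplus P]-[H_A]\bigr)=0.
\]
Combined with Step 2, this yields $\ind_T\phi^{-1}(\partial\Delta)=0$, which is exactly $I_T(X)=I_T(B)-I_T(A)$.
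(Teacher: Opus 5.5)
Your proposal is correct and is essentially the paper's proof: you lift the stable morphism to $g:H_A\to H_B$ in $\add T$, adjoin an inflation into some $P\in\cP$ to get a $T$-presentation conflation whose stable triangle is isomorphic to $q\Delta$, apply Lemma~\ref{lem:fsb-defect-invariance}, and cancel after applying $\ind_T$. The paper simply asserts the isomorphism of stable triangles that you flag in Step~2, and your TR3-plus-five-lemma argument is the standard way to justify it.
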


\begin{proof}
	Choose $A_0,B_0\in\add T$ with $qA_0\simeq qA$ and
	$qB_0\simeq qB$. Since $\cE\to\underline{\cE}$ is full, the
	first morphism of the quotient triangle lifts to a morphism
	$a:A_0\to B_0$. Choose an inflation $i:A_0\to P$ with $P\in\cP$.
	The conflation
	\[
	0\longrightarrow A_0\xrightarrow{(a,i)}B_0\oplus P
	\longrightarrow Y\longrightarrow0
	\]
	is a $T$-presentation whose stable image is isomorphic to the
	quotient triangle. Hence
	\[
	\ind_TY=[B_0]+[P]-[A_0].
	\]
	By Lemma~\ref{lem:fsb-defect-invariance},
	\[
	\kappa(A)-\kappa(B)+\kappa(X)
	=[A_0]-[B_0]-[P]+[Y].
	\]
	Applying $\ind_T$ gives
	\[
	I_T(A)-I_T(B)+I_T(X)
	=[A_0]-[B_0]-[P]+\ind_TY=0.
	\qedhere
	\]
\end{proof}

\subsection{Mutations}
A quiver with potential $(Q,W)$
consists of a finite quiver and a (possibly completed) linear combination
of cyclic paths, considered up to cyclic equivalence. Its Jacobian
algebra is the completed path algebra modulo the closed ideal generated
by the cyclic derivatives $\partial_aW$. It is \emph{Jacobi-finite}
if that algebra is finite-dimensional. The potential is
\emph{non-degenerate} if every iterated reduced mutation in the
permitted vertices has no oriented $2$-cycles.

\begin{assumption}[Mutation-compatible QP realization]
	\label{ass:fsb-QP-realization}
	The stable category admits a realization by a generalized cluster
	category of a Jacobi-finite non-degenerate quiver with potential.
	The realizations at the cluster-tilting objects under consideration
	are compatible with QP mutation. In particular, the decorated
	representation associated with a stable object $Y$ has module
	$M_U(Y)=\Hom(qU,\Sigma Y)$, and categorical mutation corresponds to
	mutation of decorated representations with this convention.
\end{assumption}

Here a decorated representation is a pair $(M,V)$ consisting of a
finite-dimensional Jacobian module $M$ and a finite-dimensional
vertex-graded vector space $V$; the decoration records summands
annihilated by the module functor. This assumption concerns the
\emph{unfrozen stable} realization. 

For $T'=\mu_kT$, let $x'$ denote the coordinates of its seed and let
$\mu_k^x:\mathbb F_{T'}\to\mathbb F_T$ be the field isomorphism
which sends $x_k'$ to its exchange expression and fixes the other
coordinates. We use $\mu_k^y$ for the associated substitution in the
independent coefficient variables, displayed in the proof below.

\begin{lemma}
	\label{lem:fsb-seed-covariance}
	Assume Assumption~\ref{ass:fsb-QP-realization}, and let
	$T'=\mu_kT$. For each $X\in\Db(\cE)$, we have
	\[
	\mu_k^x\bigl(\CC_{T',\mathrm{loc}}(X)\bigr)
	=\CC_{T,\mathrm{loc}}(X).
	\]
\end{lemma}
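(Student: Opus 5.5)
The plan is to reduce the statement to objects of $\cE$ and then invoke the standard mutation rule for Palu-type characters, transported through Assumption~\ref{ass:fsb-QP-realization}. First, write $qX\simeq qH$ with $H\in\cE$ and $\kappa(X)=[H]+\sum_pa_p[P_p]$ as in Lemma~\ref{lem:fsb-projective-correction}. The integers $a_p$ are defined through $\kappa$ and $\phi$ alone, so they do not depend on the choice of cluster-tilting object. The projective-injectives $P_p$ are summands of both $T$ and $T'$ and correspond to the same frozen variables $z_p$. Proposition~\ref{prop:fsb-localized-character} then gives
\[
\CC_{T,\mathrm{loc}}(X)=\CC_T(H)\,z^{a},\qquad
\CC_{T',\mathrm{loc}}(X)=\CC_{T'}(H)\,z^{a}.
\]
Since $\mu_k^x$ fixes every $z_p$, it suffices to prove $\mu_k^x(\CC_{T'}(H))=\CC_T(H)$ for $H\in\cE$.

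For $H\in\cE$, I will compare the two factors $x^{\ind H}$ and $F^H(\widehat y)$ separately, following the Derksen--Weyman--Zelevinsky and Plamondon strategy. Put $M=M_T(qH)$ and $M'=M_{T'}(qH)$. By Assumption~\ref{ass:fsb-QP-realization}, the decorated representation of $qH$ at $T'$ is the DWZ mutation at $k$ of the one at $T$. Let $h_k,h_k'\le 0$ be the $h$-vector components at $k$, that is, minus the dimensions of the kernel and cokernel parts of the maps $\alpha,\beta$ at vertex $k$. The index change is obtained by computing a $T'$-presentation from a $T$-presentation, splicing in the exchange conflations \eqref{eq:fsb-exchange-conflations}. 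Only the multiplicity of $T_k$ needs rewriting, and the result is the extended $g$-vector rule
\[
\ind_{T'}H=\ind_TH-2g_k e_k+\bigl[\text{correction }[\pm g_k]_+\bigl([E^+_{T,k}]\text{ or }[E^-_{T,k}]\bigr)\bigr]-h\text{-terms}.
\]
Here the frozen rows are governed by the full exchange column $\bB_Te_k$, because the middle terms $E^\pm_{T,k}$ may contain projective-injectives. For the $F$-polynomial, I take the DWZ identity
\[
(y_k+1)^{h_k}F_M(y)=(y_k'+1)^{h_k'}F_{M'}(y'),
\]
with $y'$ related to $y$ by $y$-seed mutation. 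Substituting $\widehat y$ for $y$ is compatible with $\mu_k^x$, since hatted variables mutate by the $Y$-pattern rule already displayed in the proof of Proposition~\ref{prop:compact-coefficients}. Combining the two factors, the powers of $1+\widehat y_k$ and the monomial corrections should cancel exactly, giving $\mu_k^x(x'^{\ind_{T'}H}F^{H}_{T'}(\widehat y'))=x^{\ind_TH}F^H_T(\widehat y)$.

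The main obstacle is the index computation with frozen summands, namely proving that the index changes by the extended $g$-vector mutation rule. The frozen rows of $\bB_T$ must enter through the projective-injective summands of $E^\pm_{T,k}$, and the stable QP realization does not see these rows. I would handle this by working in $L_T$ directly with the two exchange conflations. Using Lemma~\ref{lem:fsb-presentation-index}, I would express $I_{T'}(T_k)$ and $I_T(T_k^*)$ via the triangles $T_k^*\to E^+\to T_k$ and $T_k\to E^-\to T_k^*$. I would then show that $\ind_{T'}$ and $\ind_T$ differ by the linear map on $L_T$ sending $[T_k]\mapsto[E^{\pm}]-[T_k^*]$, with the sign chosen according to the sign of the $T_k$-coefficient. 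This piecewise-linear choice is exactly where the $h$-vector, i.e. $\dim\Hom_{\underline\cE}(qT_k,\Sigma qH)$ versus its dual, enters. Its identification with $h_k$, using the 2-Calabi--Yau property, is the step I expect to require the most care.
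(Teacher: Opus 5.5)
Your reduction to $H\in\cE$ matches the paper's: the integers $a_p$ do not depend on $T$, and $\mu_k^x$ fixes the $z_p$. So does your plan of treating the monomial factor and the $F$-polynomial factor separately, the latter by the DWZ identity transported through Assumption~\ref{ass:fsb-QP-realization}.

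The gap is in the index step. The concrete rule in your last paragraph replaces $[T_k]$ by $[E^+_{T,k}]-[T_k^*]$ or $[E^-_{T,k}]-[T_k^*]$ according to the sign of $g_k$. This rule is false, because $\ind_{T'}H$ is not a function of $\ind_TH$ at all. The map $\ind_{T'}$ is additive on direct sums but not on conflations. Knowing $\ind_{T'}$ on the summands of $T$ via Lemma~\ref{lem:fsb-presentation-index} therefore does not determine $\ind_{T'}H$.

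For instance, take $H=T_k\oplus T_k^*$. Then $\ind_TH=[T_k]+([E^-_{T,k}]-[T_k])=[E^-_{T,k}]$, so $g_k=0$ and your rule returns $[E^-_{T,k}]$. In fact $\ind_{T'}H=([E^+_{T,k}]-[T_k^*])+[T_k^*]=[E^+_{T,k}]$.

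In general, take a presentation $0\to T_1^0\oplus T_k^{\beta}\xrightarrow{f}T_0^0\oplus T_k^{\alpha}\to H\to0$ with the $T_k$-blocks of $f$ radical and $T_k\notin\add(T_0^0\oplus T_1^0)$. Here $\alpha,\beta$ are multiplicities, not the DWZ maps, and $g_k=\alpha-\beta$. The correct rule is $g'_k=-g_k$ and $g'_j=g_j+\alpha[b_{jk}]_+-\beta[-b_{jk}]_+$ for all $j\neq k$, frozen $j$ included. This differs from your rule by $\min(\alpha,\beta)\,\bB_Te_k$. With your rule, a stray factor $\widehat y_k^{-\min(\alpha,\beta)}$ survives the final cancellation. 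You cannot restrict to rigid $H$, where $\min(\alpha,\beta)=0$, because the lemma is applied to arbitrary objects, for example in Lemma~\ref{lem:fsb-all-twist}.

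Two ingredients are missing.

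\emph{The $T'$-presentation.} Pull back $(E^+_{T,k})^\alpha\twoheadrightarrow T_k^\alpha$ along $f$. The resulting extension of $T_1^0\oplus T_k^\beta$ by $(T_k^*)^\alpha$ splits, for two reasons: $\Ext^1_{\cE}(T_1^0,T_k^*)=0$, and the radical $T_k$-blocks of $f$ annihilate the one-dimensional space $\Ext^1_{\cE}(T_k,T_k^*)$. Then push out along $T_k^\beta\rightarrowtail(E^-_{T,k})^\beta$. The conflation $0\to T_0^0\oplus(E^+_{T,k})^\alpha\to V\to(T_k^*)^\beta\to0$ splits by rigidity of $T'$. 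This yields the $T'$-presentation $0\to(T_k^*)^\alpha\oplus T_1^0\oplus(E^-_{T,k})^\beta\to V\to H\to0$, and the correct rule follows. Radicality is essential here: adding a summand $T_k\xrightarrow{\mathrm{id}}T_k$ would shift the formula by $\bB_Te_k$.

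\emph{The $h$-vector.} You need $h_k=-\beta$ and $h'_k=-\alpha$, from Plamondon's description of the decorated representation attached to a minimal presentation. You also need the remark that these multiplicities are the same in $\underline{\cE}$, since $[\cP](T_k,T_k)\subseteq\rad\End_{\cE}(T_k)$. Your description of the $h$-vector as $\dim\Hom_{\underline{\cE}}(qT_k,\Sigma qH)$ versus its dual is wrong. Those two numbers agree by the $2$-Calabi--Yau property, and they give the dimension of $M_T(qH)$ at $k$, not $h_k$. With the correct values, DWZ gives $\mu_k^y(F_{T'}^H)=y_k^{-\alpha}(1+y_k)^{g_k}F_T^H$. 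This exactly cancels the factor $\widehat y_k^{\alpha}(1+\widehat y_k)^{-g_k}$ produced by the correct index rule.
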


\begin{proof}
	First let $H\in\cE$. Put $L=T_k$, $L^*=T_k^*$, and choose a
	minimal $T$-presentation
	\[
	0\longrightarrow T_1^0\oplus L^\beta\xrightarrow{f}
	T_0^0\oplus L^\alpha\longrightarrow H\longrightarrow0,
	\qquad f\in\rad,
	\]
	where neither $T_1^0$ nor $T_0^0$ has a summand isomorphic to $L$.
	Such a presentation is obtained by cancelling isomorphism blocks
	from any $T$-presentation.
	Write
	\[
	g=\ind_TH,\qquad g'=\ind_{T'}H,\qquad
	c^\pm=[\pm\bB_Te_k]_+=[E_k^\pm].
	\]
	In particular, $g_k=\alpha-\beta$.
	
	Put $V_0=T_0^0\oplus(E_k^+)^\alpha$. The first exchange
	conflation gives a deflation
	$V_0\twoheadrightarrow T_0^0\oplus L^\alpha$ with kernel
	$(L^*)^\alpha$. Pull it back along $f$, and denote the pullback
	by $W$. The resulting conflations are
	\[
	\begin{gathered}
		0\longrightarrow W\longrightarrow V_0\longrightarrow H
		\longrightarrow0,\\
		0\longrightarrow(L^*)^\alpha\longrightarrow W
		\longrightarrow T_1^0\oplus L^\beta\longrightarrow0.
	\end{gathered}
	\]
	The second conflation splits. Its extension class is the pullback
	of the exchange class along $f$. Its restriction to $T_1^0$ is zero
	because $T_1^0,L^*\in\add T'$. Its restriction to $L^\beta$
	is zero because the $L$-blocks of $f$ lie in
	$\rad\End_{\cE}(L)$, which annihilates the one-dimensional
	space $\Ext^1_{\cE}(L,L^*)$. We may therefore choose
	\[
	W\simeq(L^*)^\alpha\oplus T_1^0\oplus L^\beta.
	\]
	The second exchange conflation now gives an inflation
	\[
	W\longrightarrow
	W'=(L^*)^\alpha\oplus T_1^0\oplus(E_k^-)^\beta
	\]
	with cokernel $(L^*)^\beta$. Push out the inflation $W\to V_0$
	along this map. The pushout $V$ fits into conflations
	\[
	\begin{gathered}
		0\longrightarrow W'\longrightarrow V\longrightarrow H
		\longrightarrow0,\\
		0\longrightarrow V_0\longrightarrow V
		\longrightarrow(L^*)^\beta\longrightarrow0.
	\end{gathered}
	\]
	The second conflation splits since $V_0,(L^*)^\beta\in\add T'$.
	Thus $V\simeq V_0\oplus(L^*)^\beta$, and the first conflation
	is a $T'$-presentation. Consequently
	\begin{equation}
		\label{eq:fsb-full-index-mutation}
		\begin{gathered}
			g'=[V]-[W'],\qquad g'_k=-g_k,\\
			g'_j=g_j+\alpha c_j^+-\beta c_j^-
			\qquad(j\ne k).
		\end{gathered}
	\end{equation}
	Here $j$ ranges over the mutable and frozen summands.
	
	Under the QP realization, the decorated representation associated
	with $H$ has module $\Hom_{\underline{\cE}}(qT,\Sigma qH)$ and
	mutation parameters
	\[
	h_k=-\beta,\qquad h_k'=-\alpha
	\qquad\text{\cite[Section~4.1, Corollary~4.10]{PlaApplications}}.
	\]
	Here $\alpha,\beta$ can be read from the original minimal
	presentation after passing to the stable category. Indeed, for a
	non-projective indecomposable $T_i$, the ideal
	$[\cP](T_i,T_i)$ lies in $\rad\End_{\cE}(T_i)$: an invertible
	map factoring through a projective would make $T_i$ projective.
	A radical map between mutable indecomposables therefore cannot
	become invertible in the stable category.
	By \cite[Lemma~5.2]{DWZII},
	\begin{equation}
		\label{eq:fsb-all-object-f-mutation}
		\begin{aligned}
			\mu_k^y(F_{T'}^H)
			&=(1+y_k^{-1})^{-h_k'}(1+y_k)^{h_k}F_T^H\\
			&=y_k^{-\alpha}(1+y_k)^{g_k}F_T^H,
		\end{aligned}
	\end{equation}
	where
	\[
	\mu_k^y(y_k')=y_k^{-1},\qquad
	\mu_k^y(y_j')
	=y_jy_k^{[b_{kj}]_+}(1+y_k)^{-b_{kj}}
	\quad(j\ne k).
	\]
	
	Set
	\begin{equation}
		\label{eq:fsb-seed-substitution}
		P=x^{c^+},\qquad Q=x^{c^-},\qquad
		\widehat y_k=P/Q,\qquad
		\mu_k^x(x_k')=(P+Q)/x_k.
	\end{equation}
	The other coordinates are unchanged, and
	\[
	\mu_k^x(\widehat y_j')
	=\left.\mu_k^y(y_j')\right|_{y=\widehat y}.
	\]
	Equation~\eqref{eq:fsb-full-index-mutation} gives
	\[
	\begin{aligned}
		\frac{\mu_k^x((x')^{g'})}{x^g}
		&=Q^{-g_k}(1+\widehat y_k)^{-g_k}
		x^{\alpha c^+-\beta c^-}\\
		&=\widehat y_k^\alpha(1+\widehat y_k)^{-g_k}.
	\end{aligned}
	\]
	Multiplying this identity by
	\eqref{eq:fsb-all-object-f-mutation}, evaluated at $y=\widehat y$,
	yields $\mu_k^x(\CC_{T'}(H))=\CC_T(H)$.
	
	For $X\in\Db(\cE)$, write
	\[
	\kappa(X)=[H]+\sum_pa_p[P_p].
	\]
	Since $z_p$ is the frozen variable associated with $P_p$,
	\[
	\begin{aligned}
		\mu_k^x\bigl(\CC_{T',\mathrm{loc}}(X)\bigr)
		&=\mu_k^x\bigl(\CC_{T'}(H)\bigr)\prod_pz_p^{a_p}\\
		&=\CC_T(H)\prod_pz_p^{a_p}
		=\CC_{T,\mathrm{loc}}(X),
	\end{aligned}
	\]
	since mutation fixes the frozen variables.
\end{proof}

\begin{corollary}
	\label{cor:fsb-upper-valued-character}
	Under Assumption~\ref{ass:fsb-QP-realization}, the localized character
	of every object in $\cD$ belongs to $\mathcal U_T$.
\end{corollary}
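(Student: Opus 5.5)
The corollary should follow by combining Lemma~\ref{lem:fsb-seed-covariance} with an iteration along mutation sequences. Fix $X\in\cD$ and a seed $t$ reached from the initial seed by a mutation sequence $T=T^{(0)},T^{(1)}=\mu_{k_1}T^{(0)},\ldots,T^{(m)}=U$. Assumption~\ref{ass:fsb-frobenius-cluster} guarantees that each $T^{(j)}$ is a cluster-tilting object admitting further mutation. Assumption~\ref{ass:fsb-QP-realization} is stated for all cluster-tilting objects under consideration. Hence the lemma applies at every step.

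The first step is to observe that $\CC_{U,\mathrm{loc}}(X)=x_U^{I_U(X)}F_U^{qX}(\widehat y_U)$ is, by its defining formula \eqref{eq:fsb-derived-character-formula}, a Laurent polynomial in the coordinates of the seed of $U$. The index $I_U(X)\in L_U\simeq\mathbb Z^{r+s}$ is integral. The $F$-polynomial is an honest polynomial in the $y_j$. Each $\widehat y_j=x^{\bB_Ue_j}$ is a Laurent monomial. So $\CC_{U,\mathrm{loc}}(X)$ lies in $\kk[x_{1;U}^{\pm1},\ldots,x_{r;U}^{\pm1},z^{\pm1}]$.

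Next, compose the field isomorphisms $\mu^x_{k_1}\circ\cdots\circ\mu^x_{k_m}:\mathbb F_U\to\mathbb F_T$ and apply Lemma~\ref{lem:fsb-seed-covariance} inductively. This gives
\[
\bigl(\mu^x_{k_1}\circ\cdots\circ\mu^x_{k_m}\bigr)\bigl(\CC_{U,\mathrm{loc}}(X)\bigr)=\CC_{T,\mathrm{loc}}(X).
\]
Each $\mu^x_k$ sends the new mutable coordinate to its exchange expression and fixes the others. Therefore the composite is exactly the identification, inside $\mathbb F_T$, of the coordinates of seed $U$ with the cluster variables $x_{i;t}$ and the frozen $z_p$. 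Hence $\CC_{T,\mathrm{loc}}(X)$, viewed in $\mathbb F_T$, equals a Laurent polynomial in $x_{1;t},\ldots,x_{r;t},z_1,\ldots,z_s$.

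Since $t$ was arbitrary, $\CC_{T,\mathrm{loc}}(X)$ lies in every Laurent ring in the intersection defining $\mathcal U_T$, as claimed. The only point needing care, and the expected obstacle, is bookkeeping. One must check that the abstract seed attached to $T^{(j)}$, with exchange matrix $\bB_{T^{(j)}}=\mu_{k_j}\cdots(\bB_T)$ by Assumption~\ref{ass:fsb-frobenius-cluster}, matches the seed obtained by algebraic mutation in $\mathbb F_T$. One must also check that every seed $t$ of the pattern arises from such a categorical sequence. Both hold because categorical and matrix mutation agree at each step and the frozen variables are fixed throughout.
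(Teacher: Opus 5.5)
Your proposal is correct and follows the paper's proof. The paper's proof likewise iterates Lemma~\ref{lem:fsb-seed-covariance} along a mutation sequence, which expresses the localized character as a Laurent polynomial in the extended cluster of every seed; that is exactly the defining intersection for $\mathcal U_T$. Your additional bookkeeping only makes explicit what the paper's two-line argument leaves implicit: the order of the composite $\mu^x_{k_1}\circ\cdots\circ\mu^x_{k_m}$, and the categorical realizability of every seed via Assumption~\ref{ass:fsb-frobenius-cluster}.
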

\begin{proof}
	Iterating Lemma~\ref{lem:fsb-seed-covariance} expresses the same character
	as a Laurent polynomial in every seed. This is the defining intersection
	for $\mathcal U_T$.
\end{proof}

Let $\cE'$ be a second Frobenius category with the same standing
cluster-structure and character hypotheses. Write
\[
\cD'=\Db(\cE'),\qquad \cN'=K^b(\cP'),\qquad
q':\cD'\longrightarrow\underline{\cE'}.
\]
We use $\kappa'$ for its relative-class map and $I_U$ for its localized
index at a cluster-tilting object $U$. The ring
$R_{\mathrm{loc}}'$ is its cluster algebra with inverted coefficients.
For an integer matrix $H$ with $r$ columns, the notation $u^H$
means the tuple $(u^{He_1},\ldots,u^{He_r})$.

\begin{proposition}
	\label{prop:fsb-relative-recognition}
	Let $F:\Db(\cE)\to\Db(\cE')$ be a $\kk$-linear triangle functor
	which preserves perfect objects and induces an equivalence
	\[
	\overline F:\underline{\cE}\xrightarrow{\sim}\underline{\cE'}.
	\]
	Let
	$U=\bigoplus_{i=1}^rU_i\oplus\bigoplus_{p=1}^{s'}P_p'$
	be a basic cluster-tilting object of $\cE'$, with non-projective
	indecomposable summands $U_i$ and all indecomposable
	projective-injective summands $P_p'$, ordered so that
	\[
	q'U_i\simeq\overline F(qT_i)\qquad(1\le i\le r).
	\]
Then we have
	\begin{itemize}
		\item[(1)] There is an integer matrix
		\[
		G=\begin{pmatrix}I_r&0\\A&C_0\end{pmatrix},
		\qquad A\in M_{s',r}(\ZZ),\quad C_0\in M_{s',s}(\ZZ),
		\]
		such that
		\[
		G\bB_T=\bB_U,
		\qquad I_U(FX)=GI_T(X)
		\qquad(X\in\Db(\cE)).
		\]
		Write $u=(u_1,\ldots,u_r,z'_1,\ldots,z'_{s'})$ for the
		coordinates of $U$. The Laurent-ring homomorphism
		\[
		f_G:\kk[x_1^{\pm1},\ldots,x_r^{\pm1},z_1^{\pm1},\ldots,z_s^{\pm1}]
		\longrightarrow
		\kk[u_1^{\pm1},\ldots,u_r^{\pm1},(z'_1)^{\pm1},\ldots,(z'_{s'})^{\pm1}],\, x^v\longmapsto u^{Gv},
		\]
		satisfies
		\begin{equation}
			\label{eq:fsb-matched-naturality}
			f_G\bigl(\CC_{T,\mathrm{loc}}(X)\bigr)
			=\CC_{U,\mathrm{loc}}(FX).
		\end{equation}
		\item[(2)] Fix a basic cluster-tilting object \(T'\in\cE'\) whose seed
		defines the chosen cluster structure on \(R_{\mathrm{loc}}'\).
		Assume that \(U\) is reachable from \(T'\). Thus there is a finite
		sequence of mutable vertices \((k_1,\ldots,k_\ell)\) such that
		\begin{equation*}
			U\simeq\mu_{k_\ell}\cdots\mu_{k_1}(T').
		\end{equation*}
		We order the resulting seed according to the chosen summands of
		\(U\), and regard its coordinates
		\(u_1,\ldots,u_r,z'_1,\ldots,z'_{s'}\) as elements of
		\(R_{\mathrm{loc}}'\).
		Assume also that Assumption~\ref{ass:fsb-QP-realization} holds
		throughout the mutation class of \(T'\).
		
		Write \(A=(a_{pi})\) and \(C_0=(c_{pj})\).
		Let
		\(\psi:R_{\mathrm{loc}}\to R_{\mathrm{loc}}'\)
		be a \(\kk\)-algebra homomorphism satisfying
		\begin{equation*}
			\begin{aligned}
				\psi(x_i)
				&=u_i\prod_{p=1}^{s'}(z'_p)^{a_{pi}}
				&& (1\leq i\leq r),\\
				\psi(z_j)
				&=\prod_{p=1}^{s'}(z'_p)^{c_{pj}}
				&& (1\leq j\leq s).
			\end{aligned}
		\end{equation*}
		Then \(\psi\) is a quasi-cluster morphism, with the initial
		source seed \(\Sigma_T\) matched to the target seed \(\Sigma_U\).
		The displayed formulas send frozen Laurent monomials to frozen
		Laurent monomials and rescale each matched mutable variable by
		such a monomial. Moreover, the identity
		\(G\bB_T=\bB_U\) implies that the corresponding Laurent
		substitution preserves the hatted variables:
		\begin{equation*}
			f_G(\widehat y_{k,T})
			=u^{G\bB_Te_k}
			=u^{\bB_Ue_k}
			=\widehat y_{k,U}
			\qquad (1\leq k\leq r).
		\end{equation*}
		If, in addition, \(\psi\) is a ring isomorphism,
		\(s=s'\), and \(C_0\in\operatorname{GL}_s(\ZZ)\), then
		\(\psi\) is a quasi-cluster isomorphism.
	\end{itemize}
\end{proposition}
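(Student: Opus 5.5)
Define $G$ from the homomorphism on relative Grothendieck groups induced by $F$, show that it has the required block form, and then deduce everything else from naturality of indices and characters. Since $F$ is a triangle functor preserving perfect objects, it sends triangles with perfect first term to triangles with perfect first term. Hence it induces a group homomorphism $F_*:K_{\cE,\cP}\to K_{\cE',\cP'}$ with $F_*[X]_{\mathrm{rel}}=[FX]_{\mathrm{rel}}$. Composing with $\phi^{-1}$, $\ind_T$ and their primed analogues gives a map $L_T\to L_U$, provided $\ind_T\circ\phi^{-1}$ is an isomorphism $K_{\cE,\cP}\to L_T$. I would check this first: every $H$ has a $T$-presentation, so $[H]=[T_0^H]-[T_1^H]$ in $K_0^{\mathrm{sp}}(\cE)$ modulo conflation relations. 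To see that $\kappa$ followed by $\ind_T$ is inverse to the map $L_T\to K_{\cE,\cP}$, $[T_i]\mapsto[T_i]_{\mathrm{rel}}$, I would use Lemma~\ref{lem:fsb-presentation-index} on the triangle coming from a $T$-presentation.

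Let $G$ be the matrix of this map in the bases $\{[T_i],[P_p]\}$ and $\{[U_i],[P'_p]\}$, so that $I_U(FX)=GI_T(X)$ by construction. For the columns of $G$ at a frozen summand, $FP_p$ is perfect and hence equals $F(P_p)\in K^b(\cP')$. Its relative class is then an alternating sum of classes $[P'_q]$, which gives a column $\binom{0}{C_0e_p}$. For the columns at a mutable summand, $q'FT_i\simeq\overline F(qT_i)\simeq q'U_i$, so Lemma~\ref{lem:fsb-projective-correction} gives $\kappa'(FT_i)=[U_i]+\sum_p a_{pi}[P'_p]$. This produces a column $\binom{e_i}{Ae_i}$, and $G$ has the stated block form.

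To prove $G\bB_T=\bB_U$, apply $F$ to the two exchange conflations at $T_k$, viewed as triangles in $\Db(\cE)$. Subtracting their relative classes gives $[E^+_{T,k}]-[E^-_{T,k}]$. Because $F$ induces a stable equivalence, the images of these triangles are exchange triangles of $\overline F(qT_k)$ with respect to $\add\overline F(qT)=\add q'U$ in $\underline{\cE'}$. The exchange conflations of $U$ at $U_k$ have the same stable images up to isomorphism. Lemma~\ref{lem:fsb-defect-invariance} then shows that the defects of $F$ applied to each $T$-exchange triangle equal those of the corresponding $U$-exchange conflations, and these defects vanish because conflations give zero defect. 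It follows that $G([E^+_{T,k}]-[E^-_{T,k}])=[E^+_{U,k}]-[E^-_{U,k}]$ after expressing the middle terms via $I_U$. I expect this to be the main obstacle. One must check carefully that both middle-term classes are computed by $I_U$ of objects with stable image in $\add q'U$, via Lemma~\ref{lem:fsb-presentation-index}. One must also check that the minimal approximations correspond, so that no cancelling common summands intervene.

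Naturality \eqref{eq:fsb-matched-naturality} then follows by comparing factors. The index parts match because $u^{I_U(FX)}=u^{GI_T(X)}=f_G(x^{I_T(X)})$. For the $F$-polynomials, $\overline F$ identifies $M_T(qX)$ with $M_U(q'FX)$ compatibly with $\Gamma_T\cong\Gamma_U$ and with the ordering of the idempotents. The $F$-polynomials therefore coincide, and $f_G(\widehat y_{T})=\widehat y_U$ by $G\bB_T=\bB_U$. For part (2), the formulas for $\psi$ are exactly $\psi(x^v)=u^{Gv}$ restricted to the initial seed, so $\psi$ rescales mutable variables by frozen monomials and preserves hatted variables. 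Under the additional hypotheses, Proposition~\ref{prop:compact-coefficients} applies with $C=C_0$ unimodular. That proposition propagates the seed relation along every common mutation sequence, and since $U$ is reachable from $T'$ this gives the quasi-cluster isomorphism.
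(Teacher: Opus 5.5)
Your architecture matches the paper's: columns of $G$ come from $\kappa'(FT_i)$ and $\kappa'(FP_j)$, the exchange conflations are compared via Lemma~\ref{lem:fsb-defect-invariance}, and $F$-polynomials are transported along $\overline F$. But the step you propose to check first is false, and it is the step that is supposed to carry $I_U(FX)=GI_T(X)$. By Proposition~\ref{prop:relative-split-grothendieck}, $K_{\cE,\cP}\simeq K_0^{\mathrm{sp}}(\cE)$ is free on all indecomposables of $\cE$. Only triangles with perfect first term are imposed as relations. A $T$-presentation, whose first term $T_1^H$ is generally not perfect, therefore yields no relation $[H]_{\mathrm{rel}}=[T_0^H]_{\mathrm{rel}}-[T_1^H]_{\mathrm{rel}}$; there are no conflation relations to quotient by. Consequently $\ind_T\circ\phi^{-1}\colon K_{\cE,\cP}\to L_T$ is surjective but not injective once $r\geq1$: the class $[T_k]-[E_k^-]+[T_k^*]$ is nonzero in $K_0^{\mathrm{sp}}(\cE)$ yet has index zero. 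So you cannot get a map $L_T\to L_U$ by inverting it, and $I_U(FX)=GI_T(X)$ does not hold ``by construction''. Applied in $\cE$, Lemma~\ref{lem:fsb-presentation-index} only returns the tautology $I_T(H)=\ind_TH$.

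What must be proved is that $I_U\circ F_*$ factors through $I_T$ via $G$. Define $G$ by its columns $I_U(FT_i)$ and $I_U(FP_j)$, so that $I_U(FV)=G[V]$ for $V\in\add T$ by additivity. For $H\in\cE$, apply $F$ to a $T$-presentation. Since $\overline F(qT)\in\add(q'U)$, the first two terms of the image triangle have stable images in $\add(q'U)$. Lemma~\ref{lem:fsb-presentation-index}, used in $\cE'$, then gives $I_U(FH)=I_U(FT_0^H)-I_U(FT_1^H)=G\ind_TH$. Since classes of objects of $\cE$ generate $K_{\cE,\cP}$, this extends to every $X$. So the lemma you cite is the right tool, but it belongs in the target category and proves naturality, not an isomorphism in the source.

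The same misconception breaks your exchange-matrix step: conflations do not have zero defect. The kernel element above is exactly the defect of $0\to T_k\to E_k^-\to T_k^*\to0$, and likewise for the $U$-exchange conflations in $K_{\cE',\cP'}$. If the defects of $F$ applied to both $T$-exchange triangles vanished, you would get $\kappa'(FE_k^+)=\kappa'(FT_k)+\kappa'(FT_k^*)=\kappa'(FE_k^-)$, hence $G\bB_Te_k=0$. Use Lemma~\ref{lem:fsb-defect-invariance} only to equate each defect with that of the corresponding $U$-exchange conflation. This first requires checking that minimal approximations remain minimal in the stable category; the paper lifts $\theta$ and absorbs the projective factorization into $a$ using $\cP\subseteq\add T_{\widehat k}$. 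Subtracting the $+$ and $-$ identities cancels the endpoint classes and leaves $\kappa'(FE_k^+)-\kappa'(FE_k^-)=[E_{U,k}^+]-[E_{U,k}^-]$. Applying $\ind_U$ and $I_U(FV)=G[V]$ then gives $G\bB_T=\bB_U$.

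For part (2), your appeal to Proposition~\ref{prop:compact-coefficients}, with $\Xi=\psi(x)$ and $X=u$, is a valid and shorter route to the isomorphism case, which is the only case used later. The unconditional claim that $\psi$ is a quasi-cluster morphism, when $s\neq s'$ or $C_0$ is not unimodular, lies outside that proposition. As in the paper, it needs the propagation computation, which does not use unimodularity, together with the specialization of frozen variables to $1$.
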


\begin{proof}
	We first prove (1). Since \(F\) preserves perfect objects, it induces
	a homomorphism
	\begin{equation*}
		K(F):K(\cD,\cN)\longrightarrow K(\cD',\cN'),
		\qquad [X]_{\mathrm{rel}}\longmapsto[FX]_{\mathrm{rel}}.
	\end{equation*}
	Indeed, the image of a triangle whose first term is perfect is
	again a triangle with perfect first term. By Lemma~\ref{lem:fsb-projective-correction}, there are unique
	integers \(a_{pi}\) and \(c_{pj}\) such that
	\begin{equation}
		\label{eq:fsb-functor-columns}
		\begin{aligned}
			\kappa'(FT_i)&=[U_i]+\sum_{p=1}^{s'}a_{pi}[P'_p],
			&&1\leq i\leq r,\\
			\kappa'(FP_j)&=\sum_{p=1}^{s'}c_{pj}[P'_p],
			&&1\leq j\leq s.
		\end{aligned}
	\end{equation}
	For the second identity, we use \(q'FP_j=0\) and choose the zero
	object as the Frobenius representative.
	Set \(A=(a_{pi})\), \(C_0=(c_{pj})\), and
	\begin{equation*}
		G=\begin{pmatrix}I_r&0\\ A&C_0\end{pmatrix}.
	\end{equation*}
	Because \(\ind_U U_i=[U_i]\) and \(\ind_U P'_p=[P'_p]\), the
	columns of this matrix satisfy
	\begin{equation*}
		Ge_i=I_U(FT_i),\qquad Ge_{r+j}=I_U(FP_j).
	\end{equation*}
	Thus, for every \(V\in\add T\), additivity gives
	\begin{equation*}
		I_U(FV)=G[V],
	\end{equation*}
	where \([V]\) is expressed in the ordered basis of \(L_T\).
	
	We next compare exchange triangles. 
	We briefly show that the
	minimal approximation maps defining an exchange conflation remain
	minimal after passage to the stable category. Let
	\(a:E\to T_k\) be a minimal right
	\(\add T_{\widehat k}\)-approximation. The approximation property
	descends because the stable quotient is full. If
	\(q(a)\theta=q(a)\), lift \(\theta\) to an endomorphism
	\(\widetilde\theta\) of \(E\). Then
	\begin{equation*}
		a-a\widetilde\theta=bh
	\end{equation*}
	for maps \(h:E\to P\) and \(b:P\to T_k\), with \(P\in\cP\).
	Since \(P\in\add T_{\widehat k}\), write \(b=ac\).
	It follows that \(a(\widetilde\theta+ch)=a\), so minimality makes
	\(\widetilde\theta+ch\) invertible. Its stable image is \(\theta\),
	which is therefore invertible. The argument for left approximations
	is dual.
	
	Consequently, the stable images of the exchange conflations are
	the exchange triangles at \(qT_k\). The equivalence
	\(\overline F\) preserves these approximation properties, so it
	carries them to the exchange triangles at \(q'U_k\), with the same
	orientation of the endpoints. In particular,
	\(\overline F(qT_k^*)\simeq q'U_k^*\).
	Write these exchange conflations as
	\begin{equation*}
		\begin{gathered}
			0\longrightarrow T_k^*\longrightarrow E_k^+
			\longrightarrow T_k\longrightarrow0,
			\qquad
			0\longrightarrow T_k\longrightarrow E_k^-
			\longrightarrow T_k^*\longrightarrow0,\\
			0\longrightarrow U_k^*\longrightarrow E_{U,k}^+
			\longrightarrow U_k\longrightarrow0,
			\qquad
			0\longrightarrow U_k\longrightarrow E_{U,k}^-
			\longrightarrow U_k^*\longrightarrow0.
		\end{gathered}
	\end{equation*}
	Applying Lemma~\ref{lem:fsb-defect-invariance} in the target derived
	category and identifying its relative group with
	\(K_0^{\mathrm{sp}}(\cE')\), we obtain
	\begin{equation*}
		\begin{aligned}
			\kappa'(FT_k^*)-\kappa'(FE_k^+)+\kappa'(FT_k)
			&=[U_k^*]-[E_{U,k}^+]+[U_k],\\
			\kappa'(FT_k)-\kappa'(FE_k^-)+\kappa'(FT_k^*)
			&=[U_k]-[E_{U,k}^-]+[U_k^*].
		\end{aligned}
	\end{equation*}
	Subtracting cancels the endpoint classes and gives
	\begin{equation*}
		\kappa'(FE_k^+)-\kappa'(FE_k^-)
		=[E_{U,k}^+]-[E_{U,k}^-].
	\end{equation*}
	Applying \(\ind_U\), we find
	\begin{equation}
		\label{eq:fsb-full-matrix-naturality}
		\begin{aligned}
			G\bB_Te_k
			&=G\bigl([E_k^+]-[E_k^-]\bigr)\\
			&=I_U(FE_k^+)-I_U(FE_k^-)\\
			&=[E_{U,k}^+]-[E_{U,k}^-]
			=\bB_Ue_k.
		\end{aligned}
	\end{equation}
	This proves \(G\bB_T=\bB_U\), including its frozen rows.
	
	For \(H\in\cE\), choose a \(T\)-presentation
	\begin{equation*}
		0\longrightarrow T_1^H\longrightarrow T_0^H
		\longrightarrow H\longrightarrow0.
	\end{equation*}
	Its image under \(F\) is a triangle whose first two terms have
	stable images in \(\add(q'U)\). Hence
	Lemma~\ref{lem:fsb-presentation-index}, applied in \(\cE'\), gives
	\begin{equation*}
		\begin{aligned}
			I_U(FH)
			&=I_U(FT_0^H)-I_U(FT_1^H)\\
			&=G\bigl([T_0^H]-[T_1^H]\bigr)
			=G\ind_T H.
		\end{aligned}
	\end{equation*}
	The homomorphisms \(I_U\circ K(F)\) and \(G\circ I_T\) thus agree
	on the classes of objects of \(\cE\). These classes generate
	\(K(\cD,\cN)\), since the canonical map
	\(K_0^{\mathrm{sp}}(\cE)\to K(\cD,\cN)\) is an isomorphism.
	Therefore
	\begin{equation}
		\label{eq:fsb-full-index-naturality}
		I_U(FX)=GI_T(X)\qquad(X\in\cD).
	\end{equation}
	
	Choose isomorphisms
	\(\overline F(qT_i)\simeq q'U_i\) compatible with the given
	ordering. They induce an algebra isomorphism
	\begin{equation*}
		\Gamma_T=\End_{\underline{\cE}}(qT)
		\xrightarrow{\sim}
		\End_{\underline{\cE'}}(q'U)=\Gamma_U
	\end{equation*}
	preserving the primitive idempotents indexed by the mutable
	summands. Since \(\overline F\) is a triangle equivalence, it also
	gives a compatible isomorphism of right modules
	\begin{equation*}
		M_T(qX)=\Hom_{\underline{\cE}}(qT,\Sigma qX)
		\xrightarrow{\sim}
		\Hom_{\underline{\cE'}}(q'U,\Sigma q'FX)=M_U(q'FX).
	\end{equation*}
	Thus the submodule Grassmannians of every dimension vector are
	isomorphic, and
	\begin{equation*}
		F_T^{qX}(y)=F_U^{q'FX}(y).
	\end{equation*}
	Combining this with the matrix and index identities yields
	\begin{equation*}
		\begin{aligned}
			f_G\bigl(\CC_{T,\mathrm{loc}}(X)\bigr)
			&=u^{GI_T(X)}F_T^{qX}(u^{G\bB_T})\\
			&=u^{I_U(FX)}F_U^{q'FX}(u^{\bB_U})\\
			&=\CC_{U,\mathrm{loc}}(FX),
		\end{aligned}
	\end{equation*}
	which proves (1).
	
	We now prove (2). Reachability ensures that \(\Sigma_U\) is a
	seed in the cluster pattern defining \(R_{\mathrm{loc}}'\).
	The Laurent phenomenon gives
	\begin{equation*}
		R_{\mathrm{loc}}[x_1^{-1},\ldots,x_r^{-1}]
		=\kk[x_1^{\pm1},\ldots,x_r^{\pm1},
		z_1^{\pm1},\ldots,z_s^{\pm1}].
	\end{equation*}
	Since every \(\psi(x_i)\) is nonzero, \(\psi\) extends to a
	homomorphism from this localization to
	\(\operatorname{Frac}(R_{\mathrm{loc}}')\). The prescribed images
	of the initial coordinates identify this extension with \(f_G\),
	where the \(u\)-coordinates are interpreted in the target ambient
	field. In particular,
	\begin{equation*}
		f_G(\widehat y_{k,T})
		=u^{G\bB_Te_k}=u^{\bB_Ue_k}=\widehat y_{k,U}.
	\end{equation*}
	The upper block of \(G\) is \((I_r\ 0)\), so the principal parts
	of \(\bB_T\) and \(\bB_U\) coincide.
	
	We verify that this comparison persists under simultaneous mutation.
	Write \(a_i=Ae_i\), so that \(\psi(x_i)=u_i(z')^{a_i}\).
	Fix a mutable vertex \(k\), and set
	\begin{equation*}
		\beta=\bB_Te_k,\qquad \gamma=\bB_Ue_k=G\beta.
	\end{equation*}
	Using \(\beta=[\beta]_+-[-\beta]_+\), we obtain
	\begin{equation*}
		G[\beta]_+-[\gamma]_+
		=G[-\beta]_+-[-\gamma]_+.
	\end{equation*}
	The mutable part of this vector vanishes, because the mutable
	part of \(Gv\) equals that of \(v\). Hence there is
	\(d_k\in\ZZ^{s'}\) such that
	\begin{equation*}
		G[\beta]_+-[\gamma]_+
		=G[-\beta]_+-[-\gamma]_+
		=\binom{0}{d_k}.
	\end{equation*}
	Let \(x_k^{\mathrm{new}}\) and \(u_k^{\mathrm{new}}\) be the
	mutated variables. Applying \(\psi\) to the exchange relation gives
	\begin{equation*}
		\begin{aligned}
			\psi(x_k)\psi(x_k^{\mathrm{new}})
			&=u^{G[\beta]_+}+u^{G[-\beta]_+}\\
			&=(z')^{d_k}\bigl(u^{[\gamma]_+}+u^{[-\gamma]_+}\bigr)\\
			&=(z')^{d_k}u_ku_k^{\mathrm{new}}.
		\end{aligned}
	\end{equation*}
	Since \(\psi(x_k)=u_k(z')^{a_k}\), cancellation in the target
	fraction field yields
	\begin{equation*}
		\psi(x_k^{\mathrm{new}})
		=u_k^{\mathrm{new}}(z')^{d_k-a_k}.
	\end{equation*}
	Thus the new exponent matrix has the same form:
	\begin{equation*}
		G^{\mathrm{new}}
		=\begin{pmatrix}I_r&0\\ A^{\mathrm{new}}&C_0\end{pmatrix},
		\qquad
		a_j^{\mathrm{new}}=
		\begin{cases}
			a_j,&j\ne k,\\
			d_k-a_k,&j=k.
		\end{cases}
	\end{equation*}
	
	We must also check the hatted variables at the mutated seeds.
	Writing \((b_{ij})\) for their common principal exchange matrix
	before mutation, the hatted-variable mutation rule is
	\begin{equation*}
		\widehat y_j^{\mathrm{new}}=
		\begin{cases}
			\widehat y_k^{-1},&j=k,\\
			\widehat y_j\widehat y_k^{[b_{kj}]_+}
			(1+\widehat y_k)^{-b_{kj}},&j\ne k.
		\end{cases}
	\end{equation*}
	The same formula applies to both seeds, so equality of their
	hatted variables is preserved.
	By algebraic independence of the mutated target coordinates, the
	equality of hatted monomials implies
	\begin{equation*}
		G^{\mathrm{new}}\mu_k(\bB_T)=\mu_k(\bB_U).
	\end{equation*}
	The same argument can therefore be applied at the next pair of
	matched seeds. Induction along any mutation sequence shows that
	every mutable source variable is sent to the corresponding target
	variable multiplied by a frozen Laurent monomial. The frozen
	coefficient map remains \(z^b\mapsto(z')^{C_0b}\) throughout.
	
	Finally, specialize all frozen variables to \(1\). The map
	\(\psi\) sends the ideal generated by the \(z_j-1\) into the
	ideal generated by the \(z'_p-1\). By \cite[Lemma~A.1]{KW}, the
	corresponding quotients are the coefficient-free cluster algebras.
	The two
	coefficient-free seed patterns have the same initial principal
	exchange matrix, and the induced map takes the initial source
	variables to the mutable variables of \(\Sigma_U\).
	The map between their rational function fields determined by these
	algebraically independent clusters is an isomorphism. Since it
	intertwines each exchange relation in both directions, it restricts
	to an isomorphism of the coefficient-free cluster algebras.
	Together with the coefficient-group map and the hatted-variable
	identities, this proves that \(\psi\) is a quasi-cluster morphism.
	
	Suppose now that \(\psi\) is a ring isomorphism, \(s=s'\), and
	\(C_0\in\operatorname{GL}_s(\ZZ)\). Then
	\begin{equation*}
		G^{-1}
		=\begin{pmatrix}I_r&0\\-C_0^{-1}A&C_0^{-1}\end{pmatrix},
		\qquad G^{-1}\bB_U=\bB_T.
	\end{equation*}
	The inverse ring map satisfies
	\begin{equation*}
		\psi^{-1}(u_i)=x_i z^{-C_0^{-1}a_i},
		\qquad
		\psi^{-1}(z'_p)=z^{C_0^{-1}e_p}.
	\end{equation*}
	Applying the same exchange calculation to these inverse
	substitutions shows that \(\psi^{-1}\) is also a quasi-cluster
	morphism. Therefore \(\psi\) is a quasi-cluster isomorphism.
\end{proof}

\subsection{Boundary orders and the two positroid models}
\label{sec:fsb-rank-one}
\label{sec:fsb-reachability}

We now specialize to a connected ordinary Postnikov diagram $D$ of type
$(k,n)$, where $0<k<n$. Here $D$ is the strand diagram obtained by drawing
the trips of a reduced plabic graph with the ordinary cyclic boundary
labelling. Denote its positroid by $\pi$, and put
\[
R_\pi=\kk[\widehat\Pi_\pi^\circ],\qquad Z=\kk\llbracket t\rrbracket.
\]
As in the preceding section, the hat denotes the open part of the affine
Pl\"ucker cone and the necklace coordinates are inverted in $R_\pi$.

\begin{definition}[Dimer algebra and boundary order]
	\label{def:fsb-boundary-order}
	Let $(Q_D,F_D)$ be the dual ice quiver of $D$, with boundary vertices
	frozen. We retain the arrows between frozen vertices when constructing
	the algebra, even though they do not occur in an extended exchange
	matrix. Let $W_D$ be the signed sum of the oriented dimer face cycles,
	with opposite signs for the two orientations. The complete dimer algebra
	and its boundary algebra are
	\[
	A_D=\widehat{\kk Q_D}/
	\overline{(\partial_aW_D:a\notin(F_D)_1)},
	\qquad B=eA_De,
	\qquad e=\sum_{j\in(F_D)_0}e_j.
	\]
	Here completion and closure are taken in the arrow-adic topology;
	$\partial_a$ is the cyclic derivative, obtained by deleting each
	occurrence of $a$ from a cycle and summing the resulting paths.
\end{definition}

We use the orientation and potential conventions of
\cite[Definitions~2.13 and~5.1]{Pressland}: a dual arrow has the
white endpoint of the crossed plabic edge on its right. Together with
$b_{jk}=\#(k\to j)-\#(j\to k)$, this agrees with
\eqref{eq:fsb-exchange-column} and the left-module convention above.

For connected $D$, $A_D$ and $B$ are
finite free $Z$-algebras; the central parameter $t$ is represented by the
sum of the basic face cycles based at the quiver vertices. A finite free
$Z$-algebra of this kind is called a \emph{$Z$-order} here.
The boundary algebra is Iwanaga--Gorenstein of injective dimension at most
two \cite[Corollary~10.5]{CKP}; this means it is Noetherian on both sides
and its regular left and right modules have finite injective dimension.

\begin{definition}[Lattices and the circle order]
	\label{def:fsb-lattices-circle}
	For a $Z$-order $\Lambda$, let $\CM(\Lambda)$ be the category of
	finitely generated left $\Lambda$-modules which are free of finite rank
	over $Z$. Its objects are called \emph{lattices}, or Cohen--Macaulay
	modules. Its conflations are short exact sequences of $\Lambda$-modules
	whose terms are lattices. Such sequences split over $Z$, but need not
	split over $\Lambda$.
	
	Let $Q_n$ have vertices $\mathbb Z/n\mathbb Z+\tfrac12$ and arrows
	\[
	\mathsf x_i:i-\tfrac12\longrightarrow i+\tfrac12,
	\qquad
	\mathsf y_i:i+\tfrac12\longrightarrow i-\tfrac12.
	\]
	Put $\mathsf x=\sum_i\mathsf x_i$ and
	$\mathsf y=\sum_i\mathsf y_i$. The \emph{circle order} is
	\[
	C=C_{k,n}=\widehat{\kk Q_n}/
	\overline{(\mathsf x\mathsf y-\mathsf y\mathsf x,
		\mathsf y^k-\mathsf x^{n-k})},
	\qquad t=\mathsf x\mathsf y.
	\]
	The generators $\mathsf x_i,\mathsf y_i$ are arrows, not cluster
	variables. For a $C$-lattice, its rank is the common $Z$-rank of its
	vertex components; equality follows after inverting $t$.
\end{definition}

\begin{definition}[Rank-one modules]
	\label{def:fsb-rank-one-module}
	For $I\in\binom{[n]}k$, let $M_I$ be the $C$-module with
	\[
	(M_I)_a=Z\quad\text{at every vertex},\qquad
	\mathsf x_i=t^{\mathbf1_I(i)},\qquad
	\mathsf y_i=t^{1-\mathbf1_I(i)},
	\]
	where the displayed powers denote multiplication maps and
	$\mathbf1_I(i)$ is $1$ for $i\in I$ and $0$ otherwise.
	Thus $M_I$ has rank one. These maps satisfy the relations of $C$,
	and every rank-one $C$-lattice is isomorphic to a unique $M_I$.
\end{definition}

There is a homomorphism $C\to B$ whose restriction functor is exact
and fully faithful. By \cite[Propositions~5.6 and~5.8]{Pressland},
\begin{equation}
	\label{eq:fsb-boundary-restriction}
	\CM(B)\lhook\joinrel\longrightarrow\CM(C),\qquad
	M_I\text{ is in its essential image}
	\iff\Delta_I\ne0\text{ in }R_\pi.
\end{equation}
For such $I$, we also write $M_I$ for its unique $B$-lattice lift,
up to isomorphism. This is the meaning of $M_I\in\CM(B)$ below.

\begin{definition}[The source and target Frobenius categories]
	\label{def:fsb-source-target-Frobenius}
	Set $B^\vee=\Hom_Z(B,Z)$, with left action
	$(b\lambda)(a)=\lambda(ab)$. We use the exact subcategories
	\[
	\begin{aligned}
		\cE_{\mathrm s}
		&=\GP\CM(B)
		=\{M\in\CM(B):\Ext_B^j(M,B)=0\text{ for all }j>0\},\\
		\cE_{\mathrm t}
		&=\GI\CM(B)
		=\{M\in\CM(B):\Ext_B^j(B^\vee,M)=0\text{ for all }j>0\}.
	\end{aligned}
	\]
	Their exact structures are inherited from $\CM(B)$, and their
	projective-injective subcategories are $\add B$ and $\add B^\vee$,
	respectively. Their standard cluster-tilting objects are
	\[
	T^{\mathrm s}=eA_D,\qquad
	T^{\mathrm t}=(A_De)^\vee.
	\]
	The summands indexed by a face $j$ are
	$eA_De_j$ and $(e_jA_De)^\vee$, respectively.
\end{definition}


These are the standard Frobenius and character realizations of the
source- and target-labelled seeds. Under restriction to $C$,
\[
eA_De_j\simeq M_{I_j^{\mathrm{src}}},\qquad
(e_jA_De)^\vee\simeq M_{I_j^{\mathrm{tgt}}}.
\]
Assumptions~\ref{ass:fsb-frobenius-cluster} and
\ref{ass:fsb-character-realization} hold by the established Frobenius
categorifications \cite[Theorems~5.12--5.15]{Pressland}.

Let $\fgmod B$ denote the category of finitely generated left
$B$-modules and let $\proj B=\add B$. Its singularity category is
\[
\cC=\Dsg(B)=\Db(\fgmod B)/K^b(\proj B).
\]
The realization functors and the induced stable equivalences are
\begin{equation}
	\label{eq:fsb-standard-derived}
	\begin{gathered}
		j_{\mathrm t}:\Db(\cE_{\mathrm t})\xrightarrow{\sim}\Db(\fgmod B),
		\qquad
		j_{\mathrm s}:\Db(\cE_{\mathrm s})\xrightarrow{\sim}\Db(\fgmod B),\\
		\underline{\cE}_{\mathrm s}\simeq\cC
		\simeq\underline{\cE}_{\mathrm t}.
	\end{gathered}
\end{equation}
The two perfect subcategories have the same image in
$\Db(\fgmod B)$. We write
$\varphi=j_{\mathrm s}^{-1}j_{\mathrm t}$ and use $q$ for the
singularity quotient on the common derived category.

We also explain why Assumption~\ref{ass:fsb-QP-realization} holds. Let $(\overline Q_D,\overline W_D)$ be the resulting quiver with
potential after deleting the frozen vertices. Its Jacobian algebra
$A_D/A_DeA_D$ is finite-dimensional
\cite[Proposition~4.4]{PresslandCY}.
Applying the right-module formulation of the Higgs-category realization
in \cite[Theorem~4.18 and Section~7]{KW} to the opposite dimer algebra
identifies our stable category of left Gorenstein-projective modules
with the generalized cluster category of
$(\overline Q_D^{\mathrm{op}},\overline W_D^{\mathrm{op}})$.
In particular,
\[
\Gamma_{T^{\mathrm s}}
\simeq(A_D/A_DeA_D)^{\mathrm{op}}.
\]

The mutation argument in \cite[Section~5 and Theorem~5.7]{PresslandCY}
shows that iterated ice-QP mutation at mutable vertices has no loops
or oriented $2$-cycles incident with a mutable vertex after reduction.
Deleting frozen vertices commutes with mutation at the remaining
vertices, up to reduction. Hence the unfrozen potential is
non-degenerate. The categorical realization and the associated
decorated representations are compatible with mutation
\cite[Proposition~4.1]{PlaApplications}.

For the case of target Frobenius category, \cite[Theorem~6.4]{Pressland} gives
\[
\overline\varphi(qT^{\mathrm t})\simeq\Sigma^2qT^{\mathrm s}.
\]
Transporting the source realization along
$\Sigma^{-2}\overline\varphi$ gives a realization taking
$qT^{\mathrm t}$ to the canonical cluster-tilting object.
Thus Assumption~\ref{ass:fsb-QP-realization} holds for both models
and their reachable mutation classes.

Let $\eta_{\mathrm s}$ and $\eta_{\mathrm t}$ be the source and target
cluster-structure isomorphisms from their abstract cluster algebras
with inverted coefficients to $R_\pi$. For these algebras, equality
with the upper cluster algebra holds
\cite[Theorem~2.15 and Proposition~2.16]{Pressland}.
We define the \emph{localized characters} on the common
derived category by
\[
\Phi^{\mathrm s}(X)=
\eta_{\mathrm s}\bigl(\CC_{T^{\mathrm s},\mathrm{loc}}
(j_{\mathrm s}^{-1}X)\bigr),\qquad
\Phi^{\mathrm t}(X)=
\eta_{\mathrm t}\bigl(\CC_{T^{\mathrm t},\mathrm{loc}}
(j_{\mathrm t}^{-1}X)\bigr).
\]
Thus both maps are defined on all of $\Db(\fgmod B)$ and take values
in $R_\pi$. In particular, $\Phi^{\mathrm s}(M_I)$ is meaningful
without assuming $M_I\in\cE_{\mathrm s}$.
These are the characters used in
\cite[Theorem~5.28 and Sections~6--7]{Pressland}. They extend the
same Frobenius characters and satisfy the same product formula for
triangles with perfect first term, so uniqueness of localization
identifies the extensions.

Let $\tau:R_\pi\to R_\pi$ be the coordinate-ring
\emph{left twist} automorphism of Muller--Speyer, in the convention of
\cite[Section~7]{Pressland}.

\subsection{Rank-one normalization}

The cited source--target and twist formulas first compare the initial
cluster-tilting summands. The next lemma extends that comparison to all objects in $\Db(\fgmod B)$.

\begin{lemma}
	\label{lem:fsb-all-twist}
	For each $X\in\Db(\fgmod B)$, we have
	\begin{align}
		\Phi^{\mathrm s}(X[-1])
		&=\tau\bigl(\Phi^{\mathrm t}(X)\bigr),
		\label{eq:fsb-all-twist}\\
		\Phi^{\mathrm s}(X)
		&=\Phi^{\mathrm t}(X).
		\label{eq:fsb-all-source-target}
	\end{align}
\end{lemma}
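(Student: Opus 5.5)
We reduce both identities to the initial cluster-tilting summands, where they are supplied by Pressland, and then extend them to all of $\Db(\fgmod B)$ through the localized index and $F$-polynomial. Consider first \eqref{eq:fsb-all-twist}. Pressland's twist theorem \cite[Section~7]{Pressland}, in its localized form \cite[Theorem~5.28]{Pressland}, gives
\[
\Phi^{\mathrm s}(X[-1])=\tau\bigl(\Phi^{\mathrm t}(X)\bigr)
\]
for $X$ running over the indecomposable summands of $j_{\mathrm t}T^{\mathrm t}$, that is, over $(e_jA_De)^\vee$. For frozen $j$ these summands are perfect, and the identity matches the formula $\Phi_{\mathrm{loc}}(P[1])=\Phi(P)^{-1}$.

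To extend this, we regard both sides as functions of $X$ and compare their structure. The right-hand side is $\tau\eta_{\mathrm t}$ applied to $x^{I_{T^{\mathrm t}}(X)}F^{qX}_{T^{\mathrm t}}(\widehat y)$. For the left-hand side we apply Proposition~\ref{prop:fsb-relative-recognition}(1) to the triangle functor $F=j_{\mathrm s}^{-1}[-1]\,j_{\mathrm t}$. It preserves perfect objects, and it induces the stable equivalence $\Sigma^{-1}\overline\varphi$, which sends $qT^{\mathrm t}$ to $\Sigma qT^{\mathrm s}$ by \cite[Theorem~6.4]{Pressland}. We take $U$ to be the cluster-tilting object of $\cE_{\mathrm s}$ whose stable image is $\Sigma qT^{\mathrm s}$. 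Its seed is reachable: the shifted canonical object corresponds to the twisted seed, which is the setting of Pressland's twist comparison. The proposition then gives
\[
\CC_{U,\mathrm{loc}}(FX)=f_G\bigl(\CC_{T^{\mathrm t},\mathrm{loc}}(X)\bigr).
\]
Composing with $\eta_{\mathrm s}$, we obtain $\Phi^{\mathrm s}(X[-1])=\psi(\Phi^{\mathrm t}(X))$ for a ring map $\psi$. This map is determined by its values on the initial coordinates $\Phi^{\mathrm t}(T^{\mathrm t}_j)$, and Lemma~\ref{lem:fsb-seed-covariance} lets us evaluate $\CC_{U,\mathrm{loc}}$ in the chosen seed. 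By the first step, these initial values agree with $\tau$. Since $R_\pi$ is a domain generated by a cluster with inverted frozens, and equals its upper cluster algebra, two ring maps agreeing on one extended cluster agree everywhere. Hence $\psi=\tau$, which proves \eqref{eq:fsb-all-twist}.

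For \eqref{eq:fsb-all-source-target} we argue in the same way, with $F=\varphi$. On the initial summands, Pressland's source--target theorem \cite[Theorem~6.16 and Section~6]{Pressland} identifies $\Phi^{\mathrm s}\bigl((e_jA_De)^\vee\bigr)$ with $\Delta_{I_j^{\mathrm{tgt}}}=\Phi^{\mathrm t}\bigl((e_jA_De)^\vee\bigr)$. Here $\overline\varphi(qT^{\mathrm t})\simeq\Sigma^2qT^{\mathrm s}$, and the relevant seed is again reachable by Pressland. Proposition~\ref{prop:fsb-relative-recognition} then shows that the map $\Phi^{\mathrm t}(X)\mapsto\Phi^{\mathrm s}(X)$ is induced by a Laurent substitution $f_G$ composed with the two identifications $\eta$. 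This substitution agrees with the identity on an extended cluster, so it is the identity. Alternatively, \eqref{eq:fsb-all-source-target} follows from \eqref{eq:fsb-all-twist} combined with Pressland's relation between $\tau$ and the shift. We prefer the direct route.

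The main obstacle is the first step: verifying that the cited formulas are genuinely stated for the localized characters with exactly our normalizations. This concerns $\Phi^{\mathrm s}$ evaluated on objects that are not in $\cE_{\mathrm s}$, namely the target summands, and the frozen factors $a_p$ of Lemma~\ref{lem:fsb-projective-correction}. We would handle the frozen summands separately by the perfect-object product formula, and for the mutable ones we would appeal to the uniqueness of the localized extension noted after \eqref{eq:fsb-general-localized-character}. Once the matching on one extended cluster is secured, the rest is formal, via the rigidity of ring maps out of $R_\pi$ that are determined on a seed.
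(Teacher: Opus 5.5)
Your proposal is essentially the paper's argument: apply Proposition~\ref{prop:fsb-relative-recognition} to $[-\varepsilon]\circ\varphi$ with a reachable $U_\varepsilon$ whose stable image is $\Sigma^{2-\varepsilon}qT^{\mathrm s}$ (supplied by Pressland), pull back to $T^{\mathrm s}$ via Lemma~\ref{lem:fsb-seed-covariance}, and calibrate on the summands of $T^{\mathrm t}$ using Pressland's initial-summand identities; the paper treats $\varepsilon=0$ and $\varepsilon=1$ uniformly rather than separately. One small correction to your last step: $R_\pi$ is not generated by a single extended cluster, and the upper cluster algebra is not needed. Instead, as in the paper, note that both composites are ring maps on the Laurent ring $L_{\mathrm t}$ of the initial target coordinates, that they agree on its generators, and that every $\CC_{T^{\mathrm t},\mathrm{loc}}(Y)$ already lies in $L_{\mathrm t}$.
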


\begin{proof}
	For \(\varepsilon\in\{0,1\}\), put
	\begin{equation*}
		F_\varepsilon=[-\varepsilon]\circ\varphi,
		\qquad
		\xi_\varepsilon=
		\begin{cases}
			\mathrm{id}_{R_\pi},&\varepsilon=0,\\
			\tau,&\varepsilon=1.
		\end{cases}
	\end{equation*}
	Then
	\begin{equation*}
		\begin{gathered}
			j_{\mathrm s}F_\varepsilon
			\simeq[-\varepsilon]j_{\mathrm t},
			\qquad
			F_\varepsilon(\cN_{\mathrm t})=\cN_{\mathrm s},\\
			\overline F_\varepsilon
			=\Sigma^{-\varepsilon}\overline\varphi,
		\end{gathered}
	\end{equation*}
	where \(\cN_{\mathrm s}=K^b(\add B)\) and
	\(\cN_{\mathrm t}=K^b(\add B^\vee)\) in their respective
	Frobenius derived categories. The equality of perfect
	subcategories follows from their common image
	\(\operatorname{per}B\).
	
	By \cite[Proposition~6.3 and Theorem~6.4]{Pressland},
	we can choose a mutation sequence \(\boldsymbol k_\varepsilon\)
	and a cluster-tilting object $U_\varepsilon
		=\mu_{\boldsymbol k_\varepsilon}T^{\mathrm s}$
	such that, after ordering its mutable summands,
	\begin{equation*}
		\begin{aligned}
			q_{\mathrm s}U_\varepsilon
			&\simeq
			\Sigma^{2-\varepsilon}q_{\mathrm s}T^{\mathrm s}\\
			&\simeq
			\Sigma^{-\varepsilon}
			\overline\varphi(q_{\mathrm t}T^{\mathrm t})\\
			&=
			\overline F_\varepsilon(q_{\mathrm t}T^{\mathrm t}).
		\end{aligned}
	\end{equation*}
	Here \(q_{\mathrm s}\) and \(q_{\mathrm t}\) denote the stable
	quotient functors of the two Frobenius models.
	
	Write
	\(T^{\mathrm t}=\bigoplus_{j=1}^{m}T_j^{\mathrm t}\),
	including the projective-injective summands, and let
	\(x_j^{\mathrm t}\) be its initial extended-cluster coordinates.
	Set
	\begin{equation*}
		L_{\mathrm t}
		=
		\kk[(x_1^{\mathrm t})^{\pm1},\ldots,
		(x_m^{\mathrm t})^{\pm1}],
		\qquad
		K=\operatorname{Frac}(R_\pi).
	\end{equation*}
	We extend \(\eta_{\mathrm s}\), \(\eta_{\mathrm t}\), and
	\(\xi_\varepsilon\) to their fraction fields.
	
	Proposition~\ref{prop:fsb-relative-recognition} gives a
	Laurent-ring map \(f_\varepsilon=f_{G_\varepsilon}\) satisfying
	\begin{equation*}
		f_\varepsilon
		\bigl(\CC_{T^{\mathrm t},\mathrm{loc}}(Y)\bigr)
		=
		\CC_{U_\varepsilon,\mathrm{loc}}(F_\varepsilon Y)
		\qquad
		(Y\in\Db(\cE_{\mathrm t})).
	\end{equation*}
	Let
		$\mu_\varepsilon^x:
		\mathbb F_{U_\varepsilon}
		\xrightarrow{\sim}
		\mathbb F_{T^{\mathrm s}}$

	be the seed substitution along \(\boldsymbol k_\varepsilon\).
	Lemma~\ref{lem:fsb-seed-covariance} gives
	\begin{equation*}
		\mu_\varepsilon^x
		\bigl(\CC_{U_\varepsilon,\mathrm{loc}}(Z)\bigr)
		=
		\CC_{T^{\mathrm s},\mathrm{loc}}(Z)
		\qquad
		(Z\in\Db(\cE_{\mathrm s})).
	\end{equation*}
	Define
		$\Theta_\varepsilon
		=
		\eta_{\mathrm s}\circ\mu_\varepsilon^x
		\circ f_\varepsilon:
		L_{\mathrm t}\longrightarrow K.$

	For every \(Y\in\Db(\cE_{\mathrm t})\), we then have
	\begin{equation*}
		\begin{aligned}
			\Theta_\varepsilon
			\bigl(\CC_{T^{\mathrm t},\mathrm{loc}}(Y)\bigr)
			&=
			\eta_{\mathrm s}\mu_\varepsilon^x
			\bigl(
			\CC_{U_\varepsilon,\mathrm{loc}}(F_\varepsilon Y)
			\bigr)\\
			&=
			\eta_{\mathrm s}
			\bigl(
			\CC_{T^{\mathrm s},\mathrm{loc}}(F_\varepsilon Y)
			\bigr)\\
			&=
			\Phi^{\mathrm s}(j_{\mathrm s}F_\varepsilon Y)\\
			&=
			\Phi^{\mathrm s}
			\bigl((j_{\mathrm t}Y)[-\varepsilon]\bigr).
		\end{aligned}
	\end{equation*}
	
	For \(1\leq j\leq m\), the initial-summand identities of
	\cite[Corollary~6.15 and Proposition~7.1]{Pressland} give
	\begin{equation}
		\label{eq:fsb-twist-calibration}
		\begin{aligned}
			\Theta_\varepsilon(x_j^{\mathrm t})
			&=
			\Phi^{\mathrm s}
			\bigl(
			(j_{\mathrm t}T_j^{\mathrm t})[-\varepsilon]
			\bigr)\\
			&=
			\xi_\varepsilon
			\bigl(
			\Phi^{\mathrm t}(j_{\mathrm t}T_j^{\mathrm t})
			\bigr)\\
			&=
			\xi_\varepsilon\eta_{\mathrm t}(x_j^{\mathrm t}).
		\end{aligned}
	\end{equation}
	These identities include the frozen summands.
	Their values are nonzero, so
	\begin{equation*}
		\begin{aligned}
			\Theta_\varepsilon
			\bigl((x_j^{\mathrm t})^{-1}\bigr)
			&=
			\bigl(
			\xi_\varepsilon\eta_{\mathrm t}(x_j^{\mathrm t})
			\bigr)^{-1}\\
			&=
			\xi_\varepsilon\eta_{\mathrm t}
			\bigl((x_j^{\mathrm t})^{-1}\bigr).
		\end{aligned}
	\end{equation*}
	Therefore
		$\Theta_\varepsilon
		=
		\left.
		(\xi_\varepsilon\circ\eta_{\mathrm t})
		\right|_{L_{\mathrm t}}.$

	For arbitrary \(X\in\Db(\fgmod B)\), put
	\(Y=j_{\mathrm t}^{-1}X\). Since
	\(\CC_{T^{\mathrm t},\mathrm{loc}}(Y)\in L_{\mathrm t}\),
	we obtain
	\begin{equation*}
		\begin{aligned}
			\Phi^{\mathrm s}(X[-\varepsilon])
			&=
			\Theta_\varepsilon
			\bigl(\CC_{T^{\mathrm t},\mathrm{loc}}(Y)\bigr)\\
			&=
			\xi_\varepsilon\eta_{\mathrm t}
			\bigl(\CC_{T^{\mathrm t},\mathrm{loc}}(Y)\bigr)\\
			&=
			\xi_\varepsilon
			\bigl(\Phi^{\mathrm t}(X)\bigr).
		\end{aligned}
	\end{equation*}
	Taking \(\varepsilon=1\) and \(\varepsilon=0\), respectively,
	proves the two asserted identities.
\end{proof}

\begin{theorem}
	\label{thm:fsb-rank-one-normalization}
	For every $M_I\in\CM(B)$, we have
	\begin{equation}
		\label{eq:fsb-all-rank-one}
		\Phi^{\mathrm s}(M_I)
		=\Phi^{\mathrm t}(M_I)=\Delta_I.
	\end{equation}
\end{theorem}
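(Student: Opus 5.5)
The plan is to prove only $\Phi^{\mathrm t}(M_I)=\Delta_I$ (or only $\Phi^{\mathrm s}(M_I)=\Delta_I$), because \eqref{eq:fsb-all-source-target} in Lemma~\ref{lem:fsb-all-twist} then gives the other equality. The argument first settles the rank-one lattices that already lie in one of the two Frobenius categories. It then spreads to all $M_I\in\CM(B)$ using triangles with perfect first term, for which the localized characters are multiplicative.

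\emph{Step 1: the Frobenius range.} For $M_I\in\cE_{\mathrm s}$ (resp.\ $\cE_{\mathrm t}$), $\Phi^{\mathrm s}(M_I)$ (resp.\ $\Phi^{\mathrm t}(M_I)$) is the ordinary Frobenius character $\eta\circ\CC_T$ of $M_I$. Pressland's identification of the character with the dimer partition function \cite[Theorem~5.28]{Pressland} evaluates this character on rank-one lattices. Combined with the initial-summand identities $\eta_{\mathrm t}(x_j^{\mathrm t})=\Delta_{I^{\mathrm{tgt}}_j}$ and $\eta_{\mathrm s}(x_j^{\mathrm s})=\Delta_{I^{\mathrm{src}}_j}$, it gives $\Delta_I$. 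Here I would use the Marsh--Scott/Muller--Speyer twist formula to pass between perfect matchings and Pl\"ucker coordinates; this is where $\tau$ and \eqref{eq:fsb-all-twist} enter. Concretely, I would apply \eqref{eq:fsb-all-twist} to $X=M_I$ and to its shift. This converts a computation of $\Phi^{\mathrm s}(M_I[-1])$, which involves a syzygy of $M_I$ and hence a Gorenstein-projective object, into $\tau(\Phi^{\mathrm t}(M_I))$.

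\emph{Step 2: an approximation of a general $M_I$.} Since $B$ is Iwanaga--Gorenstein of injective dimension at most two and $M_I$ is a $Z$-lattice, Auslander--Buchweitz theory gives a conflation $0\to Y\to G\to M_I\to0$ in $\CM(B)$. Here $G\in\cE_{\mathrm s}$, and $Y$ has finite projective dimension, so $Y$ is perfect in $\Db(\fgmod B)$. The rotated triangle has perfect first term, so the product formula for $\Phi_{\mathrm{loc}}$ gives
\[
\Phi^{\mathrm s}(G)=\Phi^{\mathrm s}(Y)\,\Phi^{\mathrm s}(M_I).
\]
I would construct $G$ and $Y$ explicitly from the rank-one structure of $M_I$, as in the restriction picture \eqref{eq:fsb-boundary-restriction}. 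The aim is for $G$ and the terms of a projective resolution of $Y$ to be sums of rank-one lattices of the form $M_J$ with $J\in\mathcal M_\pi$, where each such $M_J$ either lies in $\cE_{\mathrm s}$ or is a summand of $B$. In that case Step~1, together with $\Phi^{\mathrm s}(Be_a)=\Delta_{I^{\mathrm{src}}_a}$ for the frozen summands, computes $\Phi^{\mathrm s}(G)$ and $\Phi^{\mathrm s}(Y)$ as Laurent monomials in Pl\"ucker coordinates. The resulting identity should be a cancellation among such monomials, mirroring the rank-one sequences $0\to M_{Sac}\to M_{Sab}\oplus M_{Scd}\to M_{Sbd}\to0$, so that the quotient equals $\Delta_I$. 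Since $R_\pi$ is a domain, $\Phi^{\mathrm s}(M_I)=\Phi^{\mathrm s}(G)/\Phi^{\mathrm s}(Y)$ is then forced.

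\emph{Main obstacle.} The hard part is Step~2: arranging that the approximation can be built from rank-one lattices whose characters are already known, rather than from an uncontrolled higher-rank $G$. If this fails, I would fall back on an induction on the Gale distance of $I$ from the necklace. This would use three-term sequences of rank-one lattices, each of which is a triangle with perfect (projective) outer term once the necklace lattices are projective. The characters would then propagate by the multiplicativity rule $\Phi_{\mathrm{loc}}(X)=\Phi_{\mathrm{loc}}(P)\Phi_{\mathrm{loc}}(Y)$.
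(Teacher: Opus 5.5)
There is a genuine gap. For a general $M_I\in\CM(B)$ your argument rests entirely on Step~2, which you leave open, and neither version of it can work as proposed.

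$\Phi_{\mathrm{loc}}$ is multiplicative only along triangles with a perfect end term. So an Auslander--Buchweitz approximation $0\to Y\to G\to M_I\to0$ only trades $M_I$ for a Gorenstein-projective $G$ with $qG\simeq qM_I$. If $M_I\notin\cE_{\mathrm s}$, this $G$ has rank at least two, and you have no formula for its character. Since $\Phi^{\mathrm s}(Y)$ is a frozen Laurent monomial, computing $\Phi^{\mathrm s}(G)$ is just the original problem up to a frozen factor.

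The fallback induction fails for a structural reason. A rank-one sequence $0\to M_{Suw}\to M_{Suv}\oplus M_{Swx}\to M_{Svx}\to0$ with perfect first term yields only the monomial identity $\Phi(M_{Suv})\Phi(M_{Swx})=\Phi(M_{Suw})\Phi(M_{Svx})$. This is consistent with the three-term Pl\"ucker relation only in the degenerate, toggle-type situation $\Delta_{Sux}\Delta_{Svw}=0$. You give no reason why every rank-one lattice outside your Step~1 range is reached from known objects by such sequences.

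The missing observation is already inside your Step~1: the syzygy computation never uses that $M_I$ lies in $\cE_{\mathrm s}$ or $\cE_{\mathrm t}$. Here is the argument for an arbitrary $M_I\in\CM(B)$.
\begin{enumerate}
\item In a projective cover $0\to\Omega_BM_I\to Q_I\to M_I\to0$, the syzygy $\Omega_BM_I$ is Gorenstein-projective \cite[Lemma~10.4]{CKP}.
\item The rotated triangle $Q_I[-1]\to M_I[-1]\to\Omega_BM_I\to Q_I$ has perfect first term, so $\Phi^{\mathrm s}(M_I[-1])=\Phi^{\mathrm s}(\Omega_BM_I)/\Phi^{\mathrm s}(Q_I)$.
\item The rank-one twist formula \cite[Theorem~12.2]{CKP} identifies this ratio with $\tau(\Delta_I)$, for every rank-one lattice. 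This is the partition-function and Marsh--Scott input you allude to.
\item Comparing with \eqref{eq:fsb-all-twist} at $X=M_I$ gives $\tau(\Phi^{\mathrm t}(M_I))=\tau(\Delta_I)$. Since $\tau$ is an automorphism, $\Phi^{\mathrm t}(M_I)=\Delta_I$.
\item \eqref{eq:fsb-all-source-target} then gives $\Phi^{\mathrm s}(M_I)=\Delta_I$.
\end{enumerate}
This is the paper's proof, and it makes both the split into a Frobenius range and Step~2 unnecessary.

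Your direct claim in Step~1, that the partition function evaluates $\Phi^{\mathrm s}(M_I)$ itself to $\Delta_I$, is also not what the cited results give. They compute the character of the syzygy, which is the twisted coordinate. So the passage through the shift and $\tau$ is essential even for Gorenstein-projective $M_I$.
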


\begin{proof}
	Choose a projective cover, that is, a right-minimal projective
	deflation onto $M_I$, and denote its kernel (the first syzygy) by
	$\Omega_BM_I$:
	\begin{equation}
		\label{eq:fsb-rank-one-cover}
		0\longrightarrow\Omega_BM_I\longrightarrow Q_I
		\longrightarrow M_I\longrightarrow0,
		\qquad Q_I\in\add B.
	\end{equation}
	By \cite[Lemma~10.4]{CKP}, $\Omega_BM_I\in\cE_{\mathrm s}$.
	Rotating the associated triangle gives
	\[
	Q_I[-1]\longrightarrow M_I[-1]
	\longrightarrow\Omega_BM_I\longrightarrow Q_I.
	\]
	Since $Q_I[-1]$ is perfect, relative localization yields
	\begin{equation}
		\label{eq:fsb-rank-one-ratio}
		\begin{aligned}
			\Phi^{\mathrm s}(M_I[-1])
			&=\Phi^{\mathrm s}(Q_I[-1])\Phi^{\mathrm s}(\Omega_BM_I)\\
			&=\frac{\Phi^{\mathrm s}(\Omega_BM_I)}{\Phi^{\mathrm s}(Q_I)}.
		\end{aligned}
	\end{equation}
	The rank-one twist formula \cite[Theorem~12.2]{CKP} identifies this
	ratio as
	\begin{equation}
		\label{eq:fsb-known-rank-one-twist}
		\frac{\Phi^{\mathrm s}(\Omega_BM_I)}{\Phi^{\mathrm s}(Q_I)}
		=\tau(\Delta_I).
	\end{equation}
	On the other hand, Lemma~\ref{lem:fsb-all-twist} gives
	\[
	\Phi^{\mathrm s}(M_I[-1])
	=\tau\bigl(\Phi^{\mathrm t}(M_I)\bigr).
	\]
	Since $\tau$ is an automorphism \cite{MS}, then we haveg
	$\Phi^{\mathrm t}(M_I)=\Delta_I$.
	Equation~\eqref{eq:fsb-all-source-target} gives the source identity.
\end{proof}

\subsection{Recovering rigid objects from characters}

Put $T=T^{\mathrm s}$, and let
\[
\bB=\bB_T\in M_{m,r}(\ZZ),
\qquad m=r+s,
\]
where $r$ and $s$ are the numbers of non-projective and
projective-injective indecomposable summands of $T$, respectively.
Reachability is understood with respect to $T$.

An object $Y\in\cC$ is called rigid if
$\Hom_{\cC}(Y,\Sigma Y)=0$.
For $H\in\cE_{\mathrm s}$, choose a triangle
\[
A_1\longrightarrow A_0\longrightarrow qH
\longrightarrow\Sigma A_1,
\qquad A_0,A_1\in\add(qT).
\]
The stable index of $qH$ with respect to $qT$ is
\[
\ind_{qT}(qH)
=[A_0]-[A_1]
\in K_0^{\mathrm{sp}}(\add(qT))\simeq\ZZ^r.
\]
It is independent of the chosen presentation triangle and satisfies
\[
\ind_{qT}(qH)=\operatorname{pr}_{\mathrm{mut}}(\ind_T H),
\]
where
$\operatorname{pr}_{\mathrm{mut}}:\ZZ^{r+s}\to\ZZ^r$
forgets the projective-injective coordinates.

\begin{lemma}[Full column rank]
	\label{lem:fsb-full-rank}
Let $G$ be a reduced plabic graph whose boundary vertices
are labelled $1,\ldots,n$ in clockwise order.
Let $\Sigma$ be its source-labelled or target-labelled seed,
or any seed obtained from one of these seeds by a finite
sequence of mutations. If $\Sigma$ has
	$r$ mutable variables and $s$ frozen variables, then its
	extended exchange matrix
	\[
	\bB_\Sigma\in M_{r+s,r}(\ZZ)
	\]
	has rank $r$. Equivalently, the homomorphism
	\[
	\bB_\Sigma:\ZZ^r\longrightarrow\ZZ^{r+s}
	\]
	is injective.
\end{lemma}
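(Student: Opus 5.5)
The plan has three steps. First, reduce to the initial seed, since rank is mutation-invariant. Then establish full rank at the initial seed either geometrically or through the categorical model.

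\emph{Mutation invariance.} Let $\bB'=\mu_k(\bB)$. A standard identity writes $\bB'=E_k\bB F_k$, where $E_k\in\operatorname{GL}_{r+s}(\ZZ)$ and $F_k\in\operatorname{GL}_r(\ZZ)$. Both are elementary matrices that agree with the identity outside row or column $k$, and they depend on a sign choice $\epsilon=\pm1$. Concretely:
\begin{itemize}
\item $E_k$ has $-1$ in position $(k,k)$ and $[-\epsilon b_{ik}]_+$ in positions $(i,k)$;
\item $F_k$ has $-1$ in position $(k,k)$ and $[\epsilon b_{kj}]_+$ in positions $(k,j)$.
\end{itemize}
Hence $\operatorname{rank}\bB_\Sigma$ is constant on a mutation class. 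It therefore suffices to treat $\Sigma_G^{\rm src}$ and $\Sigma_G^{\rm tgt}$. These two seeds share the same matrix $\bB_G$, so only one matrix has to be handled.

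\emph{Full rank of $\bB_G$.} I would use the cluster structure on $R_\pi$. By \eqref{eq:fsb-source-target-cluster-rings}, the extended cluster $(\Delta_{I_F^{\rm tgt}})_F$ is algebraically independent and generates $\operatorname{Frac}(R_\pi)$. Suppose $\bB_Gv=0$ for some nonzero $v\in\ZZ^r$. Then the Laurent monomial $\prod_k\widehat y_k^{v_k}=X^{\bB_Gv}$ equals $1$.

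The contradiction comes from the torus action. The $y$-variables $\widehat y_k$ are functions on the open positroid variety that are invariant under the torus rescaling columns and under global scaling. They descend to coordinates on $\Pi^\circ_\pi/T$, and by the Muller--Speyer/Talaska--Williams parametrization this quotient has dimension
\[
\dim\widehat\Pi^\circ_\pi-(\text{number of distinct necklace terms})=r.
\]
The face-weight (boundary measurement) parametrization $\Pi^\circ_{\pi,>0}\cong\mathbb R_{>0}^{F(G)-1}$ shows that the $y$-type face weights (products around faces) are algebraically independent modulo the torus. The hatted variables are exactly these face weights up to the standard monomial change of variables from face labels to face weights. So $\widehat y_1,\dots,\widehat y_r$ are algebraically independent, which contradicts $\prod_k\widehat y_k^{v_k}=1$.

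\emph{Categorical alternative.} Alternatively, injectivity follows from the Frobenius model $\cE_{\mathrm s}$, where $\bB_T$ is the matrix of the exchange sequences. Take the minimal projective resolutions over $\End(T)^{\rm op}$ of the simple modules at the mutable vertices. The category $\cE_{\mathrm s}$ has global dimension bounded by three and its algebra $A_D$ is internally $3$-Calabi--Yau. These resolutions then show that the classes $\bB_Te_k$, viewed in $K_0(\proj A_D)$, are the images of the simple classes $[S_k]$. Those images are linearly independent because the Cartan map $K_0(\fgmod A_D/\mathfrak m)\to K_0(\proj A_D)$ is injective, by finite global dimension (the Euler form). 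This route is cleaner and I would present it, citing \cite{Pressland} for the internal Calabi--Yau property and finiteness of the global dimension of $A_D$.

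\emph{Main obstacle.} The hard step is justifying that the exchange columns equal the images of the simples in $K_0(\proj A_D)$. This is the standard computation in which $\Hom(T,-)$ applied to the exchange conflations gives the resolution
\[
0\to P_k\to P(E^-)\to P(E^+)\to P_k\to S_k\to 0,
\]
whose alternating sum is $-\bB e_k$. Injectivity of the Cartan map then requires care because $A_D$ is infinite-dimensional. I would handle this by working with the $t$-adic filtration, or by pairing with the Euler form against $K_0$ of finite-length modules, which is nondegenerate on the simples.
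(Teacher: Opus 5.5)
Your reduction to a single matrix is fine. The factorization $\mu_k(\bB)=E_k\bB F_k$ with $E_k\in\operatorname{GL}_{r+s}(\ZZ)$ and $F_k\in\operatorname{GL}_r(\ZZ)$ is the standard Berenstein--Fomin--Zelevinsky one. The source and target seeds share $\bB_G$. Splicing the images of the exchange conflations under $\Hom(T,-)$ does give $[S_k]=-\bB e_k$ in $K_0(\proj A_D)$.

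The gap is the next step, which carries the entire content of the lemma. Finite global dimension does \emph{not} make the map from classes of finite-length modules to $K_0(\proj A_D)$ injective, because $A_D$ is a $Z$-order rather than a finite-dimensional algebra. Indeed, $t$ acts injectively on the free $Z$-module $A_D$, so the sequence $0\to A_D\xrightarrow{t}A_D\to A_D/tA_D\to0$ sends the nonzero finite-length class of $A_D/tA_D$ to $[A_D]-[A_D]=0$. The invertible-Cartan-matrix argument you have in mind is the Hom-finite one, as for Geiss--Leclerc--Schr\"oer's categories.

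Pairing with finite-length modules does not rescue it. Since $\dim\Hom(P_i,S_j)=\delta_{ij}$, the pairing of $[S_k]$ with $[S_j]$ is just $-b_{jk}$. So \emph{nondegeneracy on the mutable simples} is a verbatim restatement of full column rank of $\bB$, and the argument is circular. Passing to the $t$-adic filtration does not change this; the rank has to come from outside input.

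Your geometric route has the same feature. The $\widehat y_k=X^{\bB e_k}$ are Laurent monomials in algebraically independent coordinates, so their algebraic independence is \emph{equivalent} to the lemma. The claim that they descend to coordinates on $\Pi^\circ_\pi/T$ is therefore the conclusion, not an input.

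The justification you give for it is also inaccurate: face Pl\"ucker coordinates of a point are not monomials in the face weights of that same point. The correct input is the Muller--Speyer theorem \cite{MS}: face coordinates become Laurent monomials in the edge weights only after applying the twist. From that one can identify the pulled-back $\widehat y_k$ with internal face weights, up to inversion. Then use that the twist is an automorphism and that the boundary-measurement map is an open torus chart. Done this way, it is a valid alternative proof, but it rests on that precise theorem rather than a ``standard change of variables''.

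The paper avoids all of this. It takes a Le-diagram seed, whose dehomogenized extended exchange matrix has full column rank by \cite[Proposition~4.9]{GL}. Restoring the omitted boundary face adds one frozen row, which cannot lower the rank. It then transports the rank to $G$ via Postnikov's move-connectivity \cite[Theorem~13.4]{Postnikov}: square moves act by mutation, and the other moves leave $\bB$ unchanged.
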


\begin{proof}
	We first notice that matrix mutation preserves rank. Choose a Le-diagram representing the same positroid as $G$,
	and let $G_{\mathrm{Le}}$ be its associated reduced plabic
	graph. By \cite[Proposition~4.9]{GL}, the extended exchange
	matrix $\bB_{\mathrm{Le}}^{\mathrm{deh}}$ of the Le seed has full column rank $r$.
	
	In the conventions of \cite[Section~1.3]{GL}, the distinguished
	boundary face $F_0$ is omitted from the quiver. Restoring this
	face gives the homogeneous seed used here and adds one frozen
	row to the exchange matrix. Thus, after ordering the faces
	and choosing the same sign convention,
	\[
	\bB_{\mathrm{Le}}
	=
	\begin{pmatrix}
		\bB_{\mathrm{Le}}^{\mathrm{deh}}\\
		b_0
	\end{pmatrix}
	\]
	for some row vector $b_0$. Consequently,
	\[
	r
	=\operatorname{rank}\bB_{\mathrm{Le}}^{\mathrm{deh}}
	\leq\operatorname{rank}\bB_{\mathrm{Le}}
	\leq r,
	\]
	and hence $\operatorname{rank}\bB_{\mathrm{Le}}=r$.
	
	The graphs $G$ and $G_{\mathrm{Le}}$ have the same decorated
	trip permutation. By \cite[Theorem~13.4]{Postnikov}, they are
	connected by plabic moves. Square moves induce matrix mutation
	at the corresponding mutable faces, while the other moves
	preserve the extended exchange matrix under the natural
	identification of faces. Combined with the fact that mutation preserves rank, this yields
	\[
	\operatorname{rank}\bB_G=r.
	\]
	Source and target labellings assign coordinates to the same
	faces of the same dual ice quiver. Thus their exchange matrices
	agree when the faces are ordered identically, and the rank
	statement applies to both seeds. Finally, every additional seed mutation preserves rank.
	Therefore $\operatorname{rank}\bB_\Sigma=r$ for every seed
	$\Sigma$ in the statement.
\end{proof}

\begin{lemma}
	\label{lem:fsb-pointed-recovery}
	Let $\bB\in M_{m,r}(\ZZ)$ have full column rank, and let
	$x_1,\ldots,x_m$ be algebraically independent indeterminates.
	For $v=(v_1,\ldots,v_m)\in\ZZ^m$, write
	\[
	x^v=\prod_{i=1}^m x_i^{v_i},
	\qquad
	x^{\bB}=(x^{\bB e_1},\ldots,x^{\bB e_r}),
	\]
	where $e_1,\ldots,e_r$ are the standard basis vectors of
	$\ZZ^r$.
	
	Suppose that $g,h\in\ZZ^m$ and
	$F,H\in\ZZ[y_1,\ldots,y_r]$ satisfy
$	F(0,\ldots,0)=H(0,\ldots,0)=1.$
	If
	\begin{equation}
		\label{eq:fsb-pointed-equality}
		x^gF(x^{\bB})=x^hH(x^{\bB})
		\quad\text{in }\ZZ[x_1^{\pm1},\ldots,x_m^{\pm1}],
	\end{equation}
	then
	\[
	g=h
	\qquad\text{and}\qquad
	F=H.
	\]
	Thus an expression $x^gF(x^{\bB})$ with $F(0)=1$
	uniquely determines both its monomial prefactor $x^g$
	and its polynomial factor $F$.
\end{lemma}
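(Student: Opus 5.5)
The plan is to reduce to a leading-term argument with respect to a suitable monomial order. Since $\bB$ has full column rank, the map $e\mapsto \bB e$ from $\ZZ^r$ to $\ZZ^m$ is injective. I will choose a linear functional $\lambda:\ZZ^m\to\mathbb R$ with $\lambda(\bB e_j)>0$ for every $j$. Because the columns are linearly independent over $\mathbb Q$, there is a rational vector $w$ with $w^{\mathsf T}\bB=(1,\ldots,1)$: the transpose $\bB^{\mathsf T}$ has rank $r$, so it is surjective onto $\mathbb Q^r$. Setting $\lambda(v)=w^{\mathsf T}v$ gives $\lambda(\bB e)=e_1+\cdots+e_r>0$ for all $0\neq e\in\mathbb N^r$.

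Next I expand both sides of \eqref{eq:fsb-pointed-equality}. Write $F=\sum_e f_e y^e$, so that $x^gF(x^{\bB})=\sum_e f_e x^{g+\bB e}$. By injectivity of $\bB$ on $\ZZ^r$, the exponents $g+\bB e$ are pairwise distinct for distinct $e$, so there is no cancellation. Hence the Laurent polynomial $x^gF(x^{\bB})$ has support exactly $\{g+\bB e: f_e\neq0\}$, with coefficient $f_e$ at $g+\bB e$. The term with $e=0$ has coefficient $1$. Every other term satisfies $\lambda(g+\bB e)>\lambda(g)$. So $g$ is the unique element of the support minimizing $\lambda$, and the same holds for $h$ on the right-hand side.

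Comparing the two supports, which must coincide, gives $g=h$. Then the coefficient of $x^{g+\bB e}$ on each side gives $f_e=h_e$ for all $e$, again by injectivity, so $F=H$.

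The only real point is the existence of the separating functional $\lambda$ together with the no-cancellation observation; both follow directly from full column rank. Everything else is coefficient comparison in the Laurent ring, which is legitimate because the $x_i$ are algebraically independent.
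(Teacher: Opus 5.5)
Your proof is correct, but it reaches $g=h$ by a different mechanism than the paper.

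The paper uses no auxiliary functional. Since $x^g$ has coefficient $c_0=1$ on the left, it must also appear on the right, so $g=h+\bB v$ for some $v\in\mathbb N^r$ with $d_v\neq0$. Symmetrically, $h=g+\bB u$ for some $u\in\mathbb N^r$. Adding gives $\bB(u+v)=0$, and injectivity together with nonnegativity forces $u=v=0$. The final coefficient comparison giving $F=H$ is the same as yours.

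The two routes need the same input. For the step $g=h$, the paper uses only that $\bB$ kills no nonzero vector of $\mathbb N^r$, with no linear algebra over $\mathbb Q$. You instead solve $w^{\mathsf T}\bB=(1,\ldots,1)$ over $\mathbb Q$ to get a functional positive on all columns. By Gordan's alternative these two conditions are equivalent, so neither argument is more general.

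What your version adds is an intrinsic description of the prefactor. The exponent $g$ is the unique $\lambda$-minimal exponent in the support of the Laurent polynomial, which is the familiar leading-term (pointedness) picture. So $g$ is read off from the element itself rather than by comparing two expressions.

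One cosmetic point: you reuse $e_1,\ldots,e_r$ for the coordinates of $e$, and $h_e$ for the coefficients of $H$. These clash with the standard basis vectors and with the exponent vector $h$, so rename them.
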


\begin{proof}
	Write
	\[
	F(y)=\sum_{u\in\mathbb N^r}c_u y^u,
	\qquad
	H(y)=\sum_{v\in\mathbb N^r}d_v y^v,
	\]
	where only finitely many coefficients are nonzero and
	$c_0=d_0=1$.
	Since $\bB$ has full column rank, the map
	\[
	\ZZ^r\longrightarrow\ZZ^m,
	\qquad u\longmapsto\bB u
	\]
	is injective. Consequently, distinct monomials in $F$ or $H$
	remain distinct after the substitution $y\mapsto x^{\bB}$.
	
	The assumed equality becomes
	\[
	\sum_{u\in\mathbb N^r}c_u x^{g+\bB u}
	=
	\sum_{v\in\mathbb N^r}d_v x^{h+\bB v}.
	\]
	The coefficient of $x^g$ on the left is $c_0=1$.
	Hence there exists $v\in\mathbb N^r$ with $d_v\ne0$ such that
	\[
	g=h+\bB v.
	\]
	Similarly, the coefficient of $x^h$ on the right is $d_0=1$,
	so there exists $u\in\mathbb N^r$ with $c_u\ne0$ such that
	\[
	h=g+\bB u.
	\]
	Adding these identities gives
	$\bB(u+v)=0.$ Injectivity of $\bB$ implies $u+v=0$. Since every coordinate
	of $u$ and $v$ is nonnegative, we obtain $u=v=0$, and therefore
	$g=h$.
	
	Cancelling the invertible monomial $x^g$ now yields
	\[
	F(x^{\bB})=H(x^{\bB}).
	\]
	The Laurent monomials $x^{\bB e}$, for $e\in\mathbb N^r$,
	are pairwise distinct and hence linearly independent over $\ZZ$.
	Comparing their coefficients gives $c_e=d_e$ for every $e$,
	so $F=H$.
\end{proof}

\begin{lemma}
	\label{lem:fsb-singular-rigidity}
	For $M,N\in\CM(B)$, there are natural maps
	\begin{equation}
		\label{eq:fsb-ext-surjection}
		\Ext_C^1(M,N)\hookleftarrow\Ext_B^1(M,N)
		\twoheadrightarrow\Hom_{\cC}(qM,\Sigma qN).
	\end{equation}
	If $I$ and $J$ are weakly separated, then
	\[
	\Hom_{\cC}(qM_I,\Sigma qM_J)=0.
	\]
	In particular, every $qM_I$ is rigid.
\end{lemma}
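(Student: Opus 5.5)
We construct the two maps in \eqref{eq:fsb-ext-surjection} first, then reduce the vanishing to a known computation of extensions between rank-one $C$-lattices.

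\emph{The injection into $\Ext^1_C$.} Restriction along $C\to B$ sends a short exact sequence of $B$-lattices to one of $C$-lattices. This gives a natural map $\Ext^1_B(M,N)\to\Ext^1_C(M,N)$. Suppose an extension $0\to N\to E\to M\to0$ of $B$-lattices splits over $C$. A $C$-linear section $M\to E$ is then a morphism in $\CM(C)$ between objects in the essential image of $\CM(B)$. By the full faithfulness in \eqref{eq:fsb-boundary-restriction}, it is $B$-linear, so the extension splits over $B$. Hence the map is injective.

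\emph{The surjection onto $\Hom_{\cC}(qM,\Sigma qN)$.} We use the singularity category $\cC=\Dsg(B)$ together with $\Hom_{\Db(\fgmod B)}(M,N[1])=\Ext^1_B(M,N)$. The quotient functor $q$ gives a natural map $\Ext^1_B(M,N)\to\Hom_{\cC}(qM,\Sigma qN)$. To show it is surjective, replace $M$ by a Gorenstein-projective approximation. Since $B$ is Iwanaga--Gorenstein of dimension at most two, $\Omega^2_BM\in\cE_{\mathrm s}$, and \cite[Lemma~10.4]{CKP} already gives $\Omega_BM_I\in\cE_{\mathrm s}$ for rank-one lattices. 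Alternatively, compute $\Hom_{\cC}$ via Buchweitz's description as the stable Hom from a Gorenstein-projective object. A morphism in $\cC$ from $qM$ to $\Sigma qN$ is represented by a roof whose cone is perfect. For a lattice $M$ this roof can be chosen as $M\leftarrow X\to N[1]$, with $X$ obtained from a perfect complex and $M$. Because perfect complexes have vanishing $\Hom$ into $N[i]$ for large $i$ only up to finitely many degrees, the standard argument (Buchweitz/Orlov) shows that every morphism is realized by an honest degree-one morphism after passing to a syzygy.

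The cleanest route I would try is the following. Take the conflation $0\to\Omega M\to Q\to M\to0$. In $\cC$ it identifies $qM\simeq\Sigma q\Omega M$, and $\Hom_{\cC}(qM,\Sigma qN)\simeq\Hom_{\cC}(q\Omega M,qN)$. Now use that $\Omega M$ is Gorenstein projective, so $\Hom_{\cC}(q\Omega M,qN)=\underline{\Hom}_B(\Omega M,N)$, which is a quotient of $\Hom_B(\Omega M,N)$. Finally, the long exact sequence for $\Hom_B(-,N)$ gives a surjection $\Hom_B(\Omega M,N)\twoheadrightarrow\Ext^1_B(M,N)$. Its kernel consists of the maps factoring through $Q$, which are zero in the stable category. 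Consequently $\underline{\Hom}_B(\Omega M,N)$ is a quotient of $\Ext^1_B(M,N)$, compatibly with $q$. Checking this compatibility is where I expect the main bookkeeping obstacle. Specifically, one must show that the map induced on $\Ext^1_B(M,N)$ agrees with $q$ and that the stable Hom from a Gorenstein-projective object into an arbitrary lattice $N$ computes $\Hom_{\cC}$. The latter is Buchweitz's theorem for Gorenstein rings, applied with $N$ possibly not Gorenstein projective.

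\emph{The vanishing statement.} For weakly separated $I,J$, it remains to show $\Ext^1_B(M_I,M_J)=0$. By the injection it suffices to show $\Ext^1_C(M_I,M_J)=0$. This is the rank-one computation of Jensen--King--Su (as used in \cite{CKP}): $\Ext^1_C(M_I,M_J)=0$ precisely when $I$ and $J$ are weakly separated. The rigidity of $qM_I$ then follows by taking $J=I$, since a set is weakly separated from itself.
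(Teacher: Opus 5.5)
Your argument is correct, and its overall shape matches the paper: the injection comes from full faithfulness of restriction, the surjection from a syzygy argument, and the vanishing from \cite{JKS}. The difference is in how the surjection is produced.

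The paper shifts in the \emph{second} variable. From a projective cover $0\to\Omega N\to Q_N\to N\to 0$ and $\Ext^2_B(M,Q_N)\simeq\Ext^1_B(\Omega M,Q_N)=0$, it obtains $\Ext^1_B(M,N)\twoheadrightarrow\Ext^2_B(M,\Omega N)\simeq\Ext^1_B(\Omega M,\Omega N)$. Since $\Omega M,\Omega N\in\GP B$, this last group is $\Hom_{\cC}(q\Omega M,\Sigma q\Omega N)\simeq\Hom_{\cC}(qM,\Sigma qN)$. That identification needs only the Frobenius structure and Buchweitz's equivalence $\underline{\GP B}\simeq\cC$ between Gorenstein-projectives.

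You shift only in the first variable, using the elementary surjection $\Ext^1_B(M,N)\simeq\Hom_B(\Omega M,N)/\operatorname{im}\Hom_B(Q,N)\twoheadrightarrow\underline{\Hom}_B(\Omega M,N)$. This is more transparent and closer to identifying the map with $q$. The cost is that it relies on $\underline{\Hom}_B(G,N)\simeq\Hom_{\cC}(qG,qN)$ for $G\in\GP B$ and $N$ not Gorenstein-projective. That comparison is true, and here it reduces to the Gorenstein-projective case. Because $\Ext^1_B(\Omega M,Q_N)=0$, lifting along the two projective covers gives $\underline{\Hom}_B(\Omega M,N)\simeq\underline{\Hom}_B(\Omega^2M,\Omega N)$, which is a stable Hom between Gorenstein-projectives.

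Two smaller points:
\begin{enumerate}
\item For the surjection as stated for all $M,N\in\CM(B)$, you need $\Omega_BM\in\GP B$ for \emph{every} lattice $M$, not only rank-one ones. This is exactly what \cite[Lemma~10.4]{CKP} supplies in the paper.
\item The roof/Orlov paragraph and the compatibility with $q$ can be dropped. For the vanishing, any surjection out of $\Ext^1_B(M_I,M_J)=0$ suffices.
\end{enumerate}
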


\begin{proof}
	Suppose an extension
	\[
	0\longrightarrow N\longrightarrow E\xrightarrow{p}M
	\longrightarrow0
	\]
	splits after restriction to $C$. By full faithfulness, its
	$C$-linear splitting is $B$-linear. This proves the injectivity
	of the first map.
	
	Choose a projective cover
	$0\to\Omega N\to Q_N\to N\to0$. By \cite[Lemma~10.4]{CKP}, $\Omega M$ and $\Omega N$ are
	Gorenstein-projective. Thus
	\[
	\Ext_B^2(M,Q_N)=\Ext_B^1(\Omega M,Q_N)=0.
	\]
	The associated long exact sequence and dimension shifting give
	\[
	\begin{aligned}
		\Ext_B^1(M,N)
		&\twoheadrightarrow\Ext_B^2(M,\Omega N)\\
		&\simeq\Ext_B^1(\Omega M,\Omega N)\\
		&\simeq\Hom_{\cC}(q\Omega M,\Sigma q\Omega N)\\
		&\simeq\Hom_{\cC}(qM,\Sigma qN).
	\end{aligned}
	\]
	Here we use the stable Gorenstein-projective realization of $\cC$.
	For weakly separated $I,J$, the vanishing
	$\Ext_C^1(M_I,M_J)=0$ follows from \cite{JKS}.
	Equation~\eqref{eq:fsb-ext-surjection} gives the assertion.
\end{proof}
We now use cluster characters to detect reachability in the stable
category. Each source cluster monomial is the character of a
reachable rigid object. If the character of another object agrees
with this monomial up to a frozen Laurent factor,
Lemma~\ref{lem:fsb-pointed-recovery} shows that their indices have
the same mutable coordinates, and hence that their stable indices
coincide. Since rigid objects are determined by their stable
indices, this gives the following recognition criterion.
\begin{proposition}
	\label{prop:fsb-character-reachability}
	Let $X\in\Db(\fgmod B)$. If $qX$ is rigid and
	$\Phi^{\mathrm s}(X)$ is a source cluster monomial multiplied by
	a frozen Laurent monomial, then $qX$ is reachable rigid.
	In particular, this holds for every rank-one module
	$M_I\in\CM(B)$.
\end{proposition}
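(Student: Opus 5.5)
We compare the character of $X$ with the character of a reachable object and read off the stable index. Write
\[
\Phi^{\mathrm s}(X)=z^{a}\,\Phi^{\mathrm s}(U')
\]
where $U'\in\add U$ for a cluster-tilting object $U$ reachable from $T=T^{\mathrm s}$. Here $\Phi^{\mathrm s}(U')=\eta_{\mathrm s}(\CC_T(U'))$ is the given source cluster monomial; this uses Assumption~\ref{ass:fsb-character-realization}, which matches cluster monomials with characters of objects in $\add U$ for reachable $U$. The frozen Laurent factor $z^a$ is the character of a perfect complex $P^\bullet$ built from projective-injective summands, by the formula $\Phi_{\mathrm{loc}}(P^\bullet)=\prod_j\Phi(P^j)^{(-1)^j}$. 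Hence $\Phi^{\mathrm s}(X)=\Phi^{\mathrm s}(U'\oplus P^\bullet)$, using multiplicativity of the localized character on direct sums.

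Next, pull back along $\eta_{\mathrm s}$, which is injective. Both sides are localized characters $x^{I_T(-)}F_T^{q(-)}(\widehat y)$ in the Laurent ring of the initial seed. Each $F$-polynomial has constant term $1$, and $\bB_T$ has full column rank by Lemma~\ref{lem:fsb-full-rank}. Lemma~\ref{lem:fsb-pointed-recovery} then gives
\[
I_T(j_{\mathrm s}^{-1}X)=I_T(U'\oplus P^\bullet).
\]
Forget the frozen coordinates with $\operatorname{pr}_{\mathrm{mut}}$. By Lemma~\ref{lem:fsb-projective-correction}, $\operatorname{pr}_{\mathrm{mut}}I_T(Y)$ equals the stable index $\ind_{qT}(qY)$ of a Frobenius representative of $qY$, because the projective corrections affect only frozen coordinates. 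Therefore
\[
\ind_{qT}(qX)=\ind_{qT}(qU').
\]

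To conclude, we use the fact that rigid objects in the Hom-finite $2$-Calabi--Yau category $\cC$ with a cluster-tilting object are determined up to isomorphism by their index. This is Dehy--Keller's theorem, valid under the realization of Assumption~\ref{ass:fsb-QP-realization}. Since both $qX$ and $qU'$ are rigid, we get $qX\simeq qU'\in\add qU$, so $qX$ is reachable rigid.

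For the rank-one case, $qM_I$ is rigid by Lemma~\ref{lem:fsb-singular-rigidity}, since $I$ is weakly separated from itself. By Theorem~\ref{thm:fsb-rank-one-normalization}, $\Phi^{\mathrm s}(M_I)=\Delta_I$. The cluster-monomial input is \cite[Theorem~8.9]{MMMSV}: every nonzero Pl\"ucker coordinate is a source cluster monomial, possibly times frozen factors after inverting the necklace coordinates. The main point to check carefully is this citation: that \cite{MMMSV} really gives cluster monomials for the \emph{source} structure in our homogeneous, frozen-inverted normalization. If it is stated for the target structure instead, first transfer it through the source--target quasi-coincidence \cite[Theorem~6.16]{Pressland}. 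Quasi-coincidence preserves cluster monomials up to frozen Laurent factors, which is exactly the form the hypothesis allows.
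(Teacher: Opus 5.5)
Your proof is correct and follows the paper's argument. Both compare the two characters in the initial source seed, use full column rank and Lemma~\ref{lem:fsb-pointed-recovery} to equate the indices, project to the mutable coordinates, and finish with the Dehy--Keller uniqueness of rigid objects. The only cosmetic difference is bookkeeping: you absorb the frozen factor into a perfect complex $P^\bullet$ on the reachable side, whereas the paper removes the projective correction from $X$ through a Frobenius representative $H$.

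For the rank-one case, the paper cites \cite[Theorem~8.9]{MMMSV} directly for the source structure. It also records that $\Delta_I\neq0$ by \eqref{eq:fsb-boundary-restriction}, a point you should state explicitly. Your fallback through \cite[Theorem~6.16]{Pressland} is therefore unnecessary, though it would also work.
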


\begin{proof}
	Put $T=T^{\mathrm s}$, and write
	\[
	x=(x_1,\ldots,x_r,z_1,\ldots,z_s),
	\qquad
	\bB=\bB_T.
	\]
	For $a\in\ZZ^s$, denote its inclusion into the frozen
	coordinates by
	\[
	\iota(a)=(0,\ldots,0,a_1,\ldots,a_s)\in\ZZ^{r+s}.
	\]
	Thus $x^{\iota(a)}=z^a$.
	
	By hypothesis, there exist a source seed $t$,
	integers $d_i\geq 0$, and $b\in\ZZ^s$ such that
	\[
	\eta_{\mathrm s}^{-1}\bigl(\Phi^{\mathrm s}(X)\bigr)
	=
	z^b\prod_{i=1}^r x_{i;t}^{d_i}.
	\]
	Choose a reachable cluster-tilting object
	$U=\bigoplus_{i=1}^r U_i
	\oplus\bigoplus_{p=1}^s P_p$
	corresponding to this seed, with $U_i$ corresponding to
	$x_{i;t}$, and put
$	L=\bigoplus_{i=1}^r U_i^{\oplus d_i}.$

	The character correspondence and multiplicativity give
	\[
	\eta_{\mathrm s}^{-1}\bigl(\Phi^{\mathrm s}(L)\bigr)
	=
	\prod_{i=1}^r x_{i;t}^{d_i}.
	\]
	Moreover, $qL\in\add(qU)$, so $qL$ is reachable rigid.
	
	Choose $H\in\cE_{\mathrm s}$ with $qH\simeq qX$.
	Lemma~\ref{lem:fsb-projective-correction} gives
	$a\in\ZZ^s$ such that
	\[
	\kappa\bigl(j_{\mathrm s}^{-1}X\bigr)
	=
	[H]+\sum_{p=1}^s a_p[P_p].
	\]
	By the localized character formula, we have
	\[
	\Phi^{\mathrm s}(X)
	=
	\eta_{\mathrm s}(z^a)\Phi^{\mathrm s}(H).
	\]
	Combining these identities and applying
	$\eta_{\mathrm s}^{-1}$ yields
	\[
	z^a\CC_T(H)=z^b\CC_T(L).
	\]
	Expanding both characters in the initial source seed, we obtain
	\begin{equation*}
		x^{\ind_T H+\iota(a)}
		F_T^H(x^{\bB})
		=
		x^{\ind_T L+\iota(b)}
		F_T^L(x^{\bB}).
	\end{equation*}
	
	By Lemma~\ref{lem:fsb-full-rank}, $\bB$ has full
	column rank. Since
$	F_T^H(0)=F_T^L(0)=1,$ then
	Lemma~\ref{lem:fsb-pointed-recovery} implies
	\[
	\ind_T H+\iota(a)
	=
	\ind_T L+\iota(b).
	\]
	Le
	$\operatorname{pr}_{\mathrm{mut}}:
	\ZZ^{r+s}\longrightarrow\ZZ^r$
	be the projection onto the mutable coordinates.
	Applying this projection gives
	\begin{align*}
		\ind_{qT}(qH)
		&=
		\operatorname{pr}_{\mathrm{mut}}(\ind_T H)\\
		&=
		\operatorname{pr}_{\mathrm{mut}}(\ind_T L)
		=
		\ind_{qT}(qL).
	\end{align*}
	The object $qH$ is rigid because $qH\simeq qX$,
	and $qL$ is rigid because it belongs to $\add(qU)$.
	Rigid objects are determined up to isomorphism by their
	indices; see \cite[Section~2.3]{DehyKeller}.
	Consequently,
	\[
	qX\simeq qH\simeq qL.
	\]
	Since $qL$ is reachable rigid, this proves the first assertion.
	
	Now let $M_I\in\CM(B)$ be a rank-one module.
	Lemma~\ref{lem:fsb-singular-rigidity} shows that $qM_I$
	is rigid, while
	Theorem~\ref{thm:fsb-rank-one-normalization} gives
	$\Phi^{\mathrm s}(M_I)=\Delta_I.$
	
	Since $M_I\in\CM(B)$, the coordinate $\Delta_I$ is nonzero
	in $R_\pi$ by \eqref{eq:fsb-boundary-restriction}.
	By \cite[Theorem~8.9]{MMMSV}, it can be written as
	\[
	\Delta_I=p_Ic_I,
	\qquad p_I\in\mathscr F_\pi,
	\]
	where $c_I$ is a monomial in mutable variables belonging
	to a single source cluster.
	Thus $M_I$ satisfies both hypotheses of the first assertion,
	and $qM_I$ is reachable rigid.
\end{proof}

\begin{lemma}
	\label{lem:fsb-mutable-irreducible}
	Let $\mathcal A=\mathcal A(\Sigma)$ be a cluster algebra
	with inverted frozen variables, as in
	Definition~\ref{def:fsb-cluster-structure}.
	Write $z_1,\ldots,z_s$ for its frozen variables.
	Assume that the extended exchange matrix of every seed
	has no zero column.
	Then every mutable cluster variable is an irreducible
	nonunit of $\mathcal A$. More precisely, if $x$ is a
	mutable cluster variable, then
	\[
	x\notin\mathcal A^\times,
	\]
	and every factorization
	$$x=fg,\qquad f,g\in\mathcal A,$$
	has at least one factor in $\mathcal A^\times$.
	Here $\mathcal A^\times$ denotes the group of units
	of $\mathcal A$.
\end{lemma}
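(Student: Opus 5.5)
We argue in the Laurent ring of a seed containing $x$, using the Laurent phenomenon together with a degree argument.

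\emph{Step 1: reduction to an initial variable.} Choose a seed $t$ in which $x=x_{k;t}$. Mutation-invariance of $\mathcal A$ lets us replace $\Sigma$ by $t$, so $x=x_k$ is a variable of the initial seed. By the Laurent phenomenon,
\[
\mathcal A\subseteq L:=\kk[x_1^{\pm1},\ldots,x_r^{\pm1},z_1^{\pm1},\ldots,z_s^{\pm1}].
\]

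\emph{Step 2: $x$ is not a unit.} Let $x_k'$ be the mutated variable, with exchange relation
\[
x_kx_k'=P+Q,\qquad P=x^{[\bB e_k]_+},\quad Q=x^{[-\bB e_k]_+}.
\]
Since the $k$-th column is nonzero, $P\neq Q$. Suppose $x_k$ were a unit of $\mathcal A$. Then $x_k^{-1}\in\mathcal A$ is Laurent in the mutated seed $\mu_k(t)$. Now $x_k=(P+Q)/x_k'$, where $P,Q$ involve only the variables $x_j$ with $j\neq k$, which are shared with $\mu_k(t)$. Hence $x_k^{-1}=x_k'/(P+Q)$ would be a Laurent polynomial in the seed $\mu_k(t)$. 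This forces the binomial $P+Q$ to be a unit in a Laurent polynomial ring. A binomial with two distinct monomials is never a monomial, so this is a contradiction.

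\emph{Step 3: irreducibility.} Suppose $x_k=fg$ with $f,g\in\mathcal A\subseteq L$. Units of $L$ are monomials, and $L$ is a UFD in which $x_k$ is a unit. So we cannot argue in $L$ alone; we use the two Laurent expansions simultaneously.
\begin{itemize}
\item In $L$, the factorization of the unit $x_k$ forces $f=c\,x^{\alpha}$ and $g=c^{-1}x^{-\alpha+e_k}$ for some $c\in\kk^\times$ and $\alpha\in\ZZ^{r+s}$.
\item Expand $f$ in the mutated seed $\mu_k(t)$ by substituting $x_k=(P+Q)/x_k'$, which gives $f=c\,x^{\alpha'}(P+Q)^{\alpha_k}(x_k')^{-\alpha_k}$. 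For this to be Laurent we need $\alpha_k\ge0$. Likewise, $g$ being Laurent requires $1-\alpha_k\geq 0$. Hence $\alpha_k\in\{0,1\}$.
\end{itemize}
Without loss of generality $\alpha_k=0$, so $f=c\,x^{\alpha}$ is a monomial in the $x_j$ with $j\ne k$ and the $z_p$.

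We must show that this $f$ is a unit of $\mathcal A$. Frozen factors are units. The remaining task is to exclude $f$ having a nonzero exponent in some mutable $x_j$ with $j\ne k$. If $\alpha_j>0$, then $g$ contains $x_j^{-\alpha_j}$; if $\alpha_j<0$, then $f$ itself contains $x_j^{\alpha_j}$. In either case we mutate at $j$. Column $j$ is nonzero, so the substitution $x_j=(P_j+Q_j)/x_j'$ introduces a negative power of the binomial $P_j+Q_j$, which is not a monomial. Since $x_k$ remains a variable of $\mu_j(t)$, this contradicts Laurentness in $\mu_j(t)$. Therefore $\alpha$ is supported on the frozen coordinates, and $f$ is a unit of $\mathcal A$.

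The main obstacle is this last step. It requires checking carefully that the binomial factors cannot cancel. The factor $P_j+Q_j$ is coprime to all variables of $\mu_j(t)$, because it is a sum of two distinct monomials with no common variable, so it has no monomial divisor. The hypothesis that no column of the extended exchange matrix vanishes is exactly what guarantees $P_j\ne Q_j$ in every seed.
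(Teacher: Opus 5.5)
Your proof is correct and is essentially the paper's argument. Non-unitness comes from $x_k^{-1}=x_k'/E_k$ and uniqueness of Laurent expansion in $x_k'$; irreducibility comes from writing $f=c\,x^{a}z^{b}$, $g=c^{-1}x^{e_k-a}z^{-b}$ and expanding in each once-mutated seed $\mu_\ell(t)$ to force $0\le a_\ell\le\delta_{k\ell}$. The paper treats every mutable direction $\ell$ uniformly, whereas you handle $\ell=k$ first and then the directions $j\neq k$ by a sign case split, which is only a difference of bookkeeping.
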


\begin{proof}
	Fix a seed containing the given mutable cluster variable,
	and denote its extended cluster by
	\[
	(x_1,\ldots,x_r,z_1,\ldots,z_s).
	\]
	We prove the assertion for $x_k$.
	Put
	\[
	\mathcal L
	=
	\kk[z_1^{\pm1},\ldots,z_s^{\pm1}]
	[x_1^{\pm1},\ldots,x_r^{\pm1}].
	\]
	For each mutable index $\ell$, set
	\[
	\mathcal L_\ell
	=
	\kk[z_1^{\pm1},\ldots,z_s^{\pm1}]
	[x_j^{\pm1}:1\leq j\leq r,\ j\neq\ell],
	\]
	and write the exchange relation at $\ell$ as
	\[
	x_\ell x_\ell'=E_\ell,
	\qquad E_\ell\in\mathcal L_\ell.
	\]
	Since the $\ell$-th extended exchange column is nonzero,
	the two monomials in $E_\ell$ are distinct.
	The units of a Laurent polynomial ring over $\kk$ are
	precisely the nonzero scalar multiples of Laurent
	monomials. Hence
	$E_\ell\notin\mathcal L_\ell^\times.$
	
	The Laurent ring of the mutated seed is
	\[
	\mathcal L^{(\ell)}
	=
	\mathcal L_\ell[(x_\ell')^{\pm1}].
	\]
	By the Laurent phenomenon,
	\[
	\mathcal A\subseteq\mathcal L,
	\qquad
	\mathcal A\subseteq\mathcal L^{(\ell)}
	\quad(1\leq\ell\leq r).
	\]
	
	First, $x_k$ is not a unit of $\mathcal A$.
	Indeed, if $x_k^{-1}\in\mathcal A$, then
	\[
	x_k^{-1}=\frac{x_k'}{E_k}
	\in\mathcal L_k[(x_k')^{\pm1}].
	\]
	Uniqueness of Laurent expansion in the indeterminate $x_k'$
	would imply $E_k^{-1}\in\mathcal L_k$, contradicting
	$E_k\notin\mathcal L_k^\times$.
	
	Now suppose that
	$x_k=fg, f,g\in\mathcal A.$
	Since $x_k$ is a unit of $\mathcal L$, both $f$ and $g$
	are units of $\mathcal L$. Thus there exist
	$c\in\kk^\times$, $a\in\ZZ^r$, and $b\in\ZZ^s$ such that
	\[
	f=cx^az^b,
	\qquad
	g=c^{-1}x^{e_k-a}z^{-b}.
	\]
	
	Fix a mutable index $\ell$.
	Expressing $f$ in the seed mutated at $\ell$ gives
	\[
	f
	=
	\left(
	cz^b\prod_{j\neq\ell}x_j^{a_j}
	\right)
	E_\ell^{a_\ell}(x_\ell')^{-a_\ell}.
	\]
	The expression in parentheses is a unit of
	$\mathcal L_\ell$.
	Since $f\in\mathcal L^{(\ell)}$, uniqueness of Laurent
	expansion in $x_\ell'$ implies
	\[
	E_\ell^{a_\ell}\in\mathcal L_\ell.
	\]
	If $a_\ell<0$, this would make $E_\ell$ a unit of
	$\mathcal L_\ell$. Therefore $a_\ell\geq0$.
	Applying the same argument to $g$ gives
	\[
	\delta_{k\ell}-a_\ell\geq0.
	\]
	Consequently,
	\[
	0\leq a_\ell\leq\delta_{k\ell}
	\qquad(1\leq\ell\leq r).
	\]
	Thus $a=0$ or $a=e_k$.
	In the first case, $f=cz^b$ is a unit of $\mathcal A$;
	in the second case, $g=c^{-1}z^{-b}$ is a unit.
	This proves the irreducibility of $x_k$.
\end{proof}

\subsection{Compatibility of face objects}

Let $G^\rho$ be an admissibly relabelled plabic graph for the positroid
$\pi$. Write
\[
\cS=\{I_F^{\mathrm{tgt}}(G^\rho):F\text{ a face}\},\qquad
\cI=\{I_F^{\mathrm{tgt}}(G^\rho):F\text{ a boundary face}\},
\]
as sets of distinct labels. We use the following consequences of
admissibility from \cite[Theorems~4.14 and~4.21]{FSB}:
\begin{itemize}
	\item the face
	coordinates form a seed for $R_\pi$.
	\item the boundary coordinates are a
	$\mathbb Z$-basis of the coefficient group $\mathscr F_\pi$.
	\item the
	collection $\cS$ is weakly separated and $|\cS\setminus\cI|=r$.
\end{itemize}

Here two $k$-subsets $I,J$ are weakly separated if there are no four
cyclically ordered elements $a,b,c,d$ with
$a,c\in I\setminus J$ and $b,d\in J\setminus I$.

We recall the combinatorial $F$-polynomials and the compatibility
criterion that we will use to place the face objects in one cluster.

\begin{definition}[Combinatorial $F$-polynomials]
	\label{def:fsb-combinatorial-F}
	Fix a seed $u$ with mutable variables
	$(u_1,\ldots,u_r)$, and let $B_u$ be the principal part
	of its extended exchange matrix. Consider the cluster pattern with principal coefficients. Assume that the initial
	mutable variables $(v_1,\ldots,v_r)$, frozen variables
	$(y_1,\ldots,y_r)$, and extended exchange matrix
	\[
	\begin{pmatrix}
		B_u\\
		I_r
	\end{pmatrix}.
	\]
	This is the cluster pattern with principal coefficients
	at $v$.
	
	Let $z$ be a mutable cluster variable of the original
	cluster algebra. Choose a mutation sequence from $u$
	to a seed containing $z$, and perform the same mutations
	in the cluster pattern with principal coefficients. Denote the corresponding
	cluster variable by
	\[
	\mathcal X^{\mathrm{prin}}_{z,u}(v_1,\ldots,v_r;
	y_1,\ldots,y_r).
	\]
	The \emph{combinatorial $F$-polynomial of $z$ relative to $u$}
	is
	\[
	F^{\mathrm{comb}}_{z,u}(y_1,\ldots,y_r)
	=
	\left.
	\mathcal X^{\mathrm{prin}}_{z,u}(v_1,\ldots,v_r;
	y_1,\ldots,y_r)
	\right|_{v_1=\cdots=v_r=1}
	\in\ZZ[y_1,\ldots,y_r].
	\]
	It is independent of the chosen mutation sequence
	representing $z$; see \cite[Remark~2.19]{FuGyoda}.
\end{definition}

These polynomials satisfy
\[
F^{\mathrm{comb}}_{z,u}(0,\ldots,0)=1,
\qquad
F^{\mathrm{comb}}_{u_i,u}=1
\quad(1\leq i\leq r);
\]
see \cite[Proposition~2.14]{FuGyoda}.
The superscript ``$\mathrm{comb}$'' distinguishes them from
the Grassmannian polynomials defined by cluster characters.

To relate this construction to the original coefficients, write
\[
\mathbf u=(u_1,\ldots,u_r,w_1,\ldots,w_s),
\qquad
\widehat y_{j,u}=\mathbf u^{\bB_u e_j},
\]
where $w_1,\ldots,w_s$ are the frozen variables and $\bB_u$
is the full extended exchange matrix.
The separation formula
\cite[Corollary~6.3]{FZIV} gives
\[
z=
\left(\prod_{i=1}^r u_i^{g_i}\right)
\left(\prod_{p=1}^s w_p^{a_p}\right)
F^{\mathrm{comb}}_{z,u}
(\widehat y_{1,u},\ldots,\widehat y_{r,u})
\]
for some $g\in\ZZ^r$ and $a\in\ZZ^s$.

\begin{definition}[$f$-compatibility degree]
	\label{def:fsb-f-compatibility}
	Let $z_0$ and $z$ be mutable cluster variables.
	Choose a seed $u$ containing $z_0$, and write $z_0=u_i$.
	Their \emph{$f$-compatibility degree} is
	\[
	(z_0\Vert z)_f
	=
	\deg_{y_i}F^{\mathrm{comb}}_{z,u}(y_1,\ldots,y_r).
	\]
	By \cite[Theorem~3.3(2)]{FuGyoda}, this integer depends
	only on the ordered pair $(z_0,z)$, and not on the
	choice of the seed containing $z_0$.
\end{definition}

The following theorem turns the vanishing of these degrees
into a criterion for belonging to a common cluster.

\begin{theorem}[{\cite[Theorem~4.18]{FuGyoda}}]
	\label{thm:fsb-f-compatibility}
	Let $\mathcal Z$ be a finite set of distinct mutable
	cluster variables. Then $\mathcal Z$ is contained in
	a single cluster if and only if
	\[
	(z\Vert z')_f=0
	\qquad
	\text{for all distinct }z,z'\in\mathcal Z.
	\]
\end{theorem}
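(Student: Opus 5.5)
The plan is to prove the two implications separately. The direction ``same cluster $\Rightarrow$ degrees vanish'' is formal. The converse goes by induction on $|\mathcal Z|$, freezing one variable at a time.

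\emph{Easy direction.} Suppose every element of $\mathcal Z$ lies in one seed $u$. Given distinct $z,z'\in\mathcal Z$, write $z=u_i$ and $z'=u_j$ with $j\ne i$. By \cite[Proposition~2.14]{FuGyoda}, $F^{\mathrm{comb}}_{u_j,u}=1$, so
\[
(z\Vert z')_f=\deg_{y_i}F^{\mathrm{comb}}_{u_j,u}=0.
\]
Definition~\ref{def:fsb-f-compatibility} says the degree does not depend on the seed containing $z$, so computing it at $u$ is enough.

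\emph{Converse, base case and reduction.} Induct on $|\mathcal Z|$; the case $|\mathcal Z|\le1$ is trivial. Fix $z_0\in\mathcal Z$ and a seed $u$ with $z_0=u_i$. Let $\mathcal A_{z_0}$ be the cluster pattern obtained by freezing the vertex $i$. Its seeds are exactly the seeds of the original pattern reachable from $u$ without mutating at $i$, so every seed of $\mathcal A_{z_0}$ contains $z_0$. I would establish two facts.

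\begin{enumerate}
\item[(a)] If $(z_0\Vert z)_f=(z\Vert z_0)_f=0$, then $z$ is a cluster variable of $\mathcal A_{z_0}$.
\item[(b)] For $z,z'$ in $\mathcal A_{z_0}$, the $f$-compatibility degree computed in $\mathcal A_{z_0}$ equals the one computed in the original pattern.
\end{enumerate}

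Given (a) and (b), the set $\mathcal Z\setminus\{z_0\}$ lies in $\mathcal A_{z_0}$, which has rank $r-1$, and it is still pairwise compatible there. By induction on the rank it lies in a single cluster of $\mathcal A_{z_0}$. Adjoining $z_0$ then gives a single cluster of the original pattern.

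\emph{Fact (b).} When $z$ lies in $\mathcal A_{z_0}$, its principal-coefficient expansion is computed by a mutation sequence that avoids $i$. Hence its $F$-polynomial is the $\mathcal A_{z_0}$ $F$-polynomial with the frozen $y_i$ entering only through the coefficient rows. In particular $\deg_{y_j}$ for $j\ne i$ agrees in the two patterns, and this is what (b) needs.

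\emph{Fact (a), the main obstacle.} Nothing proved earlier in this paper addresses it. In the positroid setting I would argue categorically, using the Frobenius model of Section~\ref{sec:fsb-character-input}.

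\begin{enumerate}
\item Let $U$ be the reachable cluster-tilting object for $u$ and $Y$ the reachable rigid object with character $z$. The top-degree term of $F_U^Y$ is $y^{\underline{\dim}M_U(Y)}$ with coefficient one. So
\[
(z_0\Vert z)_f=\dim\Hom_{\underline{\cE}}(qU_i,\Sigma Y).
\]
This requires comparing $F^{\mathrm{comb}}$ with the character $F$-polynomial via Lemma~\ref{lem:fsb-pointed-recovery} and full column rank (Lemma~\ref{lem:fsb-full-rank}).
\item Vanishing of this Hom space, together with 2-Calabi--Yau symmetry, says $qU_i\oplus Y$ is rigid.
\item The remaining step is to show that a rigid object containing the reachable summand $qU_i$ is reachable inside the Calabi--Yau reduction at $qU_i$. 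That reduction categorifies $\mathcal A_{z_0}$. One still has to prove that this reduction has its own reachable cluster-tilting theory, which is exactly where the argument can fail.
\end{enumerate}

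If this categorical route breaks down, I would fall back on the combinatorial argument of Fu--Gyoda. That argument uses the independence of $(z_0\Vert z)_f$ from the seed (their Theorem~3.3) and a rank-two reduction along a mutation path from $u$ to a seed containing $z$. One shows that whenever the path mutates at $i$, it can be rerouted: any intermediate exchange at $i$ would force a positive $y_i$-degree.
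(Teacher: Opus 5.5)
\textbf{Verdict: genuine gap.} Your easy direction is correct. The reduction to the frozen pattern $\mathcal A_{z_0}$ is also sound. For (b), note that the $y_i$-row of $\binom{B_u}{I_r}$ is zero in every column $k\neq i$ and stays zero under mutations at $k\neq i$. Hence $F^{\mathrm{comb}}_{z,u}$ does not involve $y_i$ at all when $z\in\mathcal A_{z_0}$. The induction should be on the rank, applied to $\mathcal A_{z_0}$, rather than on $|\mathcal Z|$.

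The gap is fact (a). It is not an auxiliary lemma but the whole content of the theorem: for $\mathcal Z=\{z_0,z\}$ your argument is literally (a). Moreover, (a) bundles the two deep inputs:
\begin{enumerate}
\item[(i)] $(z_0\Vert z)_f=0$ forces $z_0$ and $z$ into a common cluster;
\item[(ii)] some such cluster is reachable from $u$ without ever mutating at $i$, which is the Fomin--Zelevinsky connectivity property for seeds containing a fixed variable.
\end{enumerate}
Your proposal proves neither.

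In the categorical route, steps 1 and 2 are fine: $\deg_{y_i}F_U^Y=\dim_\kk\Hom_{\underline{\cE}}(qU_i,\Sigma Y)$, and $qU_i\oplus Y$ is rigid. Step 3, however, is the categorical form of the statement being proved: a rigid object whose summands are separately reachable is a summand of a reachable cluster-tilting object. The tools of Section~\ref{sec:fsb-character-input} cannot close this. Uniqueness of rigid objects with a given index identifies objects but produces no mutation path. Completing $qU_i\oplus Y$ to some cluster-tilting object gives no control over reachability.

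The combinatorial fallback rests on a claim that is false as stated. $F^{\mathrm{comb}}_{z,u}$ depends only on $z$, not on the path chosen. A path to a seed containing $z$ may mutate at $i$ and later undo it (already $\mu_j\mu_i\mu_i$ reaches a seed of $\mathcal A_{z_0}$), so an intermediate exchange at $i$ forces nothing. What you need is the existence of a different path avoiding $i$.

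In the literature this input comes from Cao and Li's enough $g$-pairs property, which gives both connectivity of the seeds containing a fixed set of variables and the clique property of the cluster complex. The paper does not reprove any of this; it imports the statement directly from \cite[Theorem~4.18]{FuGyoda}. As written, your proposal reduces the theorem to an equivalent statement rather than proving it.
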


\begin{proposition}
	\label{prop:fsb-common-face-cluster}
	Let $G^\rho$ be an admissibly relabelled plabic graph
	for $\pi$, with face-label set $\cS$ and boundary-label
	set $\cI$. There exists a seed in the source cluster structure
	whose mutable cluster variables can be indexed as
	$\{z_J:J\in\cS\setminus\cI\},$
	such that, for each $J\in\cS\setminus\cI$, there is
	a frozen Laurent monomial $p_J\in\mathscr F_\pi$ satisfying
	\begin{equation}
		\label{eq:fsb-face-source-variable}
		\Delta_J=p_Jz_J.
	\end{equation}
	
	Under the source categorification, the variable $z_J$
	corresponds to the stable object $qM_J$.
	Consequently, the object
	$\bigoplus_{J\in\cS\setminus\cI}qM_J$
	is a basic cluster-tilting object of $\cC$,
	reachable from $qT^{\mathrm s}$.
\end{proposition}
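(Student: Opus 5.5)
For each $J\in\cS\setminus\cI$, I first obtain the variable $z_J$. Since $\Delta_J\neq0$ in $R_\pi$, \cite[Theorem~8.9]{MMMSV} writes $\Delta_J=p_Jc_J$, with $p_J\in\mathscr F_\pi$ and $c_J$ a monomial in the mutable variables of one source cluster. Because $J$ labels a mutable face of the seed $\Sigma^{\rm tgt}_{G^\rho}$ for $R_\pi$, the function $\Delta_J$ is a mutable cluster variable of that structure. It is therefore an irreducible nonunit by Lemma~\ref{lem:fsb-mutable-irreducible}; the nonzero-column hypothesis follows from full column rank (Lemma~\ref{lem:fsb-full-rank} together with reachability). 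The frozen factor $p_J$ is a unit. Irreducibility therefore forces $c_J$ to be a single mutable source variable $z_J$, since a product of two or more nonunit cluster variables would be reducible. This gives \eqref{eq:fsb-face-source-variable}.

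Next I identify $z_J$ categorically. Theorem~\ref{thm:fsb-rank-one-normalization} gives $\Phi^{\rm s}(M_J)=\Delta_J=p_Jz_J$, and $qM_J$ is rigid by Lemma~\ref{lem:fsb-singular-rigidity}. The proof of Proposition~\ref{prop:fsb-character-reachability}, applied with the monomial $z_J$, shows that $qM_J\simeq qU_J$, where $U_J$ is the reachable indecomposable summand with $\Phi^{\rm s}(U_J)=z_J$. In particular $qM_J$ is indecomposable and nonzero. Distinct $J$ give distinct $z_J$: if $z_J=z_{J'}$, then $\Delta_J/\Delta_{J'}\in\mathscr F_\pi$. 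This contradicts algebraic independence of the face seed, whose mutable variables are independent over the frozen ones. Hence the $qM_J$ are pairwise non-isomorphic.

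The main step is to place all $z_J$ in one cluster. By weak separation of $\cS$ and Lemma~\ref{lem:fsb-singular-rigidity}, $\Hom_\cC(qM_J,\Sigma qM_{J'})=0$ for all $J,J'\in\cS$. To convert this into $(z_J\Vert z_{J'})_f=0$, I choose a reachable cluster-tilting object $U$ containing $U_J$ as summand $U_i$. The Fu--Gyoda combinatorial $F$-polynomial of $z_{J'}$ relative to $U$ agrees with the categorical $F_U^{qM_{J'}}$, by Lemma~\ref{lem:fsb-pointed-recovery} and the separation formula; here full column rank at $U$ gives uniqueness of the pointed expansion. The $y_i$-degree of $F_U^{qM_{J'}}$ is the $i$-th component of the dimension vector of $M_U(qM_{J'})=\Hom_\cC(qU,\Sigma qM_{J'})$, because the top submodule has coefficient one. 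This component equals $\dim\Hom_\cC(qU_i,\Sigma qM_{J'})=\dim\Hom_\cC(qM_J,\Sigma qM_{J'})=0$. I expect this identification of degrees to be the delicate point, since one must make sure the Euler-characteristic polynomial attains the full dimension vector.

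Given pairwise $f$-compatibility, Theorem~\ref{thm:fsb-f-compatibility} places $\{z_J\}$ in a single source cluster. This set has $|\cS\setminus\cI|=r$ distinct elements, which equals the mutable rank, so it is the whole mutable part of that seed. The corresponding reachable cluster-tilting object $U$ then has $q U\simeq\bigoplus_{J}qM_J$, and this object is basic, cluster-tilting in $\cC$, and reachable from $qT^{\rm s}$.
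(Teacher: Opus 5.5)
Your proposal follows the paper's proof essentially step for step: \cite[Theorem~8.9]{MMMSV} plus irreducibility (Lemma~\ref{lem:fsb-mutable-irreducible}) to get $\Delta_J=p_Jz_J$; the index comparison from the proof of Proposition~\ref{prop:fsb-character-reachability} to get $qM_J\simeq qU_J$; the identification of the categorical and Fu--Gyoda $F$-polynomials via the separation formula and Lemma~\ref{lem:fsb-pointed-recovery}, which turns weak separation into $(z_J\Vert z_{J'})_f=0$; and the count $|\cS\setminus\cI|=r$.

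Two small remarks. First, the degree step is not actually delicate. Since $\Hom_\cC(qU_i,\Sigma qM_{J'})=0$, every submodule already has zero $i$-th component, so the $y_i$-degree is $0$ without any top-coefficient argument; the paper argues exactly this way. Second, in the distinctness step you should cite the Unit Necklace Theorem, which says that $\{\Delta_I:I\in\cI\}$ is a $\mathbb Z$-basis of $\mathscr F_\pi$. This makes $\Delta_J/\Delta_{J'}\in\mathscr F_\pi$ a Laurent monomial in the face seed's own frozen variables before you invoke algebraic independence.
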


\begin{proof}
	Put $T=T^{\mathrm s}$.
	By admissibility, the relabelled seed and the source seeds
	define cluster structures on the same ring $R_\pi$.
	We therefore regard all their cluster variables as elements
	of this ring.
	
	We first identify each mutable face coordinate with a
	single source cluster variable, up to a frozen Laurent factor.
	Fix $J\in\cS\setminus\cI$.
	By \cite[Theorem~8.9]{MMMSV}, we can write
	\[
	\Delta_J
	=
	p_J\prod_{\nu=1}^r\zeta_\nu^{a_\nu},
	\qquad
	p_J\in\mathscr F_\pi,\quad a_\nu\in\NN,
	\]
	where $\zeta_1,\ldots,\zeta_r$ are the mutable variables
	of one source seed.
	
	In the relabelled cluster structure, $\Delta_J$ is a mutable
	cluster variable. The extended exchange matrix of this seed
	agrees with that of the underlying ordinarily labelled graph.
	It has full column rank by Lemma~\ref{lem:fsb-full-rank},
	and this rank is preserved under mutation.
	Thus Lemma~\ref{lem:fsb-mutable-irreducible} applies and
	shows that $\Delta_J$ is an irreducible nonunit of $R_\pi$.
	The same lemma shows that each source variable $\zeta_\nu$
	is a nonunit, whereas $p_J$ is a unit. Hence
	\[
	\sum_{\nu=1}^r a_\nu=1.
	\]
	Indeed, a sum of zero would make $\Delta_J$ a unit,
	while a sum greater than one would give a factorization
	into two nonunits.
	Consequently, there is a source cluster variable $z_J$ with
	\[
	\Delta_J=p_Jz_J.
	\]
	
	These source variables are pairwise distinct.
	Suppose that $z_J=z_K$. Then
	\[
	\frac{\Delta_J}{\Delta_K}
	=
	\frac{p_J}{p_K}
	\in\mathscr F_\pi.
	\]
	By admissibility, the boundary coordinates
	$\{\Delta_I:I\in\cI\}$ form a $\ZZ$-basis of
	$\mathscr F_\pi$. Thus there exist integers $c_I$ such that
	\[
	\frac{\Delta_J}{\Delta_K}
	=
	\prod_{I\in\cI}\Delta_I^{c_I}.
	\]
	Since the extended-cluster coordinates
	$\{\Delta_I:I\in\cS\}$ are algebraically independent,
	this Laurent monomial identity forces $J=K$.
	
	We next identify the stable objects corresponding to these
	variables. For each $J$, choose a reachable non-projective
	indecomposable object $L_J\in\cE_{\mathrm s}$ satisfying
	\[
	\Phi^{\mathrm s}(L_J)=z_J.
	\]
	By Theorem \ref{thm:fsb-rank-one-normalization}, we have
	\[
	\Phi^{\mathrm s}(M_J)
	=
	\Delta_J
	=
	p_J\Phi^{\mathrm s}(L_J).
	\]
	We apply the index comparison from the proof of
	Proposition~\ref{prop:fsb-character-reachability}
	to this equality.
	
	More explicitly, choose $H_J\in\cE_{\mathrm s}$ with
	$qH_J\simeq qM_J$.
	Let $x$ be the initial extended cluster at $T$, and write
	\[
	\iota:\ZZ^s\longrightarrow\ZZ^{r+s},
	\qquad
	\iota(a)=(0,a),
	\]
	for the inclusion into the frozen coordinates.
	By Lemma~\ref{lem:fsb-projective-correction},
	the localization identity
	\eqref{eq:fsb-localized-character-factor},
	and the cluster character formula
	\eqref{eq:fsb-full-character},
	there exist $a_J,b_J\in\ZZ^s$ such that, after applying
	$\eta_{\mathrm s}^{-1}$, the preceding equality becomes
	\[
	x^{\ind_T H_J+\iota(a_J)}
	F_T^{H_J}(x^{\bB_T})
	=
	x^{\ind_T L_J+\iota(b_J)}
	F_T^{L_J}(x^{\bB_T}).
	\]
	Both $F$-polynomials have constant term one.
	Lemmas~\ref{lem:fsb-full-rank}
	and~\ref{lem:fsb-pointed-recovery} therefore give
	\[
	\ind_T H_J+\iota(a_J)
	=
	\ind_T L_J+\iota(b_J).
	\]
	Projecting onto the mutable coordinates yields
	\[
	\ind_{qT}(qH_J)=\ind_{qT}(qL_J).
	\]
	By Lemma~\ref{lem:fsb-singular-rigidity}, $qH_J\simeq qM_J$
	is rigid, and $qL_J$ is rigid by construction.
	The uniqueness of rigid objects with a prescribed index
	\cite[Section~2.3]{DehyKeller} now implies
	\[
	qM_J\simeq qH_J\simeq qL_J.
	\]
	
	Admissibility also gives weak separation of the face labels.
	Lemma~\ref{lem:fsb-singular-rigidity} consequently implies
	\[
	\Hom_{\cC}(qL_J,\Sigma qL_K)
	\simeq
	\Hom_{\cC}(qM_J,\Sigma qM_K)
	=0
	\qquad
	(J,K\in\cS\setminus\cI).
	\]
	We now translate this vanishing into compatibility of
	the corresponding source cluster variables.
	
	Fix distinct labels $J,K\in\cS\setminus\cI$.
	Choose a reachable cluster-tilting object
	$U\in\cE_{\mathrm s}$ having $L_J$ as a summand,
	and order its non-projective summands so that $U_i=L_J$.
	Let $u$ be the corresponding source seed, with extended
	coordinates
	\[
	\mathbf u=(u_1,\ldots,u_r,w_1,\ldots,w_s).
	\]
	In particular, $u_i=z_J$.
	Regard these coordinates as algebraically independent
	elements of $\operatorname{Frac}(R_\pi)$.
	
	Put
$	V=\Hom_{\cC}(qU,\Sigma qL_K).$
	If $\epsilon_i$ denotes the idempotent corresponding to
	$qU_i$, then
	\[
	V\epsilon_i
	=
	\Hom_{\cC}(qU_i,\Sigma qL_K)
	=
	\Hom_{\cC}(qL_J,\Sigma qL_K)
	=0.
	\]
	Hence every submodule of $V$ has zero $i$-th component.
	It follows directly from the Grassmannian formula that
	\[
	\deg_{y_i}F_U^{L_K}(y)=0.
	\]
	
	To identify this polynomial with the combinatorial
	$F$-polynomial, we use
	Lemma~\ref{lem:fsb-seed-covariance}.
	Expressing the character of the fixed Frobenius object
	$L_K$ in the seed $u$ gives
	\[
	z_K
	=
	\Phi^{\mathrm s}(L_K)
	=
	\mathbf u^{\ind_U L_K}
	F_U^{L_K}(\mathbf u^{\bB_U}).
	\]
	On the other hand, the separation formula
	\cite[Corollary~6.3]{FZIV} gives
	\[
	z_K
	=
	\mathbf u^h
	F^{\mathrm{comb}}_{z_K,u}(\mathbf u^{\bB_U})
	\]
	for some $h\in\ZZ^{r+s}$, with the frozen
	factor included in $\mathbf u^h$.
	The matrix $\bB_U$ has full column rank, and both
	polynomials have constant term one.
	Lemma~\ref{lem:fsb-pointed-recovery} therefore implies
	\[
	F_U^{L_K}
	=
	F^{\mathrm{comb}}_{z_K,u}.
	\]
	Since $u_i=z_J$, we conclude that
	\[
	(z_J\Vert z_K)_f
	=
	\deg_{y_i}F^{\mathrm{comb}}_{z_K,u}
	=
	\deg_{y_i}F_U^{L_K}
	=0.
	\]
	
	Thus the distinct variables
	$\{z_J:J\in\cS\setminus\cI\}$ have pairwise vanishing
	$f$-compatibility degrees.
	By \cite[Theorem~4.18]{FuGyoda}, they belong to one
	source cluster.
	Admissibility gives
	\[
	|\cS\setminus\cI|=r,
	\]
	and every source cluster has exactly $r$ mutable variables.
	Hence these variables form the entire mutable cluster
	of a source seed.
	
	Let $U_*$ be the basic reachable Frobenius cluster-tilting
	object corresponding to this seed.
	Its non-projective summands are the objects $L_J$.
	Therefore
	\[
	qU_*
	\simeq
	\bigoplus_{J\in\cS\setminus\cI}qL_J
	\simeq
	\bigoplus_{J\in\cS\setminus\cI}qM_J.
	\]
	The direct sum above is consequently a basic
	cluster-tilting object of $\cC$, reachable from $qT$.
\end{proof}

\section{Tilting the boundary necklace and the main result}
\label{sec:fsb-tilting}

We first consider a connected loopless positroid of type $(k,n)$,
where $0<k<n$, and denote its trip permutation by $\pi$.
We work over $\mathbb C$ and retain the conventions of
Section~\ref{sec:fsb-character-input}.
All modules are left modules unless a right action is explicitly
specified.

Fix a reduced plabic graph $G$ with trip permutation $\pi$,
whose boundary vertices are labelled $1,\ldots,n$ in clockwise
order. Let $D$ be the associated Postnikov diagram obtained
from the trips of $G_0$. We use its boundary algebra
$B=eA_De$, and write
\[
Z=\kk\llbracket t\rrbracket,\qquad
K=\kk((t)),\qquad
C=C_{k,n}.
\]

Recall that $\mathcal M_\pi$ is the set of bases of the positroid
indexed by $\pi$, namely
\[
\mathcal M_\pi
=
\left\{
I\in\binom{[n]}{k}:
\Delta_I\neq 0\text{ in }R_\pi
\right\}.
\]
Thus $I\in\mathcal M_\pi$ means that $\Delta_I$ does not vanish
identically on $\widehat\Pi_\pi^\circ$.

Now let $\rho\in S_n$ be admissible for $\pi$ in the sense of
Definition~\ref{def:fsb-admissible}, and put
\[
\mu=\rho^{-1}\pi\rho,
\qquad
\iota=\pi\rho.
\]
Choose a reduced plabic graph $G$ with trip permutation $\mu$.
The relabelled graph $G^\rho$ then has trip permutation $\pi$.

Let
\[
\mathbf I=(I_1,\ldots,I_n)
\]
be the boundary Grassmannlike necklace of $G^\rho$, obtained
by reading its boundary target labels in the cyclic order
inherited from $G$. We denote the sets of boundary labels
and all face labels by
\[
\cI=\{I_a:a\in[n]\},
\qquad
\cS=
\{I_F^{\mathrm{tgt}}(G^\rho):
F\text{ is a face of }G^\rho\}.
\]
The sequence $\mathbf I$ records the cyclic order and retains
repeated labels. In the sets $\cI\subseteq\cS$, each label
is counted only once.

\begin{definition}[Boundary module]
	\label{def:fsb-boundary-module}
	For $I\in\cI$, let $M_I\in\CM(B)$ be the rank-one $B$-lattice
	whose restriction to $C=C_{k,n}$ is the circle module labelled by
	$I$, as in Section~\ref{sec:fsb-rank-one}. We call
	\begin{equation}
		\label{eq:fsb-boundary-module}
		P_{\cI}:=\bigoplus_{I\in\cI}M_I
	\end{equation}
	the \emph{boundary module associated with $\mathbf I$}. Each distinct
	boundary label contributes exactly one summand.
\end{definition}

Admissibility gives $\cI\subseteq\mathcal M_\pi$, so the rank-one
restriction criterion
\cite[Propositions~5.6 and~5.8]{Pressland}
ensures that each $M_I$, with $I\in\cI$, admits a $B$-lattice
lift unique up to isomorphism. Thus $P_{\cI}$ is well defined
up to isomorphism.
Here ``boundary'' refers to the boundary faces of $G^\rho$
indexing its summands; projectivity over $B$ is not assumed.
We establish the tilting properties of $P_{\cI}$ in
Theorem~\ref{prop:fsb-boundary-tilting}.

Put
\[
m=|\cS|,\qquad f=|\cI|,\qquad r=m-f.
\]
In the connected case, the ordinary boundary coordinates form
a basis of the coefficient group with $n$ elements.
By the Unit Necklace Theorem
(Theorem~\ref{thm:fsb-unit-necklace}), the coordinates indexed
by $\cI$ form another basis of the same group, so $f=n$.
The general count is established in
Lemma~\ref{lem:fsb-boundary-count}.
We order every face-labelled extended cluster with the
$r$ labels in $\cS\setminus\cI$ first and the $f$ labels
in $\cI$ last.

The preceding section identifies the face modules in the
standard categorical model.
Theorem~\ref{thm:fsb-rank-one-normalization} gives
\[
\Phi^{\mathrm s}(M_J)=\Delta_J
\qquad(J\in\cS),
\]
while Proposition~\ref{prop:fsb-common-face-cluster} shows that
the stable images of the mutable face modules form a basic
reachable cluster-tilting object.

We will construct a Frobenius categorification whose projective-injective
objects are indexed by the new boundary necklace.
We first prove that $P_{\cI}$ is a tilting $B$-module and then
use its endomorphism order
\[
B_{\cI}=\End_B(P_{\cI})^{\mathrm{op}}
\]
to construct the new categorification model. Its indecomposable
projective-injective objects will be
\[
\Hom_B(P_{\cI},M_I),
\qquad I\in\cI.
\]
Once this model and the comparison functor are constructed,
Proposition~\ref{prop:fsb-relative-recognition} will provide
the comparison of full indices and extended exchange matrices.

\subsection{Orders, lattices, and tilting modules}

\begin{definition}[Orders and lattices]
	\label{def:necklace-orders}
	A \emph{$Z$-order} is a unital $Z$-algebra $\Lambda$, with central
	$Z$-action, which is finite and free as a $Z$-module and for which
	$K\otimes_Z\Lambda$ is a finite-dimensional semisimple $K$-algebra.
	We write $\fgmod\Lambda$ for the category of finitely generated left
	$\Lambda$-modules and
	\[
	\CM(\Lambda)=\{M\in\fgmod\Lambda:M\text{ is free as a }Z\text{-module}\}.
	\]
	Its objects are called \emph{$\Lambda$-lattices}. Its conflations are
	short exact sequences of $\Lambda$-modules all of whose terms are
	lattices. Such sequences split over $Z$, but need not split over
	$\Lambda$. The dual order-module is
	\[
	\Lambda^\vee=\Hom_Z(\Lambda,Z),\qquad
	(a\varphi b)(c)=\varphi(bca).
	\]
\end{definition}

\begin{definition}[Gorenstein-projective modules]
	\label{def:necklace-gorenstein}
	A two-sided Noetherian ring $\Lambda$ is \emph{Iwanaga--Gorenstein} if
	\[
	\operatorname{id}_{\Lambda}\Lambda<\infty,
	\qquad
	\operatorname{id}_{\Lambda^{\mathrm{op}}}\Lambda<\infty.
	\]
	A finitely generated module is \emph{Gorenstein-projective} if it is a
	cycle in an exact complex of finitely generated projective modules
	which remains exact after applying $\Hom_\Lambda(-,\Lambda)$.
	We denote their full subcategory by $\GP\Lambda$. For an
	Iwanaga--Gorenstein ring,
	\[
	\GP\Lambda
	=\{M\in\fgmod\Lambda:\Ext_\Lambda^a(M,\Lambda)=0\text{ for all }a>0\}.
	\]
	With the inherited short exact sequences, $\GP\Lambda$ is Frobenius
	and its projective-injective objects are $\proj\Lambda$.
	For a $Z$-order, its Gorenstein-projective modules are lattices, since
	they embed into finitely generated projectives. The
	\emph{Gorenstein-projective dimension} $\operatorname{Gpd}_\Lambda M$
	is the least length of a resolution of $M$ by Gorenstein-projective
	modules, or $\infty$ if no such resolution exists.
\end{definition}

For an object $M$ of an additive category, $\add M$ consists of direct
summands of finite direct sums of copies of $M$. Inside
$\Db(\fgmod\Lambda)$, the notation $\thick(M)$ denotes the smallest
full triangulated subcategory containing $M$ and closed under direct
summands. In particular,
\[
\per\Lambda=\thick(\Lambda)\simeq K^b(\proj\Lambda),\qquad
\Dsg(\Lambda)=\Db(\fgmod\Lambda)/\per\Lambda.
\]
The first is the category of \emph{perfect complexes}, and the second
is the \emph{singularity category}. For an Iwanaga--Gorenstein ring,
Buchweitz's equivalence identifies
$\underline{\GP\Lambda}$ with $\Dsg(\Lambda)$ \cite{Buchweitz}.

\begin{definition}[Tilting module]
	\label{def:necklace-tilting}
	A finitely generated $\Lambda$-module $V$ is a \emph{tilting module}
	if
	\[
	\operatorname{pd}_\Lambda V<\infty,\qquad
	\Ext_\Lambda^a(V,V)=0\ (a>0),\qquad
	\thick(V)=\per\Lambda.
	\]
	It is a \emph{$1$-tilting module} if, in addition,
	$\operatorname{pd}_\Lambda V\leq1$. The last condition means generation
	in the derived category; it is not the assertion
	$\add V=\proj\Lambda$.
\end{definition}

For the boundary order $B$, restriction is an exact fully faithful
functor
\[
\operatorname{res}:\CM(B)\longrightarrow\CM(C).
\]
The rank-one circle module $M_I$ belongs to its essential image
exactly when $\Delta_I\ne0$ in $R_\pi$. We identify a $B$-lattice with
its restriction, but retain the algebra as a subscript on $\Hom$ and
$\Ext$ whenever it matters. We also use
\[
\Omega_B\CM(B)\subseteq\GP B,\qquad
\add\!\left(\bigoplus_{I\in\mathcal I^\pi}M_I\right)=\add B^\vee;
\]
see \cite[Propositions~5.6, 5.8, 5.25 and Corollary~5.22]{Pressland}.
Here $\Omega_BM$ is the kernel of a surjection from a finitely
generated projective module to $M$.

\subsection{Boundary tilting}

For $u,v\in[n]$,
$[u,v)$ denotes the clockwise cyclic interval starting at $u$ and
ending just before $v$. For a set $S$ disjoint from $\{u,v\}$, we
abbreviate $S\cup\{u,v\}$ to $Suv$.

Let $\mathbf J=(J_1,\ldots,J_n)$ be a Grassmannlike necklace
with removal permutation $\rho$, insertion permutation $\iota$,
and trip permutation $\pi=\iota\rho^{-1}$. Thus
\[
J_{a+1}
=
\bigl(J_a\setminus\{\rho(a)\}\bigr)\cup\{\iota(a)\}.
\]
All indices are read modulo $n$. We remain in the connected
case, where $\pi$ has no fixed points.

\begin{definition}[Toggle]
	\label{def:necklace-toggle}
	A \emph{toggle at position $a$} interchanges the consecutive
	removal-insertion steps at positions $a-1$ and $a$.
	It is allowed when
	\[
	\rho(a-1)\neq\iota(a),
	\qquad
	\rho(a)\neq\iota(a-1).
	\]
	The resulting necklace $\mathbf J'$ is defined by
	\[
	J'_a
	=
	\bigl(J_{a-1}\setminus\{\rho(a)\}\bigr)\cup\{\iota(a)\},
	\qquad
	J'_b=J_b\quad(b\neq a).
	\]
\end{definition}

Let $s_{a-1}$ exchange the cyclic positions $a-1$ and $a$.
The new removal and insertion permutations are
\[
\rho'=\rho s_{a-1},
\qquad
\iota'=\iota s_{a-1}.
\]
Consequently, the trip permutation is unchanged:
$\iota'(\rho')^{-1}=\pi$.

\begin{definition}[Aligned toggle]
	\label{def:necklace-aligned-toggle}
	Place the boundary labels $1,\ldots,n$ clockwise on a circle.
	Two disjoint directed chords
	$\alpha\to\beta$ and $\gamma\to\delta$, with four distinct
	endpoints, are \emph{aligned} if their clockwise cyclic
	order is
	\[
	(\alpha,\gamma,\delta,\beta)
	\qquad\text{or}\qquad
	(\alpha,\beta,\delta,\gamma).
	\]
	An allowed toggle at $a$ is \emph{aligned} if the chords
	\[
	\rho(a-1)\longrightarrow\iota(a-1),
	\qquad
	\rho(a)\longrightarrow\iota(a)
	\]
	are aligned.
\end{definition}

An \emph{aligned-toggle path} is a finite sequence
\[
\mathbf J^{(0)},\mathbf J^{(1)},\ldots,\mathbf J^{(N)}
\]
whose consecutive necklaces differ by an aligned toggle.
Since toggles preserve the trip permutation, all these necklaces
have the same trip permutation $\pi$.
The path is called a \emph{unit-necklace path} if
\[
\Delta_{J_b^{(\ell)}}\in\mathscr F_\pi
\qquad
(0\leq\ell\leq N,\ b\in[n]).
\]
The convention for loop chords, needed when $\pi$ has fixed
points, is given in Section~\ref{sec:fsb-disconnected}.

The condition $\iota\leq_\circ\pi$ supplies an aligned-toggle path
from the ordinary forward necklace to $\mathbf I$; all its terms
are unit necklaces \cite[Lemma~4.13 and Theorem~4.14]{FSB}.
The intermediate necklaces are not required to be weakly separated.


\begin{example}[An aligned toggle of unit necklaces]
	\label{ex:fsb-aligned-toggle}
	We write $ij$ for the subset $\{i,j\}$.
	Consider the permutation
	\[
	\pi=
	\begin{pmatrix}
		1&2&3&4&5\\
		4&3&5&1&2
	\end{pmatrix}.
	\]
	Its forward Grassmann necklace is
	\[
	\mathbf J=(12,24,34,45,15),
	\qquad
	\rho=\mathrm{id},
	\qquad
	\iota=\pi.
	\]
	Indeed, its consecutive removal-insertion steps are
	\[
	12
	\xrightarrow{\,1\mapsto4\,}24
	\xrightarrow{\,2\mapsto3\,}34
	\xrightarrow{\,3\mapsto5\,}45
	\xrightarrow{\,4\mapsto1\,}15
	\xrightarrow{\,5\mapsto2\,}12.
	\]
	
	We toggle at position $a=2$.
	The conditions for this operation are satisfied:
	\[
	\rho(1)=1\neq3=\iota(2),
	\qquad
	\rho(2)=2\neq4=\iota(1).
	\]
	The two chords involved are
	\[
	1\longrightarrow4,
	\qquad
	2\longrightarrow3.
	\]
	They are disjoint, and their endpoints occur in the clockwise
	order $1,2,3,4$. This is the order
	$(\alpha,\gamma,\delta,\beta)$ with
	$(\alpha,\beta,\gamma,\delta)=(1,4,2,3)$.
	Hence the toggle is aligned.
	
	\begin{center}
		\begin{tikzpicture}
			\draw[gray] (0,0) circle (1.2);
			\foreach \j/\theta in
			{1/90,2/18,3/-54,4/-126,5/162}
			{
				\coordinate (p\j) at (\theta:1.2);
				\fill (p\j) circle (1.5pt);
				\node at (\theta:1.45) {$\j$};
			}
			\draw[->,thick] (p1) -- (p4);
			\draw[->,thick] (p2) -- (p3);
		\end{tikzpicture}
	\end{center}
	
	Interchanging the two steps replaces
	\[
	12\xrightarrow{\,1\mapsto4\,}24
	\xrightarrow{\,2\mapsto3\,}34
	\]
	by
	\[
	12\xrightarrow{\,2\mapsto3\,}13
	\xrightarrow{\,1\mapsto4\,}34.
	\]
	Thus only the second necklace term changes:
	\[
	J'_2=(J_1\setminus\{2\})\cup\{3\}=13,
	\qquad
	\mathbf J'=(12,13,34,45,15).
	\]
	The new removal and insertion permutations are
	\[
	\rho'=(1\,2),
	\qquad
	\iota'=
	\begin{pmatrix}
		1&2&3&4&5\\
		3&4&5&1&2
	\end{pmatrix},
	\]
	and $\iota'(\rho')^{-1}=\pi$.
	
	We now check the unit-necklace condition.
	The positroid associated with $\pi$ has bases
	\[
	\mathcal M_\pi
	=
	\binom{[5]}{2}\setminus\{23\}.
	\]
	Hence $\Delta_{23}=0$ in $R_\pi$, and the Pl\"ucker relation
	specializes to
	\[
	\Delta_{13}\Delta_{24}
	=
	\Delta_{12}\Delta_{34}
	+
	\Delta_{14}\Delta_{23}
	=
	\Delta_{12}\Delta_{34}.
	\]
	Since $\mathbf J$ is the forward Grassmann necklace,
	\[
	\mathscr F_\pi
	=
	\left\langle
	\Delta_{12},\Delta_{24},\Delta_{34},
	\Delta_{45},\Delta_{15}
	\right\rangle_{\ZZ}.
	\]
	Therefore
	\[
	\Delta_{13}
	=
	\frac{\Delta_{12}\Delta_{34}}{\Delta_{24}}
	\in\mathscr F_\pi.
	\]
	All other entries of $\mathbf J'$ already occur in
	$\mathbf J$. Thus both necklaces are unit necklaces,
	and $(\mathbf J,\mathbf J')$ is an aligned-toggle
	unit-necklace path of length one.
\end{example}

The following lemma realizes an aligned toggle by a short exact
sequence of rank-one lattices.

\begin{lemma}[Short exact sequence associated with an aligned toggle]
	\label{lem:fsb-toggle-sequence}
	Let $\mathbf J=(J_1,\ldots,J_n)$ and
	$\mathbf J'=(J'_1,\ldots,J'_n)$ be Grassmannlike necklaces
	of type $(k,n)$ with a fixed-point-free trip permutation.
	Suppose that $\mathbf J'$ is obtained from $\mathbf J$
	by a nontrivial aligned toggle at position $a$.
	
	After interchanging $\mathbf J$ and $\mathbf J'$ if necessary,
	there exist four distinct boundary labels $u,v,w,x$, occurring
	in this clockwise cyclic order, and a subset
	\[
	S\subseteq[n]\setminus\{u,v,w,x\},
	\qquad |S|=k-2,
	\]
	such that
	\[
	J_{a-1}=Suv,\qquad
	J_a=Svx,\qquad
	J_{a+1}=Swx,\qquad
	J'_a=Suw.
	\]
	Here $Spq$ denotes $S\cup\{p,q\}$, and the remaining necklace
	entries are unchanged.
	
	With this choice of orientation, there is a short exact
	sequence in $\CM(C)$
	\begin{equation}
		\label{eq:fsb-toggle-sequence}
		0\longrightarrow M_{Suw}
		\xrightarrow{\alpha}
		M_{Suv}\oplus M_{Swx}
		\xrightarrow{\beta}
		M_{Svx}
		\longrightarrow0.
	\end{equation}
	Thus the new label determines the kernel, the two neighboring
	labels determine the middle term, and the old label determines
	the quotient.
	
	If all four rank-one $C$-lattices admit $B$-lattice lifts,
	then \eqref{eq:fsb-toggle-sequence} is also a short exact
	sequence in $\CM(B)$, after identifying these lifts with
	their restrictions to $C$.
\end{lemma}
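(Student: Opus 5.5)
The proof has three parts: identify the labels combinatorially, build the sequence as explicit maps of rank-one lattices, then lift it to $B$ by full faithfulness.

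\emph{Combinatorics.} Put $p=\rho(a-1)$, $q=\iota(a-1)$, $p'=\rho(a)$, $q'=\iota(a)$. The toggle is allowed, so $p\neq q'$ and $p'\neq q$. Nontriviality gives $p\ne q$ and $p'\ne q'$, since otherwise the middle term is unchanged. Because the two chords are disjoint, the four labels $p,q,p',q'$ are distinct. Set $S=J_{a-1}\setminus\{p,p'\}$; we need $p'\in J_{a-1}$, which holds since $p'\in J_a$ and $p'\ne q$. Then
\[
J_{a-1}=Spp',\qquad J_a=Sqp',\qquad J_{a+1}=Sqq',\qquad J'_a=Spq'.
\]
Aligned means the cyclic order of $(p,p',q',q)$ or of $(p,q,q',p')$ is clockwise. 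In the first case put $(u,v,w,x)=(p,p',q',q)$. Then $J_{a-1}=Suv$, $J_{a+1}=Swx$, $J'_a=Suw$, and $J_a=Svx$, as required. In the second case, interchange the roles of $\mathbf J$ and $\mathbf J'$; the toggle is an involution with $\rho'=\rho s_{a-1}$ and $\iota'=\iota s_{a-1}$. This swaps $(p,q)$ with $(p',q')$ and gives the first pattern. The one thing to check here is which of the two aligned orders lands in which orientation; if it comes out reversed, exchange the names.

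\emph{The sequence over $C$.} Each $M_I$ is $Z$ at every vertex, so a $C$-map $M_I\to M_{I'}$ is a tuple of scalars $t^{h_c}$ at the vertices. Commuting with $\mathsf x_i$ forces $h_{i+1/2}-h_{i-1/2}=\mathbf 1_{I}(i)-\mathbf 1_{I'}(i)$. Since $|I|=|I'|$, this determines a map up to an overall power of $t$, and there is a minimal choice with all $h_c\ge0$.

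Let $\alpha=(\phi_1,\phi_2)$ and $\beta=(\psi_1,-\psi_2)$ be these minimal maps, with $\phi_1\colon M_{Suw}\to M_{Suv}$, $\phi_2\colon M_{Suw}\to M_{Swx}$, $\psi_1\colon M_{Suv}\to M_{Svx}$ and $\psi_2\colon M_{Swx}\to M_{Svx}$. We need to check three things.
\begin{enumerate}
\item The height functions satisfy $h(\psi_1)+h(\phi_1)=h(\psi_2)+h(\phi_2)$ at every vertex, so $\beta\alpha=0$. Both sides are the minimal map $M_{Suw}\to M_{Svx}$, or the same multiple of it.
\item Exactness holds vertexwise. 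At vertex $c$ the sequence is $0\to Z\to Z^2\to Z\to0$ with entries powers of $t$. It is exact, and $Z$-split, exactly when at each vertex one of the two components of $\alpha$ (equivalently of $\beta$) is an isomorphism, i.e.\ $h=0$.
\item That last condition is a direct cyclic interval computation. One checks on the four arcs $[u,v)$, $[v,w)$, $[w,x)$, $[x,u)$ that $\min(h(\phi_1),h(\phi_2))=0$ and $\min(h(\psi_1),h(\psi_2))=0$. The order $u,v,w,x$ is used precisely here: with the other crossing pattern the minima can both be positive, and the cokernel is not torsion-free.
\end{enumerate}
This interval bookkeeping is the main obstacle. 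I would do it as a table of heights on the four arcs, normalized so that each height function has minimum $0$.

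\emph{Lift to $B$.} The functor $\operatorname{res}\colon\CM(B)\to\CM(C)$ is fully faithful, so $\alpha$ and $\beta$ are restrictions of $B$-module maps between the lifts. Restriction is faithful and exact, and forgets only the action, so the composite is still $0$ and exactness holds on underlying $Z$-modules. Hence \eqref{eq:fsb-toggle-sequence} is a short exact sequence of $B$-lattices.
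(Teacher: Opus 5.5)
Your proposal is correct and follows the paper's proof essentially step by step. It uses the same normalization, $(u,v,w,x)=(p,p',q',q)$, or $(p',p,q,q')$ after interchanging $\mathbf J$ and $\mathbf J'$, and the same monomial maps: your minimal maps are exactly the paper's $t^{\mathbf 1_{[v,w)}}$ and $t^{\mathbf 1_{[x,u)}}$, and vertexwise exactness amounts to the disjointness of $[v,w)$ and $[x,u)$ forced by the cyclic order. It ends with the same lift to $B$ by full faithfulness of restriction.

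Only minor slips need fixing:
\begin{itemize}
\item The inequalities $p\neq q$ and $p'\neq q'$ follow from $\pi(p)=q$, $\pi(p')=q'$ and fixed-point-freeness, or from the distinct-endpoint clause in the definition of aligned. They do not follow from nontriviality.
\item The commutation constraint for a map $M_I\to M_{I'}$ is $h_{i+1/2}-h_{i-1/2}=\mathbf 1_{I'}(i)-\mathbf 1_I(i)$; your sign is reversed.
\item The unit-component conditions for $\alpha$ and for $\beta$ are not equivalent in general, though you rightly check both.
\end{itemize}
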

\begin{proof}
	Let $\rho$ and $\iota$ be the removal and insertion permutations
	of $\mathbf J$, respectively, so that
	\[
	J_{b+1}
	=
	\bigl(J_b\setminus\{\rho(b)\}\bigr)\cup\{\iota(b)\}.
	\]
	We first determine the four labels involved in the toggle.
	Write
	\[
	r_1=\rho(a-1),\qquad s_1=\iota(a-1),\qquad
	r_2=\rho(a),\qquad s_2=\iota(a).
	\]
	The fixed-point-free assumption and the toggle conditions imply
	that these four labels are distinct. By the defining recurrence of $\mathbf J$, we have
	\[
	J_a
	=
	\bigl(J_{a-1}\setminus\{r_1\}\bigr)\cup\{s_1\},
	\qquad
	J_{a+1}
	=
	\bigl(J_a\setminus\{r_2\}\bigr)\cup\{s_2\}.
	\] Hence
	\[
	\{r_1,r_2\}\subseteq J_{a-1},
	\qquad
	\{s_1,s_2\}\cap J_{a-1}=\varnothing.
	\]
	Setting
$	S=J_{a-1}\setminus\{r_1,r_2\},$
	we obtain
	\begin{equation*}
		\begin{aligned}
			J_{a-1}&=Sr_1r_2,
			& J_a&=Ss_1r_2,\\
			J_{a+1}&=Ss_1s_2,
			& J'_a&=Sr_1s_2.
		\end{aligned}
	\end{equation*}
	In particular, $|S|=k-2$ and $S$ is disjoint from the four
	endpoints.
	
	Since the toggle is aligned, the clockwise cyclic order of
	these endpoints is either
	\[
	(r_1,r_2,s_2,s_1)
	\qquad\text{or}\qquad
	(r_1,s_1,s_2,r_2).
	\]
	In the first case, take
	$(u,v,w,x)=(r_1,r_2,s_2,s_1).$
	In the second case, interchange $\mathbf J$ and $\mathbf J'$
	and take
	\[
	(u,v,w,x)=(r_2,r_1,s_1,s_2).
	\]
	In both cases, $u,v,w,x$ occur in clockwise cyclic order and
	\[
	(J_{a-1},J_a,J_{a+1},J'_a)
	=(Suv,Svx,Swx,Suw).
	\]
	
	We now construct the short exact sequence in $\CM(C)$.
	For boundary labels $p,q$, let $[p,q)$ denote the clockwise
	cyclic interval starting at $p$ and ending immediately before
	$q$. For each $i\in[n]$, put
	\[
	\lambda_i=\mathbf1_{[v,w)}(i),
	\qquad
	\nu_i=\mathbf1_{[x,u)}(i),
	\qquad
	f_i=t^{\lambda_i},
	\qquad
	g_i=t^{\nu_i}.
	\]
	At the vertex $i+\tfrac12$, define
	\[
	\alpha_i=
	\begin{pmatrix}f_i\\g_i\end{pmatrix}
	:Z\longrightarrow Z\oplus Z,
	\qquad
	\beta_i=
	\begin{pmatrix}g_i&-f_i\end{pmatrix}
	:Z\oplus Z\longrightarrow Z.
	\]
	We claim that these vertex maps define $C$-module morphisms
	\[
	\alpha:M_{Suw}\longrightarrow M_{Suv}\oplus M_{Swx},
	\qquad
	\beta:M_{Suv}\oplus M_{Swx}\longrightarrow M_{Svx}.
	\]
	
	To check this, recall that a vertexwise multiplication map
	\[
	h_i=t^{d_i}:(M_I)_{i+\frac12}
	\longrightarrow(M_J)_{i+\frac12}
	\]
	commutes with $\mathsf x_i$ precisely when
	\[
	t^{d_i}t^{\mathbf1_I(i)}
	=
	t^{\mathbf1_J(i)}t^{d_{i-1}},
	\]
	or equivalently,
	\begin{equation}
		\label{eq:fsb-toggle-map-condition}
		d_i-d_{i-1}
		=\mathbf1_J(i)-\mathbf1_I(i).
	\end{equation}
	The same equality also gives commutativity with $\mathsf y_i$,
	since
	\[
	d_{i-1}+1-\mathbf1_I(i)
	=
	1-\mathbf1_J(i)+d_i.
	\]
	
	Let $\delta_{ip}$ be the Kronecker delta. The interval
	indicators satisfy
	\[
	\lambda_i-\lambda_{i-1}
	=\delta_{iv}-\delta_{iw},
	\qquad
	\nu_i-\nu_{i-1}
	=\delta_{ix}-\delta_{iu}.
	\]
	On the other hand, we have
	\begin{align*}
		\mathbf1_{Suv}(i)-\mathbf1_{Suw}(i)
		&=
		\mathbf1_{Svx}(i)-\mathbf1_{Swx}(i)
		=\delta_{iv}-\delta_{iw},\\
		\mathbf1_{Swx}(i)-\mathbf1_{Suw}(i)
		&=
		\mathbf1_{Svx}(i)-\mathbf1_{Suv}(i)
		=\delta_{ix}-\delta_{iu}.
	\end{align*}
	Thus \eqref{eq:fsb-toggle-map-condition} holds for all four
	components of $\alpha$ and $\beta$. Consequently, both maps
	are $C$-linear.
	
	It remains to verify exactness. At every vertex,
	\[
	\beta_i\alpha_i=g_if_i-f_ig_i=0.
	\]
	More explicitly, the maps on the four cyclic intervals are
	\[
	\begin{array}{c|c|c}
		i & \alpha_i & \beta_i\\ \hline
		[u,v)
		& \begin{pmatrix}1\\1\end{pmatrix}
		& \begin{pmatrix}1&-1\end{pmatrix}\\[4pt]
		[v,w)
		& \begin{pmatrix}t\\1\end{pmatrix}
		& \begin{pmatrix}1&-t\end{pmatrix}\\[4pt]
		[w,x)
		& \begin{pmatrix}1\\1\end{pmatrix}
		& \begin{pmatrix}1&-1\end{pmatrix}\\[4pt]
		[x,u)
		& \begin{pmatrix}1\\t\end{pmatrix}
		& \begin{pmatrix}t&-1\end{pmatrix}.
	\end{array}
	\]
	In particular, at least one of $f_i,g_i$ equals $1$.
	Therefore $\alpha_i$ is injective, $\beta_i$ is surjective,
	and
	\[
	\ker\beta_i
	=
	Z\begin{pmatrix}f_i\\g_i\end{pmatrix}
	=
	\operatorname{im}\alpha_i.
	\]
	Exactness can be checked at each vertex, so these maps give
	the required short exact sequence
	\[
	0\longrightarrow M_{Suw}
	\xrightarrow{\alpha}
	M_{Suv}\oplus M_{Swx}
	\xrightarrow{\beta}
	M_{Svx}
	\longrightarrow0.
	\]
	All its terms are $C$-lattices, so it is a conflation in
	$\CM(C)$.
	
	Finally, suppose that all four modules admit $B$-lattice
	lifts. By the full faithfulness of restriction
	\[
	\CM(B)\longrightarrow\CM(C),
	\]
	proved in \cite[Proposition~5.6]{Pressland}, the maps
	$\alpha$ and $\beta$ are $B$-linear after identifying the
	lifts with their restrictions. Restriction along $C\to B$
	does not change the underlying vector spaces or linear maps.
	Hence the same sequence is exact over $B$, and all its terms
	belong to $\CM(B)$.
	
	Along a unit-necklace path, each of the four labels occurs
	in one of the two necklaces. Its Pl\"ucker coordinate belongs
	to $\mathscr F_\pi$ and is therefore nonzero in $R_\pi$.
	By \cite[Proposition~5.8]{Pressland}, all four rank-one modules
	admit $B$-lattice lifts. Thus the preceding argument applies
	to every step of the path, with the orientation chosen above.
\end{proof}

\begin{example}
	\label{ex:necklace-toggle}
	Consider the permutation
	\[
	\pi=
	\begin{pmatrix}
		1&2&3&4&5&6\\
		4&6&5&2&1&3
	\end{pmatrix}.
	\]
	Its ordinary target Grassmann necklace is
	\[
	\mathbf J=(123,234,346,456,256,126),
	\]
	with removal permutation $\rho=\mathrm{id}$ and insertion
	permutation $\iota=\pi$; see
	\cite[Examples~4.6 and~4.11]{FSB}.
	We examine the toggle at position $a=3$.
	
	The two recurrences adjacent to $J_3$ are
	\[
	J_3=(J_2\setminus\{2\})\cup\{6\}=346,
	\qquad
	J_4=(J_3\setminus\{3\})\cup\{5\}=456.
	\]
	The toggle is allowed because $2\ne5$ and $3\ne6$.
	Moreover, the chords $2\to6$ and $3\to5$ are disjoint,
	and their endpoints occur in the clockwise order $2,3,5,6$.
	Thus the toggle is aligned. Interchanging the order of the
	two replacements gives
	\[
	J'_3=(J_2\setminus\{3\})\cup\{5\}=245,
	\qquad
	J_4=(J'_3\setminus\{2\})\cup\{6\}=456.
	\]
	The resulting necklace is therefore
	\[
	\mathbf J'=(123,234,245,456,256,126).
	\]
	
	\begin{center}
		\begin{tikzpicture}[
			>=Stealth,
			every node/.style={font=\small},
			lab/.style={fill=white,inner sep=2pt},
			entry/.style={
				draw,
				rounded corners=2pt,
				fill=white,
				minimum width=11mm,
				minimum height=7mm
			}
			]
			\draw[gray!65] (0,0) circle (1.35);
			\foreach \j/\theta in
			{1/90,2/30,3/-30,4/-90,5/-150,6/150}
			{
				\coordinate (p\j) at (\theta:1.35);
				\fill (p\j) circle (1.5pt);
				\node at (\theta:1.64) {$\j$};
			}
			\draw[->,thick,shorten <=2pt,shorten >=2pt]
			(p2) --
			node[lab,above=3pt] {$2\mapsto6$}
			(p6);
			\draw[->,thick,dashed,shorten <=2pt,shorten >=2pt]
			(p3) --
			node[lab,below=3pt] {$3\mapsto5$}
			(p5);
			\node[align=center] at (0,-2.13)
			{The aligned chords};
			
			\begin{scope}[xshift=3.45cm]
				\node[entry] (left)  at (0,0)    {$234$};
				\node[entry] (old)   at (2.3,1)  {$346$};
				\node[entry] (right) at (4.6,0)  {$456$};
				\node[entry] (new)   at (2.3,-1) {$245$};
				
				\draw[->,thick]
				(left) --
				node[lab,above,sloped] {$2\mapsto6$}
				(old);
				\draw[->,thick,dashed]
				(old) --
				node[lab,above,sloped] {$3\mapsto5$}
				(right);
				\draw[->,thick,dashed]
				(left) --
				node[lab,below,sloped] {$3\mapsto5$}
				(new);
				\draw[->,thick]
				(new) --
				node[lab,below,sloped] {$2\mapsto6$}
				(right);
				
				\draw[->,gray!75]
				(old) --
				node[right,font=\scriptsize] {toggle}
				(new);
				\node[align=center] at (2.3,-2.13)
				{The two orders of replacement};
			\end{scope}
		\end{tikzpicture}
	\end{center}
	
	In the notation of the toggle sequence, we have
	\[
	S=\{4\},
	\qquad
	(u,v,w,x)=(2,3,5,6),
	\]
	and hence
	$(Suv,Svx,Swx,Suw)=(234,346,456,245).$

	The corresponding short exact sequence is
	\begin{equation}
		\label{eq:ex-necklace-toggle-sequence}
		0\longrightarrow M_{245}
		\xrightarrow{\alpha}
		M_{234}\oplus M_{456}
		\xrightarrow{\beta}
		M_{346}
		\longrightarrow0.
	\end{equation}
	
	We make its maps explicit. At the vertex $i+\tfrac12$, put
	\[
	\lambda_i=\mathbf1_{[3,5)}(i),
	\qquad
	\nu_i=\mathbf1_{[6,2)}(i),
	\]
	where the intervals are taken cyclically. Thus
	\[
	(\lambda_1,\ldots,\lambda_6)=(0,0,1,1,0,0),
	\qquad
	(\nu_1,\ldots,\nu_6)=(1,0,0,0,0,1).
	\]
	Define
	\[
	\alpha_i=
	\begin{pmatrix}
		t^{\lambda_i}\\
		t^{\nu_i}
	\end{pmatrix},
	\qquad
	\beta_i=
	\begin{pmatrix}
		t^{\nu_i}&-t^{\lambda_i}
	\end{pmatrix}.
	\]
	Equivalently, these maps are
	\[
	\begin{array}{c|c|c}
		i&\alpha_i&\beta_i\\ \hline
		1,6
		&\begin{pmatrix}1\\t\end{pmatrix}
		&\begin{pmatrix}t&-1\end{pmatrix}\\[5pt]
		2,5
		&\begin{pmatrix}1\\1\end{pmatrix}
		&\begin{pmatrix}1&-1\end{pmatrix}\\[5pt]
		3,4
		&\begin{pmatrix}t\\1\end{pmatrix}
		&\begin{pmatrix}1&-t\end{pmatrix}.
	\end{array}
	\]
	To verify $C$-linearity, observe that, with indices read
	modulo $6$,
	\begin{align*}
		\lambda_i-\lambda_{i-1}
		&=
		\mathbf1_{234}(i)-\mathbf1_{245}(i)
		=
		\mathbf1_{346}(i)-\mathbf1_{456}(i),\\
		\nu_i-\nu_{i-1}
		&=
		\mathbf1_{456}(i)-\mathbf1_{245}(i)
		=
		\mathbf1_{346}(i)-\mathbf1_{234}(i).
	\end{align*}
	These identities show that all four components commute
	with the arrows $\mathsf x_i$ and $\mathsf y_i$.
	At every vertex, we have
	\[
	\beta_i\alpha_i=0,
	\qquad
	\ker\beta_i
	=
	Z\begin{pmatrix}
		t^{\lambda_i}\\
		t^{\nu_i}
	\end{pmatrix}
	=
	\operatorname{im}\alpha_i.
	\]
	Since at least one of $t^{\lambda_i},t^{\nu_i}$ is $1$,
	the map $\alpha_i$ is injective and $\beta_i$ is surjective.
	This proves exactness of
	\eqref{eq:ex-necklace-toggle-sequence} in $\CM(C)$.
	
	We now compare this sequence with the Pl\"ucker coordinates.
	The positroid of $\pi$ is
	\[
	\mathcal M_\pi
	=
	\binom{[6]}{3}\setminus\{345,156\};
	\]
	see \cite[Example~4.11]{FSB}.
	In particular, all four labels in
	\eqref{eq:ex-necklace-toggle-sequence} belong to
	$\mathcal M_\pi$.
	The rank-one restriction criterion therefore provides their
	$B$-lattice lifts, and full faithfulness of restriction makes
	the displayed maps $B$-linear. Thus the sequence is also
	exact in $\CM(B)$.
	
	The three-term Pl\"ucker relation is
	\[
	\Delta_{245}\Delta_{346}
	=
	\Delta_{234}\Delta_{456}
	+
	\Delta_{246}\Delta_{345}.
	\]
	Since $345\notin\mathcal M_\pi$, we have $\Delta_{345}=0$
	in $R_\pi$. Consequently,
	\begin{equation}
		\label{eq:ex-necklace-toggle-monomial}
		\Delta_{245}
		=
		\frac{\Delta_{234}\Delta_{456}}{\Delta_{346}}
		\in\mathscr F_\pi.
	\end{equation}
	Here $\Delta_{234},\Delta_{346},\Delta_{456}$ are frozen
	coordinates of the ordinary target seed. Thus $\mathbf J'$
	is again a unit necklace. Formula
	\eqref{eq:ex-necklace-toggle-monomial} describes the coordinate
	change, while the explicit maps above realize the replacement
	of $M_{346}$ by $M_{245}$ through their common neighboring
	modules $M_{234}$ and $M_{456}$.
\end{example}

\begin{theorem}[Boundary tilting]
	\label{prop:fsb-boundary-tilting}
	The boundary module $P_{\cI}$ is a $1$-tilting $B$-module. More explicitly,
	\begin{equation}
		\label{eq:fsb-boundary-tilting}
		\operatorname{pd}_B P_{\cI}\leq1,\qquad
		\Ext_B^a(P_{\cI},P_{\cI})=0\ (a>0),\qquad \thick(P_{\cI})=\per B.
	\end{equation}
\end{theorem}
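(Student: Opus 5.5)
We prove the three conditions in \eqref{eq:fsb-boundary-tilting} separately, following the aligned-toggle path supplied by $\iota\le_\circ\pi$. Write $\mathbf J^{(0)},\ldots,\mathbf J^{(N)}$ for an aligned-toggle unit-necklace path from the ordinary forward necklace $\mathbf J^{(0)}=\mathbf I^\pi$ to $\mathbf J^{(N)}=\mathbf I$ \cite[Lemma~4.13 and Theorem~4.14]{FSB}. For each $\ell$ put
\[
P^{(\ell)}=\bigoplus_{J\in\{J^{(\ell)}_b\}}M_J .
\]
Every summand lifts to $\CM(B)$ because its Pl\"ucker coordinate lies in $\mathscr F_\pi$.

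\textbf{Generation.} We prove $\thick(P^{(\ell)})=\thick(P^{(0)})$ by induction on $\ell$. By Lemma~\ref{lem:fsb-toggle-sequence}, consecutive necklaces differ in one entry, and there is a short exact sequence
\[
0\to M_{Suw}\to M_{Suv}\oplus M_{Swx}\to M_{Svx}\to0
\]
in $\CM(B)$. Here $Suv$ and $Swx$ are entries common to both necklaces, and the two remaining labels are the old and new entries. This sequence is a triangle in $\Db(\fgmod B)$, so each of $M_{Suw}$ and $M_{Svx}$ lies in the thick closure of the other together with the common entries. (If the replaced entry still occurs elsewhere in the necklace, both thick subcategories contain it already.) Hence the thick subcategory is invariant along the path.

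At $\ell=0$ we have $\add P^{(0)}=\add B^\vee$. We need $\thick(B^\vee)=\per B$. Since $B$ is Iwanaga--Gorenstein, $B^\vee$ has finite projective dimension and $B$ has a finite coresolution by $\add B^\vee$, which gives this equality. Consequently $\thick(P_\cI)=\per B$.

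\textbf{Projective dimension.} The generation argument already gives $P_\cI\in\per B$, so $\operatorname{pd}_BP_\cI<\infty$. To sharpen this to $\operatorname{pd}\le1$, we use that $B$ has injective dimension at most $2$. Every lattice $M$ fits into $0\to\Omega_BM\to Q\to M\to0$ with $\Omega_BM\in\GP B$. A Gorenstein-projective module of finite projective dimension is projective, so $\operatorname{pd}_BM_I<\infty$ forces $\Omega_BM_I$ to be projective. Hence $\operatorname{pd}M_I\le1$ for each summand, and therefore $\operatorname{pd}P_\cI\le1$.

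\textbf{Rigidity.} By the previous step only $\Ext^1_B$ needs to vanish. The final necklace $\mathbf I$ is weakly separated by admissibility (Theorem~\ref{thm:fsb-cluster-criterion}). Lemma~\ref{lem:fsb-singular-rigidity} embeds $\Ext^1_B(M_I,M_J)$ into $\Ext^1_C(M_I,M_J)$, and the latter vanishes for weakly separated $I,J$ by \cite{JKS}. So $\Ext^{a}_B(P_\cI,P_\cI)=0$ for all $a>0$.

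\textbf{Expected obstacle.} The hardest step is the base case $\thick(B^\vee)=\per B$, together with making sure $\Ext^{\ge2}$ vanishes through $\operatorname{pd}\le1$. Here $B^\vee$ is projective-injective only in $\GI\CM(B)$, not in $\fgmod B$, so we must apply the Iwanaga--Gorenstein property of $B$ to $B^\vee$ directly: on both sides, $B$ has finite injective dimension and $B^\vee$ has finite projective dimension. This base case is also where the rest of the proof uses $\operatorname{pd}\le 1$: it gives the membership $P_\cI\in\per B$ needed in the projective-dimension step.
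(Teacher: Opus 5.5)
Your proposal is correct and follows essentially the same route as the paper: invariance of $\thick$ along the aligned-toggle path via the toggle sequences, the base case $\thick(B^\vee)=\per B$, the sharpening from finite projective dimension to $\operatorname{pd}\le1$ through the Gorenstein-projective first syzygy of a lattice, and rigidity from weak separation of the final necklace via the injection $\Ext^1_B\hookrightarrow\Ext^1_C$ and \cite{JKS}. The one step you only assert, the base case, is proved in the paper exactly along the lines you indicate. It uses dimension shifting with $\Ext^{>0}_B(X,B^\vee)\simeq\Ext^{>0}_Z(X,Z)=0$ for lattices $X$ to build a finite $\add B^\vee$-coresolution of $B$, and then dualizes to get a finite $\add B$-resolution of $B^\vee$.
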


\begin{proof}
	Put $D_Z=\Hom_Z(-,Z)$. The lattice category $\CM(B)$ has enough
	projectives $\add B$ and enough injectives $\add B^\vee$.
	For every $B$-lattice $X$, adjunction gives
	\[
	\Ext_B^i(X,B^\vee)\simeq\Ext_Z^i(X,Z)=0\qquad(i>0).
	\]
	Successively embed $B$ into objects of $\add B^\vee$, taking the
	cokernels in $\CM(B)$. Dimension shifting and
	$\operatorname{id}_BB<\infty$ show that a sufficiently late
	cokernel $K$ satisfies $\Ext_B^1(X,K)=0$ for every lattice $X$.
	It is therefore injective in $\CM(B)$ and belongs to $\add B^\vee$.
	This gives a finite $\add B^\vee$-coresolution of $B$.
	The same argument over $B^{\mathrm{op}}$, followed by the exact
	duality $D_Z$ on lattices, gives a finite $\add B$-resolution of
	$B^\vee$. Thus there are exact sequences
	\[
	\begin{gathered}
		0\to B\to J^0\to\cdots\to J^a\to0,
		\qquad J^i\in\add B^\vee,\\
		0\to Q_b\to\cdots\to Q_0\to B^\vee\to0,
		\qquad Q_i\in\add B,
	\end{gathered}
	\]
	and hence
	\begin{equation}
		\label{eq:fsb-injective-generator}
		\thick(B^\vee)=\per B.
	\end{equation}
	
	Choose an aligned-toggle path
	\[
	\mathbf I^{(0)}\rightsquigarrow\cdots\rightsquigarrow
	\mathbf I^{(\ell)}=\mathbf I.
	\]
	For $0\leq a\leq\ell$, let $\cI^{(a)}$ be the set of distinct
	terms of $\mathbf I^{(a)}$ and put
	\[
	P_{\cI^{(a)}}=\bigoplus_{I\in\cI^{(a)}}M_I.
	\]
	Thus $\cI^{(\ell)}=\cI$ and
	$\add P_{\cI^{(0)}}=\add B^\vee$. At each nontrivial step, the two
	neighbouring summands in \eqref{eq:fsb-toggle-sequence} are unchanged.
	The associated triangle expresses each replaced summand in the thick
	subcategory generated by the other boundary sum. Hence
	\[
	\thick(P_{\cI^{(a+1)}})=\thick(P_{\cI^{(a)}})
	\qquad(0\leq a<\ell),
	\]
	and consequently
	\[
	\thick(P_{\cI})=\thick(P_{\cI^{(0)}})
	=\thick(B^\vee)=\per B.
	\]
	In particular, $P_{\cI}$ has finite projective dimension. Its first syzygy
	is Gorenstein-projective by the boundary-order syzygy property.
	A Gorenstein-projective module of finite projective dimension is
	projective, so $\operatorname{pd}_B P_{\cI}\leq1$.
	
	For $X,Y\in\CM(B)$, restriction induces an injection
	\begin{equation}
		\label{eq:fsb-ext-injection-again}
		\Ext_B^1(X,Y)\lhook\joinrel\longrightarrow\Ext_C^1(X,Y).
	\end{equation}
	Indeed, a splitting after restriction is $B$-linear by full
	faithfulness. The final necklace is weakly separated, so the
	rank-one extension criterion \cite{JKS} gives $\Ext_C^1(P_{\cI},P_{\cI})=0$.
	Thus $\Ext_B^1(P_{\cI},P_{\cI})=0$, and the projective-dimension bound gives the
	higher vanishings. Notice that weak separation is used at the final
	necklace, not at the intermediate steps of the path.
\end{proof}

\subsection{The tilted order and its face modules}

\begin{definition}[Tilted boundary order]
	\label{def:necklace-tilted-order}
	The \emph{tilted boundary order} associated with $\cI$ is
	\[
	B_{\cI}=\End_B(P_{\cI})^{\mathrm{op}}.
	\]
	We regard $P_{\cI}$ as a $(B,B_{\cI})$-bimodule by
	$p\cdot a^{\mathrm{op}}=a(p)$ for $a\in\End_B(P_{\cI})$.
	For a left $B$-module $M$, the left $B_{\cI}$-action on
	$\Hom_B(P_{\cI},M)$ is $a^{\mathrm{op}}\cdot h=h\circ a$.
	For $J\in\cS$, put
	\[
	N_J=\Hom_B(P_{\cI},M_J),\qquad
	T_{\cI}=\bigoplus_{J\in\cS}N_J.
	\]

\end{definition}

\begin{lemma}
	\label{lem:fsb-derived-order}
	The algebra $B_{\cI}$ is a two-sided Noetherian
	Iwanaga--Gorenstein $Z$-order. There are inverse equivalences
	\begin{equation}
		\label{eq:fsb-derived-equivalence}
		\begin{aligned}
			F_{\cI}=\mathbf R\Hom_B(P_{\cI},-)&:
			\Db(\fgmod B)\xrightarrow{\sim}\Db(\fgmod B_{\cI}),\\
			L_{\cI}=P_{\cI}\otimes^{\mathbf L}_{B_{\cI}}-&:
			\Db(\fgmod B_{\cI})\xrightarrow{\sim}\Db(\fgmod B).
		\end{aligned}
	\end{equation}
	They restrict to equivalences of perfect categories and induce
	inverse equivalences of singularity categories.
\end{lemma}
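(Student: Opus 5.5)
We will treat this as a standard consequence of tilting theory for orders, fed by Theorem~\ref{prop:fsb-boundary-tilting}. The argument has four parts: order properties of $B_{\cI}$, the derived equivalence, perfect categories, and finally the Iwanaga--Gorenstein property with the singularity categories.

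\textbf{Step 1: $B_{\cI}$ is a $Z$-order.} Since $P_{\cI}$ is a $B$-lattice, $\End_B(P_{\cI})$ is a $Z$-submodule of $\End_Z(P_{\cI})$, which is finite free. Because $Z$ is a principal ideal domain, this submodule is itself finite free, so $B_{\cI}$ is module-finite over the central subring $Z$. Hence $B_{\cI}$ is two-sided Noetherian. For semisimplicity we invert $t$. The tilting generation $\thick(P_{\cI})=\per B$ survives localization, so $K\otimes_Z P_{\cI}$ is a progenerator of the semisimple algebra $K\otimes_Z B$. Therefore $K\otimes_Z B_{\cI}\simeq\End_{K\otimes B}(K\otimes P_{\cI})^{\mathrm{op}}$ is semisimple, and $B_{\cI}$ is a $Z$-order.

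\textbf{Step 2: the derived equivalence.} We apply the Rickard--Happel tilting theorem for the finitely generated tilting module $P_{\cI}$ over the Noetherian ring $B$. Its hypotheses are $\operatorname{pd}\leq1$, self-orthogonality, and $\thick(P_{\cI})=\per B$, all supplied by Theorem~\ref{prop:fsb-boundary-tilting}. The theorem gives $F_{\cI}=\mathbf R\Hom_B(P_{\cI},-)$ as an equivalence $D(\operatorname{Mod}B)\to D(\operatorname{Mod}B_{\cI})$, with quasi-inverse $L_{\cI}$. We must check that it restricts to the bounded finitely generated parts.
\begin{itemize}
\item Since $\operatorname{pd}P_{\cI}\leq1$, a $B$-module $M$ has cohomology of $F_{\cI}M$ only in degrees $0$ and $1$.
\item These cohomology groups are $\Ext^i_B(P_{\cI},M)$, which are finitely generated over $B_{\cI}$ by Noetherianity.
\item Conversely, the bimodule $P_{\cI}$ is finitely generated and of finite projective dimension over $B_{\cI}$. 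This holds because $P_{\cI}$ is a tilting $B_{\cI}^{\mathrm{op}}$-module under the standard symmetry. Hence $L_{\cI}$ also preserves $\Db(\fgmod)$.
\end{itemize}

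\textbf{Step 3: perfect categories.} $F_{\cI}(P_{\cI})=B_{\cI}$ and $\thick(P_{\cI})=\per B$, so $F_{\cI}$ restricts to an equivalence $\per B\simeq\thick(B_{\cI})=\per B_{\cI}$.

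\textbf{Step 4: Iwanaga--Gorenstein property and singularity categories.} Here we expect the main obstacle, since the Gorenstein condition must be transferred across the equivalence. We plan to use the intrinsic criterion that a Noetherian order $\Lambda$ is Iwanaga--Gorenstein if and only if $\thick(\Lambda)=\thick(\Lambda^\vee)$ in $\Db(\fgmod\Lambda)$, together with its right-handed analogue.
\begin{itemize}
\item For $B$, the relation $\thick(B^\vee)=\per B$ is \eqref{eq:fsb-injective-generator}.
\item We will check that $F_{\cI}$ carries $B^\vee$ to $B_{\cI}^\vee$, using the identification $\mathbf R\Hom_B(P_{\cI},B^\vee)\simeq\Hom_Z(P_{\cI},Z)$. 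This identification holds because $\Ext^{>0}_B(P_{\cI},B^\vee)=0$ by the adjunction argument in the proof of Theorem~\ref{prop:fsb-boundary-tilting}. We then identify $D_Z(P_{\cI})$ with $B_{\cI}^\vee$ via $D_Z\Hom_B(P_{\cI},P_{\cI})\simeq\Hom_B(P_{\cI},D_Z D_Z\,\cdot)$-type duality, which is valid because $P_{\cI}$ generates $\per B$.
\end{itemize}
Combining these gives $\thick(B_{\cI}^\vee)=\per B_{\cI}$. The same argument over the opposite algebras, using the right tilting module, handles the other side. The criterion then yields finite injective dimensions on both sides.

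If this identification of $F_{\cI}(B^\vee)$ proves delicate, the fallback is a more direct argument. Finite injective dimension of $B_{\cI}$ would follow from vanishing of $\Hom_{\Db}(X,B_{\cI}[i])$ for $i\gg0$ and all modules $X$. We would compute this across the equivalence as $\Hom(L_{\cI}X,P_{\cI}[i])$, using that $L_{\cI}X$ has bounded cohomology and that $\operatorname{id}P_{\cI}\leq\operatorname{id}B+1$.

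With the perfect categories matched, the triangle equivalences descend to the Verdier quotients, giving inverse equivalences $\Dsg(B)\simeq\Dsg(B_{\cI})$.
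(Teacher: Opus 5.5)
\textbf{The primary argument for Step~4 rests on a false isomorphism.} Your Steps~1--3 match the paper, which also uses Rickard's derived Morita theory with $Q=\mathbf R\Hom_B(P_{\cI},B)$ as the inverse bimodule. Your ``fallback'' for Step~4 is exactly the paper's argument:
\begin{itemize}
\item take a bounded injective resolution of $P_{\cI}$;
\item use a lower bound on the cohomology of $L_{\cI}X$ that is uniform in the module $X$;
\item conclude $\Ext^q_{B_{\cI}}(X,B_{\cI})\simeq\Hom(L_{\cI}X,P_{\cI}[q])=0$ beyond a fixed $q$, and apply Baer's criterion.
\end{itemize}
On the right, the paper runs the same count with $Q$ in place of $P_{\cI}$.

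The primary route claims $F_{\cI}(B^\vee)\simeq B_{\cI}^\vee$, and this is false in general. Adjunction and $\Ext_Z^{>0}(P_{\cI},Z)=0$ do give $F_{\cI}(B^\vee)\simeq\Hom_Z(P_{\cI},Z)$. But its left $B_{\cI}$-structure is the $Z$-dual of the \emph{right} $B_{\cI}$-module $P_{\cI}$, whereas $B_{\cI}^\vee$ is the dual of the regular right module. The two agree only if $P_{\cI}\simeq B_{\cI}$ as right modules. For example, take $\rho=\mathrm{id}$. Then $P_{\cI}\simeq B^\vee$, so $F_{\cI}(B^\vee)\simeq F_{\cI}(P_{\cI})\simeq B_{\cI}$ is the regular module. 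Here $B_{\cI}\simeq B$, so this equals $B_{\cI}^\vee$ only if $\add B=\add B^\vee$, i.e.\ only if the source and target necklaces of $\pi$ coincide. Moreover, $\thick(F_{\cI}(B^\vee))=F_{\cI}(\per B)=\per B_{\cI}$ holds automatically by Step~3. So the whole content of your criterion lies in this identification, and no ``$D_ZD_Z$-type duality'' can supply it.

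\textbf{Repairing the primary route.} Once repaired, it gives a genuinely different proof. Use the right-hand tilting property you already invoked in Step~2. The equivalence $-\otimes^{\mathbf L}_BP_{\cI}$ sends $B_B$ to $P_{\cI}$, so $\thick(P_{\cI})=\per B_{\cI}^{\mathrm{op}}$ in right modules. Since $Z$ is regular, $D_Z=\mathbf R\Hom_Z(-,Z)$ is a duality $\Db(\fgmod B_{\cI}^{\mathrm{op}})^{\mathrm{op}}\simeq\Db(\fgmod B_{\cI})$. It turns that equality into $\thick(F_{\cI}(B^\vee))=\thick(D_ZP_{\cI})=\thick(B_{\cI}^\vee)$, hence $\per B_{\cI}=\thick(B_{\cI}^\vee)$.

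The sequence $0\to B_{\cI}^\vee\to\Hom_Z(B_{\cI},K)\to\Hom_Z(B_{\cI},K/Z)\to0$ shows $\operatorname{id}B_{\cI}^\vee\le1$. So $B_{\cI}\in\thick(B_{\cI}^\vee)$ gives uniform vanishing of $\Ext^q_{B_{\cI}}(-,B_{\cI})$, and Baer's criterion applies. Dualizing $B_{\cI}^\vee\in\per B_{\cI}$ by $D_Z$ gives the right-hand bound, so you need no separate argument over the opposite algebras.

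Compared with the paper, this trades the two explicit degree counts (with $P_{\cI}$ and with $Q$) for the derived $Z$-duality and the two-sided tilting property. As written, though, you must either promote your fallback or insert this correction.
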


\begin{proof}
	Put
	\[
	P=P_{\cI},
	\qquad
	\Lambda=B_{\cI}=\End_B(P)^{\mathrm{op}}.
	\]
	We regard $P$ as a $(B,\Lambda)$-bimodule with the actions
	specified above.
	
	We first verify that $\Lambda$ is a two-sided Noetherian
	$Z$-order. Since $P$ is a $B$-lattice, it is finite free over
	$Z$, and
	$\End_B(P)\subseteq\End_Z(P).$
	
	The ring $Z$ is a discrete valuation ring, so every submodule
	of a finite free $Z$-module is again finite free. Consequently,
	$\Lambda$ is finite free over $Z$, with central $Z$-action.
	
	Write
	\[
	B_K=K\otimes_Z B,
	\qquad
	P_K=K\otimes_Z P.
	\]
	Since $P$ is finitely presented over $B$, localization gives
	\[
	K\otimes_Z\Lambda
	\simeq
	\End_{B_K}(P_K)^{\mathrm{op}}.
	\]
	The generic fibre $B_K$ of the reference boundary order is
	semisimple. Hence $P_K$ is a finite-dimensional semisimple
	$B_K$-module, and its endomorphism algebra is semisimple.
	Thus $\Lambda$ is a $Z$-order. Moreover, every left or right
	ideal of $\Lambda$ is a $Z$-submodule of the finite
	$Z$-module $\Lambda$. The ascending chain condition over
	$Z$ therefore implies that $\Lambda$ is Noetherian on
	both sides.
	
	By Proposition~\ref{prop:fsb-boundary-tilting},
	\[
	\operatorname{pd}_B P\leq1,
	\qquad
	\Ext_B^j(P,P)=0\quad(j>0),
	\qquad
	\thick(P)=\per B.
	\]
	Since $P$ is concentrated in degree zero, we also have
	\[
	\Hom_{\Db(\fgmod B)}(P,P[j])=0
	\qquad(j<0).
	\]
	Thus $P$ is a tilting object with endomorphism algebra
	$\Lambda^{\mathrm{op}}$.
	
	Set
	$Q=\mathbf R\Hom_B(P,B),$
	viewed as a complex of $(\Lambda,B)$-bimodules. By
	Derived Morita theory \cite{Rickard}, we have isomorphisms
	\begin{equation}
		\label{eq:fsb-tilting-bimodule-identities}
		P\otimes_\Lambda^{\mathbf L}Q\simeq B,
		\qquad
		Q\otimes_B^{\mathbf L}P\simeq\Lambda
	\end{equation}
	in the corresponding derived categories of bimodules.
	Moreover, $P$ and $Q$ are perfect over each of their
	two acting rings separately. Since $P$ is perfect as a
	left $B$-module, there is a natural isomorphism
	\[
	\mathbf R\Hom_B(P,-)
	\simeq Q\otimes_B^{\mathbf L}-.
	\]
	Therefore the functors
	\[
	F=Q\otimes_B^{\mathbf L}-,
	\qquad
	L=P\otimes_\Lambda^{\mathbf L}-
	\]
	are mutually inverse. Indeed,
	\begin{align*}
		LF(X)
		&\simeq
		\bigl(P\otimes_\Lambda^{\mathbf L}Q\bigr)
		\otimes_B^{\mathbf L}X
		\simeq X,\\
		FL(Y)
		&\simeq
		\bigl(Q\otimes_B^{\mathbf L}P\bigr)
		\otimes_\Lambda^{\mathbf L}Y
		\simeq Y.
	\end{align*}
	Their finite Tor-amplitudes and finite generation properties
	imply that they preserve bounded complexes with finitely
	generated cohomology. This proves
	\eqref{eq:fsb-derived-equivalence}.
	
	In particular, we have
	$F(P)\simeq\Lambda,
	$ and $
	L(\Lambda)\simeq P.$
	It follows that
	\[
	F(\per B)
	=
	F(\thick(P))
	=
	\thick(\Lambda)
	=
	\per\Lambda.
	\]
	Hence the equivalences restrict to the perfect categories.
	
	We next prove that $\Lambda$ is Iwanaga--Gorenstein.
	We use the finite self-injective dimensions of $B$ on both
	sides and the finite amplitudes of the tilting functors.
	
	First consider left modules. Since $P$ is perfect over $B$
	and $\operatorname{id}_B B<\infty$, it has a bounded
	injective resolution
	\[
	P\xrightarrow{\sim}I^\bullet,
	\qquad
	I^j=0\quad(j>N)
	\]
	for some integer $N$.
	Since $P$ is also perfect as a right $\Lambda$-module,
	there is an integer $a$, independent of
	$X\in\fgmod\Lambda$, such that
	\[
	H^j(LX)=0\qquad(j<a).
	\]
	Choose a representative $A_X^\bullet$ of $LX$ with
	$A_X^j=0$ for $j<a$.
	
	For $q>N-a$, the shifted injective complex satisfies
	\[
	(I^\bullet[q])^j=I^{j+q}=0
	\qquad(j\geq a).
	\]
	Thus every chain map
	$A_X^\bullet\to I^\bullet[q]$ is zero.
	A bounded complex of injectives computes morphisms into it
	in the derived category, so
	\[
	\Hom_{\Db(\fgmod B)}(LX,P[q])=0
	\qquad(q>N-a).
	\]
	Using $L(\Lambda)\simeq P$, we obtain
	\[
	\begin{aligned}
		\Ext_\Lambda^q(X,\Lambda)
		&\simeq
		\Hom_{\Db(\fgmod\Lambda)}(X,\Lambda[q])\\
		&\simeq
		\Hom_{\Db(\fgmod B)}(LX,P[q])\\
		&=0
		\qquad(q>N-a).
	\end{aligned}
	\]
	The bound is uniform in $X$. In particular, it applies to
	$\Lambda/J$ for every left ideal $J\subseteq\Lambda$.
	By the injective-dimension form of Baer's criterion,
	\[
	\operatorname{id}_\Lambda\Lambda<\infty.
	\]
	
	For right modules, the same bimodule identities give inverse
	equivalences
	\[
	-\otimes_\Lambda^{\mathbf L}Q:
	\Db(\fgmod\Lambda^{\mathrm{op}})
	\xrightarrow{\sim}
	\Db(\fgmod B^{\mathrm{op}}),
	\qquad
	-\otimes_B^{\mathbf L}P.
	\]
	The first functor sends the regular right $\Lambda$-module
	to $Q$. Since $Q$ is perfect as a right $B$-complex and
	$\operatorname{id}_{B^{\mathrm{op}}}B<\infty$, it admits
	a bounded injective representative
	\[
	Q\simeq J^\bullet,
	\qquad
	J^j=0\quad(j>N')
	\]
	for some integer $N'$.
	
	Since $Q$ is perfect as a left $\Lambda$-complex, there is
	an integer $a'$, independent of the finitely generated
	right $\Lambda$-module $Y$, such that
	\[
	H^j\bigl(Y\otimes_\Lambda^{\mathbf L}Q\bigr)=0
	\qquad(j<a').
	\]
	Repeating the preceding degree argument gives
	\[
	\begin{aligned}
		\Ext_{\Lambda^{\mathrm{op}}}^q(Y,\Lambda)
		&\simeq
		\Hom_{\Db(\fgmod B^{\mathrm{op}})}
		\bigl(Y\otimes_\Lambda^{\mathbf L}Q,Q[q]\bigr)\\
		&=0
		\qquad(q>N'-a').
	\end{aligned}
	\]
	By Baer's criterion \cite[\S3A]{Lam}, we have
	\[
	\operatorname{id}_{\Lambda^{\mathrm{op}}}\Lambda
	<\infty.
	\]
	Together with two-sided Noetherianity, these two bounds
	prove that $\Lambda$ is Iwanaga--Gorenstein.
	
	Finally, since $F$ and $L$ identify the perfect subcategories,
	they descend to mutually inverse equivalences of Verdier
	quotients:
	\[
	\begin{aligned}
		\overline F:\Dsg(B)
		&\xrightarrow{\sim}\Dsg(\Lambda),\\
		\overline L:\Dsg(\Lambda)
		&\xrightarrow{\sim}\Dsg(B).
	\end{aligned}
	\]
	Substituting $\Lambda=B_{\cI}$ completes the proof.
\end{proof}

\begin{definition}[$P_{\cI}$-orthogonal lattices]
	\label{def:necklace-orthogonal-lattices}
	Let
	\[
	\mathcal L_{\cI}=\{M\in\CM(B):
	\Ext_B^1(P_{\cI},M)=0=\Ext_B^1(M,P_{\cI})\}.
	\]
	This is an extension-closed full subcategory, with its inherited
	lattice exact structure. It is not being identified with all of
	$\CM(B)$ or with all of $\GP B_{\cI}$.
\end{definition}

\begin{lemma}
	\label{lem:necklace-orthogonal-images}
	For $M\in\mathcal L_{\cI}$, we have
	\[
	F_{\cI}(M)\simeq\Hom_B(P_{\cI},M)\in\GP B_{\cI}.
	\]
	The functor $\Hom_B(P_{\cI},-):\mathcal L_{\cI}\to\GP B_{\cI}$ is exact and
	fully faithful. In particular, for every $J\in\cS$,
	\begin{equation}
		\label{eq:fsb-face-two-sided-ext}
		\Ext_B^1(P_{\cI},M_J)=0=\Ext_B^1(M_J,P_{\cI}).
	\end{equation}
\end{lemma}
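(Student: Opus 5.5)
The argument has three parts, taken in order: the derived image, exactness and full faithfulness of $\Hom_B(P_{\cI},-)$ on $\mathcal L_{\cI}$, and the vanishing \eqref{eq:fsb-face-two-sided-ext} for face modules.

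\emph{Derived image.} Let $M\in\mathcal L_{\cI}$. By Theorem~\ref{prop:fsb-boundary-tilting} we have $\operatorname{pd}_BP_{\cI}\le1$, so $\Ext_B^a(P_{\cI},M)=0$ for $a\ge2$. Together with $\Ext_B^1(P_{\cI},M)=0$ this shows that $F_{\cI}(M)=\mathbf R\Hom_B(P_{\cI},M)$ is concentrated in degree zero, with cohomology $\Hom_B(P_{\cI},M)$.

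\emph{Gorenstein-projectivity.} By the Iwanaga--Gorenstein description in Definition~\ref{def:necklace-gorenstein}, it suffices to show $\Ext^a_{B_{\cI}}(N,B_{\cI})=0$ for $a>0$, where $N=\Hom_B(P_{\cI},M)$. Using Lemma~\ref{lem:fsb-derived-order} and $F_{\cI}(P_{\cI})\simeq B_{\cI}$, we get
\[
\Ext^a_{B_{\cI}}(N,B_{\cI})\simeq\Hom_{\Db(\fgmod B)}(M,P_{\cI}[a])=\Ext^a_B(M,P_{\cI}).
\]
For $a=1$ this vanishes by hypothesis. The case $a\ge2$ is the main obstacle. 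The plan is to resolve $P_{\cI}$ by $0\to Q_1\to Q_0\to P_{\cI}\to0$ with $Q_i\in\add B$, which exists since $\operatorname{pd}_BP_{\cI}\le1$. This gives
\[
\Ext^a_B(M,P_{\cI})\hookrightarrow\Ext^{a+1}_B(M,Q_1)\quad\text{when }\Ext^a_B(M,Q_0)=0.
\]
It therefore suffices to know $\Ext^{\ge2}_B(M,B)=0$ for lattices $M$. This follows because $\Omega_BM$ is Gorenstein-projective (the syzygy property $\Omega_B\CM(B)\subseteq\GP B$ recalled above), so $\Ext^{a}_B(M,B)=\Ext^{a-1}_B(\Omega_BM,B)=0$ for $a\ge2$. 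Then the long exact sequence yields $\Ext^a_B(M,P_{\cI})=0$ for all $a\ge2$.

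\emph{Exactness and full faithfulness.} Exactness on conflations in $\mathcal L_{\cI}$ follows from the vanishing of $\Ext^1_B(P_{\cI},-)$ on their first terms. Full faithfulness comes from $F_{\cI}$ being an equivalence: for $M,M'\in\mathcal L_{\cI}$,
\[
\Hom_{B_{\cI}}(F_{\cI}M,F_{\cI}M')=\Hom_{\Db}(F_{\cI}M,F_{\cI}M')\simeq\Hom_B(M,M'),
\]
since both objects are modules in degree zero.

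\emph{Face modules.} For $J\in\cS$, weak separation of $\cS$ (from admissibility) together with \cite{JKS} gives $\Ext^1_C(M_I,M_J)=0=\Ext^1_C(M_J,M_I)$ for every $I\in\cI$. The injection $\Ext^1_B\hookrightarrow\Ext^1_C$ of Lemma~\ref{lem:fsb-singular-rigidity} transfers this to $B$. Hence $M_J\in\mathcal L_{\cI}$, which is \eqref{eq:fsb-face-two-sided-ext}.
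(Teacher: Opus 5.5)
Your proof is correct and follows the paper's argument essentially step by step. The steps are: degree-zero concentration of $F_{\cI}(M)$ from $\operatorname{pd}_BP_{\cI}\le1$; the identification $\Ext^a_{B_{\cI}}(\Hom_B(P_{\cI},M),B_{\cI})\simeq\Ext^a_B(M,P_{\cI})$ via the derived equivalence; exactness and full faithfulness as you describe; and the face-module vanishing from weak separation, \cite{JKS}, and the injection $\Ext^1_B\hookrightarrow\Ext^1_C$. The only cosmetic difference is in the $a\ge2$ vanishing, where the paper dimension-shifts through $\Omega_BM\in\GP B$ against the finite projective dimension of $P_{\cI}$, while you resolve $P_{\cI}$ and reduce to $\Ext^{\ge2}_B(M,B)=0$; both use exactly the same two ingredients.
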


\begin{proof}
	If $G\in\GP B$, $L$ has finite projective dimension $d$, and $a>0$,
	dimension shifting in the second variable gives
	\begin{equation}
		\label{eq:fsb-gp-perfect-ext}
		\Ext_B^a(G,L)
		\simeq\Ext_B^{a+d}(G,\Omega_B^dL)=0.
	\end{equation}
	Taking $G=\Omega_BM$ and $L=P_{\cI}$ shows that
	$\Ext_B^a(M,P_{\cI})=0$ for $a\geq2$. Its degree-one group vanishes by
	definition of $\mathcal L_{\cI}$. Similarly,
	$\operatorname{pd}_B P_{\cI}\leq1$ and $\Ext_B^1(P_{\cI},M)=0$ imply
	$\Ext_B^{>0}(P_{\cI},M)=0$. Thus $F_{\cI}(M)$ is concentrated in degree
	zero, and
	\[
	\Ext_{B_{\cI}}^a(\Hom_B(P_{\cI},M),B_{\cI})
	\simeq\Ext_B^a(M,P_{\cI})=0\qquad(a>0).
	\]
	The Iwanaga--Gorenstein criterion gives Gorenstein-projectivity.
	Full faithfulness follows by taking degree-zero morphisms under the
	derived equivalence. Exactness follows by applying $\Hom_B(P_{\cI},-)$
	to a conflation whose left endpoint belongs to $\mathcal L_{\cI}$.
	Finally, weak separation of $\cS$ and
	\eqref{eq:fsb-ext-injection-again} give
	\eqref{eq:fsb-face-two-sided-ext}.
\end{proof}

\begin{theorem}
	\label{prop:fsb-frobenius-realization}
	The exact category
$	\cE_{\cI}=\GP B_{\cI}$
	is Krull--Schmidt and Frobenius, with projective-injective subcategory
	\[
	\cP_{\cI}=\proj B_{\cI}=\add\!\left(\bigoplus_{I\in\cI}N_I\right).
	\]
	Its stable category is Hom-finite and $2$-Calabi--Yau. The object
	$T_{\cI}$ is basic cluster-tilting, and $N_J$ is projective if and
	only if $J\in\cI$. Moreover, $F_{\cI}M_J\simeq N_J$ for every
	$J\in\cS$.
\end{theorem}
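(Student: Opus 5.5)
The proof transports everything along the tilting equivalence of Lemma~\ref{lem:fsb-derived-order}, using the standard source model $\cE_{\mathrm s}=\GP B$ as reference. First, $\cE_{\cI}=\GP B_{\cI}$ is Frobenius with projective-injectives $\proj B_{\cI}$, since $B_{\cI}$ is Iwanaga--Gorenstein (Definition~\ref{def:necklace-gorenstein}). It is Krull--Schmidt because $B_{\cI}$ is a module-finite algebra over the complete local ring $Z$, so idempotents lift and endomorphism rings of indecomposables are local. The indecomposable projectives of $B_{\cI}=\End_B(P_{\cI})^{\mathrm{op}}$ are $\Hom_B(P_{\cI},M_I)=N_I$ for $I\in\cI$. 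These are pairwise non-isomorphic: the $M_I$ are distinct rank-one lattices with local endomorphism ring $Z$, and $\Hom_B(P_{\cI},-)$ is fully faithful on $\add P_{\cI}$. Hence $\cP_{\cI}=\add\bigl(\bigoplus_{I\in\cI}N_I\bigr)$. By Buchweitz and Lemma~\ref{lem:fsb-derived-order}, $\underline{\cE_{\cI}}\simeq\Dsg(B_{\cI})\simeq\Dsg(B)\simeq\underline{\cE_{\mathrm s}}$. Hom-finiteness and the $2$-Calabi--Yau property are intrinsic to the triangulated category, so both transfer from $\underline{\cE_{\mathrm s}}$, where they are known.

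Second, Lemma~\ref{lem:necklace-orthogonal-images} gives $F_{\cI}M_J\simeq N_J\in\GP B_{\cI}$ for every $J\in\cS$, together with exactness and full faithfulness of $\Hom_B(P_{\cI},-)$ on $\mathcal L_{\cI}$. The $N_J$ are then pairwise non-isomorphic indecomposables with local endomorphism ring $\End_B(M_J)=Z$, so $T_{\cI}$ is basic. For $J\in\cS\setminus\cI$, the object $N_J$ is not projective. Indeed, $qN_J=\overline F_{\cI}(qM_J)$ is a nonzero summand of the cluster-tilting object of Proposition~\ref{prop:fsb-common-face-cluster}, and $F_{\cI}$ identifies the perfect subcategories. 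This proves the projectivity criterion.

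The main step is cluster-tilting. Rigidity of $T_{\cI}$ follows because $\Ext^1_{B_{\cI}}(N_J,N_K)\simeq\Hom_{\Db}(M_J,M_K[1])=\Ext^1_B(M_J,M_K)$, which injects into $\Ext^1_C(M_J,M_K)=0$ by weak separation of $\cS$ and \cite{JKS}. For maximality, I would use the fact that in a Frobenius category whose stable category is triangulated, an object containing all projective-injectives is cluster-tilting exactly when its stable image is cluster-tilting. Both Ext-orthogonality conditions are conditions in $\underline{\cE_{\cI}}$, where $\Ext^1_{\cE_{\cI}}(X,Y)=\Hom(X,\Sigma Y)$. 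The stable image is $q T_{\cI}=\overline F_{\cI}\bigl(\bigoplus_{J\in\cS\setminus\cI}qM_J\bigr)$, and this is cluster-tilting because $\overline F_{\cI}$ is a triangle equivalence and Proposition~\ref{prop:fsb-common-face-cluster} already gives cluster-tilting of the source object. Functorial finiteness of $\add T_{\cI}$ in $\cE_{\cI}$ holds because $T_{\cI}$ is a single object and $\Hom$ spaces are finitely generated over $Z$.

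The expected obstacle is this last lifting from the stable to the exact category. In particular, one must check that the $\GP$ structure of $B_{\cI}$, rather than some lattice category, is the correct exact structure, so that $\Ext^1_{\cE_{\cI}}$ agrees with stable $\Hom(-,\Sigma-)$. Buchweitz's theorem supplies exactly this identification, so the argument should go through.
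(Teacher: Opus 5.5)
Your proposal is correct and follows essentially the same route as the paper. Both arguments use the same steps: Krull--Schmidt from finiteness over the complete local ring $Z$; the projectives identified via $\Hom_B(P_{\cI},P_{\cI})\simeq\bigoplus_{I\in\cI}N_I$; Hom-finiteness and the $2$-Calabi--Yau property transported along $\underline{\cE}_{\cI}\simeq\Dsg(B_{\cI})\simeq\Dsg(B)$; and cluster-tilting lifted from the stable cluster-tilting object of Proposition~\ref{prop:fsb-common-face-cluster}, using that $T_{\cI}$ contains every indecomposable projective-injective.

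The differences are minor. Your separate rigidity check via \cite{JKS} is already implied by the stable cluster-tilting property. You also detect non-projectivity of $N_J$ for $J\notin\cI$ through $qM_J\neq0$, whereas the paper applies $L_{\cI}$ to obtain $M_J\in\add P_{\cI}$, which forces $J\in\cI$.
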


\begin{proof}
	The Gorenstein-projective assertions follow from the preceding lemma.
	Since $B_{\cI}$ is finite over the complete local ring $Z$, its
	finitely generated modules have semiperfect endomorphism rings.
	The category $\GP B_{\cI}$ is closed under summands, hence is
	Krull--Schmidt. We have equivalences
	\[
	\underline{\cE}_{\cI}\simeq\Dsg(B_{\cI})
	\xrightarrow[\sim]{\overline L_{\cI}}\Dsg(B),
	\qquad \underline N_J\longmapsto qM_J.
	\]
	The last category is a Hom-finite $2$-Calabi--Yau category.
	
	With the action defined above, $a^{\mathrm{op}}\mapsto a$ identifies
	the left regular $B_{\cI}$-module with $\Hom_B(P_{\cI},P_{\cI})$.
	Thus
	\[
	{}_{B_{\cI}}B_{\cI}
	\simeq\Hom_B(P_{\cI},P_{\cI})
	\simeq\bigoplus_{I\in\cI}N_I.
	\]
	Distinct labels give
	nonisomorphic rank-one modules and hence nonisomorphic $N_J$.
	If $N_J$ is projective, applying $L_{\cI}$ gives
	$M_J\in\add P_{\cI}$, which forces $J\in\cI$.
	
	By Proposition~\ref{prop:fsb-common-face-cluster}, the nonprojective
	part of $T_{\cI}$ is cluster-tilting in the stable category.
	Since $T_{\cI}$ contains every indecomposable projective-injective,
	stable orthogonality lifts to
	\[
	\add T_{\cI}
	=\{X\in\cE_{\cI}:\Ext^1(T_{\cI},X)=0\}
	=\{X\in\cE_{\cI}:\Ext^1(X,T_{\cI})=0\}.
	\]
	For clarity, if a stable object lies in $\add\underline T_{\cI}$,
	Krull--Schmidt decomposition identifies its nonprojective summands
	with summands of $T_{\cI}$; its remaining summands are projective.
	Finally, Hom modules are finite over $Z$. Choosing finite sets of
	$Z$-generators gives right and left $\add T_{\cI}$-approximations.
	Thus $\add T_{\cI}$ is functorially finite, as required.
\end{proof}

\subsection{The relabeled endomorphism algebra}

\begin{definition}[relabeled endomorphism algebra]
	\label{def:necklace-face-algebra}
	The \emph{relabeled endomorphism algebra} associated with
	the face-relabeled cluster-tilting object $T_{\cI}$ is
	\[
	A_{\cI}
	=
	\End_{B_{\cI}}(T_{\cI})^{\mathrm{op}}.
	\]
\end{definition}

More generally, let $V=\bigoplus_j V_j$ be a basic
cluster-tilting object in a module Frobenius category.
We write
\[
A_V=\End(V)^{\mathrm{op}},
\qquad
H_V=\Hom(V,-),
\qquad
Q_j=H_V(V_j).
\]
The functor $H_V$ takes values in left $A_V$-modules,
and $Q_j$ is the indecomposable projective corresponding
to $V_j$.

We write $K_0(\proj A_V)$ for the split Grothendieck group
of finitely generated projective left $A_V$-modules.
If an $A_V$-module $M$ admits a finite resolution
\[
0\longrightarrow Q_d\longrightarrow\cdots
\longrightarrow Q_0\longrightarrow M\longrightarrow0
\]
by finitely generated projectives, its class is defined by
\[
[M]=\sum_{i=0}^{d}(-1)^i[Q_i]
\in K_0(\proj A_V).
\]
This class is independent of the chosen resolution.
Whenever a simple module admits such a resolution,
we interpret its class in $K_0(\proj A_V)$ in this sense.

\begin{lemma}[Finite global dimension]
	\label{lem:fsb-face-finite-gldim}
	Let $\Lambda$ be an Iwanaga--Gorenstein $Z$-order and let
	$V\in\GP\Lambda$ be cluster-tilting. Put
	$A=\End_\Lambda(V)^{\mathrm{op}}$ and $H=\Hom_\Lambda(V,-)$.
	If every finitely generated $\Lambda$-module has
	Gorenstein-projective dimension at most $d$, then
	\[
	\operatorname{l.gl.dim}A\leq\max\{3,d+2\}.
	\]
	In particular, $A_{\cI}$ has finite left and right global dimension.
\end{lemma}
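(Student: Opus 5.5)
The plan is the standard Iyama-type argument: resolve an arbitrary module by the projectives of $A$, and use the cluster-tilting property of $V$ together with Gorenstein-projective resolutions in $\Lambda$ to bound how long that resolution must run before it becomes projective.

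\textbf{Step 1 (reduction to a kernel).} Let $X$ be a finitely generated left $A$-module and take a projective presentation $Q_1\xrightarrow{\phi} Q_0\to X\to0$. Since $H$ is fully faithful on $\add V$ (it is the Yoneda functor for $A=\End(V)^{\mathrm{op}}$), every $Q_i$ is of the form $HV_i$ with $V_i\in\add V$, and $\phi=H(g)$ for some $g:V_1\to V_0$. Let $K=\ker g$ in $\fgmod\Lambda$. By left exactness of $H$,
\[
0\to HK\to HV_1\to HV_0\to X\to0
\]
is exact. Hence it suffices to show $\operatorname{pd}_A HK\leq\max\{1,d\}$, since the two projectives $Q_1,Q_0$ contribute the remaining $2$.

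\textbf{Step 2 (resolving $K$).} Take a Gorenstein-projective resolution $0\to G_d\to\cdots\to G_0\to K\to0$ of length at most $d$; this exists by hypothesis. For each Gorenstein-projective $G$, the cluster-tilting property in the Frobenius category $\GP\Lambda$ gives a $V$-presentation conflation $0\to V^1\to V^0\to G\to0$ with $V^0,V^1\in\add V$. Here $V^0\to G$ is a right $\add V$-approximation (enlarged by a projective deflation), and the kernel lies in $\add V$ by the argument recalled after Definition~\ref{def:fsb-T-presentation}. Applying $H$ yields $0\to HV^1\to HV^0\to HG\to0$; right exactness holds because $V^0\to G$ is an approximation. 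Thus $\operatorname{pd}_A HG\leq1$ for every $G\in\GP\Lambda$.

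\textbf{Step 3 (splicing).} This is the delicate step, since $H$ is not exact on arbitrary short exact sequences of $\Lambda$-modules. I would break the resolution of $K$ into short exact sequences $0\to\Omega^{j+1}\to G_j\to\Omega^j\to0$ and first try to arrange each $G_j\to\Omega^j$ to be a right $\GP\Lambda$-approximation. By Auslander--Buchweitz theory for Iwanaga--Gorenstein rings such approximations exist, and their kernels then have finite projective dimension. Because $V$ is Gorenstein-projective, $\Ext^1_\Lambda(V,L)=0$ whenever $L$ has finite projective dimension (apply \eqref{eq:fsb-gp-perfect-ext}). Consequently $H$ preserves exactness of these sequences. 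Induction on the short exact sequences, using Step 2, then gives $\operatorname{pd}_A HK\leq\max\{1,d\}$ up to the precise bookkeeping. That bookkeeping, namely getting exactly $\max\{3,d+2\}$ rather than something weaker, is the expected main obstacle. If the Auslander--Buchweitz route becomes awkward, I would instead resolve $K$ directly by $\add V$-approximations and show that the $(d+1)$st syzygy lies in $\add V$ using $\Ext^{>0}(V,\Lambda)=0$ and dimension shifting.

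\textbf{Step 4 (application to $A_{\cI}$).} Apply the lemma with $\Lambda=B_{\cI}$ and $V=T_{\cI}$, which is basic cluster-tilting by Theorem~\ref{prop:fsb-frobenius-realization}. The hypothesis on Gorenstein-projective dimension holds for an Iwanaga--Gorenstein ring with $d=\operatorname{id}B_{\cI}$, using Lemma~\ref{lem:fsb-derived-order}. This gives finite left global dimension. For the right global dimension, either run the dual argument with $\Hom_\Lambda(-,V)$ in the opposite Frobenius category $\GP B_{\cI}^{\mathrm{op}}$ (via $D_Z$ duality on lattices), or use that for a Noetherian $Z$-order finite left global dimension implies finite right global dimension.
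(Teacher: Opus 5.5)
Steps~1 and~2 are fine. Step~3, however, does not establish the stated bound: the ``bookkeeping'' you defer is exactly where the argument falls short. As you describe it, the induction uses only Step~2, i.e.\ $\operatorname{pd}_A HG_j\le 1$ for every term $G_j$. Then each sequence $0\to H\Omega^{j+1}\to HG_j\to H\Omega^j\to0$ can raise the bound by one. A chain of length $d$ therefore gives only $\operatorname{pd}_A HK\le d+1$, hence $\operatorname{l.gl.dim}A\le d+3$. That suffices for the finiteness assertion, but not for $\max\{3,d+2\}$ when $d\geq1$.

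Your fallback has the same defect, since placing the $(d+1)$st syzygy in $\add V$ again gives $d+3$. It also cites the wrong input. What puts a syzygy into $\add V$ is not $\Ext^{>0}_\Lambda(V,\Lambda)=0$. It is the vanishing $\Ext^1_\Lambda(V,K^{a+1})=0$, which comes from the approximation property and rigidity of $V$, combined with the drop in Gorenstein-projective dimension.

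The missing point is that only the first term of the resolution needs to be a non-projective Gorenstein-projective. For $d\geq1$, take one Auslander--Buchweitz sequence $0\to L\to G\to K\to0$ with $G\in\GP\Lambda$ and $\operatorname{pd}_\Lambda L\le d-1$, and then resolve $L$ by projectives.
\begin{itemize}
\item Every syzygy of $L$ has finite projective dimension, so \eqref{eq:fsb-gp-perfect-ext} makes $H$ exact along this resolution.
\item Since $\Lambda\in\add V$, its terms go to projective $A$-modules, so $\operatorname{pd}_A HL\le d-1$.
\item The sequence $0\to HL\to HG\to HK\to0$ then gives $\operatorname{pd}_A HK\le\max\{1,d\}$, as required.
\end{itemize}
In your iterated-approximation language: every $G_j$ with $j\geq1$ is Gorenstein-projective of finite projective dimension, hence projective.

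With this repair your route is a valid alternative. The paper avoids Auslander--Buchweitz theory altogether. It resolves $K$ by right $\add V$-approximations $0\to K^{a+1}\to V^a\to K^a\to0$ and observes that already $K^s$, with $s=\max\{1,d\}$, is Gorenstein-projective and $V$-orthogonal, hence in $\add V$.

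For the right global dimension, your second option (left and right global dimensions coincide for a two-sided Noetherian ring) is correct and shorter than the paper's use of the duality $\Hom_\Lambda(-,\Lambda)\colon(\GP\Lambda)^{\mathrm{op}}\to\GP\Lambda^{\mathrm{op}}$. Your first option should use that duality rather than $D_Z$, since $D_Z$ sends $\GP\Lambda$ to Gorenstein-injective lattices over $\Lambda^{\mathrm{op}}$, not to $\GP\Lambda^{\mathrm{op}}$.
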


\begin{proof}
	The regular module $\Lambda$ belongs to $\add V$. If
	$K\in\fgmod\Lambda$, choose $Z$-generators
	$f_1,\ldots,f_N$ of $\Hom_\Lambda(V,K)$. The map
	$(f_1,\ldots,f_N):V^N\to K$ is a right $\add V$-approximation and
	is surjective: a surjection from an object of $\add\Lambda$ factors
	through it. Iterating yields
	\begin{equation}
		\label{eq:fsb-approximation-tower}
		0\longrightarrow K^{a+1}\longrightarrow V^a
		\xrightarrow{p_a}K^a\longrightarrow0,
		\qquad K^0=K,\quad V^a\in\add V.
	\end{equation}
	The approximation property makes $H(p_a)$ surjective. Rigidity of
	$V$ and the long exact sequence give
	$\Ext_\Lambda^1(V,K^{a+1})=0$. Dimension shifting gives
	$\operatorname{Gpd}_\Lambda K^a\leq\max\{d-a,0\}$.
	For $s=\max\{1,d\}$, the module $K^s$ is Gorenstein-projective
	and orthogonal to $V$, hence lies in $\add V$. Applying $H$ to
	the tower gives $\operatorname{pd}_AH(K)\leq s$.
	
	Given $X\in\fgmod A$, lift a projective presentation through the
	equivalence $H:\add V\simeq\proj A$:
	\[
	H(V_1)\xrightarrow{H(u)}H(V_0)\twoheadrightarrow X.
	\]
	For $K=\ker u$, left exactness gives
	\[
	0\longrightarrow H(K)\longrightarrow H(V_1)\longrightarrow
	H(V_0)\longrightarrow X\longrightarrow0.
	\]
	Thus $\operatorname{pd}_AX\leq s+2$. The algebra $A$ is Noetherian,
	since it is finite over $Z$. In particular this uniform bound applies
	to every cyclic module $A/J$, so it bounds the left global dimension.
	
	For the right-hand statement, use the exact duality
	\[
	(-)^*=\Hom_\Lambda(-,\Lambda):
	(\GP\Lambda)^{\mathrm{op}}\xrightarrow{\sim}\GP\Lambda^{\mathrm{op}}.
	\]
	It sends $V$ to a cluster-tilting object $V^*$ and gives
	\[
	\End_{\Lambda^{\mathrm{op}}}(V^*)^{\mathrm{op}}
	\simeq A^{\mathrm{op}}.
	\]
	Apply the left-hand argument over $\Lambda^{\mathrm{op}}$.
	The required finite bounds on Gorenstein-projective dimensions exist
	on both sides because $\Lambda$ is Iwanaga--Gorenstein.
\end{proof}

\subsection{The relabeled ice quiver}
\label{subsec:fsb-relabeled-ice-quiver}

\begin{definition}[Gabriel quiver and frozen vertices]
	\label{def:necklace-ice-quiver}
	Let $A$ be a split basic semiperfect $\kk$-algebra, with a
	complete set of pairwise orthogonal primitive idempotents
	$e_1,\ldots,e_m$, and put
	\[
	\mathfrak r_A=\operatorname{rad}A.
	\]
	Thus $A/\mathfrak r_A\simeq\kk^m$, and idempotents lift
	modulo $\mathfrak r_A$.
	Assume that $\mathfrak r_A/\mathfrak r_A^2$ is
	finite-dimensional over $\kk$, as holds for the endomorphism
	orders considered here.
	
	For vertices $a,b$, define the arrow space
	\[
	\operatorname{Arr}_A(a,b)
	=
	\frac{e_b\mathfrak r_A e_a}
	{e_b\mathfrak r_A^2 e_a}.
	\]
	The Gabriel quiver $Q_A$ has one vertex for each $e_a$ and
	$\dim_\kk\operatorname{Arr}_A(a,b)$
	arrows from $a$ to $b$.
	
	A choice of frozen vertices gives $Q_A$ its ice-quiver
	structure. We write $Q_A^{\mathrm{fr}}$ for the full
	subquiver on these vertices. Thus
	$Q_A^{\mathrm{fr}}$ records every arrow whose source and
	target are both frozen.
\end{definition}
	For $A_{\cI}$, put
	\[
	Q_{\cI}=Q_{A_{\cI}},
	\qquad
	(Q_{\cI})_0=\cS,
	\qquad
	(Q_{\cI}^{\mathrm{fr}})_0=\cI.
	\]
	All arrows of $Q_{\cI}$ are retained, including loops and
	oriented $2$-cycles when present.
	
	The associated extended exchange matrix is
	\[
	\bB_{A_{\cI}}
	=
	(b_{ba})_{
		b\in\cS,\,
		a\in\cS\setminus\cI},
	\qquad
	b_{ba}
	=
	\dim_\kk\operatorname{Arr}_{A_{\cI}}(a,b)
	-
	\dim_\kk\operatorname{Arr}_{A_{\cI}}(b,a).
	\]
	In particular, the matrix records the signed arrow counts
	involving mutable vertices, while
	$Q_{\cI}^{\mathrm{fr}}$ retains the additional
	frozen--frozen arrow data.

Choose a maximal weakly separated collection
\[
\widehat{\cS}\subseteq\binom{[n]}k
\qquad\text{with}\qquad
\cS\subseteq\widehat{\cS},
\]
and put
\[
\widehat T=\bigoplus_{J\in\widehat{\cS}}M_J,
\qquad
\widehat A=\End_C(\widehat T)^{\mathrm{op}},
\qquad
e=\sum_{J\in\cS}e_J.
\]
Write $\widehat Q$ for the Gabriel quiver of $\widehat A$.

We use the Grassmannian endomorphism-quiver description of
\cite{BKM}, with the same opposite-endomorphism convention as for
$A_{\cI}$. With our clockwise planar convention, the arrows run
clockwise around white cells and counterclockwise around black
cells. Equivalently, a dual arrow crossing an internal plabic
edge has its white endpoint on the right.

\begin{definition}
	\label{def:necklace-inside-domain}
	Choose points $p_1,\ldots,p_n$ in sufficiently general
	strictly convex clockwise position, and put
	\[
	p(J)=\sum_{j\in J}p_j.
	\]
	The plabic tiling of $\widehat{\cS}$ has vertices
	$p(J)$ for $J\in\widehat{\cS}$.
	Its white cells are the nondegenerate polygons formed by
	labels $H\cup\{a\}$ with a common $(k-1)$-subset $H$.
	Its black cells are the nondegenerate polygons formed by
	labels $L\setminus\{a\}$ with a common $(k+1)$-subset $L$.
	The sides of these polygons are the tiling edges.
\end{definition}
	The boundary necklace $\mathbf I=(I_1,\ldots,I_n)$
	determines the closed polygonal curve
	\[
	\zeta(\mathbf I)
	=
	\bigcup_{a=1}^n[p(I_a),p(I_{a+1})],
	\qquad I_{n+1}=I_1.
	\]
	The inside domain $D^{\mathrm{in}}_{\mathbf I}$ consists
	of the $k$-subsets $J$ that are weakly separated from every
	$I_a$ and whose points $p(J)$ lie on or inside this curve.
	When repeated necklace terms occur, the enclosed region
	is the union of the closed regions bounded by its simple
	closed components.
	
	We write
	\[
	V_{\mathrm{in}}=\cS\setminus\cI,
	\qquad
	V_{\mathrm{out}}=\widehat{\cS}\setminus\cS.
	\]
	Thus $V_{\mathrm{in}}$ consists of the retained mutable
	vertices, and $V_{\mathrm{out}}$ consists of the vertices
	omitted from $e\widehat A e$.

In the connected nondegenerate case, the necklace terms are
distinct and $\zeta(\mathbf I)$ is simple. For $n=2$, the curve
may be a segment traversed twice; the inside domain is then
interpreted using the labels on that segment. There are no
mutable vertices in this case, but the complete frozen quiver
is still retained.

The inside-domain correspondence gives
\begin{equation}
	\label{eq:fsb-inside-intersection}
	\widehat{\cS}\cap D^{\mathrm{in}}_{\mathbf I}
	=
	\cS;
\end{equation}
see \cite[Theorem~6.3]{FG} and
\cite[Remark~4.20 and Lemma~7.3]{FSB}.
Indeed, $\cS$ is maximal weakly separated in the inside domain.
The left-hand side is a weakly separated collection containing
$\cS$, so maximality gives the equality.

\begin{example}[An inside domain in $\operatorname{Gr}(2,5)$]
	\label{ex:fsb-inside-domain-gr25}
	We write $ij$ for the subset $\{i,j\}$.
	Consider the permutation $\pi=24513$, whose forward
	Grassmann necklace is
	\[
	\mathbf I=(13,23,34,45,15).
	\]
	Let
	$\cI=\{13,23,34,45,15\}$
	be the set of its necklace terms, and choose
	\[
	\cS=\cI\cup\{14\}
	=\{13,14,15,23,34,45\}.
	\]
	
	The positroid associated with $\pi$ is
	\[
	\mathcal M_\pi
	=
	\binom{[5]}2\setminus\{12\}.
	\]
	The collection $\cS$ is maximal weakly separated in
	$\mathcal M_\pi$ and contains $\cI$.
	Indeed, its members are pairwise weakly separated,
	while the remaining labels $24,25,35$ cannot be added:
	$24$ and $25$ are not weakly separated from $13$,
	and $35$ is not weakly separated from $14$.
	
	By \cite[Theorem~2.9]{FSB}, we may therefore choose
	a reduced plabic graph $G$ with trip permutation $\pi$
	whose target face labels are precisely $\cS$.
	Thus $\cS$ is the set of all target face labels,
	$\cI$ is the set of boundary labels, and $14$ is the
	unique internal label.
	This is the case $\rho=\mathrm{id}$ of the relabeled
	construction.
	
	To obtain an ambient maximal weakly separated
	collection, we add the label $12$:
	\[
	\widehat{\cS}
	=
	\cS\cup\{12\}
	=
	\{12,13,14,15,23,34,45\}.
	\]
	This collection consists of the five sides of a
	pentagon together with the two noncrossing diagonals
	$13$ and $14$, so it is maximal weakly separated
	in $\binom{[5]}2$.
	
	In the plabic tiling of $\widehat{\cS}$, the labels
	$12,13,14,15$ form a white cell because they contain
	the common subset $\{1\}$.
	The labels $13,14,34$ form a black cell because they
	are contained in the common subset $\{1,3,4\}$.
	The necklace curve follows the points in the order
	\[
	p(13),\ p(23),\ p(34),\ p(45),\ p(15),\ p(13).
	\]
	
	The following picture shows the ambient tiling.
	Black cells are shaded gray, and the orange curve
	is $\zeta(\mathbf I)$.
	Each displayed label $J$ denotes the point $p(J)$.
	The hollow point labelled $35$ marks an additional
	candidate label; it is not a vertex of the chosen
	tiling.
	
	\begin{center}
		\begin{tikzpicture}[
			scale=0.55,
			every node/.style={font=\small},
			tiling edge/.style={
				draw=gray!65,
				line width=0.6pt
			},
			necklace curve/.style={
				draw=orange!85!black,
				line width=1.5pt
			}
			]
			\coordinate (p1) at (0,3);
			\coordinate (p2) at (3,1);
			\coordinate (p3) at (2.2,-2);
			\coordinate (p4) at (-2,-3);
			\coordinate (p5) at (-3,1);
			
			\coordinate (q12) at ($(p1)+(p2)$);
			\coordinate (q13) at ($(p1)+(p3)$);
			\coordinate (q14) at ($(p1)+(p4)$);
			\coordinate (q15) at ($(p1)+(p5)$);
			\coordinate (q23) at ($(p2)+(p3)$);
			\coordinate (q34) at ($(p3)+(p4)$);
			\coordinate (q45) at ($(p4)+(p5)$);
			\coordinate (q35) at ($(p3)+(p5)$);
			
			\filldraw[fill=white,tiling edge]
			(q12)--(q13)--(q14)--(q15)--cycle;
			\filldraw[fill=white,tiling edge]
			(q13)--(q23)--(q34)--cycle;
			\filldraw[fill=white,tiling edge]
			(q14)--(q34)--(q45)--cycle;
			
			\filldraw[fill=gray!25,tiling edge]
			(q12)--(q13)--(q23)--cycle;
			\filldraw[fill=gray!25,tiling edge]
			(q13)--(q14)--(q34)--cycle;
			\filldraw[fill=gray!25,tiling edge]
			(q14)--(q15)--(q45)--cycle;
			
			\draw[necklace curve]
			(q13)--(q23)--(q34)--(q45)--(q15)--cycle;
			
			\fill (q13) circle (2pt);
			\fill (q23) circle (2pt);
			\fill (q34) circle (2pt);
			\fill (q45) circle (2pt);
			\fill (q15) circle (2pt);
			
			\node[above right] at (q13) {$13$};
			\node[right]       at (q23) {$23$};
			\node[below]       at (q34) {$34$};
			\node[left]        at (q45) {$45$};
			\node[above left]  at (q15) {$15$};
			
			\fill[blue!70!black]
			(q14) circle (2.3pt);
			\node[left,text=blue!70!black]
			at (q14) {$14$};
			
			\fill[red!75!black]
			(q12) circle (2.3pt);
			\node[above right,text=red!75!black]
			at (q12) {$12$};
			
			\draw[
			draw=green!45!black,
			fill=white,
			line width=1pt
			]
			(q35) circle (2.5pt);
			\node[right,text=green!45!black]
			at (q35) {$35$};
		\end{tikzpicture}
	\end{center}
	
	The points $p(14)$ and $p(35)$ lie strictly inside
	the necklace curve, and both labels are weakly
	separated from every term of $\mathbf I$.
	The label $12$ is also weakly separated from every
	necklace term, but its point lies outside the curve.
	The remaining labels $24$ and $25$ are excluded
	because neither is weakly separated from $13$.
	Hence
	\[
	D^{\mathrm{in}}_{\mathbf I}
	=
	\cI\cup\{14,35\}.
	\]
	
	In particular, the inside domain and the selected
	face collection are different:
	\[
	D^{\mathrm{in}}_{\mathbf I}
	=
	\cI\cup\{14,35\},
	\qquad
	\cS=\cI\cup\{14\}.
	\]
	The label $35$ belongs to the inside domain but
	does not belong to $\widehat{\cS}$.
	Consequently,
	$$\widehat{\cS}\cap D^{\mathrm{in}}_{\mathbf I}
	=\cS,$$
	$$ V_{\mathrm{in}}
	=\cS\setminus\cI=\{14\},$$
	$$V_{\mathrm{out}}
	=\widehat{\cS}\setminus\cS=\{12\}.$$
	The two internal candidates $14$ and $35$ are not
	weakly separated from each other, so they cannot
	both occur in a weakly separated face collection.
\end{example}

\begin{proposition}
	\label{prop:fsb-complete-ice-quiver}
	There is an isomorphism
	\begin{equation}
		\label{eq:fsb-face-corner}
		A_{\cI}\simeq e\widehat A e
	\end{equation}
	identifying the primitive idempotents indexed by $\cS$.
	Put $\mathfrak r=\operatorname{rad}\widehat A$.
	
	For every $a,b\in\cS$, define
	\[
	\mathcal K_{ba}
	=
	\frac{e_b\mathfrak r^2e_a}
	{e_b\mathfrak r e\mathfrak r e_a}.
	\]
	Then the following statements hold.
	
	\begin{enumerate}
		\item
		The complete arrow space of $A_{\cI}$ is
		\begin{equation}
			\label{eq:fsb-complete-arrow-space}
			\operatorname{Arr}_{A_{\cI}}(a,b)
			\simeq
			\frac{e_b\mathfrak r e_a}
			{e_b\mathfrak r e\mathfrak r e_a}.
		\end{equation}
		Under this identification, there is a natural short
		exact sequence
		\begin{equation}
			\label{eq:fsb-corner-arrow-sequence}
			0\longrightarrow
			\mathcal K_{ba}
			\longrightarrow
			\operatorname{Arr}_{A_{\cI}}(a,b)
			\longrightarrow
			\operatorname{Arr}_{\widehat A}(a,b)
			\longrightarrow0.
		\end{equation}
		In particular,
		\begin{equation}
			\label{eq:fsb-complete-arrow-count}
			\#\{a\to b\text{ in }Q_{\cI}\}
			=
			\#\{a\to b\text{ in }\widehat Q\}
			+
			\dim_\kk\mathcal K_{ba}.
		\end{equation}
		
		\item
		If at least one of $a,b$ belongs to
		$V_{\mathrm{in}}$, then $\mathcal K_{ba}=0$.
		Moreover, $Q_{\cI}$ and $Q(G^\rho)$ agree at every
		arrow having a mutable endpoint, with the same
		orientation. Consequently,
		\[
		\bB_{A_{\cI}}=\bB_{G^\rho}.
		\]
		
		\item
		The full frozen subquiver $Q_{\cI}^{\mathrm{fr}}$
		has vertex set $\cI$ and, for every $a,b\in\cI$,
		\[
		\#\{a\to b\text{ in }Q_{\cI}^{\mathrm{fr}}\}
		=
		\#\{a\to b\text{ in }\widehat Q\}
		+
		\dim_\kk\mathcal K_{ba}.
		\]
		Thus any additional arrows arising from the corner
		have both endpoints frozen.
	\end{enumerate}
\end{proposition}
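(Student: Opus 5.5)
The plan is to identify $A_{\cI}$ with a corner of $\widehat A$ through two full embeddings, then read off arrow spaces from the radical filtration of that corner, and finally use the plabic-tiling description of $\widehat A$ to compare arrows near mutable vertices.

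\emph{Step 1: the corner isomorphism.} By Lemma~\ref{lem:necklace-orthogonal-images}, every $M_J$ with $J\in\cS$ lies in $\mathcal L_{\cI}$. The functor $\Hom_B(P_{\cI},-)$ is fully faithful on $\mathcal L_{\cI}$, so
\[
A_{\cI}=\End_{B_{\cI}}(T_{\cI})^{\mathrm{op}}\simeq\End_B\Bigl(\bigoplus_{J\in\cS}M_J\Bigr)^{\mathrm{op}}.
\]
Restriction $\CM(B)\to\CM(C)$ is also fully faithful, which identifies the right-hand side with $\End_C(\bigoplus_{J\in\cS}M_J)^{\mathrm{op}}=e\widehat Ae$. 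These identifications send the projection onto $M_J$ to $e_J$, giving \eqref{eq:fsb-face-corner}.

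\emph{Step 2: the arrow space and the exact sequence.} For a semiperfect algebra and a full idempotent, the radical of the corner is the corner of the radical: $\operatorname{rad}(e\widehat Ae)=e\mathfrak re$. Hence
\[
\operatorname{rad}^2(e\widehat Ae)=e\mathfrak re\mathfrak re,
\]
which yields \eqref{eq:fsb-complete-arrow-space}. The inclusions $e_b\mathfrak re\mathfrak re_a\subseteq e_b\mathfrak r^2e_a\subseteq e_b\mathfrak re_a$ then give \eqref{eq:fsb-corner-arrow-sequence}, with cokernel $\operatorname{Arr}_{\widehat A}(a,b)$ and kernel $\mathcal K_{ba}$. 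Taking dimensions gives \eqref{eq:fsb-complete-arrow-count}. Part (3) is the restriction of (1) to $a,b\in\cI$.

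\emph{Step 3: vanishing of $\mathcal K_{ba}$ at mutable endpoints.} I will use the \cite{BKM} description of $\widehat A$. It is the completed frozen Jacobian (dimer) algebra of the plabic tiling of $\widehat{\cS}$, so $\mathfrak r$ is generated by arrows and $\mathfrak r^2$ by paths of length at least two. Let $a\in V_{\mathrm{in}}$. Its point $p(a)$ lies strictly inside $\zeta(\mathbf I)$, so every tiling cell containing $p(a)$ lies in the closed inside region. Every tiling neighbour $c$ of $a$ is therefore weakly separated from each $I_a$ and lies in $D^{\mathrm{in}}_{\mathbf I}$. By \eqref{eq:fsb-inside-intersection}, $c\in\cS$.

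It follows that every path of length at least two starting at $a$ factors as $(\text{path})\cdot(\text{arrow }a\to c)$ with $c\in\cS$, so it lies in $\mathfrak re\mathfrak r$. Dually, if $b\in V_{\mathrm{in}}$, the last arrow of any such path starts in $\cS$. Since these ideals are closed, the same holds for convergent sums, and so $\mathcal K_{ba}=0$.

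\emph{Step 4: matching the quiver with $Q(G^\rho)$.} With $\mathcal K_{ba}=0$, the arrows of $Q_{\cI}$ at a mutable vertex are exactly the arrows of $\widehat Q$ there. These are the dual arrows of the tiling cells around $p(a)$. The cells around $p(a)$ are the same in the tiling of $\widehat{\cS}$ and in the tiling of $\cS$, and the latter is the plabic tiling of $G^\rho$ with target labels, by \cite[Theorem~6.3]{FG}. The white-on-the-right convention agrees with that of Section~\ref{subsec:plabic-seeds}. Signed arrow counts with a mutable endpoint therefore agree, which gives $\bB_{A_{\cI}}=\bB_{G^\rho}$. Cancelling $2$-cycles does not affect the signed counts, and frozen--frozen arrows do not enter the matrix.

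I expect Step 3 to be the main obstacle: justifying that every tiling neighbour of an interior vertex of $\cS$ lies inside the necklace curve. This is delicate when the curve is not strictly convex near $p(a)$, or when repeated necklace terms occur. My first attempt will use the fact that $\zeta(\mathbf I)$ is a union of tiling edges of $\widehat{\cS}$, so no cell straddles it; I will derive this from \cite[Lemma~7.3]{FSB}.
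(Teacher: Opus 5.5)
Your overall route matches the paper's. The corner isomorphism comes from full faithfulness of $\operatorname{Hom}_B(P_{\cI},-)$ on $\mathcal L_{\cI}$ and of restriction to $C$. Next come the identity $\operatorname{rad}(e\widehat Ae)=e\mathfrak re$ (valid for any idempotent, so fullness is not needed) and the chain $e_b\mathfrak re\mathfrak re_a\subseteq e_b\mathfrak r^2e_a\subseteq e_b\mathfrak re_a$. Finally, paths are cut at a retained vertex next to a mutable endpoint.

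\textbf{The gap.} The geometric input you give for Step 3, and reuse in Step 4, is false. You claim that every tiling cell containing $p(a)$, for $a\in V_{\mathrm{in}}$, lies in the closed inside region. You also claim that $\zeta(\mathbf I)$ is a union of tiling edges of $\widehat{\cS}$, so that no cell straddles it. The paper's own Example~\ref{ex:fsb-inside-domain-gr25} refutes both claims. There $14\in V_{\mathrm{in}}$, and the white cell $\{12,13,14,15\}$ of the tiling of $\widehat{\cS}$ contains $p(14)$. But $p(12)$ lies strictly outside the curve, and the side $[p(15),p(13)]$ of $\zeta(\mathbf I)$ is a diagonal of that cell, not a tiling edge. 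Consecutive necklace terms share a $(k-1)$-subset and a $(k+1)$-superset, and the segment joining them is in general only a chord of the corresponding white or black cell. So the derivation you plan from \cite[Lemma~7.3]{FSB} cannot succeed as stated.

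\textbf{The fix.} Argue with edges rather than cells, as the paper does.
\begin{itemize}
\item The curve consists of tiling edges and chords lying inside single white or black cells. Neither kind of segment meets the relative interior of a tiling edge.
\item Every $J\in V_{\mathrm{out}}$ is weakly separated from all necklace terms. This holds because $\cI\subseteq\widehat{\cS}$ and $\widehat{\cS}$ is weakly separated, not because of any position argument, contrary to your ``therefore''.
\item By \eqref{eq:fsb-inside-intersection}, such $J$ is not in $D^{\mathrm{in}}_{\mathbf I}$, so $p(J)$ lies strictly outside the curve.
\item Hence no tiling edge joins $V_{\mathrm{in}}$ to $V_{\mathrm{out}}$, so there is no arrow of $\widehat Q$ between them. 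This is exactly what the path-cutting step needs.
\end{itemize}

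\textbf{Step 4.} The cells around $p(a)$ need not coincide in the two tilings. In the example, the white cell around $14$ becomes $\{13,14,15\}$ in the tiling of $\cS$. What is preserved is the set of tiling edges at $p(a)$, together with the colour of the cell on each side. This holds because every new side created by cutting a cell along the curve joins two necklace vertices. That preserved data is what gives matching arrows and orientations at mutable vertices. With these two replacements, your argument goes through.
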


\begin{proof}
	By Lemma~\ref{lem:necklace-orthogonal-images} and the
	full faithfulness of restriction from $B$ to $C$, we have
	\[
	\begin{aligned}
		A_{\cI}
		&\simeq
		\End_B\!\left(
		\bigoplus_{J\in\cS}M_J
		\right)^{\mathrm{op}}\\
		&\simeq
		\End_C\!\left(
		\bigoplus_{J\in\cS}M_J
		\right)^{\mathrm{op}}\\
		&\simeq e\widehat A e.
	\end{aligned}
	\]
	This proves \eqref{eq:fsb-face-corner}.
	
	Notice that we have
	\[
	\operatorname{rad}(e\widehat A e)
	=
	e\mathfrak r e,
	\qquad
	\operatorname{rad}(e\widehat A e)^2
	=
	e\mathfrak r e\mathfrak r e.
	\]
	Consequently, for all retained vertices $a,b$, we have
	\[
	\operatorname{Arr}_{A_{\cI}}(a,b)
	\simeq
	\frac{e_b\mathfrak r e_a}
	{e_b\mathfrak r e\mathfrak r e_a}.
	\]
	The inclusions
$	e_b\mathfrak r e\mathfrak r e_a
	\subseteq
	e_b\mathfrak r^2e_a
	\subseteq
	e_b\mathfrak r e_a$
	give the short exact sequence
	\eqref{eq:fsb-corner-arrow-sequence}.
	Taking dimensions proves
	\eqref{eq:fsb-complete-arrow-count}.
	
	We next show that
	\begin{equation}
		\label{eq:fsb-inside-outside-isolation}
		\text{there are no arrows in $\widehat Q$ between
			$V_{\mathrm{in}}$ and $V_{\mathrm{out}}$.}
	\end{equation}
	The curve $\zeta(\mathbf I)$ is formed from tiling edges
	and chords lying inside individual white or black cells.
	A chord of this kind does not cross the interior of a
	tiling edge. An edge joining a vertex strictly inside
	the curve to one strictly outside it would have to
	cross the curve. Such an edge therefore cannot occur.
	
	The inside-domain correspondence identifies the tiling
	inside $\zeta(\mathbf I)$ with the relabeled plabic
	tiling; see \cite[Section~6]{FG} and
	\cite[Remark~4.20, Lemma~7.3 and Section~7.3]{FSB}.
	It preserves the white and black cells incident with
	internal vertices. Any new side introduced by cutting
	a cell joins two necklace vertices. Thus the ambient
	quiver and $Q(G^\rho)$ have the same arrows, with the
	same orientations, at every vertex in
	$V_{\mathrm{in}}$.
	
 We next claim that
	\begin{equation}
		\label{eq:fsb-corner-radical}
		e_b\mathfrak r^2e_a
		=
		e_b\mathfrak r e\mathfrak r e_a
		\qquad
		\text{if }a\in V_{\mathrm{in}}
		\text{ or }b\in V_{\mathrm{in}}.
	\end{equation}
	The inclusion from right to left is immediate.
	
	For the reverse inclusion, use the
	quiver presentation of $\widehat A$.
	If $a\in V_{\mathrm{in}}$, the first intermediate
	vertex of every path of length at least two starting
	at $a$ is retained, by
	\eqref{eq:fsb-inside-outside-isolation}.
	Cutting the path at that vertex expresses it as an
	element of
	$e_b\mathfrak r e\mathfrak r e_a$.
	If $b\in V_{\mathrm{in}}$, the same argument applies
	to the last intermediate vertex.
	
	This argument also applies to completed sums.
	Indeed, $\widehat A$ is finite over the complete
	discrete valuation ring $Z$. Its radical-adic and
	$t$-adic topologies agree, and the displayed
	$Z$-submodules are closed. Hence the path
	factorizations imply
	\eqref{eq:fsb-corner-radical}.
	
	It follows that $\mathcal K_{ba}=0$ whenever at least
	one endpoint is mutable. The short exact sequence
	\eqref{eq:fsb-corner-arrow-sequence} therefore identifies
	the arrow spaces of $A_{\cI}$ with those of $\widehat A$
	whenever at least one endpoint is mutable.
	Together with the inside-domain comparison and the
	chosen orientation convention, this proves
	\[
	\bB_{A_{\cI}}=\bB_{G^\rho}.
	\]
	
	Finally, for $a,b\in\cI$, the arrow-count formula
	\eqref{eq:fsb-complete-arrow-count} gives the stated
	description of $Q_{\cI}^{\mathrm{fr}}$.
\end{proof}

\begin{remark}[The additional frozen arrows]
	\label{rem:fsb-frozen-arrow-correction}
	The spaces $\mathcal K_{ba}$ record the arrows that
	can appear when the vertices in $V_{\mathrm{out}}$
	are omitted. More explicitly,
	\[
	\mathcal K_{ba}
	\simeq
	\frac{
		e_b\mathfrak r(1-e)\mathfrak r e_a
	}{
		e_b\mathfrak r(1-e)\mathfrak r e_a
		\cap
		e_b\mathfrak r e\mathfrak r e_a
	}.
	\]
	Thus these additional arrows are represented by paths
	through omitted vertices, modulo the relations of
	$\widehat A$ and factorizations through retained
	intermediate vertices.
	
	Choosing a basis of $\mathcal K_{ba}$ and lifts of an
	ambient arrow basis gives an arrow basis for the corner.
	These choices are not canonical. The complete arrow
	spaces themselves are intrinsic to $A_{\cI}$ and
	therefore independent of the auxiliary collection
	$\widehat{\cS}$.
	
	The proposition retains all frozen--frozen arrows.
	Identifying them individually with those of a specified
	complete dual quiver $Q(G^\rho)$ would additionally
	require proving
	\[
	\#\{a\to b\text{ in }Q(G^\rho)\}
	=
	\dim_\kk
	\frac{e_b\mathfrak r e_a}
	{e_b\mathfrak r e\mathfrak r e_a}
	\qquad(a,b\in\cI).
	\]
	This additional identification is not needed for the
	equality of extended exchange matrices.
\end{remark}

\begin{example}[The associated ice quivers]
	\label{ex:fsb-corner-gr25}
	Continue Example~\ref{ex:fsb-inside-domain-gr25}, with
	\[
	\cS=\cI\cup\{14\},
	\qquad
	\widehat{\cS}=\cS\cup\{12\}.
	\]
	Working over $C=C_{2,5}$, put
	\[
	\begin{aligned}
		T_{\cS}&=\bigoplus_{J\in\cS}M_J,
		&\widehat T&=T_{\cS}\oplus M_{12},\\
		\widehat A&=\End_C(\widehat T)^{\mathrm{op}},
		&e&=1-e_{12}.
	\end{aligned}
	\]
	Then
	\[
	A_{\cI}
	\simeq\End_C(T_{\cS})^{\mathrm{op}}
	\simeq e\widehat A e.
	\]
	
	The Gabriel quiver $\widehat Q$ of
	$\widehat A$ is displayed below.
	Blue boxes denote frozen vertices, and blue arrows
	have both endpoints frozen.
	In the ambient Grassmannian seed, the mutable
	vertices are $13$ and $14$.
	\[
	\widehat Q:\qquad
	\begin{tikzcd}[row sep=2.2em,column sep=1.8em]
		&
		\color{blue}\boxed{15}
		\arrow[rr,blue]
		\arrow[dl,blue]
		&&
		\color{blue}\boxed{12}
		\arrow[d,black]
		&\\
		\color{blue}\boxed{45}
		\arrow[r,black]
		&
		\boxed{14}
		\arrow[u,black]
		\arrow[dr,black]
		&&
		\boxed{13}
		\arrow[ll,black]
		\arrow[r,black]
		&
		\color{blue}\boxed{23}
		\arrow[ul,blue]
		\arrow[dll,blue]
		\\
		&&
		\color{blue}\boxed{34}
		\arrow[ur,black]
		\arrow[ull,blue]
		&&
	\end{tikzcd}
	\]
	
	We compute the additional arrow arising from the
	omission of $12$.
	For rank-one modules, write
	\[
	\Hom_C(M_I,M_J)=Zg_{J,I},
	\]
	where $g_{J,I}$ is the monomial generator whose
	vertex components are powers of $t$, with at least
	one component equal to $1$; see
	\cite[Lemma~7.4]{BKM}.
	
	The path $15\to12\to13$ in $\widehat Q$
	corresponds, under our opposite-endomorphism
	convention, to
	\[
	M_{13}\xrightarrow{g_{12,13}}M_{12}
	\xrightarrow{g_{15,12}}M_{15}.
	\]
	In the vertex order $j+\tfrac12$, $j=1,\ldots,5$,
	direct calculation gives
	\[
	\begin{aligned}
		g_{12,13}&=(1,t,1,1,1),\\
		g_{15,12}&=(t,1,1,1,t),\\
		f:=g_{15,13}&=(t,t,1,1,t).
	\end{aligned}
	\]
	Consequently,
	\[
	g_{15,12}g_{12,13}=f.
	\]
	For every other possible intermediate summand,
	the same calculation gives
	\[
	g_{15,K}g_{K,13}=tf
	\qquad
	(K\in\{14,23,34,45\}).
	\]
	Since $\End_C(M_I)=Z$, radical endomorphisms of
	the endpoints also produce multiples of $tf$.
	Therefore
	\[
	\operatorname{rad}_{\add T_{\cS}}^2(M_{13},M_{15})
	=tZf,
	\]
	and hence
	\begin{equation}
		\label{eq:fsb-gr25-corner-new-arrow}
		\operatorname{Irr}_{\add T_{\cS}}(M_{13},M_{15})
		=
		Zf/tZf
		\simeq\kk.
	\end{equation}
	Thus the class of $f$ gives a new arrow
	$\alpha:15\to13$ in the Gabriel quiver of $A_{\cI}$.
	
	Computing the remaining arrow spaces in the same way
	gives the following complete quiver:
	\[
	Q_{\cI}:\qquad
	\begin{tikzcd}[row sep=2.2em,column sep=1.8em]
		&
		\color{blue}\boxed{15}
		\arrow[dl,blue]
		\arrow[drr,blue,"\alpha"]
		&&&\\
		\color{blue}\boxed{45}
		\arrow[r,black]
		&
		\boxed{14}
		\arrow[u,black]
		\arrow[dr,black]
		&&
		\color{blue}\boxed{13}
		\arrow[ll,black]
		\arrow[r,blue]
		&
		\color{blue}\boxed{23}
		\arrow[dll,blue]
		\\
		&&
		\color{blue}\boxed{34}
		\arrow[ur,blue]
		\arrow[ull,blue]
		&&
	\end{tikzcd}
	\]
	Every arrow has multiplicity one, and there are no
	loops. The frozen vertices are precisely
	\[
	\cI=\{13,23,34,45,15\},
	\]
	so $14$ is the unique mutable vertex.
	In particular, $13$ is mutable in the ambient
	Grassmannian seed but frozen in $Q_{\cI}$.
	The only additional arrow is $\alpha:15\to13$,
	and all arrows incident with $14$ are unchanged.
	
	With the extended-cluster order
	\[
	X=(\Delta_{14},\Delta_{13},\Delta_{23},
	\Delta_{34},\Delta_{45},\Delta_{15}),
	\]
	our convention
	$b_{ij}=\#\{j\to i\}-\#\{i\to j\}$ gives
	\[
	\bB_{A_{\cI}}
	=
	\begin{pmatrix}
		0\\-1\\0\\1\\-1\\1
	\end{pmatrix},
	\qquad
	\widehat y
	=
	\frac{\Delta_{34}\Delta_{15}}
	{\Delta_{13}\Delta_{45}}.
	\]
	Mutation replaces $\Delta_{14}$ by $\Delta_{35}$:
	\[
	\Delta_{14}\Delta_{35}
	=
	\Delta_{13}\Delta_{45}
	+
	\Delta_{15}\Delta_{34}.
	\]
\end{example}

\subsection{Internal mutations and cluster characters}
Recall the full subcategory
\[
\mathcal L_{\cI}
=
\left\{
M\in\CM(B):
\Ext_B^1(P_{\cI},M)=0
=
\Ext_B^1(M,P_{\cI})
\right\}.
\]
By Lemma~\ref{lem:necklace-orthogonal-images}, the functor
\[
H=\Hom_B(P_{\cI},-):
\mathcal L_{\cI}\longrightarrow
\cE_{\cI}=\GP B_{\cI}
\]
is exact and fully faithful. Via restriction along $C\to B$,
we also regard $\mathcal L_{\cI}$ as a full exact subcategory
of the ambient Grassmannian Frobenius category $\CM(C)$.

Starting from $\widehat T$, we perform mutations in $\CM(C)$
only at vertices in $V_{\mathrm{in}}$, keeping the summands
indexed by $\cI$ and $V_{\mathrm{out}}$ fixed.
At each stage, write $M_j$ for the current $C$-lattice at
vertex $j$; these lattices need not remain rank one.
The retained direct sum is
\[
W=\bigoplus_{j\in V_{\mathrm{in}}\sqcup\cI}M_j.
\]
The following proposition shows that the retained summands
remain in $\mathcal L_{\cI}$ and that the ambient exchange
sequences lie in this subcategory. Applying $H$ then gives
the exchange conflations for $H(W)$ in $\cE_{\cI}$.

\begin{proposition}[Mutation compatibility]
	\label{prop:fsb-frobenius-mutations}
	Let $V$ be a basic cluster-tilting object reachable from
	$T_{\cI}$ in $\cE_{\cI}$, and let $k$ be a mutable vertex.
	The corresponding mutation in $\CM(C)$ induces
	the mutation $\mu_kV$ in $\cE_{\cI}$.
	
	Its two exchange conflations are obtained by applying
	$\Hom_B(P_{\cI},-)$ to the exchange sequences in $\CM(C)$,
	regarded as exact sequences of $B$-lattices.
	Moreover, the extended exchange matrices satisfy
	\[
	\bB_{\mu_kV}=\mu_k(\bB_V).
	\]

	Thus $(\cE_{\cI},T_{\cI})$ 
	realizes the mutation pattern
	of the target-labelled seed of $G^\rho$.
\end{proposition}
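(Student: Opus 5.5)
We argue by induction along a mutation sequence from $T_{\cI}$, maintaining the following invariant for the current object $V$. There is a reachable cluster-tilting object $\widehat V$ of the Grassmannian Frobenius category $\CM(C)$, obtained from $\widehat T$ by mutations only at vertices of $V_{\mathrm{in}}$. Its summands indexed by $\cI\sqcup V_{\mathrm{out}}$ are the original rank-one lattices. Its retained part $W=\bigoplus_{j\in V_{\mathrm{in}}\sqcup\cI}M_j$ lies in $\mathcal L_{\cI}$ (in particular admits $B$-lattice lifts), and $V\simeq H(W)$. Moreover the ambient quiver of $\widehat V$ has no arrows between $V_{\mathrm{in}}$ and $V_{\mathrm{out}}$. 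At the initial stage this is Lemma~\ref{lem:necklace-orthogonal-images}, together with \eqref{eq:fsb-inside-outside-isolation}.

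For the inductive step, fix $k\in V_{\mathrm{in}}$ and take the two ambient exchange sequences $0\to M_k^*\to E^\pm\to M_k\to0$ in $\CM(C)$. The isolation property forces $E^\pm\in\add(\bigoplus_{j\in V_{\mathrm{in}}\sqcup\cI}M_j)$. The outer vertices contribute no arrows at $k$, so the minimal ambient approximations only involve retained summands. We then show $M_k^*\in\mathcal L_{\cI}$. First, $M_k^*$ lifts to $B$: it is a kernel of a $B$-linear map between $B$-lattices, by full faithfulness of restriction (Pressland, Prop.~5.6), and $\CM(B)$ is closed under kernels of surjections inside $\CM(C)$. Second, the vanishing of $\Ext^1_B(P_{\cI},M_k^*)$ and $\Ext^1_B(M_k^*,P_{\cI})$ comes from the injection \eqref{eq:fsb-ext-injection-again}, since ambient rigidity gives $\Ext^1_C(M_I,M_k^*)=0=\Ext^1_C(M_k^*,M_I)$ for $I\in\cI$. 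Exactness of $H$ on $\mathcal L_{\cI}$ (Lemma~\ref{lem:necklace-orthogonal-images}) then yields conflations $0\to H(M_k^*)\to H(E^\pm)\to H(M_k)\to0$ in $\cE_{\cI}$. Full faithfulness of $H$ transports the minimal right and left $\add H(W/M_k)$-approximation properties. The object $H(W/M_k)\oplus H(M_k^*)$ is rigid, by \eqref{eq:fsb-ext-injection-again} and $H$ being fully faithful, and it has the right number of summands. It is therefore cluster-tilting, being the unique completion given by Assumption~\ref{ass:fsb-frobenius-cluster} for $\cE_{\cI}$; this can be checked in the stable category via the equivalence $\overline L_{\cI}$ and Proposition~\ref{prop:fsb-common-face-cluster}. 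Hence it equals $\mu_kV$.

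The matrix statement follows by reading off multiplicities: $\bB_{\mu_kV}$ is computed from the middle terms $H(E^\pm)$, which coincide with the ambient middle terms. The rows and columns of $\bB_{\mu_k V}$ are thus the $V_{\mathrm{in}}\sqcup\cI$ rows of the ambient matrix at $\mu_k\widehat V$. Ambient matrix mutation \cite{BKM}, restricted to these rows and the $V_{\mathrm{in}}$ columns, is matrix mutation of $\bB_V$. The only entries that could differ involve paths through $V_{\mathrm{out}}$, and these are zero at mutable columns. At the initial stage Proposition~\ref{prop:fsb-complete-ice-quiver} identifies $\bB_{T_{\cI}}=\bB_{G^\rho}$, so the pattern is that of $G^\rho$.

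The main obstacle is preserving the isolation invariant: after mutation at $k$, no arrow between $V_{\mathrm{in}}$ and $V_{\mathrm{out}}$ may be created. By the ambient quiver mutation rule, a new arrow $i\to j$ arises only from a path $i\to k\to j$, and such a path is excluded when $k\in V_{\mathrm{in}}$ and there are no $V_{\mathrm{in}}$–$V_{\mathrm{out}}$ arrows beforehand. Arrows through frozen vertices of $\cI$ are not created at mutable-to-outer positions either, since ambient mutation only adds arrows along two-paths through $k$. This closes the induction. A second subtle point is the kernel-closure of $\CM(B)$ inside $\CM(C)$. I would handle it via the description of $\CM(B)$ as the lattices whose rank-one filtration factors have nonzero Plücker coordinate, or more simply by the fact that $\CM(B)$ is the essential image and is closed under extensions and kernels of admissible epimorphisms in Pressland's setup.
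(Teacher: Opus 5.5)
Your overall route is the paper's: induction along mutations with the isolation invariant (no ambient arrows between $V_{\mathrm{in}}$ and $V_{\mathrm{out}}$), $E^\pm\in\add W_{\widehat k}$, the $B$-structure on $M_k^*$ as $\ker_B(u)$, $\Ext$-vanishing against $P_{\cI}$ via the restriction injection, and transport of the exchange sequences by the exact fully faithful $H$. The gap is the step concluding that $H(W_{\widehat k})\oplus H(M_k^*)$ is cluster-tilting and equals $\mu_kV$.

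\begin{itemize}
\item Assumption~\ref{ass:fsb-frobenius-cluster} is not yet known for $\cE_{\cI}$. This proposition is what supplies it, and the paper uses the proposition afterwards to verify the QP realization for $\cE_{\cI}$. Invoking its uniqueness clause is therefore circular.
\item Uniqueness of the complement only identifies an object already known to complete $V_{\widehat k}$ to a cluster-tilting object.
\item ``Rigid with the right number of summands implies cluster-tilting'' is not supplied by anything you cite.
\item Transporting along $\overline L_{\cI}$ and Proposition~\ref{prop:fsb-common-face-cluster} only covers $V=T_{\cI}$. For a general reachable $V$ you would first need $\overline L_{\cI}(\underline V)$ to be reachable in $\cC$, which is part of what is being proved.
\item Your rigidity claim is also not justified as stated. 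Full faithfulness of $H$ only gives $\Ext^1_{\mathcal L_{\cI}}(X,Y)\hookrightarrow\Ext^1_{\cE_{\cI}}(HX,HY)$, which does not transfer vanishing. You would need $H\simeq F_{\cI}$ on $\mathcal L_{\cI}$ (Lemma~\ref{lem:necklace-orthogonal-images}) together with the derived equivalence.
\end{itemize}

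The fix uses something you already have. $H(u)\colon H(E_+)\to N_k$ is a minimal right $\add V_{\widehat k}$-approximation with kernel $N_k^*$. Its stable image is therefore a right $\add\underline V_{\widehat k}$-approximation in the Hom-finite $2$-Calabi--Yau category $\underline{\cE}_{\cI}$, so the induced triangle is an Iyama--Yoshino exchange triangle. By \cite[Theorem~5.3]{IyamaYoshino}, $\underline V_{\widehat k}\oplus\underline{N_k^*}$ is cluster-tilting. Since $V_{\widehat k}$ contains $H(P_{\cI})$, i.e.\ all projective-injectives, the orthogonality lifts to $\cE_{\cI}$ as in Theorem~\ref{prop:fsb-frobenius-realization}. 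This is exactly the paper's argument, and it makes the rigidity and summand count unnecessary.

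Smaller points:
\begin{itemize}
\item You must check that the initial $M_J$, $J\in V_{\mathrm{in}}$, are non-projective in $\CM(C)$, so that ambient mutation exists. The paper's argument: otherwise $\Ext^1_B(M_J,-)=0$ on $\CM(B)$, so $M_J$ is $B$-projective and $qM_J=0$, contradicting Proposition~\ref{prop:fsb-common-face-cluster}.
\item Your ``second subtle point'' is not an issue. $\ker_B(u)$ is a $B$-submodule of a $Z$-free module, hence a $B$-lattice, and it restricts to $M_k^*$.
\item Reading $\bB_{\mu_kV}$ from middle-term multiplicities via \eqref{eq:fsb-exchange-column} is a legitimate alternative to the paper's corner-algebra radical argument. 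It requires running the construction at every vertex of $\mu_kV$, not only at $k$.
\end{itemize}
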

\begin{proof}
	Put
	\[
	H=\Hom_B(P_{\cI},-):
	\mathcal L_{\cI}\longrightarrow
	\cE_{\cI}=\GP B_{\cI},
	\qquad
	T_{\mathrm{out}}
	=
	\bigoplus_{J\in V_{\mathrm{out}}}M_J.
	\]
	By Lemma~\ref{lem:necklace-orthogonal-images}, $H$ is
	exact and fully faithful.
	
	We construct the mutations in $\CM(C)$, starting from
	\[
	\widehat T
	=
	\left(\bigoplus_{J\in\cS}M_J\right)
	\oplus T_{\mathrm{out}}.
	\]
	Only the summands at vertices in $V_{\mathrm{in}}$ are
	mutated. The summands indexed by $\cI$ and
	$V_{\mathrm{out}}$ remain fixed.
	After mutation, vertex labels denote positions in the
	mutation pattern, and the corresponding $C$-lattices
	need not remain rank one.
	If $V_{\mathrm{in}}=\varnothing$, there is nothing to prove.
	
	We first check that each initial summand $M_J$ with
	$J\in V_{\mathrm{in}}$ is non-projective in $\CM(C)$.
	Suppose otherwise. The restriction injections
	\[
	\Ext_B^1(M_J,N)
	\hookrightarrow
	\Ext_C^1(M_J,N)
	\qquad(N\in\CM(B))
	\]
	would give
	\[
	\Ext_B^1(M_J,N)=0
	\qquad(N\in\CM(B)).
	\]
	Choose a projective $B$-deflation
	\[
	0\longrightarrow K\longrightarrow Q
	\longrightarrow M_J\longrightarrow0.
	\]
	Its kernel is a $B$-lattice, so this sequence splits.
	Thus $M_J$ is $B$-projective and $qM_J=0$, contrary to
	Proposition~\ref{prop:fsb-common-face-cluster}.
	Hence mutation in $\CM(C)$ is available at every
	vertex in $V_{\mathrm{in}}$.
	
	For a cluster-tilting object $\widehat W$ in $\CM(C)$,
	write
	\[
	\widehat A_{\widehat W}
	=
	\End_C(\widehat W)^{\mathrm{op}},
	\]
	and let $\widehat Q_{\widehat W}$ and
	$\widehat{\bB}_{\widehat W}$ be its Gabriel quiver
	and extended exchange matrix, respectively.
	In this matrix, the frozen vertices correspond to the
	projective-injective summands in $\CM(C)$.
	The mutation results for $\CM(C)$ give exchange
	sequences, the absence of loops and oriented $2$-cycles,
	and the identity
	\[
	\widehat{\bB}_{\mu_k\widehat W}
	=
	\mu_k\bigl(\widehat{\bB}_{\widehat W}\bigr);
	\]
	see \cite[Proposition~4.2 and Section~6]{JKSQuantum}.
	
	By
	Proposition~\ref{prop:fsb-complete-ice-quiver}, $\widehat Q_{\widehat T}$ has no arrows
	between $V_{\mathrm{in}}$ and $V_{\mathrm{out}}$.
	This remains true after any sequence of mutations
	at vertices in $V_{\mathrm{in}}$.
	Indeed, suppose that the current matrix satisfies
	\[
	\widehat b_{oj}=0
	\qquad
	(o\in V_{\mathrm{out}},\ j\in V_{\mathrm{in}}).
	\]
	For $k\in V_{\mathrm{in}}$, the matrix mutation formula,
	together with $\widehat b_{ok}=0$, gives
	\[
	(\mu_k\widehat{\bB})_{oj}=0
	\qquad
	(o\in V_{\mathrm{out}},\ j\in V_{\mathrm{in}}).
	\]
	Since the Gabriel quiver after mutation has no oriented
	$2$-cycles, these zero entries imply the same absence
	of arrows.
	
	We now proceed by induction on the number of mutations.
	At a current stage, write
	\[
	W=\bigoplus_{j\in\cS}M_j,
	\qquad
	\widehat W=W\oplus T_{\mathrm{out}},
	\qquad
	V=H(W).
	\]
	Assume that $\widehat W$ is cluster-tilting in $\CM(C)$,
	all summands of $W$ belong to $\mathcal L_{\cI}$,
	and $V$ is cluster-tilting in $\cE_{\cI}$.
	These assertions hold initially by
	Lemma~\ref{lem:necklace-orthogonal-images} and
	Proposition~\ref{prop:fsb-frobenius-realization}.
	
	Fix $k\in V_{\mathrm{in}}$, and write
	\[
	W=M_k\oplus W_{\widehat k}.
	\]
	The exchange sequences at $M_k$ in $\CM(C)$ are
	\begin{equation}
		\label{eq:fsb-ambient-exchanges}
		\begin{gathered}
			0\longrightarrow M_k^*
			\longrightarrow E_+
			\xrightarrow{u}M_k
			\longrightarrow0,\\
			0\longrightarrow M_k
			\xrightarrow{v}E_-
			\longrightarrow M_k^*
			\longrightarrow0.
		\end{gathered}
	\end{equation}
	Their middle terms are determined by the arrows
	incident with $k$ in $\widehat Q_{\widehat W}$.
	The absence of arrows between $k$ and
	$V_{\mathrm{out}}$ therefore implies
	\[
	E_+,E_-\in\add W_{\widehat k}.
	\]
	Thus $E_+$, $E_-$, and $M_k$ are already $B$-lattices.
	
	We next equip $M_k^*$ with a $B$-module structure.
	Full faithfulness of restriction
	$\CM(B)\hookrightarrow\CM(C)$ makes $u$ and $v$
	$B$-linear. Set
	\[
	K=\ker_B(u),
	\qquad
	L=\operatorname{coker}_B(v).
	\]
	After restriction to $C$, both $K$ and $L$ are
	isomorphic to the $C$-lattice $M_k^*$.
	In particular, both are finite free over $Z$ and
	belong to $\CM(B)$.
	Their $C$-isomorphism lifts to a $B$-isomorphism by
	full faithfulness.
	We may therefore regard $M_k^*$ as a $B$-lattice
	and both sequences in
	\eqref{eq:fsb-ambient-exchanges} as short exact
	sequences of $B$-lattices.
	
	The object $P_{\cI}$ is a summand of
	$W_{\widehat k}$ and remains a summand of the
	cluster-tilting object
	\[
	\mu_k\widehat W
	=
	M_k^*\oplus W_{\widehat k}\oplus T_{\mathrm{out}}
	\qquad\text{in }\CM(C).
	\]
	Rigidity in $\CM(C)$ gives
	\[
	\Ext_C^1(P_{\cI},M_k^*)=0
	=
	\Ext_C^1(M_k^*,P_{\cI}).
	\]
	Using the restriction injections once more, we obtain
	\begin{equation}
		\label{eq:fsb-mutation-induction}
		\Ext_B^1(P_{\cI},M_k^*)=0
		=
		\Ext_B^1(M_k^*,P_{\cI}).
	\end{equation}
	Hence $M_k^*\in\mathcal L_{\cI}$.
	
	Put
	\[
	N_k=H(M_k),
	\qquad
	N_k^*=H(M_k^*),
	\qquad
	V_{\widehat k}=H(W_{\widehat k}).
	\]
	Exactness of $H$ gives conflations in $\cE_{\cI}$:
	\begin{equation}
		\label{eq:fsb-induced-exchange-conflations}
		\begin{gathered}
			0\longrightarrow N_k^*
			\longrightarrow H(E_+)
			\xrightarrow{H(u)}N_k
			\longrightarrow0,\\
			0\longrightarrow N_k
			\xrightarrow{H(v)}H(E_-)
			\longrightarrow N_k^*
			\longrightarrow0.
		\end{gathered}
	\end{equation}
	The approximation properties of
	\eqref{eq:fsb-ambient-exchanges} in $\CM(C)$
	restrict to $\add W_{\widehat k}$.
	Full faithfulness of restriction and of $H$ then
	shows that $H(u)$ and $H(v)$ are respectively
	minimal right and left
	$\add V_{\widehat k}$-approximations.
	The other endpoint maps have the corresponding
	approximation properties.
	The conflations are non-split, since a splitting
	would lift to a splitting of the corresponding
	exchange sequence in $\CM(C)$.
	
	Full faithfulness also shows that $N_k^*$ is
	indecomposable and is not isomorphic to any summand
	of $V$.
	Moreover, it is non-projective: every indecomposable
	projective of $\cE_{\cI}$ belongs to
	$\add H(P_{\cI})\subseteq\add V_{\widehat k}$.
	The exchange-complement theorem in the Hom-finite
	$2$-Calabi--Yau category $\underline{\cE}_{\cI}$
	identifies $N_k^*$ as the mutation complement of
	$N_k$; see \cite[Theorem~5.3]{IyamaYoshino}.
	Consequently,
	\[
	\mu_kV
	=
	V_{\widehat k}\oplus N_k^*
	=
	H(W_{\widehat k}\oplus M_k^*),
	\]
	and \eqref{eq:fsb-induced-exchange-conflations}
	are the exchange conflations in $\cE_{\cI}$.
	This proves the induction step.
	
	Finally, we compare the extended exchange matrices.
	At every stage, by full faithfulness, we have
	\[
	\begin{aligned}
		A_V
		:=
		\End_{B_{\cI}}(V)^{\mathrm{op}}
		&\simeq \End_B(W)^{\mathrm{op}}\\
		&\simeq \End_C(W)^{\mathrm{op}}\\
		&\simeq e_W\widehat A_{\widehat W}e_W,
	\end{aligned}
	\]
	where $e_W$ is the idempotent corresponding to $W$.
	The Gabriel quiver of $\widehat A_{\widehat W}$
	has no arrows between $V_{\mathrm{in}}$ and
	$V_{\mathrm{out}}$.
	The radical argument in
	Proposition~\ref{prop:fsb-complete-ice-quiver}
	therefore identifies the arrow spaces of $A_V$
	with those of $\widehat A_{\widehat W}$
	whenever at least one endpoint belongs to
	$V_{\mathrm{in}}$.
	It follows that
	\[
	\bB_V
	=
	\left(
	\widehat{\bB}_{\widehat W}
	\right)_{\cS\times V_{\mathrm{in}}}.
	\]
	The rows indexed by $\cI$ are retained and are
	regarded as frozen rows for $\cE_{\cI}$.
	
	For $k\in V_{\mathrm{in}}$, restriction to these rows
	and columns commutes with matrix mutation.
	Thus we have
	\[
	\begin{aligned}
		\bB_{\mu_kV}
		&=
		\left(
		\widehat{\bB}_{\mu_k\widehat W}
		\right)_{\cS\times V_{\mathrm{in}}}\\
		&=
		\left(
		\mu_k(\widehat{\bB}_{\widehat W})
		\right)_{\cS\times V_{\mathrm{in}}}\\
		&=
		\mu_k\!\left(
		\left(
		\widehat{\bB}_{\widehat W}
		\right)_{\cS\times V_{\mathrm{in}}}
		\right)\\
		&=\mu_k(\bB_V).
	\end{aligned}
	\]
	This equality includes all rows indexed by $\cI$.
	
	The initial identity
	\[
	\bB_{T_{\cI}}=\bB_{G^\rho}
	\]
	follows from
	Proposition~\ref{prop:fsb-complete-ice-quiver}.
	The construction applies along every finite sequence
	of mutations at vertices in $V_{\mathrm{in}}$,
	and therefore realizes the mutation pattern of
	the target-labeled seed of $G^\rho$.
\end{proof}

We now verify Assumption~\ref{ass:fsb-QP-realization}
for the stable category $\underline{\cE}_{\cI}$
with cluster-tilting object $\underline T_{\cI}$.
By Proposition~\ref{prop:fsb-common-face-cluster},
there is a mutation sequence
$\boldsymbol k=(k_1,\ldots,k_\ell)$ such that
\[
U
:=
\bigoplus_{J\in V_{\mathrm{in}}}qM_J
\simeq
\mu_{\boldsymbol k}(qT^{\mathrm s}),
\qquad
\overline L_{\cI}(\underline T_{\cI})\simeq U,
\]
where
\[
\overline L_{\cI}:
\underline{\cE}_{\cI}\xrightarrow{\sim}\cC
\]
is the stable equivalence induced by $L_{\cI}$.

Let $(Q_{\mathrm s},W_{\mathrm s})$ be the standard
source QP realization, and put
\[
(Q_U,W_U)
=
\mu_{\boldsymbol k}(Q_{\mathrm s},W_{\mathrm s}).
\]
Mutation compatibility of the source realization gives
a triangle equivalence
\[
\Theta_U:
\mathcal C(Q_U,W_U)\xrightarrow{\sim}\cC,
\qquad
\Theta_U(T_U^{\mathrm{can}})\simeq U,
\]
where $T_U^{\mathrm{can}}$ is the canonical
cluster-tilting object.
Moreover,
\[
\dim_\kk\operatorname{Jac}(Q_U,W_U)<\infty,
\qquad
(Q_U,W_U)\text{ is non-degenerate}.
\]

Choose a quasi-inverse of $\overline L_{\cI}$ and define
\[
\Theta_{\cI}
=
\overline L_{\cI}^{-1}\circ\Theta_U:
\mathcal C(Q_U,W_U)
\xrightarrow{\sim}
\underline{\cE}_{\cI}.
\]
Then
\[
\Theta_{\cI}(T_U^{\mathrm{can}})
\simeq
\overline L_{\cI}^{-1}(U)
\simeq
\underline T_{\cI}.
\]
For every $Y\in\underline{\cE}_{\cI}$, this equivalence
also identifies the character modules:
\[
\Hom_{\mathcal C(Q_U,W_U)}
\bigl(T_U^{\mathrm{can}},
\Sigma\Theta_{\cI}^{-1}Y\bigr)
\simeq
\Hom_{\underline{\cE}_{\cI}}
\bigl(\underline T_{\cI},\Sigma Y\bigr).
\]
These isomorphisms respect the right actions of the
corresponding endomorphism algebras.

Finally, for any further mutation sequence
$\boldsymbol j$, put
\[
(Q_{\boldsymbol j},W_{\boldsymbol j})
=
\mu_{\boldsymbol j}(Q_U,W_U).
\]
The transported realization and
Proposition~\ref{prop:fsb-frobenius-mutations} give
\[
\Theta_{\cI}
\bigl(\mu_{\boldsymbol j}T_U^{\mathrm{can}}\bigr)
\simeq
\mu_{\boldsymbol j}\underline T_{\cI}
\simeq
\underline{\mu_{\boldsymbol j}T_{\cI}}.
\]
Thus Assumption~\ref{ass:fsb-QP-realization} holds for
$(\underline{\cE}_{\cI},\underline T_{\cI})$
and throughout its reachable mutation class.

\bigskip

Let
$V=\bigoplus_{j=1}^{r+f}V_j$
be a basic cluster-tilting object reachable from $T_{\cI}$
in $\cE_{\cI}$, ordered so that $V_1,\ldots,V_r$ are
non-projective and $V_{r+1},\ldots,V_{r+f}$ are
projective-injective.
Put
\[
A=A_V=\End_{\cE_{\cI}}(V)^{\mathrm{op}},
\qquad
H=H_V=\Hom_{\cE_{\cI}}(V,-).
\]
For each $j$, let $e_j\in A$ be the idempotent corresponding
to $V_j$, and write
\[
Q_j=H(V_j)\simeq Ae_j,
\qquad
S_j=Q_j/\operatorname{rad}Q_j.
\]
Thus $Q_j$ and $S_j$ are the corresponding indecomposable
projective and simple left $A$-modules.

Set
$e_{\mathrm{fr}}=\sum_{j=r+1}^{r+f}e_j.$ Then we have
\[
\overline A
:=
A/Ae_{\mathrm{fr}}A
\simeq
\End_{\underline{\cE}_{\cI}}(\underline V)^{\mathrm{op}}
=
\Gamma_V^{\mathrm{op}}.
\]
This algebra is finite-dimensional. In particular, left
$\overline A$-modules are identified with the right
$\Gamma_V$-modules used in
Section~\ref{sec:fsb-character-input}.

For $X\in\cE_{\cI}$, put
\[
E_V(X)
:=
\Ext^1_{\cE_{\cI}}(V,X)
\simeq
\Hom_{\underline{\cE}_{\cI}}
(\underline V,\Sigma\underline X).
\]
Precomposition defines its left $A$-module structure:
\[
a^{\mathrm{op}}\cdot\xi=a^*\xi
\qquad
\bigl(a\in\End_{\cE_{\cI}}(V)\bigr).
\]
This action factors through $\overline A$. Indeed,
\[
e_jE_V(X)\simeq\Ext^1_{\cE_{\cI}}(V_j,X)=0
\qquad(j>r).
\]
Thus $E_V(X)$ is a finite-dimensional left
$\overline A$-module supported on the mutable vertices.

For $\mathbf v=(v_1,\ldots,v_r)\in\mathbb N^r$, define
\[
\Gr_{\mathbf v}(E_V(X))
=
\left\{
N\subseteq E_V(X):
\begin{array}{l}
	N\text{ is an }\overline A\text{-submodule},\\
	\dim_\kk e_jN=v_j\quad(1\leq j\leq r)
\end{array}
\right\}.
\]
Let $\chi$ denote the topological
Euler characteristic. The associated $F$-polynomial is
\[
F_V^X(y_1,\ldots,y_r)
=
\sum_{\mathbf v\in\mathbb N^r}
\chi\bigl(\Gr_{\mathbf v}(E_V(X))\bigr)
y^{\mathbf v},
\qquad
y^{\mathbf v}=\prod_{j=1}^r y_j^{v_j}.
\]

We have the following Proposition.

\begin{proposition}
	\label{prop:necklace-full-character}
	Identify
$	K_0^{\mathrm{sp}}(\add V)
	\simeq K_0(\proj A)
	\simeq \mathbb Z^{r+f}$
	by sending $[V_j]$ to $[Q_j]$ and then to the $j$-th
	standard basis vector. Let
	\[
	x=(x_1,\ldots,x_r,z_1,\ldots,z_f)
	\]
	be algebraically independent variables corresponding
	to the summands of $V$ in this order.
	
	For $X\in\cE_{\cI}$, choose a conflation
	\[
	0\longrightarrow V_1^X
	\longrightarrow V_0^X
	\longrightarrow X
	\longrightarrow0,
	\qquad
	V_0^X,V_1^X\in\add V,
	\]
	and put
$	\ind_VX=[V_0^X]-[V_1^X].$
\begin{itemize}
	\item 	The formula
	\[
	\begin{aligned}
		\CC_V(X)
		&=
		x^{\ind_VX}
		\sum_{\mathbf v\in\mathbb N^r}
		\chi\bigl(\Gr_{\mathbf v}(E_V(X))\bigr)
		x^{\bB_V\mathbf v}\\
		&=
		x^{\ind_VX}F_V^X(x^{\bB_V})
	\end{aligned}
	\]
	defines a Frobenius cluster character with values in
	$\kk[x_1^{\pm1},\ldots,x_r^{\pm1},
	z_1^{\pm1},\ldots,z_f^{\pm1}],$
	where
	$	x^{\bB_V}
	=
	(x^{\bB_Ve_1},\ldots,x^{\bB_Ve_r}).$
	
	Its values on the initial summands are
	\[
	\CC_V(V_j)=
	\begin{cases}
		x_j,&1\leq j\leq r,\\
		z_{j-r},&r<j\leq r+f.
	\end{cases}
	\]
	\item The construction of
	Section~\ref{sec:fsb-character-input} gives a localized
	extension $\CC_{V,\mathrm{loc}}$ to $\Db(\cE_{\cI})$.
	Explicitly, for $Y\in\Db(\cE_{\cI})$, if
	\[
	\kappa(Y)
	=
	[X]+\sum_{p=1}^f a_p[V_{r+p}],
	\qquad
	X\in\cE_{\cI},\quad a_p\in\mathbb Z,
	\]
	then
	\[
	\CC_{V,\mathrm{loc}}(Y)
	=
	\CC_V(X)\prod_{p=1}^f z_p^{a_p}.
	\]
	This expression is independent of the chosen
	representative $X$.
\end{itemize}

\end{proposition}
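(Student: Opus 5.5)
The plan is to reduce both parts to the general Frobenius-character framework of Section~\ref{sec:fsb-character-input}, applied to $(\cE_{\cI},\cP_{\cI},V)$. The first step is to check the standing hypotheses for this triple. By Theorem~\ref{prop:fsb-frobenius-realization}, $\cE_{\cI}$ is Krull--Schmidt and Frobenius with Hom-finite $2$-Calabi--Yau stable category. Proposition~\ref{prop:fsb-frobenius-mutations} supplies Assumption~\ref{ass:fsb-frobenius-cluster} along the reachable class. The transported realization $\Theta_{\cI}$ constructed just before the statement supplies Assumption~\ref{ass:fsb-QP-realization}. With these in hand, the index $\ind_VX$ is well defined and additive by the pullback argument following Definition~\ref{def:fsb-T-presentation}. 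The identifications $K_0^{\mathrm{sp}}(\add V)\simeq K_0(\proj A)$ come from the equivalence $H:\add V\simeq\proj A$. Since $E_V(X)\simeq\Hom_{\underline{\cE}_{\cI}}(\underline V,\Sigma\underline X)$ as $\overline A=\Gamma_V^{\mathrm{op}}$-modules, the displayed $F$-polynomial is exactly $F_V^{X}$ of Definition~\ref{def:fsb-Grassmannian-polynomial}. Hence the formula coincides with \eqref{eq:fsb-full-character}.

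Next, to show that $\CC_V$ is a cluster character, I would invoke the Fu--Keller theorem for Frobenius categories in the form of \cite[Appendix~A.2]{KW}. Its hypotheses are Hom-finiteness and the $2$-Calabi--Yau property of the stable category, and the fact that the endomorphism algebra of the cluster-tilting object has finite global dimension; the last is Lemma~\ref{lem:fsb-face-finite-gldim}. Two points need care. The first is multiplicativity on direct sums, which follows from additivity of the index and the direct-sum formula for Euler characteristics of quiver Grassmannians. The second is the exchange multiplication formula for $\dim\Ext^1=1$, which is the content of the cited theorem. The main obstacle is that $\cE_{\cI}$ is Hom-infinite. Fu--Keller's argument requires the ``$\ind$ plus $F$-polynomial'' expression to satisfy the multiplication formula. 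There the key identity is that the exponent $\ind_V$ combined with $x^{\bB_V\mathbf v}$ recovers the coindex, which uses that $\bB_V$ is computed from the Gabriel quiver of $A$ with its frozen rows. I would verify this identity directly via minimal projective resolutions of the simple $A$-modules $S_j$ ($j\le r$), using finite global dimension, so that the class of $S_j$ in $K_0(\proj A)$ equals the column $\bB_Ve_j$ up to the sign convention. The values on initial summands are then immediate. For $\CC_V(V_j)$, take the trivial presentation, so $\ind_VV_j=[V_j]$, and note $E_V(V_j)=0$ by rigidity, so $F=1$.

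For the second part, I would apply Proposition~\ref{prop:relative-split-grothendieck} and Lemma~\ref{lem:fsb-projective-correction} to $\Db(\cE_{\cI})$ and $K^b(\cP_{\cI})$. This expresses $\kappa(Y)=[X]+\sum_pa_p[V_{r+p}]$, and Proposition~\ref{prop:fsb-localized-character} gives the stated formula. For independence of the representative, I would repeat the argument after \eqref{eq:fsb-general-localized-character}. Two representatives $X,X'$ have equal classes modulo the projective span, so by Krull--Schmidt they share nonprojective parts. They therefore differ only in projective summands, which are compensated exactly by the $a_p$. Multiplicativity and $\CC_V(V_{r+p})=z_p$ then give equal values.
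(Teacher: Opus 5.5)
Your proposal is correct and follows essentially the route the paper intends. The paper gives no separate proof of this proposition: it obtains it by applying the Frobenius-character framework of Section~\ref{sec:fsb-character-input} to $(\cE_{\cI},V)$, with the hypotheses supplied by Theorem~\ref{prop:fsb-frobenius-realization}, Proposition~\ref{prop:fsb-frobenius-mutations}, Lemma~\ref{lem:fsb-face-finite-gldim} and the transported QP realization, and it gets the localized part from Lemma~\ref{lem:fsb-projective-correction}, Proposition~\ref{prop:fsb-localized-character} and the independence argument after \eqref{eq:fsb-general-localized-character}; one simplification is that the identity $[S_j]=-\bB_Ve_j$ in $K_0(\proj A)$ for mutable $j$ comes directly from splicing the two exchange conflations into the resolution $0\to Q_j\to H(E_{V,j}^-)\to H(E_{V,j}^+)\to Q_j\to S_j\to0$, so that step does not need finite global dimension.
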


\subsection{The connected case}
\label{sec:fsb-connected-proof}
Recall that
\[
\cS
=
\left\{
I_F^{\mathrm{tgt}}(G^\rho):
F\text{ is a face of }G^\rho
\right\}
\]
is the set of distinct target face labels.
The subset $\cI$ consists of the boundary labels, and
$V_{\mathrm{in}}=\cS\setminus\cI$ indexes the mutable
summands.
Admissibility gives $\cS\subseteq\mathcal M_\pi$.
By the restriction criterion
\eqref{eq:fsb-boundary-restriction}, each $J\in\cS$
therefore determines a rank-one $B$-lattice $M_J$.
Its corresponding summand in $T_{\cI}$ is
\[
N_J=\Hom_B(P_{\cI},M_J)\in\cE_{\cI}.
\]

To compare these objects with the standard source
categorification, we use the mutually quasi-inverse
equivalences of Lemma~\ref{lem:fsb-derived-order}:
\[
\begin{aligned}
	F_{\cI}
	&=\mathbf R\Hom_B(P_{\cI},-):
	\Db(\fgmod B)
	\xrightarrow{\sim}
	\Db(\fgmod B_{\cI}),\\
	L_{\cI}
	&=P_{\cI}\otimes_{B_{\cI}}^{\mathbf L}-:
	\Db(\fgmod B_{\cI})
	\xrightarrow{\sim}
	\Db(\fgmod B).
\end{aligned}
\]
We transport them to the derived categories of
\[
\cE_{\cI}=\GP B_{\cI},
\qquad
\cE_{\mathrm s}=\GP B.
\]
Both algebras are Iwanaga--Gorenstein, so their
Gorenstein-projective subcategories are resolving
and every finitely generated module admits a finite
Gorenstein-projective resolution.
The inclusions consequently induce triangle equivalences
\[
\begin{aligned}
	j_{\cI}&:\Db(\cE_{\cI})
	\xrightarrow{\sim}\Db(\fgmod B_{\cI}),\\
	j_{\mathrm s}&:\Db(\cE_{\mathrm s})
	\xrightarrow{\sim}\Db(\fgmod B).
\end{aligned}
\]
Choosing quasi-inverses, we obtain the mutually
quasi-inverse triangle equivalences
\begin{equation}
	\label{eq:fsb-main-functor}
	\begin{aligned}
		\mathsf F_{\cI}
		&=
		j_{\mathrm s}^{-1}\circ L_{\cI}\circ j_{\cI}:
		\Db(\cE_{\cI})
		\xrightarrow{\sim}
		\Db(\cE_{\mathrm s}),\\
		\mathsf F_{\cI}^{-1}
		&=
		j_{\cI}^{-1}\circ F_{\cI}\circ j_{\mathrm s}:
		\Db(\cE_{\mathrm s})
		\xrightarrow{\sim}
		\Db(\cE_{\cI}).
	\end{aligned}
\end{equation}
Thus the comparison from $\cE_{\cI}$ to the standard
source category is induced by derived tensor product.

For $J\in\cS$,
Lemma~\ref{lem:necklace-orthogonal-images} gives
\[
F_{\cI}(M_J)\simeq N_J.
\]
Together with the counit
$L_{\cI}F_{\cI}\simeq\mathrm{Id}$, this yields
\begin{equation}
	\label{eq:fsb-main-face-image}
	\mathsf F_{\cI}(N_J)
	\simeq
	j_{\mathrm s}^{-1}\bigl(L_{\cI}F_{\cI}(M_J)\bigr)
	\simeq
	j_{\mathrm s}^{-1}(M_J)
	\qquad(J\in\cS).
\end{equation}
Here $M_J$ is viewed as a complex concentrated in
degree zero in $\Db(\fgmod B)$, and
$j_{\mathrm s}^{-1}(M_J)$ is its realization in
$\Db(\cE_{\mathrm s})$.

\begin{theorem}[Connected case]
	\label{prop:necklace-connected-comparison}
	Let $G^\rho$ be an admissibly relabeled reduced plabic
	graph for a connected loopless positroid $\pi$.
	Then the identity on $R_\pi$ is a quasi-cluster
	isomorphism from the target-labeled cluster structure
	of $G^\rho$ to the standard source cluster structure.
	Consequently, it also gives a quasi-cluster isomorphism
	to the standard target cluster structure.
\end{theorem}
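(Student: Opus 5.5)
The plan is to apply Proposition~\ref{prop:fsb-relative-recognition} with $\cE=\cE_{\cI}$, $T=T_{\cI}$, $\cE'=\cE_{\mathrm s}$, and $F=\mathsf F_{\cI}$ from \eqref{eq:fsb-main-functor}.

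We first check its hypotheses. The functor $\mathsf F_{\cI}$ is a triangle equivalence. By Lemma~\ref{lem:fsb-derived-order} it carries $K^b(\proj B_{\cI})=\per B_{\cI}$ onto $\per B=K^b(\add B)$, so it preserves perfect objects and induces the stable equivalence $\overline L_{\cI}$. For the target cluster-tilting object we take the reachable $U_*$ of Proposition~\ref{prop:fsb-common-face-cluster}. Its nonprojective summands $L_J$ satisfy $qL_J\simeq qM_J$, and by \eqref{eq:fsb-main-face-image} we have $qM_J\simeq\overline{\mathsf F}_{\cI}(qN_J)$ for $J\in V_{\mathrm{in}}$. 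This gives the required ordering.

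The numbers of frozen summands agree. We have $s=|\cI|=f=n$, while $\cE_{\mathrm s}$ has $n$ projective-injective summands because $\add B^\vee$ is indexed by $\mathcal I^\pi$. Part~(1) then yields $G=\begin{pmatrix}I_r&0\\A&C_0\end{pmatrix}$ with $G\bB_{T_{\cI}}=\bB_{U_*}$ and $f_G(\CC_{T_{\cI},\mathrm{loc}}(X))=\CC_{U_*,\mathrm{loc}}(\mathsf F_{\cI}X)$. Proposition~\ref{prop:fsb-complete-ice-quiver} gives $\bB_{T_{\cI}}=\bB_{G^\rho}$. Proposition~\ref{prop:fsb-frobenius-mutations}, together with the transported QP realization verified above, supplies the standing assumptions for $\cE_{\cI}$.

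Next we identify the ring map with the identity. We define the initial seed of the $\cE_{\cI}$-model by $\eta_{\cI}(x_J)=\Delta_J$ for $J\in\cS$. Since the exchange matrices agree, $\eta_{\cI}$ is the cluster structure of $\Sigma^{\rm tgt}_{G^\rho}$ on $R_\pi$ by Theorem~\ref{thm:fsb-cluster-criterion}. Evaluating the naturality identity at $X=N_J$, and using \eqref{eq:fsb-main-face-image} and Lemma~\ref{lem:fsb-seed-covariance}, gives
\[
\eta_{\mathrm s}\mu^x\bigl(f_G(x_J)\bigr)=\Phi^{\mathrm s}(M_J)=\Delta_J
\]
by Theorem~\ref{thm:fsb-rank-one-normalization}. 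This holds for all $J\in\cS$, frozen labels included. Hence the composite $\eta_{\mathrm s}\mu^x f_G\eta_{\cI}^{-1}$ fixes every face coordinate. Since these coordinates form a transcendence basis and $R_\pi$ is recovered from Laurent rings, the map $\psi$ of Proposition~\ref{prop:fsb-relative-recognition}(2) is the identity of $R_\pi$, read in the coordinates of $U_*$. Concretely, $\psi(x_J)=u_J(z')^{a_J}$ expresses $\Delta_J=p_Jz_J$, and the frozen columns $C_0$ express $\Delta_I$ for $I\in\cI$ in the ordinary necklace coordinates.

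The main obstacle is unimodularity of $C_0$. Here we use the Unit Necklace Theorem~\ref{thm:fsb-unit-necklace}. The frozen column $C_0e_I$ is the exponent vector of $\Delta_I$ in the basis $\Delta(\mathcal I^\pi)$ of $\mathscr F_\pi$. This holds because $\psi(z_I)=\Delta_I$ and the frozen coordinates of $U_*$ are $\Phi^{\mathrm s}$ of the summands of $B$. By Theorem~\ref{thm:fsb-rank-one-normalization} these are the ordinary necklace Plücker coordinates up to the source/target identification, which we must check carefully for the frozen summands $eA_De_j$. Since $\{\Delta_I:I\in\cI\}$ is another $\ZZ$-basis of $\mathscr F_\pi$, $C_0$ is a change-of-basis matrix and lies in $\operatorname{GL}_n(\ZZ)$. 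If needed, this can alternatively be derived from the toggle triangles of Lemma~\ref{lem:fsb-toggle-sequence}, which give unimodular elementary steps on relative Grothendieck groups.

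Proposition~\ref{prop:fsb-relative-recognition}(2) then shows that the identity is a quasi-cluster isomorphism to the source structure. The final sentence of the statement follows by composing with Pressland's source–target quasi-cluster isomorphism \cite[Theorem~6.16]{Pressland}, since quasi-cluster isomorphisms compose.
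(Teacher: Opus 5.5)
Your proof follows the paper's architecture, with one genuinely different step: the proof that $C_0\in\operatorname{GL}_f(\ZZ)$. That step needs a correction. The shared parts are the functor $\mathsf F_{\cI}$, the reachable source object $U_*$ from Proposition~\ref{prop:fsb-common-face-cluster}, and Proposition~\ref{prop:fsb-relative-recognition}. You also, like the paper, identify $\eta_{\mathrm s}\mu^x f_G$ with $\eta_{\cI}$ on all face coordinates, frozen labels included, via Theorem~\ref{thm:fsb-rank-one-normalization}, and finish by composing with \cite[Theorem~6.16]{Pressland}.

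The paper proves unimodularity categorically. For $I\in\cI$ the object $\mathsf F_{\cI}N_I$ is perfect, and $\kappa_{\mathrm s}$ of a perfect complex is its Euler class. So $C_0$ is the matrix of $K_0(\mathsf F_{\cI})\colon K_0(K^b(\proj B_{\cI}))\to K_0(K^b(\proj B))$ in the bases of indecomposable projectives. Since $\mathsf F_{\cI}$ restricts to an equivalence of perfect categories (Lemma~\ref{lem:fsb-derived-order}), this map is an isomorphism. That gives $C_0$ unimodular and also $s=s'$, with no counting of summands.

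You instead read $C_0$ off from $\psi=\mathrm{id}$ as a change-of-basis matrix inside $\mathscr F_\pi$. That works, but you have misidentified the basis. The frozen coordinates of $U_*$ are those of $T^{\mathrm s}=eA_D$, namely $\Phi^{\mathrm s}(eA_De_j)=\Delta_{I_j^{\mathrm{src}}}$. These are the \emph{source} boundary labels, not $\Delta(\mathcal I^\pi)$. Similarly, the projective-injectives of $\cE_{\mathrm s}$ form $\add B$, not $\add B^\vee$, though the count $n$ is unaffected.

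So you need one extra input: $\{\Delta_{I_j^{\mathrm{src}}}\}$ is a $\ZZ$-basis of $\mathscr F_\pi$. This follows from Theorem~\ref{thm:fsb-unit-necklace} applied to the admissible cyclic-shift relabeling $\rho=\pi^{-1}\epsilon_k$, for which $\Sigma^{\rm tgt}_{G^\rho}=\Sigma^{\rm src}_H$ (Subsection~\ref{subsec:fsb-quasi-equivalence-conjecture}). With that correction, $C_0$ expresses one $\ZZ$-basis of $\mathscr F_\pi$ in terms of another and is therefore unimodular.

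There is also an ordering point. You establish unimodularity only after identifying $\psi$, so $f_G$ is not yet known to extend to $\mathbb F_{T_{\cI}}$. You should therefore compare $\eta_{\mathrm s}\mu^x f_G$ and $\eta_{\cI}$ on the Laurent ring $\kk[x_J^{\pm1}]$, which is enough because $R_{\cI,\mathrm{loc}}$ lies inside it.

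The trade-off is as follows. The paper's route is intrinsic to the derived equivalence and needs no Pl\"ucker input at this step. Yours gives $C_0$ a concrete meaning, the transition between the relabeled and source frozen Pl\"ucker bases. The cost is that it depends on the prior identification $\psi=\mathrm{id}$ and on the Unit Necklace Theorem for two different necklaces. Your fallback via toggle triangles is essentially a hand-made version of the paper's $K_0$ argument.
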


\begin{proof}
	By the tilting property of $P_{\cI}$ and
	Lemma~\ref{lem:fsb-derived-order}, $\mathsf F_{\cI}$ induces a triangle
	equivalence
	\[
	\overline{\mathsf F}_{\cI}:
	\underline{\cE}_{\cI}
	\xrightarrow{\sim}
	\underline{\cE}_{\mathrm s}.
	\]
	
	By Proposition~\ref{prop:fsb-common-face-cluster}
	and \eqref{eq:fsb-main-face-image}, we may choose
	a basic source cluster-tilting object
	\[
	U
	=
	\bigoplus_{J\in V_{\mathrm{in}}}U_J
	\oplus
	\bigoplus_{p=1}^fP'_p
	\]
	and a mutation sequence $\boldsymbol k$ such that
	\[
	U\simeq\mu_{\boldsymbol k}T^{\mathrm s},
	\qquad
	\underline U_J
	\simeq
	\overline{\mathsf F}_{\cI}(\underline N_J)
	\quad(J\in V_{\mathrm{in}}).
	\]
	Here $P'_1,\ldots,P'_f$ are all the indecomposable
	projective-injective objects of $\cE_{\mathrm s}$.
	
	Let $\kappa_{\mathrm s}$ be the relative-class map for
	$\cE_{\mathrm s}=\GP B$, defined by
	\[
	\kappa_{\mathrm s}(Y)
	=\phi_{\mathrm s}^{-1}([Y]_{\mathrm{rel}})
	\in K_0^{\mathrm{sp}}(\cE_{\mathrm s}),
	\qquad Y\in\Db(\cE_{\mathrm s}),
	\]
	where $\phi_{\mathrm s}$ is the isomorphism of
	Proposition~\ref{prop:relative-split-grothendieck}
	applied to $\cE_{\mathrm s}$. In particular,
	\[
	\kappa_{\mathrm s}(H)=[H]
	\quad(H\in\cE_{\mathrm s}),\qquad
	\kappa_{\mathrm s}(P^\bullet)
	=\sum_i(-1)^i[P^i]
	\quad(P^\bullet\in K^b(\proj B)).
	\]
	By Lemma~\ref{lem:fsb-projective-correction}, write
	\[
	\begin{aligned}
		\kappa_{\mathrm s}(\mathsf F_{\cI}N_J)
		&=
		[U_J]+\sum_{p=1}^f a_{pJ}[P'_p]
		&& (J\in V_{\mathrm{in}}),\\
		\kappa_{\mathrm s}(\mathsf F_{\cI}N_I)
		&=
		\sum_{p=1}^f c_{pI}[P'_p]
		&& (I\in\cI).
	\end{aligned}
	\]
	Set
	\[
	A=(a_{pJ}),
	\qquad
	C_0=(c_{pI}),
	\qquad
	G=
	\begin{pmatrix}
		I_r&0\\
		A&C_0
	\end{pmatrix}.
	\]
	The matrix $C_0$ represents the isomorphism
	\[
	K_0\bigl(K^b(\proj B_{\cI})\bigr)
	\xrightarrow[\sim]{\,K_0(\mathsf F_{\cI})\,}
	K_0\bigl(K^b(\proj B)\bigr)
	\]
	in the bases $\{[N_I]:I\in\cI\}$ and
	$\{[P'_p]:1\leq p\leq f\}$.
	Hence
	\[
	C_0\in\operatorname{GL}_f(\mathbb Z),
	\qquad
	G\in\operatorname{GL}_{r+f}(\mathbb Z).
	\]
	
	By Propositions~\ref{prop:fsb-complete-ice-quiver}
	and~\ref{prop:fsb-frobenius-mutations}, the
	initial extended exchange matrix of $T_{\cI}$ is
	$\widetilde B_{G^\rho}$. The by
	Proposition~\ref{prop:fsb-relative-recognition}, we have
	\begin{equation}
		\label{eq:fsb-main-matrix}
		\begin{gathered}
			G\widetilde B_{G^\rho}=\widetilde B_U,\\
			I_U(\mathsf F_{\cI}X)
			=
			G I_{T_{\cI}}(X)
			\qquad(X\in\Db(\cE_{\cI})).
		\end{gathered}
	\end{equation}
	Write $x=(x_J)_{J\in\cS}$ and $u$ for the ordered
	extended clusters at $T_{\cI}$ and $U$, respectively.
	Since $G$ is unimodular, the Laurent substitution
	extends to a field isomorphism
	\[
	f_G:\mathbb F_{T_{\cI}}
	\xrightarrow{\sim}\mathbb F_U,
	\qquad
	x^v\longmapsto u^{Gv}.
	\]
	The same proposition gives
	\begin{equation}
		\label{eq:fsb-connected-character-naturality}
		f_G\bigl(\CC_{T_{\cI},\mathrm{loc}}(X)\bigr)
		=
		\CC_{U,\mathrm{loc}}(\mathsf F_{\cI}X)
		\qquad(X\in\Db(\cE_{\cI})).
	\end{equation}

	By admissibility and \cite[Theorem~4.21]{FSB},
	the seed $\Sigma_{G^\rho}^{\mathrm{tgt}}$ defines
	a cluster structure on $R_\pi$.
	Together with the preceding identification of the
	initial seed of $T_{\cI}$, this yields an isomorphism
	\[
	\eta_{\cI}:
	R_{\cI,\mathrm{loc}}\xrightarrow{\sim}R_\pi,
	\qquad
	x_J\longmapsto\Delta_J
	\quad(J\in\cS).
	\]
	Let
	$\eta_{\mathrm s}:
	R_{\mathrm s,\mathrm{loc}}\xrightarrow{\sim}R_\pi$
	be the standard source cluster-structure isomorphism.
	We use the same notation for their extensions to
	fraction fields.
	
	Let
	\[
	\mu_{\boldsymbol k}^x:
	\mathbb F_U\xrightarrow{\sim}\mathbb F_{T^{\mathrm s}}
	\]
	be the change of coordinates along the chosen mutation
	sequence, and put
	\[
	\theta
	=
	\eta_{\mathrm s}\circ\mu_{\boldsymbol k}^x\circ f_G:
	\mathbb F_{T_{\cI}}
	\xrightarrow{\sim}\operatorname{Frac}(R_\pi).
	\]
	For every $J\in\cS$, by
	\eqref{eq:fsb-connected-character-naturality},
	and Lemma~\ref{lem:fsb-seed-covariance}, we have
	\begin{equation}
		\label{eq:fsb-main-geometric-check}
		\begin{aligned}
			\theta(x_J)
			&=
			\eta_{\mathrm s}\!\left(
			\mu_{\boldsymbol k}^x
			\bigl(\CC_{U,\mathrm{loc}}
			(\mathsf F_{\cI}N_J)\bigr)
			\right)\\
			&=
			\Phi^{\mathrm s}
			\bigl(j_{\mathrm s}\mathsf F_{\cI}N_J\bigr)\\
			&=
			\Phi^{\mathrm s}(M_J)\\
			&=
			\Delta_J
			=
			\eta_{\cI}(x_J).
		\end{aligned}
	\end{equation}
	The third equality uses
	\eqref{eq:fsb-main-face-image}, and the fourth uses
	Theorem~\ref{thm:fsb-rank-one-normalization}.
	This calculation includes all boundary labels.
	
	Since the coordinates $x_J$, $J\in\cS$, generate
	$\mathbb F_{T_{\cI}}$, we obtain
	\[
	\theta=\eta_{\cI}.
	\]
	It follows that the ring isomorphism
	$\psi
	:=
	\eta_{\mathrm s}^{-1}\circ\eta_{\cI}:
	R_{\cI,\mathrm{loc}}
	\xrightarrow{\sim}
	R_{\mathrm s,\mathrm{loc}}$
	is the restriction of
	\[
	\mu_{\boldsymbol k}^x\circ f_G.
	\]
	Thus $\psi$ satisfies the initial-coordinate condition
	of Proposition~\ref{prop:fsb-relative-recognition}.
	Together with
	\[
	U\simeq\mu_{\boldsymbol k}T^{\mathrm s},
	\qquad
	C_0\in\operatorname{GL}_f(\mathbb Z),
	\qquad
	G\widetilde B_{G^\rho}=\widetilde B_U,
	\]
	it shows that $\psi$ is a quasi-cluster
	isomorphism.
	We now identify both abstract cluster algebras with
	$R_\pi$ via
	\[
	\eta_{\cI}:R_{\cI,\mathrm{loc}}\xrightarrow{\sim}R_\pi,
	\qquad
	\eta_{\mathrm s}:R_{\mathrm s,\mathrm{loc}}
	\xrightarrow{\sim}R_\pi.
	\]
	Under these identifications, the quasi-cluster
	isomorphism $\psi$ becomes the map
	\[
	R_\pi
	\xrightarrow{\eta_{\cI}^{-1}}
	R_{\cI,\mathrm{loc}}
	\xrightarrow{\psi}
	R_{\mathrm s,\mathrm{loc}}
	\xrightarrow{\eta_{\mathrm s}}
	R_\pi.
	\]
	Since $\psi=\eta_{\mathrm s}^{-1}\circ\eta_{\cI}$,
	for every $h\in R_\pi$ we have
	\[
	\eta_{\mathrm s}\bigl(
	\psi(\eta_{\cI}^{-1}(h))\bigr)
	=
	\eta_{\mathrm s}\bigl(
	\eta_{\mathrm s}^{-1}(
	\eta_{\cI}(\eta_{\cI}^{-1}(h)))\bigr)
	=h.
	\]
	Thus
	\[
	\eta_{\mathrm s}\circ\psi\circ\eta_{\cI}^{-1}
	=\mathrm{id}_{R_\pi}.
	\]
	Consequently, the identity on $R_\pi$ is a
	quasi-cluster isomorphism from the relabeled
	target cluster structure to the standard
	source cluster structure.
	
	Finally, the identity on $R_\pi$ is also a
	quasi-cluster isomorphism between the standard source
	and target structures by
	\cite[Theorem~6.16]{Pressland}.
	Composing the two comparisons proves the assertion
	for the standard target structure.
\end{proof}

\subsection{The general case}
\label{sec:fsb-disconnected}

We extend the connected result to arbitrary loopless positroids
by combining the comparisons for their connected components
at the level of coordinate rings.

For the combinatorial toggle paths, we use the definition of
aligned chords in \cite[Definition~4.5]{FSB}, including chords
corresponding to fixed points. Explicitly, for distinct
$a,c,d\in[n]$, the loop chord $a\to a$ and the chord $c\to d$
are aligned if and only if
\[
a<_a c<_a d,
\]
where $<_a$ denotes the cyclic order beginning at $a$.
Two distinct loop chords are not aligned.

This convention extends the combinatorial notion of an aligned
toggle to the disconnected setting. The short exact sequence
\eqref{eq:fsb-toggle-sequence} is used only for nontrivial aligned
toggles in the connected case, where the four endpoints are distinct.

\begin{definition}[Components and cyclic restrictions]
	\label{def:necklace-components}
	A matroid is \emph{connected} if it cannot be written
	as a direct sum on two nonempty disjoint ground sets.
	Write the unique decomposition of $\mathcal M_\pi$
	into connected components as
	\[
	\mathcal M_\pi
	=\bigoplus_{j=1}^c\mathcal M_j,
	\qquad
	\mathcal M_j=\mathcal M_\pi|_{S_j},
	\qquad
	[n]=S_1\sqcup\cdots\sqcup S_c,
	\]
	and put
	\[
	n_j=|S_j|,
	\qquad
	k_j=\operatorname{rank}\mathcal M_j.
	\]
	Each $S_j$ carries the cyclic order inherited from $[n]$.
\end{definition}
	By \cite[Theorem~7.6 and Corollary~7.9]{ARW},
	the component partition is noncrossing: no two distinct
	blocks contain alternating pairs of cyclically ordered
	elements. Moreover, it is the finest noncrossing partition
	whose blocks are individually preserved by $\pi$.
	We denote it by
	\[
	\operatorname{NCInv}(\pi)=\{S_1,\ldots,S_c\}.
	\]

	For a nonempty subset $S\subseteq[n]$, a
	\emph{cyclic enumeration} is a cyclic-order-preserving
	bijection
	\[
	\beta:[|S|]=\{1,2,\ldots,|S|\}\xrightarrow{\sim}S.
	\]
	If $\sigma(S)=S$, its \emph{standardized restriction}
	with respect to $\beta$ is
	\[
	\beta^{-1}\circ(\sigma|_S)\circ\beta,
	\]
	with fixed-point decorations transported along $\beta$
	when $\sigma$ is decorated.
	Changing the first element of the enumeration conjugates
	this restriction by a cyclic shift. Thus the standardized
	positroid changes only by cyclic relabeling; the positroid
	on $S$ is unchanged.

Let $\mathbf J=(J_1,\ldots,J_n)$ have removal permutation
$\rho$ and insertion permutation $\pi\rho$.
For a nonempty $\pi$-invariant subset $S\subseteq[n]$,
put $m=|S|$ and list the positions
\[
\{a\in[n]:\rho(a)\in S\}
=\{a_1,\ldots,a_m\}
\]
in cyclic order. Define the \emph{projection to $S$} by
\[
\operatorname{pr}_S\mathbf J=(K_1,\ldots,K_m),
\qquad K_\ell=J_{a_\ell}\cap S.
\]
Thus $K_\ell$ is the intersection with $S$ immediately
before the removal at position $a_\ell$.

Since $\pi(S)=S$, every omitted step leaves the intersection
with $S$ unchanged. Consequently,
\[
K_{\ell+1}
=
\bigl(K_\ell\setminus\{\rho(a_\ell)\}\bigr)
\cup\{\pi\rho(a_\ell)\},
\qquad K_{m+1}=K_1.
\]
The projected list retains all repetitions and is defined
up to cyclic rotation.

If $\mathbf J$ is a unit necklace for $\pi$ and $S=S_j$
is a component, then each $J_{a_\ell}$ is a basis of
$\mathcal M_\pi$. Hence $K_\ell$ is a basis of
$\mathcal M_\pi|_{S_j}$, so
\[
|K_\ell|=k_j
\qquad(1\leq\ell\leq n_j).
\]

\begin{example}[Projection to connected components]
	\label{ex:necklace-component-projection}
	We abbreviate subsets by concatenating their elements.
	Consider
	\[
	\pi=(1\,3)(2\,4)(5\,6).
	\]
	The corresponding positroid has bases
	\[
	\mathcal M_\pi
	=
	\left\{
	A\cup\{b\}:
	A\in\binom{\{1,2,3,4\}}{2},\
	b\in\{5,6\}
	\right\}.
	\]
	Its connected components have ground sets and ranks
	\[
	S_1=\{1,2,3,4\},\qquad k_1=2,
	\qquad
	S_2=\{5,6\},\qquad k_2=1.
	\]
	The ordinary forward Grassmann necklace is
	\[
	\mathbf I^\pi=(125,235,345,145,125,126).
	\]
	
	Now consider the Grassmannlike necklace
	\[
	\mathbf J=(125,235,125,126,125,145)
	\]
	with removal and insertion data
	\[
	\begin{array}{c|cccccc}
		a             &1&2&3&4&5&6\\ \hline
		\rho(a)       &1&3&5&6&2&4\\
		\pi\rho(a)    &3&1&6&5&4&2\\
		J_a           &125&235&125&126&125&145
	\end{array}
	\]
	Indeed, these satisfy
	\[
	J_{a+1}
	=
	\bigl(J_a\setminus\{\rho(a)\}\bigr)
	\cup\{\pi\rho(a)\},
	\qquad J_7=J_1.
	\]
	Every term of $\mathbf J$ occurs in $\mathbf I^\pi$.
	Hence each $\Delta_{J_a}$ is an ordinary target frozen
	variable, so $\mathbf J$ is a unit necklace for $\pi$.
	
	For $S_1$, the retained positions are $1,2,5,6$,
	since these are precisely the positions with
	$\rho(a)\in S_1$. Therefore
	\[
	\begin{aligned}
		\operatorname{pr}_{S_1}\mathbf J
		&=(J_1\cap S_1,J_2\cap S_1,
		J_5\cap S_1,J_6\cap S_1)\\
		&=(12,23,12,14).
	\end{aligned}
	\]
	Its successive removals and insertions are
	\[
	12\xrightarrow{1\mapsto3}23
	\xrightarrow{3\mapsto1}12
	\xrightarrow{2\mapsto4}14
	\xrightarrow{4\mapsto2}12.
	\]
	In particular, the two occurrences of $12$ are retained:
	they precede different removal steps.
	
	For $S_2$, the retained positions are $3,4$, giving
	\[
	\operatorname{pr}_{S_2}\mathbf J
	=(J_3\cap S_2,J_4\cap S_2)
	=(\{5\},\{6\}),
	\]
	with transitions
	\[
	\{5\}\xrightarrow{5\mapsto6}\{6\}
	\xrightarrow{6\mapsto5}\{5\}.
	\]
	Thus the two projections have $|S_1|=4$ and
	$|S_2|=2$ positions, and their terms have sizes
	$k_1=2$ and $k_2=1$, respectively.
\end{example}

\begin{lemma}[Boundary count]
	\label{lem:fsb-boundary-count}
	Let \(\mathcal M_\pi\) be a loopless positroid on
	the ground set \([N]=\{1,\ldots,N\}\), where \(N\geq1\).
	Let \(d\) be its number of connected components, and let
	\[
	\mathbf I^\pi=(I_1^\pi,\ldots,I_N^\pi)
	\]
	be its forward Grassmann necklace. Then
	\[
	\bigl|\{I_a^\pi:a\in[N]\}\bigr|=N-d+1.
	\]
	The Pl\"ucker coordinates corresponding to these distinct
	labels freely generate the frozen monomial group
	\(\mathscr F_\pi\). In particular,
	\[
	\mathscr F_\pi\simeq\mathbb Z^{N-d+1}.
	\]
\end{lemma}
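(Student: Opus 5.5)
The count will be obtained from the removal--insertion recurrence \eqref{eq:fsb-forward-necklace-recursion}, and the generation statement by reducing to the connected case. Since $\pi$ is loopless, we have $I_{a+1}^\pi=I_a^\pi$ exactly when $\pi(a)=a$, i.e.\ at a white fixed point. Consequently, the number of distinct consecutive transitions is controlled by the non-fixed points. However, we must count distinct labels globally, not only consecutive repetitions. I would therefore proceed component by component.

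\textbf{Step 1: labels are determined by their component projections.} Write $[N]=S_1\sqcup\cdots\sqcup S_d$ for the noncrossing component partition $\operatorname{NCInv}(\pi)$. By the projection discussion preceding the lemma, each $I_a^\pi\cap S_j$ is a basis of $\mathcal M_j$. Moreover, $I_a^\pi\cap S_j$ equals the necklace term of the standardized restriction to $S_j$ at the first position $a'\in S_j$ with $a'\geq_a a$ cyclically. This follows because steps outside $S_j$ leave the intersection with $S_j$ unchanged. Hence $I_a^\pi$ is determined by the tuple $(\iota_j(a))_j$, where $\iota_j(a)$ is the next element of $S_j$ cyclically at or after $a$.

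\textbf{Step 2: the count.} A connected positroid on $n_j\geq2$ elements is fixed-point free. Its necklace has $n_j$ pairwise distinct terms: consecutive terms differ, and a repeat $I_a=I_b$ with $a\neq b$ would force the interval $[a,b)$ to be $\pi$-invariant as a set, contradicting connectedness. A singleton component (a white fixed point) contributes a single term. As $a$ runs once around the circle, the tuple $(\iota_j(a))_j$ changes exactly at those $a$ that are the successor of the previous element of a block. Since the partition is noncrossing, the sequence $a\mapsto(\iota_j(a))_j$ takes $N-d+1$ distinct values. To see this, induct on $d$, collapsing an innermost block, which is a cyclic interval by noncrossingness. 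A block that is an interval of length $m$ contributes $m$ new labels and destroys one old one, namely the label at the position where the interval was inserted. Thus the count $N-d+1$ propagates, with base case $d=1$ giving $N$. The main obstacle I expect is making this collapsing induction precise, in particular showing that the labels produced inside the collapsed interval are new and pairwise distinct. For this I would use Step 1 together with distinctness in the connected case.

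\textbf{Step 3: free generation.} The distinct $\Delta_I$ generate $\mathscr F_\pi$ by definition, so only independence remains. Here I would use the ordinary target seed of any reduced plabic graph $G$ with trip permutation $\pi$. By \eqref{eq:fsb-boundary-forward-necklace}, its boundary face labels are exactly the necklace terms. Distinct boundary faces carrying distinct labels are distinct extended-cluster coordinates, and these are algebraically independent by Definition~\ref{def:fsb-seed}. Hence the distinct necklace coordinates are multiplicatively independent in $R_\pi^\times$, and $\mathscr F_\pi\simeq\mathbb Z^{N-d+1}$.
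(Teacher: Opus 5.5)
Your proposal is correct and follows essentially the same route as the paper. Distinctness in the connected case comes from the $\pi$-invariant interval $[a,b)$, the disconnected case is an induction that splits off a component whose ground set is a cyclic interval, and free generation comes from algebraic independence of the frozen variables of an ordinary target seed; your tuple $(\iota_j(a))_j$ just repackages the paper's direct-sum formula $I_a^\pi=J_a\cup K_{m+1}$ or $J_1\cup K_a$. The only thing to tidy is the bookkeeping in the collapsing step: the $m$ positions of the interval give $m$ pairwise distinct labels, and exactly one of them (at the first element of the interval) coincides with the label at the position just after the interval, so the count is $b(\mathcal M_S)+b(\mathcal M_T)-1$ rather than ``$m$ new labels and one destroyed''.
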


\begin{proof}
	For a positroid \(\mathcal M\), denote by \(b(\mathcal M)\)
	the number of distinct terms in its forward Grassmann
	necklace. We prove the counting formula by induction
	on the number of connected components.
	
	First suppose that \(\mathcal M_\pi\) is connected.
	Let \(e_1,\ldots,e_N\) be the standard basis vectors of
	\(\mathbb Z^N\), and write
	\[
	\mathbf 1_J=\sum_{j\in J}e_j
	\qquad(J\subseteq[N]).
	\]
	The necklace recurrence gives
	\[
	\mathbf 1_{I_{a+1}^\pi}
	-\mathbf 1_{I_a^\pi}
	=e_{\pi(a)}-e_a.
	\]
	Suppose that \(I_a^\pi=I_b^\pi\) for some
	\(1\leq a<b\leq N\), and put
	\[
	E=\{a,a+1,\ldots,b-1\}.
	\]
	Summing the recurrence yields
	\[
	\begin{aligned}
		0
		&=\mathbf 1_{I_b^\pi}-\mathbf 1_{I_a^\pi}\\
		&=\sum_{i\in E}(e_{\pi(i)}-e_i)\\
		&=\mathbf 1_{\pi(E)}-\mathbf 1_E.
	\end{aligned}
	\]
	Hence \(\pi(E)=E\).
	The sets \(E\) and \([N]\setminus E\) are nonempty
	cyclic intervals, so they form a nontrivial
	\(\pi\)-invariant noncrossing partition.
	By \cite[Corollary~7.9]{ARW}, the component partition
	refines this partition, contradicting connectedness.
	Thus all necklace terms are distinct, and
	\[
	b(\mathcal M_\pi)=N.
	\]
	
	Now suppose that \(d>1\).
	By \cite[Theorem~7.6]{ARW}, the component ground sets
	form a noncrossing partition.
	Every noncrossing partition has a block that is a
	cyclic interval. After a cyclic relabeling, we may
	therefore choose a component with ground set
	\[
	S=\{1,\ldots,m\},
	\qquad
	T=\{m+1,\ldots,N\}.
	\]
	Write
	\[
	\mathcal M_\pi
	=\mathcal M_S\oplus\mathcal M_T,
	\qquad
	\mathcal M_S=\mathcal M_\pi|_S,
	\qquad
	\mathcal M_T=\mathcal M_\pi|_T.
	\]
	Here \(\mathcal M_S\) is connected, while
	\(\mathcal M_T\) has \(d-1\) connected components.
	
	Let
	\[
	(J_1,\ldots,J_m),
	\qquad
	(K_{m+1},\ldots,K_N)
	\]
	be the forward Grassmann necklaces of these two
	restrictions, with their inherited cyclic orders.
	A lexicographically minimal basis of a direct sum
	is the union of the corresponding minimal bases of
	its summands. Consequently,
	\[
	I_a^\pi=
	\begin{cases}
		J_a\cup K_{m+1},&1\leq a\leq m,\\
		J_1\cup K_a,&m+1\leq a\leq N.
	\end{cases}
	\]
	Put
	\[
	\begin{aligned}
		\mathcal B_S
		&=\{J_a\cup K_{m+1}:1\leq a\leq m\},\\
		\mathcal B_T
		&=\{J_1\cup K_a:m+1\leq a\leq N\}.
	\end{aligned}
	\]
	Since \(S\) and \(T\) are disjoint, we have
	\[
	|\mathcal B_S|=b(\mathcal M_S),
	\qquad
	|\mathcal B_T|=b(\mathcal M_T).
	\]
	Moreover,
$	\mathcal B_S\cap\mathcal B_T
	=\{J_1\cup K_{m+1}\}.$ Indeed, equality between labels from the two families
	forces equality of their intersections with both
	\(S\) and \(T\).
	
	The connected case and the induction hypothesis now give
	\[
	\begin{aligned}
		b(\mathcal M_\pi)
		&=|\mathcal B_S\cup\mathcal B_T|\\
		&=b(\mathcal M_S)+b(\mathcal M_T)-1\\
		&=m+\bigl((N-m)-(d-1)+1\bigr)-1\\
		&=N-d+1.
	\end{aligned}
	\]
	The same argument applies when the chosen component
	consists of a single element.
	
	Finally, let
	$\mathcal B_\pi=\{I_a^\pi:a\in[N]\}.$
	The coordinates \(\Delta_J\), for \(J\in\mathcal B_\pi\),
	are the frozen variables of an ordinary target seed.
	They are therefore algebraically independent;
	see \cite[Definition~2.13 and Theorem~4.21]{FSB}.
	If
	\[
	\prod_{J\in\mathcal B_\pi}\Delta_J^{a_J}=1,
	\qquad a_J\in\mathbb Z,
	\]
	then clearing negative exponents gives
	$\prod_{\substack{J\in\mathcal B_\pi\\a_J>0}}
	\Delta_J^{a_J}
	=
	\prod_{\substack{J\in\mathcal B_\pi\\a_J<0}}
	\Delta_J^{-a_J}.$
	
	By algebraic independence, we have \(a_J=0\) for every
	\(J\in\mathcal B_\pi\). Thus these coordinates freely
	generate \(\mathscr F_\pi\), and
	\[
	\mathscr F_\pi
	\simeq\mathbb Z^{|\mathcal B_\pi|}
	=\mathbb Z^{N-d+1}.
	\]
\end{proof}

For the component positroid on \(S_j\), let
\(\mathcal B_j\) be the set of distinct labels in its
forward Grassmann necklace, and put
$R_j=\kk[\widehat\Pi_j^\circ].$

The usual Pl\"ucker grading extends to this ring by
assigning degree \(1\) to every Pl\"ucker coordinate
and degree \(-1\) to each inverted boundary coordinate.
We write
$R_j^0=(R_j)_0$
for its degree-zero subring. Define the local coefficient group and its degree-zero
subgroup by
\[
\begin{aligned}
	\mathbb P_j
	&=
	\left\{
	\prod_{K\in\mathcal B_j}\Delta_K^{a_K}:
	a_K\in\mathbb Z
	\right\}
	\subseteq R_j^\times,\\
	\mathbb P_j^0
	&=\ker\bigl(\deg:\mathbb P_j\longrightarrow\mathbb Z\bigr).
\end{aligned}
\]
In particular, we have
$\deg\left(\prod_{K\in\mathcal B_j}\Delta_K^{a_K}\right)
=\sum_{K\in\mathcal B_j}a_K.$

Choose an ordinary global target frozen coordinate
\(p=\Delta_L\), and set
\[
p_j=\Delta_{L\cap S_j}
\qquad(1\leq j\leq c).
\]
The restriction \(L\cap S_j\) is a forward necklace
label of the component positroid. Hence
\[
p_j\in\mathbb P_j\subseteq R_j^\times,
\qquad \deg p_j=1.
\]

\begin{definition}[Diagonal graded product]
	\label{def:necklace-segre}
	Let \(R_1,\ldots,R_c\) be the graded
	\(\kk\)-algebras defined above.
	Their \emph{diagonal graded product} is
	\[
	R_1\mathbin{\#}\cdots\mathbin{\#}R_c
	=
	\bigoplus_{d\in\mathbb Z}
	\left(
	(R_1)_d\otimes_\kk\cdots\otimes_\kk(R_c)_d
	\right)
	\subseteq
	R_1\otimes_\kk\cdots\otimes_\kk R_c,
	\]
	with multiplication induced by the tensor product algebra.
	Its degree-\(d\) component consists of sums of tensors
	\(f_1\otimes\cdots\otimes f_c\) with
	\(\deg f_j=d\) for every \(j\).
	Thus the grading records the common degree of the factors.
	In the present setting, this is the
	\emph{localized homogeneous Segre product}.
\end{definition}

\begin{lemma}
	\label{lem:fsb-component-coefficients}
	Let
	$\mathcal M_\pi
	=\mathcal M_1\oplus\cdots\oplus\mathcal M_c$
	be the connected-component decomposition of a loopless
	positroid on \([n]=\{1,\ldots,n\}\).
	Write \(S_j\) for the ground set of \(\mathcal M_j\), and put
	\[
	n_j=|S_j|,
	\qquad
	k_j=\operatorname{rank}\mathcal M_j,
	\qquad
	R_j=\kk[\widehat\Pi_j^\circ].
	\]
	Use the linear orders on the sets \(S_j\) inherited from
	\(1<\cdots<n\). Choose an ordinary global target frozen coordinate
	\(p=\Delta_L\), and put
	\[
	p_j=\Delta_{L\cap S_j}.
	\]
	Each \(p_j\) is an ordinary local frozen coordinate
	and is an invertible element of degree one.
	Then the following statements hold.
	
	\begin{enumerate}
		\item
		There exist fixed signs
		\(\varepsilon_1,\ldots,\varepsilon_n\in\{1,-1\}\),
		depending only on the ordered component decomposition
		and the ranks \(k_j\), with the following property.
		\begin{itemize}
			\item For matrices \(M_j\) representing points of the component
			positroid cones, let \(\widetilde M_j\) be obtained from
			\(M_j\) by inserting zero columns outside \(S_j\), and set
			\[
			M=
			\begin{pmatrix}
				\widetilde M_1\\
				\vdots\\
				\widetilde M_c
			\end{pmatrix},
			\qquad
			D=\operatorname{diag}(\varepsilon_1,\ldots,\varepsilon_n).
			\]
			Then
			\[
			\Delta_J(MD)
			=
			\prod_{j=1}^c\Delta_{J\cap S_j}(M_j)
			\qquad(J\in\mathcal M_\pi).
			\]
			Thus the fixed column changes \(M\mapsto MD\)
			remove the determinant signs in the direct-sum formula
			simultaneously for all \(J\).
			\item  The resulting pullback induces a graded isomorphism
			\[
			\partial:
			R_\pi\xrightarrow{\sim}
			R_1\mathbin{\#}\cdots\mathbin{\#}R_c
			\subseteq
			R_1\otimes_\kk\cdots\otimes_\kk R_c,
			\]
			given by
			\[
			\partial(\Delta_J)
			=
			\bigotimes_{j=1}^c\Delta_{J\cap S_j}
			\qquad(J\in\mathcal M_\pi).
			\]
			In particular,
			\[
			\partial(p)=p_1\otimes\cdots\otimes p_c.
			\]
		\end{itemize} 
		
		\item
		For \(f\in R_j^0=(R_j)_0\), the tensor
		\(1\otimes\cdots\otimes f\otimes\cdots\otimes1\)
		has degree zero in every factor and therefore belongs
		to the diagonal graded product.
		Hence there is an embedding
		\[
		\iota_j:R_j^0\lhook\joinrel\longrightarrow R_\pi,
		\qquad
		\iota_j(f)
		=
		\partial^{-1}
		(1\otimes\cdots\otimes f\otimes\cdots\otimes1).
		\]
		These embeddings identify the global degree-zero subring
		and the full homogeneous ring as
		\[
		(R_\pi)_0
		\simeq
		\bigotimes_{j=1}^cR_j^0,
		\]
		\begin{equation}
			\label{eq:component-ring-normalisation}
			R_\pi
			\simeq
			\left(\bigotimes_{j=1}^cR_j^0\right)[p^{\pm1}].
		\end{equation}
		Here a tensor \(f_1\otimes\cdots\otimes f_c\)
		is sent to \(\prod_j\iota_j(f_j)\), and the Laurent
		generator is sent to the chosen coordinate \(p\).
		
		\item
		Let \(\mathbb P_j\) be the local frozen monomial group,
		and let
		\[
		\mathbb P_j^0
		=
		\ker\bigl(\deg:\mathbb P_j\longrightarrow\mathbb Z\bigr).
		\]
		Then \(\iota_j(\mathbb P_j^0)\subseteq\mathbb P_\pi\),
		where \(\mathbb P_\pi=\mathscr F_\pi\).
		Moreover, multiplication induces an isomorphism
		\[
		\begin{aligned}
			\left(\bigoplus_{j=1}^c\mathbb P_j^0\right)
			\oplus\mathbb Z[p]
			&\xrightarrow{\sim}\mathbb P_\pi,\\
			((f_j)_{j=1}^c,d[p])
			&\longmapsto
			p^d\prod_{j=1}^c\iota_j(f_j).
		\end{aligned}
		\]
		Here \(\mathbb Z[p]\) is the infinite cyclic exponent
		lattice generated by \(p\).
		Equivalently, every global frozen Laurent monomial has
		a unique expression
		\[
		p^d\prod_{j=1}^c\iota_j(f_j),
		\qquad d\in\mathbb Z,\quad f_j\in\mathbb P_j^0.
		\]
		Consequently,
		\begin{equation}
			\label{eq:fsb-component-frozen-lattice}
			\mathbb P_\pi
			\simeq
			\left(\bigoplus_{j=1}^c\mathbb P_j^0\right)
			\oplus\mathbb Z[p],
			\qquad
			\operatorname{rank}\mathbb P_\pi
			=\sum_{j=1}^c(n_j-1)+1
			=n-c+1.
		\end{equation}
	\end{enumerate}
\end{lemma}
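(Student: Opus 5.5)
We prove the three parts in order; part (1) carries the geometric content, and (2) and (3) are bookkeeping in the grading.

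For (1), fix the ordered decomposition $[n]=S_1\sqcup\cdots\sqcup S_c$. For $J\in\mathcal M_\pi$ the direct-sum structure forces $|J\cap S_j|=k_j$. The block matrix $M$ has rows grouped by components, so Laplace expansion along the row blocks gives
\[
\Delta_J(M)=\operatorname{sgn}(J)\prod_j\Delta_{J\cap S_j}(M_j),
\]
where $\operatorname{sgn}(J)$ is the sign of the shuffle that sorts the columns of $J$ into component blocks. This sign is $(-1)^{N(J)}$, where $N(J)$ is the number of pairs $a<b$ in $J$ with $a\in S_j$, $b\in S_i$ and $i<j$.

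We want column signs $\varepsilon$ with $\prod_{a\in J}\varepsilon_a=(-1)^{N(J)}$ for every $J$ in the positroid. The natural choice is
\[
\varepsilon_a=(-1)^{\#\{i<j:\ \text{rank contributions}\}},
\]
more precisely $\varepsilon_b=(-1)^{\sum_{j>i}\#\{a\in J\cap S_j:a<b\}}$ for $b\in S_i$. This still depends on $J$, so it must be removed. \emph{This is the main obstacle.} The plan is to use noncrossingness of the partition (\cite[Theorem~7.6]{ARW}). For each pair of components, one block lies inside a single cyclic gap of the other. After the linear order is cut at $1$, the inner block $S_i$ sits either entirely between two consecutive elements of $S_j$ or wraps around the cut. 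The count of elements of $J\cap S_j$ preceding $b\in S_i$ is then either constant in $b$ (constant term, which depends only on $J$ through $|J\cap S_j|=k_j$ or on fixed positional data), or it equals the number of elements of $J\cap S_j$ below a fixed threshold. In the second case, reduce further using the wrapping structure so that the count again depends only on $k_j$ and on $b$'s position. Write $N(J)=\sum_{b\in J}\nu(b)+\text{const}$, with $\nu$ depending only on $b$ and the $k_j$. Set $\varepsilon_b=(-1)^{\nu(b)}$, and absorb the constant by flipping one column in a component of odd rank if needed, or by adjusting the sign of one row of $M_1$.

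With the signs in place, the map $(M_1,\dots,M_c)\mapsto MD$ lands in $\widehat\Pi_\pi^\circ$, since boundary coordinates factor into component boundary coordinates. Its pullback $\partial$ sends $\Delta_J$ to $\bigotimes\Delta_{J\cap S_j}$. The map $\partial$ is graded and lands in the diagonal product. Surjectivity holds because the diagonal product is generated by tensors of Plücker coordinates of equal degree, together with inverses of boundary terms; products of $\partial(\Delta_J)$ realize these once we use that any tuple of component bases assembles to a global basis. Injectivity follows from the dimension count $\dim\widehat\Pi_\pi^\circ=\sum_j\dim\widehat\Pi_j^\circ-(c-1)$ and irreducibility: the pullback is dominant onto the quotient of $\prod_j\widehat\Pi_j^\circ$ by the torus $(\kk^\times)^{c}/\kk^\times$ rescaling blocks, so $R_\pi$ is the invariant ring, which is exactly the diagonal product.

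For (2), every degree-$d$ element of the diagonal product can be written as $(p_1\otimes\cdots\otimes p_c)^d$ times a tensor of degree-zero factors $f_j=g_j p_j^{-d}$. This gives the decomposition \eqref{eq:component-ring-normalisation}, and uniqueness comes from $p$ being a non-torsion unit of degree one. For (3), an ordinary global frozen coordinate $\Delta_I$ maps under $\partial$ to a tensor of local frozen coordinates, by the necklace description used in Lemma~\ref{lem:fsb-boundary-count}. Hence $\partial(\mathbb P_\pi)\subseteq p^{\mathbb Z}\prod_j\iota_j(\mathbb P_j^0)$. Conversely, generators of $\mathbb P_j^0$ are ratios $\Delta_K/\Delta_{K'}$ of local necklace labels, and these pull back to ratios of global necklace labels, as in the block decomposition $\mathcal B_S\cup\mathcal B_T$ in that lemma. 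Uniqueness follows from freeness, and the rank count $\sum_j(n_j-1)+1=n-c+1$ agrees with Lemma~\ref{lem:fsb-boundary-count}, which confirms the map is an isomorphism.
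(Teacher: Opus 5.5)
The gap is in the step you flag as the main obstacle, specifically in the configuration you call ``constant in $b$''. Suppose $i<j$ and $S_i$ sits between two consecutive elements of $S_j$, i.e.\ $S_j^-<S_i<S_j^+$. Then for every $b\in S_i$ the number of elements of $J\cap S_j$ below $b$ is $|J\cap S_j^-|$. This is independent of $b$ but not of $J$. It is determined by $k_j$ or by positional data only when $S_j^-\in\{\varnothing,S_j\}$. For instance, take $S_1=\{2,3\}$, $S_2=\{1,4\}$ and $k_1=k_2=1$: the shuffle sign of $J$ is $-1$ exactly when $1\in J$.

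So your $\nu(b)$ is not a function of $b$, and the ``constant'' you propose to absorb is not constant. Your absorption devices would not be admissible in any case. A sign change of a row of $M_1$ is not of the form $M\mapsto MD$, and a component of odd rank need not exist.

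The missing idea is to count per pair of blocks and put the weight on the \emph{other} block. In the configuration $S_j^-<S_i<S_j^+$, the pair contributes $|J\cap S_i|\cdot|J\cap S_j^-|=k_i\,|J\cap S_j^-|$, i.e.\ weight $k_i$ on each element of $S_j^-$. In the configuration $S_i^-<S_j<S_i^+$, it contributes $k_j\,|J\cap S_i^+|$, i.e.\ weight $k_j$ on each element of $S_i^+$. Noncrossingness puts every pair in one of these two configurations. Hence $N(J)=\sum_{a\in J}c_a$ holds exactly, with $c_a$ depending only on the ordered decomposition and the ranks, and $\varepsilon_a=(-1)^{c_a}$ works with no correction. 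This is the paper's construction.

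The rest is sound in outline, but two steps need to be supplied. First, in the surjectivity of $\partial$ the denominators are not automatic. An equal-degree tensor of local necklace monomials need not be the image of a global necklace coordinate; in Example~\ref{ex:necklace-amalgamation}, $a_{23}\otimes b_{67}=\partial(\Delta_{2367})$ is not. You can fix this in either of two ways:
\begin{itemize}
\item pad the denominators, using that each local necklace coordinate is a tensor factor of the image of some global necklace coordinate, the remaining factors being Pl\"ucker coordinates; or
\item first prove, as the paper does, that the ratio of consecutive global necklace coordinates at a removal step in $S_j$ maps to $1\otimes\cdots\otimes(q'/q)\otimes\cdots\otimes1$, with $q,q'$ consecutive local necklace coordinates. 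This does not presuppose surjectivity.
\end{itemize}

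Second, your dominance argument for injectivity is a genuine alternative to the paper's route (Pressland's Propositions~4.2 and~4.4 for the closed cones, then localization). However, it rests on $\dim\widehat\Pi_\pi^\circ=\sum_j\dim\widehat\Pi_j^\circ-(c-1)$, which you do not justify. A simpler route: every point of $\widehat\Pi_\pi^\circ$ has matroid contained in $\mathcal M_\pi$, so the ranks of the $S_j$ add up to $k$. The point is therefore represented by a block matrix, and the pullback map is surjective on points. The identification of $R_\pi$ with a torus-invariant ring is not needed.

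In (3), deducing injectivity from a surjection between free abelian groups of the same rank $n-c+1$ (via Lemma~\ref{lem:fsb-boundary-count}) is a valid shortcut. It replaces the paper's algebraic-independence argument.
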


\begin{proof}
	We first construct the fixed column signs.
	For \(J\in\mathcal M_\pi\), put
	\[
	J_j=J\cap S_j.
	\]
	Since \(\mathcal M_\pi=\bigoplus_j\mathcal M_j\),
	we have \(|J_j|=k_j\).
	For \(a<b\), define
	\[
	\nu_{ab}(J)
	=
	\bigl|\{(u,v)\in J_a\times J_b:u>v\}\bigr|,
	\qquad
	\sigma(J)=(-1)^{\sum_{a<b}\nu_{ab}(J)}.
	\]
	Reordering the columns of \(M_J\) by their component
	blocks gives
	\[
	\Delta_J(M)
	=
	\sigma(J)\prod_{j=1}^c\Delta_{J_j}(M_j).
	\]
	
	The component partition is noncrossing.
	Consequently, after restricting the linear order to
	\(S_a\cup S_b\), the two blocks occur in at most three
	consecutive groups. Otherwise, four alternating groups
	would give four cyclically alternating elements from
	the two blocks.
	
	Thus, for each \(a<b\), we may choose one of the forms
	\[
	\begin{aligned}
		&S_a=S_a^-\sqcup S_a^+,
		&&S_a^-<S_b<S_a^+,\\
		\text{or}\qquad
		&S_b=S_b^-\sqcup S_b^+,
		&&S_b^-<S_a<S_b^+.
	\end{aligned}
	\]
	Here \(A<B\) means that every element of \(A\) precedes
	every element of \(B\), and the end pieces may be empty.
	In the first case,
	\[
	\nu_{ab}(J)=k_b|J\cap S_a^+|,
	\]
	while in the second case,
	\[
	\nu_{ab}(J)=k_a|J\cap S_b^-|.
	\]
	Define
	\[
	c_{ab,i}=
	\begin{cases}
		k_b\mathbf 1_{S_a^+}(i),&\text{in the first case},\\
		k_a\mathbf 1_{S_b^-}(i),&\text{in the second case},
	\end{cases}
	\]
	and put
	\[
	c_i=\sum_{a<b}c_{ab,i},
	\qquad
	\varepsilon_i=(-1)^{c_i},
	\qquad
	D=\operatorname{diag}(\varepsilon_1,\ldots,\varepsilon_n).
	\]
	These choices depend only on the ordered component
	decomposition and its ranks. Moreover,
	\[
	\sigma(J)
	=
	(-1)^{\sum_{i\in J}c_i}
	=
	\prod_{i\in J}\varepsilon_i.
	\]
	Therefore
	\[
	\begin{aligned}
		\Delta_J(MD)
		&=
		\left(\prod_{i\in J}\varepsilon_i\right)\Delta_J(M)\\
		&=
		\sigma(J)^2
		\prod_{j=1}^c\Delta_{J_j}(M_j)\\
		&=
		\prod_{j=1}^c\Delta_{J_j}(M_j).
	\end{aligned}
	\]
	This proves the asserted simultaneous sign correction.
	
	We next establish the ring comparison, including
	the localization.
	Let \(A_\pi\) and \(A_j\) denote the homogeneous
	coordinate rings of the closed positroid cones,
	before inverting their boundary coordinates.
	The direct-sum description and its compatibility with
	the Segre embedding give
	\[
	A_\pi
	\simeq
	\bigoplus_{d\geq0}
	\bigotimes_{j=1}^c(A_j)_d;
	\]
	see \cite[Propositions~4.2 and~4.4]{Pressland}.
	Column sign changes preserve the vanishing and
	nonvanishing of Pl\"ucker coordinates.
	With the choice of \(D\) above, this isomorphism sends
	\[
	\Delta_J\longmapsto
	\bigotimes_{j=1}^c\Delta_{J\cap S_j}.
	\]
	For a singleton component, \(A_j=\kk[p_j]\), and
	the same assertion follows directly.
	
	Every global boundary coordinate maps to a tensor
	product of local boundary coordinates.
	These are units in the rings \(R_j\).
	Consequently, localization gives an injective map
	\[
	\partial:
	R_\pi\lhook\joinrel\longrightarrow
	R_1\mathbin{\#}\cdots\mathbin{\#}R_c.
	\]
	We show that this map is surjective.
	
	For \(f\in R_j\), write
	\[
	\epsilon_j(f)
	=
	1\otimes\cdots\otimes f\otimes\cdots\otimes1
	\in\bigotimes_{h=1}^cR_h.
	\]
	If \(f\) has degree zero, this tensor belongs to the
	diagonal graded product.
	
	Let \(Q_b=\Delta_{I_b^\pi}\) be the ordinary global
	necklace coordinates, with \(Q_{n+1}=Q_1\).
	List the local necklace coordinates in cyclic order as
	\[
	q_{j,1},\ldots,q_{j,n_j},
	\qquad q_{j,n_j+1}=q_{j,1}.
	\]
	At a global removal \(b\in S_j\), only the \(S_j\)-part
	of the necklace changes. If this is the \(a\)-th local
	step, then
	\[
	\partial\left(\frac{Q_{b+1}}{Q_b}\right)
	=
	\epsilon_j\left(\frac{q_{j,a+1}}{q_{j,a}}\right).
	\]
	Both the displayed ratio and its inverse belong to
	\(\partial(R_\pi)\).
	Since \(p_j\) is one of the local necklace coordinates,
	multiplying consecutive ratios gives
	\[
	\epsilon_j(q_{j,a}/p_j)^{\pm1}
	\in\partial(R_\pi)
	\qquad(1\leq a\leq n_j).
	\]
	
	For any basis \(K\in\mathcal M_j\), put
	\[
	J(K)
	=
	K\cup\bigcup_{h\neq j}(L\cap S_h).
	\]
	The direct-sum decomposition implies
	\(J(K)\in\mathcal M_\pi\), and hence
	\[
	\partial\left(\frac{\Delta_{J(K)}}p\right)
	=
	\epsilon_j\left(\frac{\Delta_K}{p_j}\right).
	\]
	Now the degree-zero subring is generated by
	\[
	R_j^0
	=
	\kk\left[
	\frac{\Delta_K}{p_j},
	\frac{p_j}{q_{j,a}}:
	K\in\mathcal M_j,\ 1\leq a\leq n_j
	\right].
	\]
	Indeed, the numerator and denominator of a homogeneous
	monomial of degree zero have the same total degree,
	so inserting the corresponding powers of \(p_j\)
	expresses it in these generators.
	We have therefore proved
	\[
	\epsilon_j(R_j^0)\subseteq\partial(R_\pi)
	\qquad(1\leq j\leq c).
	\]
	
	Since \(p_j\) is a degree-one unit, we have
	\[
	(R_j)_d=R_j^0p_j^d
	\qquad(d\in\mathbb Z).
	\]
	Writing \(p_\otimes=p_1\otimes\cdots\otimes p_c\),
	we obtain
	\[
	\begin{aligned}
		R_1\mathbin{\#}\cdots\mathbin{\#}R_c
		&=
		\bigoplus_{d\in\mathbb Z}
		\left(\bigotimes_{j=1}^cR_j^0\right)p_\otimes^d\\
		&\simeq
		\left(\bigotimes_{j=1}^cR_j^0\right)
		[p_\otimes^{\pm1}].
	\end{aligned}
	\]
	The image of \(\partial\) contains every
	\(\epsilon_j(R_j^0)\) and contains
	\(p_\otimes^{\pm1}=\partial(p^{\pm1})\).
	It therefore equals the diagonal graded product.
	
	In particular,
	$\iota_j=\partial^{-1}\circ\epsilon_j|_{R_j^0}$
	is well defined. Taking degree zero gives
	$(R_\pi)_0\simeq\bigotimes_{j=1}^cR_j^0,$
	and the preceding Laurent decomposition proves
	\eqref{eq:component-ring-normalisation}.
	
	It remains to identify the frozen monomial group.
	By Lemma~\ref{lem:fsb-boundary-count}, the local
	coordinates \(q_{j,1},\ldots,q_{j,n_j}\) freely
	generate \(\mathbb P_j\). Since they all have degree one,
	\[
	\mathbb P_j^0
	=
	\left\langle
	\frac{q_{j,a+1}}{q_{j,a}}:
	1\leq a\leq n_j
	\right\rangle_{\mathbb Z}.
	\]
	The comparison of consecutive necklace ratios above gives
	$\iota_j(\mathbb P_j^0)\subseteq\mathbb P_\pi.$
	Conversely, each global frozen coordinate satisfies
	\[
	Q_b
	=
	p\prod_{j=1}^c
	\iota_j\left(
	\frac{\Delta_{I_b^\pi\cap S_j}}{p_j}
	\right),
	\]
	and every quotient on the right belongs to
	\(\mathbb P_j^0\).
	Thus the homomorphism
	\[
	((f_j)_{j=1}^c,d[p])
	\longmapsto
	p^d\prod_{j=1}^c\iota_j(f_j)
	\]
	is surjective onto \(\mathbb P_\pi\).
	
	For injectivity, suppose that
	\[
	p^d\prod_{j=1}^c\iota_j(f_j)=1,
	\qquad f_j\in\mathbb P_j^0.
	\]
	Taking homogeneous degrees gives \(d=0\).
	Applying \(\partial\) then gives
$	f_1\otimes\cdots\otimes f_c=1.$
	The local frozen coordinates are algebraically independent.
	Tensoring their Laurent polynomial subalgebras over
	\(\kk\) gives an injection
	\[
	\bigotimes_{j=1}^c
	\kk[q_{j,1}^{\pm1},\ldots,q_{j,n_j}^{\pm1}]
	\lhook\joinrel\longrightarrow
	\bigotimes_{j=1}^cR_j.
	\]
	The source is a Laurent polynomial algebra in all the
	local frozen coordinates.
	The displayed monomial equality therefore forces
	every exponent to vanish, so \(f_j=1\) for every \(j\).
	This proves the asserted uniqueness and the group
	isomorphism in \eqref{eq:fsb-component-frozen-lattice}.
	
	Finally, \(\deg:\mathbb P_j\to\mathbb Z\) is surjective
	because \(\deg p_j=1\). Hence
$	\operatorname{rank}\mathbb P_j^0=n_j-1,$
	and consequently
	\[
	\operatorname{rank}\mathbb P_\pi
	=
	\sum_{j=1}^c(n_j-1)+1
	=
	n-c+1.
	\]
\end{proof}

\begin{lemma}[Components and local admissibility]
	\label{lem:fsb-local-components}
	Let \(G^\rho\) be admissible, and set
	\[
	\mu=\rho^{-1}\pi\rho,
	\qquad
	A_j=\rho^{-1}(S_j)
	\quad(1\leq j\leq c).
	\]
	Then \(A_1,\ldots,A_c\) are precisely the ground sets
	of the connected components of \(\mathcal M_\mu\).
	
	Equip \(A_j\) and \(S_j\) with their inherited cyclic orders.
	There exist cyclic order-preserving bijections
	\[
	\alpha_j:[n_j]\xrightarrow{\sim}A_j,
	\qquad
	\beta_j:[n_j]\xrightarrow{\sim}S_j
	\]
	such that the induced permutations
	\[
	\mu_j=\alpha_j^{-1}\mu\alpha_j,
	\qquad
	\pi_j=\beta_j^{-1}\pi\beta_j,
	\qquad
	\rho_j=\beta_j^{-1}\rho\alpha_j
	\]
	satisfy
	$\pi_j=\rho_j\mu_j\rho_j^{-1},$
	and \(\rho_j\) is admissible for \(\pi_j\) whenever
	\(n_j>1\).
	If \(n_j=1\), all three permutations are the identity,
	and the corresponding component is a singleton coloop.
	The choice of \(\alpha_j\) and \(\beta_j\) amounts only
	to choosing cyclic origins.
\end{lemma}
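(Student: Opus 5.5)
Conjugation transports the component structure, and admissibility descends to components through the additivity of affine length. We first show that each $A_j=\rho^{-1}(S_j)$ is $\mu$-invariant: since $\pi(S_j)=S_j$, we get $\mu(A_j)=\rho^{-1}\pi\rho\rho^{-1}(S_j)=A_j$. It remains to prove that $\{A_j\}$ is exactly $\operatorname{NCInv}(\mu)$, the finest noncrossing $\mu$-invariant partition (\cite[Corollary~7.9]{ARW}). For this we use the boundary necklace $\mathbf I$ of $G^\rho$. Its terms are bases of $\mathcal M_\pi$ by the Unit Necklace Theorem (Theorem~\ref{thm:fsb-unit-necklace}), so $|I_a\cap S_j|=k_j$ for all $a$. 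Since $I_a=\rho(I_a^\mu)$, every necklace term $I_a^\mu$ of $\mu$ meets $A_j$ in exactly $k_j$ elements. The rank function of $\mathcal M_\mu$ on $A_j$ is read off from the necklace through the Gale description \eqref{eq:fsb-positroid-gale}. We would use this to show that $\mathcal M_\mu=\bigoplus_j\mathcal M_\mu|_{A_j}$, that is, $\sum_j\operatorname{rank}(A_j)=k$. Two things remain: the blocks $A_j$ must form a noncrossing partition, and each block must carry a connected restriction. For both we would use the equality of dimensions. By \eqref{eq:fsb-positroid-dimension} and Lemma~\ref{lem:fsb-boundary-count}, the mutable rank $|\cS\setminus\cI|=\dim-\operatorname{rank}\mathscr F$ is the same for $\pi$ and $\mu$ only if $\mu$ has the same number $c$ of components. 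A strictly finer or crossing decomposition for $\mu$ would change the count $n-c+1$.

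We then build the local data. We choose the cyclic origins of $\beta_j$ and $\alpha_j$ compatibly, namely so that the first local necklace step of $\operatorname{pr}_{S_j}\mathbf I$ corresponds to the first removal position $a_1$ with $\rho(a_1)\in S_j$. The identity $\pi_j=\rho_j\mu_j\rho_j^{-1}$ is then formal: $\beta_j^{-1}\rho\alpha_j\,\alpha_j^{-1}\mu\alpha_j\,\alpha_j^{-1}\rho^{-1}\beta_j=\beta_j^{-1}\pi\beta_j$. The projection $\operatorname{pr}_{S_j}\mathbf I$ defined before Example~\ref{ex:necklace-component-projection} is, after standardization by $\beta_j$, the Grassmannlike necklace $\mathcal J_{\rho_j,\iota_j,\pi_j}$ with $\iota_j=\pi_j\rho_j$. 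Here the cyclic reindexing of positions is what makes $\rho_j$ a permutation of $[n_j]$ rather than of the retained positions, and this is where the choice of $\alpha_j$ enters. Its terms have size $k_j$, so $\iota_j$ has type $(k_j,n_j)$. The singleton case $n_j=1$ is immediate: the block is a coloop, since the component is loopless, and all three permutations are trivial.

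The main obstacle is the local admissibility $\iota_j\le_\circ\pi_j$ together with $\ell(f_{\mu_j})=\ell(f_{\pi_j})$. For the order condition we would project the aligned-toggle path supplied by \cite[Lemma~4.13]{FSB} from the ordinary necklace of $\pi$ to $\mathbf I$. A toggle whose two chords both lie in $S_j$ projects to an aligned toggle of the $S_j$-necklace, and aligned chords remain aligned after cyclic restriction. A toggle involving two different components leaves every projection unchanged. The projected path therefore runs from the ordinary necklace of $\pi_j$ to $\operatorname{pr}_{S_j}\mathbf I$. We would then invoke the converse direction of \cite[Lemma~4.13]{FSB}, that such a path characterizes $\iota_j\le_\circ\pi_j$; this is the step we are least sure of. For the lengths we would use Theorem~\ref{thm:fsb-cluster-criterion}(3). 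Weak separation of $\mathbf I$ restricts to weak separation of the projected necklace, since $I\cap S_j$ and $J\cap S_j$ inherit any separating arc. Weak separation of a necklace is one of the equivalent conditions in that theorem, so applying it to the local data yields $\ell(f_{\mu_j})=\ell(f_{\pi_j})$.
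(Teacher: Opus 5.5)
Your check of $\pi_j=\rho_j\mu_j\rho_j^{-1}$, the singleton case, the count $c_\mu=c$, and the restriction of weak separation to $S_j$ are all fine. There are, however, two genuine gaps.

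The first is the identification of the components of $\mathcal M_\mu$. The necklace data you invoke carry no information. For \emph{every} $\mu$-invariant set $A$, the recurrence $I^\mu_{a+1}=(I^\mu_a\setminus\{a\})\cup\{\mu(a)\}$ removes and inserts elements that are simultaneously in or out of $A$, so $|I^\mu_a\cap A|$ is automatically constant. For non-interval $A$ this constant does not compute the rank: for $\mu=(1\,3)(2\,4)$ and $A=\{1,3\}$, every necklace term meets $A$ once, yet $A$ has rank $2$ in the connected matroid $U_{2,4}$. Hence $\mathcal M_\mu=\bigoplus_j\mathcal M_\mu|_{A_j}$ is not established. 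Your count only gives $c_\mu=c$, which does not exclude a crossing invariant partition with $c$ blocks: for $(1\,3)(2\,4)(5\,6)$, the partition $\{1,3\}\sqcup\{2,4,5,6\}$ is invariant and has two blocks. What is missing is a proof that every basis of $\mathcal M_\mu$ meets $A_j$ in exactly $k_j$ elements. The paper obtains this from torus weights. The Laurent expansion of any $\Delta_B$ in a Pl\"ucker seed is weight-homogeneous, so the seed labels span $V(\mathcal M)=\operatorname{span}_{\mathbb Q}\{\mathbf 1_B-\mathbf 1_{B'}\}$. Comparing the seeds of $G$ and $G^\rho$ then gives $V(\mathcal M_\pi)=\rho(V(\mathcal M_\mu))$, and components are read off from the condition $e_a-e_b\in V$. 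Once you have the direct sum, your count $c_\mu=c$ does finish the identification.

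The second gap is local admissibility. The converse of \cite[Lemma~4.13]{FSB} that you flag is not a formality. The alignment relation is symmetric in the two chords, so the reverse of an aligned toggle is again aligned, and an aligned-toggle path has no built-in direction in weak order. Showing that such a path never leaves the lower interval of $f_{\pi_j}$ requires tracking how each toggle changes $\ell(u)$ and $\ell(r)$, which is exactly the length bookkeeping you hoped to avoid. Also, the inference ``the terms have size $k_j$, so $\iota_j$ has type $(k_j,n_j)$'' is wrong. Term size gives the type of $\mu_j$, not of $\iota_j$ (compare Example~\ref{ex:fsb-relabeling-changes-type}). The type of $\iota_j$ depends on the relative cyclic origins of $\alpha_j$ and $\beta_j$, and your prescription leaves these undetermined, because the projected necklace is only defined up to rotation. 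The paper handles both issues at the affine level. It writes $f=ur^{-1}$ and $m=r^{-1}u$ with $\ell(f)=\ell(u)+\ell(r)=\ell(m)$. Disjointness of $\operatorname{Inv}(u),\operatorname{Inv}(r)$ and of $\operatorname{Inv}(u^{-1}),\operatorname{Inv}(r^{-1})$ then descends to components along increasing enumerations. Shifting the origin of $A_j$ to force $\operatorname{av}(r_j)=0$ yields $u_j\le_R f_j$ and $\ell(m_j)=\ell(f_j)$ at once. Your weak-separation route to the length equality is a valid alternative, but only after $\pi_j\rho_j\le_\circ\pi_j$ is known, since Theorem~\ref{thm:fsb-cluster-criterion} assumes it.
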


\begin{proof}
	Let \(e_1,\ldots,e_n\) be the standard basis of \(\mathbb Q^n\),
	and write \(\mathbf 1_B=\sum_{i\in B}e_i\) for \(B\subseteq[n]\).
	For a matroid \(\mathcal M\) on \([n]\), put
	\[
	V(\mathcal M)
	=
	\operatorname{span}_{\mathbb Q}
	\{\mathbf 1_B-\mathbf 1_{B'}:B,B'\in\mathcal M\}.
	\]
	Let
	\[
	[n]=C_1\sqcup\cdots\sqcup C_d,
	\qquad
	\mathcal M=\bigoplus_{h=1}^d\mathcal M|_{C_h},
	\]
	be the connected-component decomposition of \(\mathcal M\).
	Then we have
	\[
	V(\mathcal M)
	=
	\left\{
	v=(v_1,\ldots,v_n)\in\mathbb Q^n:
	\sum_{a\in C_h}v_a=0
	\text{ for }1\leq h\leq d
	\right\}.
	\]
	In particular, for \(a\neq b\),
	\[
	a,b\text{ belong to the same component}
	\quad\Longleftrightarrow\quad
	e_a-e_b\in V(\mathcal M).
	\]
	
	Let \(\mathcal C\) be the set of distinct labels of a
	Pl\"ucker seed defining a cluster structure on \(R_{\mathcal M}\).
	For \(B\in\mathcal M\), the Laurent phenomenon gives
	\[
	\Delta_B
	=
	\sum_{\mathbf a}c_{\mathbf a}
	\prod_{K\in\mathcal C}\Delta_K^{a_K}.
	\]
	Using
	\[
	\operatorname{wt}(\Delta_K)=\mathbf 1_K,
	\qquad
	\deg\Delta_K=1,
	\]
	and algebraic independence of the seed coordinates, we obtain
	\[
	c_{\mathbf a}\neq0
	\quad\Longrightarrow\quad
	\sum_Ka_K\mathbf 1_K=\mathbf 1_B,
	\qquad
	\sum_Ka_K=1.
	\]
	For any fixed \(K_0\in\mathcal C\), this yields
	$\mathbf 1_B-\mathbf 1_{K_0}
	=
	\sum_Ka_K(\mathbf 1_K-\mathbf 1_{K_0}),$
	and hence
	\begin{equation}
		\label{eq:fsb-local-seed-weight-span}
		V(\mathcal M)
		=
		\operatorname{span}_{\mathbb Q}
		\{\mathbf 1_K-\mathbf 1_{K'}:K,K'\in\mathcal C\}.
	\end{equation}

	Let
	$\mathcal C
	=
	\{I_F^{\mathrm{tgt}}:F\in\operatorname{Faces}(G)\}
	\subseteq\binom{[n]}{k}$
	be the set of distinct target labels of all faces of \(G\),
	including boundary faces.
	The corresponding label set for \(G^\rho\) is
	\[
	\rho(\mathcal C)=\{\rho(K):K\in\mathcal C\},
	\qquad
	\rho(K)=\{\rho(a):a\in K\}.
	\]
	By admissibility and \cite[Theorem~4.21]{FSB}, we have
	\[
	\mathcal A(\Sigma_G^T)=R_\mu,
	\qquad
	\mathcal A(\Sigma_{G^\rho}^T)=R_\pi.
	\]
	Extend \(\rho\) linearly to \(\mathbb Q^n\) by
	\(\rho(e_a)=e_{\rho(a)}\).
	Applying \eqref{eq:fsb-local-seed-weight-span} to these two seeds gives
	\[
	\begin{aligned}
		V(\mathcal M_\pi)
		&=
		\operatorname{span}_{\mathbb Q}
		\{\mathbf 1_{\rho(K)}-\mathbf 1_{\rho(K')}:
		K,K'\in\mathcal C\}\\
		&=
		\rho\!\left(
		\operatorname{span}_{\mathbb Q}
		\{\mathbf 1_K-\mathbf 1_{K'}:K,K'\in\mathcal C\}
		\right)\\
		&=\rho\bigl(V(\mathcal M_\mu)\bigr).
	\end{aligned}
	\]
	Then we have
	\[
	\begin{aligned}
		a\sim_{\mathcal M_\mu}b
		&\Longleftrightarrow e_a-e_b\in V(\mathcal M_\mu)\\
		&\Longleftrightarrow
		e_{\rho(a)}-e_{\rho(b)}\in V(\mathcal M_\pi)\\
		&\Longleftrightarrow
		\rho(a)\sim_{\mathcal M_\pi}\rho(b).
	\end{aligned}
	\]
	Thus the component ground sets of \(\mathcal M_\mu\) are
	\[
	A_j=\rho^{-1}(S_j),
	\qquad 1\leq j\leq c.
	\]
	
	We next prove local admissibility.
	Let \(f,u\in\operatorname{Bound}(k,n)\) be the affine lifts
	of \(\pi,\pi\rho\), and put
	$r=f^{-1}u.$
	Then
	\[
	\overline r=\rho,
	\qquad
	\operatorname{av}(r)=0,
	\qquad
	u\leq_R f.
	\]
	By \cite[Lemma~4.17]{FSB}, the bounded affine lift of
	\(\mu\) is
	\[
	m=r^{-1}u=r^{-1}fr.
	\]
	Admissibility and the affine dimension formula imply
	\[
	\dim\widehat\Pi_\mu^\circ
	=
	\dim\widehat\Pi_\pi^\circ
	\quad\Longrightarrow\quad
	\ell(m)=\ell(f).
	\]
	Hence
	\begin{equation}
		\label{eq:fsb-local-global-factorizations}
		f=ur^{-1},
		\qquad
		m=r^{-1}u,
		\qquad
		\ell(f)=\ell(u)+\ell(r)=\ell(m).
	\end{equation}
	
	Fix \(j\) with \(n_j>1\). Write
	$\widetilde A_j=A_j+n\mathbb Z,
	\widetilde S_j=S_j+n\mathbb Z,$
	and choose increasing bijections
	\[
	a_j:\mathbb Z\xrightarrow{\sim}\widetilde A_j,
	\qquad
	b_j:\mathbb Z\xrightarrow{\sim}\widetilde S_j
	\]
	satisfying
	\[
	a_j(t+n_j)=a_j(t)+n,
	\qquad
	b_j(t+n_j)=b_j(t)+n.
	\]
	Since
	\[
	\begin{aligned}
		f(\widetilde S_j)&=\widetilde S_j,
		&
		m(\widetilde A_j)&=\widetilde A_j,\\
		r(\widetilde A_j)&=\widetilde S_j,
		&
		u(\widetilde A_j)&=\widetilde S_j,
	\end{aligned}
	\]
	we may define affine permutations of period \(n_j\) by
	\[
	\begin{aligned}
		f_j&=b_j^{-1}fb_j,
		&
		u_j&=b_j^{-1}ua_j,\\
		r_j&=b_j^{-1}ra_j,
		&
		m_j&=a_j^{-1}ma_j.
	\end{aligned}
	\]
	They satisfy
	\begin{equation}
		\label{eq:fsb-local-affine-identities}
		f_j=u_jr_j^{-1},
		\qquad
		m_j=r_j^{-1}u_j=r_j^{-1}f_jr_j.
	\end{equation}
	
	For an affine permutation \(w\) of period \(N\), define
$	\operatorname{av}(w)
	=
	\frac1N\sum_{t=1}^N(w(t)-t)\in\mathbb Z.$
For affine permutations of the same period, we have
	\[
	\operatorname{av}(v\circ w)
	=
	\operatorname{av}(v)+\operatorname{av}(w).
	\]
	
	We first choose the cyclic origin of \(A_j\).
	For \(h\in\mathbb Z\), replace \(a_j\) by
	\[
	a_j'(t)=a_j(t+h).
	\]
	The local affine permutations defined using \(a_j'\) are
	\[
	\begin{aligned}
		r_j'(t)
		&=b_j^{-1}\bigl(r(a_j'(t))\bigr)
		=r_j(t+h),\\
		u_j'(t)
		&=b_j^{-1}\bigl(u(a_j'(t))\bigr)
		=u_j(t+h),\\
		m_j'(t)
		&=(a_j')^{-1}\bigl(m(a_j'(t))\bigr)
		=m_j(t+h)-h.
	\end{aligned}
	\]
	The permutation \(f_j\) remains unchanged.
	Since \(r_j(t)-t\) is \(n_j\)-periodic,
	\[
	\begin{aligned}
		\operatorname{av}(r_j')
		&=
		\frac1{n_j}\sum_{t=1}^{n_j}
		\bigl(r_j(t+h)-t\bigr)\\
		&=
		\frac1{n_j}\sum_{t=1}^{n_j}
		\bigl(r_j(t+h)-(t+h)\bigr)+h\\
		&=\operatorname{av}(r_j)+h.
	\end{aligned}
	\]
	We choose \(h=-\operatorname{av}(r_j)\) and henceforth
	use the new enumerations and local permutations, omitting
	the primes. Thus
	\begin{equation}
		\label{eq:fsb-local-average-zero}
		\operatorname{av}(r_j)=0.
	\end{equation}
	Only the cyclic origin of \(A_j\) has changed; its inherited
	cyclic order is preserved.
	
	We next verify local length-additivity.
	For an affine permutation \(w\), define
	\[
	\operatorname{Inv}(w)
	=
	\{(s,t)\in\mathbb Z^2:s<t,\ w(s)>w(t)\}.
	\]
	For affine permutations \(x,y\) of the same period, by
	\cite[Lemma~2.26]{FSB}, we have
	\[
	\ell(x\circ y)=\ell(x)+\ell(y)
	\quad\Longleftrightarrow\quad
	\operatorname{Inv}(x)
	\cap\operatorname{Inv}(y^{-1})=\varnothing.
	\]
	Applying this criterion to
	\eqref{eq:fsb-local-global-factorizations}, we obtain
	\[
	\begin{aligned}
		\operatorname{Inv}(u)\cap\operatorname{Inv}(r)
		&=\varnothing,\\
		\operatorname{Inv}(u^{-1})\cap\operatorname{Inv}(r^{-1})
		&=\varnothing.
	\end{aligned}
	\]
	
	For any integers \(s<t\), the increasing enumerations satisfy
	\[
	a_j(s)<a_j(t),
	\qquad
	b_j(s)<b_j(t).
	\]
	The definitions of the local permutations therefore give
	\[
	\begin{aligned}
		u_j(s)>u_j(t)
		&\Longleftrightarrow
		u(a_j(s))>u(a_j(t)),\\
		r_j(s)>r_j(t)
		&\Longleftrightarrow
		r(a_j(s))>r(a_j(t)),\\
		u_j^{-1}(s)>u_j^{-1}(t)
		&\Longleftrightarrow
		u^{-1}(b_j(s))>u^{-1}(b_j(t)),\\
		r_j^{-1}(s)>r_j^{-1}(t)
		&\Longleftrightarrow
		r^{-1}(b_j(s))>r^{-1}(b_j(t)).
	\end{aligned}
	\]
	Thus a common inversion of \(u_j,r_j\) would give a common
	inversion of \(u,r\); the same argument applies to their
	inverses. Hence
	\[
	\begin{aligned}
		\operatorname{Inv}(u_j)\cap\operatorname{Inv}(r_j)
		&=\varnothing,\\
		\operatorname{Inv}(u_j^{-1})
		\cap\operatorname{Inv}(r_j^{-1})
		&=\varnothing.
	\end{aligned}
	\]
	Since
$	f_j=u_j\circ r_j^{-1}$ and $
	m_j=r_j^{-1}\circ u_j,$
	the length criterion yields
	\begin{equation}
		\label{eq:fsb-local-length-additivity}
		\ell(f_j)
		=
		\ell(u_j)+\ell(r_j)
		=
		\ell(m_j).
	\end{equation}
	
	Put \(k_j=\operatorname{rank}(\mathcal M_\pi|_{S_j})\).
	The restriction of \(f\) to this component gives
	$f_j\in\operatorname{Bound}(k_j,n_j).$
	Using \(u_j=f_j\circ r_j\) and
	\eqref{eq:fsb-local-average-zero}, we obtain
	\[
	\operatorname{av}(u_j)
	=
	\operatorname{av}(f_j)+\operatorname{av}(r_j)
	=k_j.
	\]
	Together with \eqref{eq:fsb-local-length-additivity}, this gives
	$u_j\leq_R f_j.$
	Since \(\operatorname{Bound}(k_j,n_j)\) is a lower ideal
	for right weak order, then
	\[
	u_j\in\operatorname{Bound}(k_j,n_j).
	\]
	
	For \(m_j\), boundedness follows directly from
	$a_j(t)<m(a_j(t))
	\leq a_j(t)+n
	=a_j(t+n_j).$
	Applying the increasing map \(a_j^{-1}\), we get
	\[
	t<m_j(t)\leq t+n_j.
	\]
	Moreover, one has
	\[
	\begin{aligned}
		\operatorname{av}(m_j)
		&=
		\operatorname{av}(r_j^{-1}\circ f_j\circ r_j)\\
		&=
		-\operatorname{av}(r_j)
		+\operatorname{av}(f_j)
		+\operatorname{av}(r_j)\\
		&=k_j.
	\end{aligned}
	\]
	Hence
	$m_j\in\operatorname{Bound}(k_j,n_j).$
	For \(1\leq t\leq n_j\), let \(\alpha_j(t)\) and
	\(\beta_j(t)\) be the representatives in \([n]\) of
	\(a_j(t)\) and \(b_j(t)\) modulo \(n\), respectively.
	These define cyclic order-preserving bijections
	\[
	\alpha_j:[n_j]\xrightarrow{\sim}A_j,
	\qquad
	\beta_j:[n_j]\xrightarrow{\sim}S_j.
	\]
	By the definitions of \(\pi_j,\mu_j,\rho_j\), we have
	\[
	\begin{aligned}
		f_j(t)&\equiv\pi_j(t)\pmod{n_j},\\
		m_j(t)&\equiv\mu_j(t)\pmod{n_j},\\
		r_j(t)&\equiv\rho_j(t)\pmod{n_j},\\
		u_j(t)&\equiv\pi_j(\rho_j(t))\pmod{n_j}.
	\end{aligned}
	\]
	Thus \(f_j,m_j,u_j\) are the bounded affine lifts of
	\(\pi_j,\mu_j,\pi_j\rho_j\), respectively.
	The identities
	\[
	f_j=r_j\circ m_j\circ r_j^{-1},
	\qquad
	u_j\leq_R f_j
	\]
	therefore imply
	\[
	\pi_j=\rho_j\mu_j\rho_j^{-1},
	\qquad
	\pi_j\rho_j\leq_\circ\pi_j.
	\]
	Finally, \eqref{eq:fsb-local-length-additivity} gives
	\[
	\begin{aligned}
		\dim\widehat\Pi_{\mu_j}^{\circ}
		&=k_j(n_j-k_j)+1-\ell(m_j)\\
		&=k_j(n_j-k_j)+1-\ell(f_j)\\
		&=\dim\widehat\Pi_{\pi_j}^{\circ}.
	\end{aligned}
	\]
	Hence \(\rho_j\) is admissible for \(\pi_j\).
	
	If \(n_j=1\), then
	$\pi_j=\mu_j=\rho_j=\mathrm{id}.$
	Since the positroid is loopless, its singleton component has
	rank one. Its unique element is therefore a coloop,
	that is, an element belonging to every basis.
\end{proof}

\begin{definition}[Homogeneous amalgamation of component seeds]
	\label{def:necklace-amalgamation}
	Let \(\Sigma_j\) be the seed associated with the component
	diagram \(D_j\), for \(1\leq j\leq c\).
	Assume that the diagrams are embedded in a common disc
	compatibly with the noncrossing partition
	\([n]=S_1\sqcup\cdots\sqcup S_c\).
	Set
	\[
	D=\bigcup_{j=1}^cD_j,
	\qquad
	Q(D)=
	\left(\bigsqcup_{j=1}^cQ(D_j)\right)\big/\sim,
	\]
	where \(\sim\) identifies frozen vertices corresponding
	to the same boundary region of \(D\).

	For each global face \(F\), denote by \(F_j\) the face
	of \(D_j\) containing \(F\), and by \(x_{j,F_j}\in R_j\)
	its coordinate.
	The \emph{homogeneous amalgamation} of
	\(\Sigma_1,\ldots,\Sigma_c\) is the seed
	\[
	\Sigma_D
	=
	\bigl((X_F)_{F\in\operatorname{Faces}(D)},Q(D)\bigr),
	\qquad
	X_F
	=
	\partial^{-1}
	\bigl(x_{1,F_1}\otimes\cdots\otimes x_{c,F_c}\bigr),
	\]
	where \(\partial^{-1}\) denotes the inverse of the graded isomorphism
	\[
	\partial:R_\pi\xrightarrow{\sim}
	R_1\mathbin{\#}\cdots\mathbin{\#}R_c
	\]
	from Lemma~\ref{lem:fsb-component-coefficients}.
\end{definition}

	This expression is well defined because
$	\deg x_{j,F_j}=1 (1\leq j\leq c).$
	For distinct indices \(\ell,j\), the connected diagram \(D_j\)
	is disjoint from \(D_\ell\) and meets the boundary of the disc.
	It is therefore contained in a unique boundary face
	\(F_{\ell j}\) of \(D_\ell\).
	Let \(x_{\ell,F_{\ell j}}\) be the coordinate of the frozen
	vertex of the component seed \(\Sigma_\ell\) corresponding
	to this face, and define
	\[
	q_{\ell j}
	=
	x_{\ell,F_{\ell j}}
	\in\mathbb P_\ell,
	\qquad
	\deg q_{\ell j}=1.
	\]
	Thus \(q_{\ell j}\) is a frozen coordinate of component \(\ell\),
	determined by the boundary region containing component \(j\). Define
	\[
	u_j
	=
	\prod_{\ell\neq j}
	\iota_\ell\left(\frac{q_{\ell j}}{p_\ell}\right).
	\]
	By \eqref{eq:fsb-component-frozen-lattice},
	\[
	u_j\in\mathbb P_\pi,
	\qquad
	\deg u_j=0.
	\]
	
	For a homogeneous local cluster variable \(x_v\in R_j\)
	of degree \(d_v\), its homogeneous lift \(X_v\in R_\pi\)
	is defined by
	\[
	\partial(X_v)
	=
	q_{1j}^{d_v}\otimes\cdots\otimes
	q_{j-1,j}^{d_v}\otimes x_v\otimes
	q_{j+1,j}^{d_v}\otimes\cdots\otimes q_{cj}^{d_v}.
	\]
	Equivalently,
	\[
	X_v
	=
	(pu_j)^{d_v}
	\iota_j\left(\frac{x_v}{p_j^{d_v}}\right).
	\]
	For initial face variables, we have \(d_v=1\), and this formula
	agrees with the definition of \(X_F\).
	The elements \(q_{\ell j}\) and \(u_j\) are invariant
	under mutations in component \(j\).

\begin{example}[Homogeneous amalgamation of two rank-one seeds]
	\label{ex:necklace-amalgamation}
	Let
	\[
	S_1=\{1,2,3,4\},\qquad S_2=\{5,6,7,8\},\qquad
	\mathcal M_\pi=
	\left\{I\cup J:I\in\binom{S_1}{2},\ J\in\binom{S_2}{2}\right\}.
	\]
	The component positroids are uniform of rank two, and
	\[
	\pi=(1\,3)(2\,4)(5\,7)(6\,8).
	\]
	Write \(a_{ij}=\Delta^{(1)}_{\{i,j\}}\) and
	\(b_{rs}=\Delta^{(2)}_{\{r,s\}}\) for the local Pl\"ucker
	coordinates. The diagonal graded isomorphism satisfies
	\[
	\partial(\Delta_{ijrs})=a_{ij}\otimes b_{rs}
	\qquad(i<j\text{ in }S_1,\ r<s\text{ in }S_2).
	\]
	Choose the component seeds with mutable variables \(a_{13}\) and
	\(b_{57}\), and respective frozen variables
	\[
	(a_{12},a_{23},a_{34},a_{14}),\qquad
	(b_{56},b_{67},b_{78},b_{58}).
	\]
	Their Postnikov diagram and reduced plabic graph are shown in
	Figure~\ref{fig:amalgamation-diagrams}. All face labels are target labels;
	the label \(I\) denotes the coordinate \(\Delta_I\).
	
	\begingroup
	\definecolor{amalgFrozen}{RGB}{42,81,128}
	\definecolor{amalgMutable}{RGB}{0,112,103}
	\definecolor{amalgShared}{RGB}{155,78,8}
	\definecolor{amalgSOne}{RGB}{34,82,145}
	\definecolor{amalgSTwo}{RGB}{185,93,15}
	\definecolor{amalgSThree}{RGB}{0,125,105}
	\definecolor{amalgSFour}{RGB}{133,66,145}
	\tikzset{
		amalg boundary/.style={draw=black!45,line width=.5pt},
		amalg label/.style={font=\small,inner sep=1.3pt,fill=white},
		amalg face/.style={amalg label,text=amalgFrozen},
		amalg mutable face/.style={amalg label,text=amalgMutable},
		amalg shared face/.style={amalg label,text=amalgShared},
		amalg black/.style={circle,draw=black,fill=black,inner sep=2.2pt},
		amalg white/.style={circle,draw=black,fill=white,inner sep=2.2pt},
		amalg strand/.style={line width=.65pt,rounded corners=4pt,
			postaction={decorate},decoration={markings,
				mark=at position .48 with {\arrow{Stealth[length=3.5pt,width=3pt]}},
				mark=at position .84 with {\arrow{Stealth[length=3.5pt,width=3pt]}}}},
		amalg mutable/.style={circle,draw=amalgMutable,fill=amalgMutable!7,
			line width=.7pt,minimum size=7mm,inner sep=2pt,font=\small},
		amalg frozen/.style={rectangle,draw=amalgFrozen,fill=amalgFrozen!5,
			minimum width=8mm,minimum height=6mm,inner sep=2pt,font=\small},
		amalg shared/.style={amalg frozen,draw=amalgShared,fill=amalgShared!9},
		amalg arrow/.style={-{Stealth[length=4.5pt,width=3.5pt]},line width=.65pt},
		amalg frozen arrow/.style={amalg arrow,draw=black!45,dashed}
	}
	
	\newcommand{\amalgGeometry}{%
		\draw[amalg boundary] (0,0) ellipse[x radius=4.8,y radius=2.4];
		\foreach \i/\ang in {1/255,2/210,3/150,4/105,5/75,6/30,7/-30,8/-75}{%
			\coordinate (b\i) at ({4.8*cos(\ang)},{2.4*sin(\ang)});
			\fill (b\i) circle[radius=1.3pt];
			\node[font=\small] at ({5.13*cos(\ang)},{2.73*sin(\ang)}) {\(\i\)};
		}
		\coordinate (v1) at (-1.2,-.7); \coordinate (v2) at (-3,-.7);
		\coordinate (v3) at (-3,.7);   \coordinate (v4) at (-1.2,.7);
		\coordinate (v5) at (1.2,.7);  \coordinate (v6) at (3,.7);
		\coordinate (v7) at (3,-.7);   \coordinate (v8) at (1.2,-.7);
		\foreach \i/\j in {1/2,2/3,3/4,1/4,5/6,6/7,7/8,5/8}{%
			\coordinate (e\i\j) at ($(v\i)!.5!(v\j)$);
		}
	}
	\newcommand{\amalgFaceLabels}{%
		\node[amalg mutable face] at (-2.1,0) {\(1356\)};
		\node[amalg mutable face] at (2.1,0) {\(1257\)};
		\node[amalg shared face] at (0,0) {\(1256\)};
		\node[amalg face] at (-2.45,-1.48) {\(2356\)};
		\node[amalg face] at (-3.98,0) {\(3456\)};
		\node[amalg face] at (-2.45,1.48) {\(1456\)};
		\node[amalg face] at (2.45,1.48) {\(1267\)};
		\node[amalg face] at (3.98,0) {\(1278\)};
		\node[amalg face] at (2.45,-1.48) {\(1258\)};
	}
	
	\begin{figure}[htbp]
		\centering
		\begin{tikzpicture}[scale=.70]
			\amalgGeometry
			\draw[amalg strand,amalgSOne]   (b1)--(e14)--(e34)--(b3);
			\draw[amalg strand,amalgSTwo]   (b2)--(e23)--(e34)--(b4);
			\draw[amalg strand,amalgSThree] (b3)--(e23)--(e12)--(b1);
			\draw[amalg strand,amalgSFour]  (b4)--(e14)--(e12)--(b2);
			\draw[amalg strand,amalgSOne]   (b5)--(e58)--(e78)--(b7);
			\draw[amalg strand,amalgSTwo]   (b6)--(e67)--(e78)--(b8);
			\draw[amalg strand,amalgSThree] (b7)--(e67)--(e56)--(b5);
			\draw[amalg strand,amalgSFour]  (b8)--(e58)--(e56)--(b6);
			\amalgFaceLabels
			\node[font=\small] at (0,-3.25) {(a) Postnikov diagram \(D=D_1\cup D_2\)};
		\end{tikzpicture}
		\hfill
		\begin{tikzpicture}[scale=.70]
			\amalgGeometry
			\draw[line width=.8pt] (v1)--(v2)--(v3)--(v4)--cycle;
			\draw[line width=.8pt] (v5)--(v6)--(v7)--(v8)--cycle;
			\foreach \i in {1,...,8}{\draw[line width=.8pt] (v\i)--(b\i);}
			\foreach \i in {1,3,5,7}{\node[amalg black] at (v\i) {};}
			\foreach \i in {2,4,6,8}{\node[amalg white] at (v\i) {};}
			\amalgFaceLabels
			\node[font=\small] at (0,-3.25) {(b) Reduced plabic graph};
		\end{tikzpicture}
		\caption{Two representations of the positroid \(\mathcal M_\pi\).
			The boundary labels increase clockwise. The face labelled \(1256\)
			is a single region meeting both boundary gaps \(8\)--\(1\) and
			\(4\)--\(5\).}
		\label{fig:amalgamation-diagrams}
	\end{figure}
	
	The common boundary region has local coordinates \(a_{12}\) and
	\(b_{56}\). Accordingly,
	\[
	q_{21}=b_{56},\qquad q_{12}=a_{12},\qquad
	Z_0=\Delta_{1256},\qquad \partial(Z_0)=a_{12}\otimes b_{56}.
	\]
	The component quivers are glued at the frozen vertices carrying
	\(a_{12}\) and \(b_{56}\); the resulting vertex carries \(Z_0\).
	
	\begin{figure}[htbp]
		\centering
		\begin{tikzpicture}[x=1cm,y=1cm]
			\begin{scope}[shift={(-3.6,0)}]
				\node[amalg mutable] (ax) at (0,0) {\(a_{13}\)};
				\node[amalg shared] (a0) at (1.4,0) {\(a_{12}\)};
				\node[amalg frozen] (a1) at (0,-1.1) {\(a_{23}\)};
				\node[amalg frozen] (a2) at (-1.4,0) {\(a_{34}\)};
				\node[amalg frozen] (a3) at (0,1.1) {\(a_{14}\)};
				\draw[amalg arrow] (a0)--(ax); \draw[amalg arrow] (a2)--(ax);
				\draw[amalg arrow] (ax)--(a1); \draw[amalg arrow] (ax)--(a3);
				\foreach \s/\t in {a1/a0,a1/a2,a3/a0,a3/a2}{%
					\draw[amalg frozen arrow] (\s)--(\t);}
				\node[font=\small] at (0,-1.65) {\(Q(D_1)\)};
			\end{scope}
			\begin{scope}[shift={(3.6,0)}]
				\node[amalg mutable] (by) at (0,0) {\(b_{57}\)};
				\node[amalg shared] (b0) at (-1.4,0) {\(b_{56}\)};
				\node[amalg frozen] (b4) at (0,1.1) {\(b_{67}\)};
				\node[amalg frozen] (b5) at (1.4,0) {\(b_{78}\)};
				\node[amalg frozen] (b6) at (0,-1.1) {\(b_{58}\)};
				\draw[amalg arrow] (b0)--(by); \draw[amalg arrow] (b5)--(by);
				\draw[amalg arrow] (by)--(b4); \draw[amalg arrow] (by)--(b6);
				\foreach \s/\t in {b4/b0,b4/b5,b6/b0,b6/b5}{%
					\draw[amalg frozen arrow] (\s)--(\t);}
				\node[font=\small] at (0,-1.65) {\(Q(D_2)\)};
			\end{scope}
			\draw[amalg arrow,draw=amalgShared] (0,-1.75)--(0,-3.5);
			\node[font=\small,anchor=west] at (.25,-2.6)
			{identify the two orange vertices};
			\begin{scope}[shift={(0,-5)}]
				\node[amalg mutable] (X) at (-2.1,0) {\(X\)};
				\node[amalg mutable] (Y) at (2.1,0) {\(Y\)};
				\node[amalg shared] (Z0) at (0,0) {\(Z_0\)};
				\node[amalg frozen] (Z1) at (-2.1,-1.1) {\(Z_1\)};
				\node[amalg frozen] (Z2) at (-4.2,0) {\(Z_2\)};
				\node[amalg frozen] (Z3) at (-2.1,1.1) {\(Z_3\)};
				\node[amalg frozen] (Z4) at (2.1,1.1) {\(Z_4\)};
				\node[amalg frozen] (Z5) at (4.2,0) {\(Z_5\)};
				\node[amalg frozen] (Z6) at (2.1,-1.1) {\(Z_6\)};
				\foreach \s/\t in {Z0/X,Z2/X,X/Z1,X/Z3,Z0/Y,Z5/Y,Y/Z4,Y/Z6}{%
					\draw[amalg arrow] (\s)--(\t);}
				\foreach \s/\t in {Z1/Z0,Z1/Z2,Z3/Z0,Z3/Z2,Z4/Z0,Z4/Z5,Z6/Z0,Z6/Z5}{%
					\draw[amalg frozen arrow] (\s)--(\t);}
				\node[font=\small] at (0,-1.65) {\(Q(D)\)};
			\end{scope}
		\end{tikzpicture}
		\caption{The component ice quivers and their amalgamation.
			Circles denote mutable vertices and squares frozen vertices.
			Dashed arrows join frozen vertices and do not enter the extended
			exchange matrix. The two component frozen vertices are replaced by
			the single vertex \(Z_0\), whose coordinate is specified by
			\(\partial(Z_0)=a_{12}\otimes b_{56}\).}
		\label{fig:amalgamation-quivers}
	\end{figure}
	\endgroup
	
	The global seed coordinates are
	\[
	\begin{array}{c|c@{\qquad}c|c}
		\text{Coordinate}&\text{Image under }\partial&
		\text{Coordinate}&\text{Image under }\partial\\
		\hline
		X=\Delta_{1356}&a_{13}\otimes b_{56}&
		Y=\Delta_{1257}&a_{12}\otimes b_{57}\\
		Z_0=\Delta_{1256}&a_{12}\otimes b_{56}&
		Z_4=\Delta_{1267}&a_{12}\otimes b_{67}\\
		Z_1=\Delta_{2356}&a_{23}\otimes b_{56}&
		Z_5=\Delta_{1278}&a_{12}\otimes b_{78}\\
		Z_2=\Delta_{3456}&a_{34}\otimes b_{56}&
		Z_6=\Delta_{1258}&a_{12}\otimes b_{58}\\
		Z_3=\Delta_{1456}&a_{14}\otimes b_{56}&&
	\end{array}
	\]
	In particular, the ordinary global necklace is
	\[
	(\Delta_{I_1^\pi},\ldots,\Delta_{I_8^\pi})
	=(Z_0,Z_1,Z_2,Z_3,Z_0,Z_4,Z_5,Z_6).
	\]
	There are two mutable and seven frozen variables. In the row order
	\((X,Y,Z_0,Z_1,Z_2,Z_3,Z_4,Z_5,Z_6)\), with columns \((X,Y)\),
	the convention \(b_{vw}=\#(w\to v)-\#(v\to w)\) gives
	\[
	\widetilde B=
	\begin{pmatrix}
		0&0&-1&1&-1&1&0&0&0\\
		0&0&-1&0&0&0&1&-1&1
	\end{pmatrix}^{\!T}.
	\]
	The local Pl\"ucker relations
	\[
	a_{13}a_{24}=a_{12}a_{34}+a_{14}a_{23},\qquad
	b_{57}b_{68}=b_{56}b_{78}+b_{58}b_{67}
	\]
	therefore yield
	\[
	\begin{aligned}
		XX'&=Z_0Z_2+Z_1Z_3,&X'&=\Delta_{2456},\\
		YY'&=Z_0Z_5+Z_4Z_6,&Y'&=\Delta_{1268}.
	\end{aligned}
	\]
	For example,
	\[
	\begin{aligned}
		\partial(XX')
		&=(a_{13}a_{24})\otimes b_{56}^{2}\\
		&=(a_{12}a_{34}+a_{14}a_{23})\otimes b_{56}^{2}\\
		&=\partial(Z_0Z_2+Z_1Z_3).
	\end{aligned}
	\]
	Since \(b_{XY}=b_{YX}=0\), mutations in the two components commute.
	
	Finally, choose
	\[
	p=Z_1=\Delta_{2356},\qquad p_1=a_{23},\qquad p_2=b_{56}.
	\]
	The coefficient factors in the homogeneous amalgamation are
	\[
	\begin{aligned}
		u_1&=\iota_2\!\left(\frac{b_{56}}{b_{56}}\right)=1,\\
		u_2&=\iota_1\!\left(\frac{a_{12}}{a_{23}}\right)=\frac{Z_0}{Z_1}.
	\end{aligned}
	\]
	Thus the degree-one lift formula gives
	\[
	\begin{aligned}
		X&=pu_1\,\iota_1\!\left(\frac{a_{13}}{p_1}\right)
		=Z_1\,\iota_1\!\left(\frac{a_{13}}{a_{23}}\right),\\
		Y&=pu_2\,\iota_2\!\left(\frac{b_{57}}{p_2}\right)
		=Z_0\,\iota_2\!\left(\frac{b_{57}}{b_{56}}\right).
	\end{aligned}
	\]
\end{example}

\begin{lemma}
	\label{lem:fsb-amalgamation}
	\label{prop:fsb-connected-reduction}
	Let
	\[
	[n]=S_1\sqcup\cdots\sqcup S_c
	\]
	be a noncrossing component partition.
	For each \(j\), let \(\Sigma_j\) and \(\Sigma'_j\)
	be homogeneous positroid seed patterns on \(S_j\).
	Denote their respective homogeneous amalgamations
	by \(\Sigma\) and \(\Sigma'\).
	If \(\Sigma_j\) and \(\Sigma'_j\) are quasi-coincident
	for every \(j\), then \(\Sigma\) and \(\Sigma'\)
	are quasi-coincident.
\end{lemma}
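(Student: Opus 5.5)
The plan is to reduce everything to Proposition~\ref{prop:compact-coefficients}: we exhibit a common mutation sequence and a block matrix $G=\begin{pmatrix}I&0\\A&C\end{pmatrix}$ with $C$ unimodular and $G\widetilde B_{\Sigma'}=\widetilde B_{\Sigma}$. For each $j$, quasi-coincidence of $\Sigma_j$ and $\Sigma'_j$ gives a mutation sequence $\boldsymbol\mu_j$ in component $j$ together with local data $(G_j,A_j,C_j)$, with $C_j\in\operatorname{GL}(\mathbb Z)$. Since the amalgamated quiver $Q(D)$ has no arrows between mutable vertices of different components (they meet only at frozen vertices), mutations in distinct components commute. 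We therefore take the concatenation $\boldsymbol\mu=\boldsymbol\mu_1\cdots\boldsymbol\mu_c$, applied simultaneously to $\Sigma$ and $\Sigma'$. By construction the mutable variables after these mutations are the homogeneous lifts $X_v=(pu_j)^{d_v}\iota_j(x_v/p_j^{d_v})$ of local cluster variables. The elements $u_j$ are mutation-invariant frozen monomials of degree zero.

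Next we compare the variables. If $x'_v=m_vx_v$ with $m_v\in\mathbb P_j$ a local frozen monomial, homogeneity forces $\deg m_v=0$ once both sides have the same degree; this holds because all positroid cluster variables have Plücker degree equal to their $g$-vector degree. Thus $m_v\in\mathbb P_j^0$. The lift formula then gives $X'_v=X_v\cdot(u'_j/u_j)^{d_v}\iota_j(m_v)$. Here $u_j,u'_j\in\mathbb P_\pi$, and $\iota_j(\mathbb P_j^0)\subseteq\mathbb P_\pi$ by Lemma~\ref{lem:fsb-component-coefficients}(3). So matched global mutable variables differ by global frozen monomials. For the coefficient groups, Lemma~\ref{lem:fsb-component-coefficients}(3) identifies both global coefficient groups with $\bigl(\bigoplus_j\mathbb P_j^0\bigr)\oplus\mathbb Z[p]$. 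Local quasi-coincidence gives $\mathbb P_j=\mathbb P'_j$, hence $\mathbb P_j^0=(\mathbb P'_j)^0$. Consequently $\mathbb P_\Sigma=\mathbb P_{\Sigma'}=\mathscr F_\pi$.

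The remaining step is equality of exchange ratios. A global exchange ratio at a vertex of component $j$ is a degree-zero Laurent monomial in that component's seed coordinates and the glued frozen coordinates. Under $\partial$, every factor $\partial(X_F)$ is a tensor whose components $\ell\ne j$ are the fixed $q_{\ell j}$. In a degree-zero ratio these cancel, so $\partial(\widehat y^{\Sigma}_v)=\epsilon_j(\widehat y^{\Sigma_j}_v)$, and likewise for $\Sigma'$. Local equality of hatted variables then gives global equality, because $\partial$ is injective. I expect this to be the main obstacle. One must check that the amalgamated exchange matrix $Q(D)$, after identification of frozen vertices, produces exactly the product of local hatted variables, including when a glued frozen vertex receives arrows from several components. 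One must also check that frozen vertices of $\Sigma_j$ and $\Sigma'_j$ lying in different boundary regions yield possibly different $q_{\ell j}$ factors, which must cancel in degree zero. I would verify this by computing $\partial$ of each column monomial factor by factor, using $\deg\widehat y=0$. Once the three conditions hold, Definition~\ref{def:fsb-quasi-equivalent-seeds} gives $\boldsymbol\mu(\Sigma)\sim\Sigma'$.
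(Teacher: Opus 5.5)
Your route is the paper's. You lift the component mutation sequences, which commute because distinct components share only frozen vertices. You compare matched global variables through the homogeneous-lift formula and Lemma~\ref{lem:fsb-component-coefficients}(3). You then identify each global exchange ratio with $\iota_j$ of the local one, because the factors $q_{\ell j}$ with $\ell\neq j$ cancel in degree zero. The obstacle you flag is settled in one line in the paper. Amalgamation identifies only frozen vertices of \emph{different} components, so the column at a mutable vertex of component $j$ is the injective image of the local column. Hence $\widehat Y_{j,k}=\prod_vL_j(x_v)^{b_{vk}}=\iota_j(\widehat y_{j,k})$, using $\sum_vb_{vk}d_v=0$.

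There is, however, a genuine gap in your comparison of mutable variables. Nothing in the hypotheses forces matched variables $x_v$ and $x'_v=m_vx_v$ to have the same Pl\"ucker degree. Quasi-equivalence only requires $m_v\in\mathbb P_j$. Your justification is circular. Relative to the initial Pl\"ucker seed, the degree of $x^gF(\widehat y)$ is the sum of the entries of the \emph{extended} vector $g$. Multiplying by $m_v$ changes exactly the frozen entries of that vector, so ``equal degrees'' is equivalent to the claim $\deg m_v=0$ you want to prove.

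If $\delta_v=\deg m_v\neq0$, two things go wrong. First, $m_v\notin\mathbb P_j^0$, so $\iota_j(m_v)$ is not defined. Second, your formula $X'_v=X_v(u'_j/u_j)^{d_v}\iota_j(m_v)$ is wrong, because $X'_v$ is built with exponent $d'_v=d_v+\delta_v$. The repair, which is what the paper does, is to normalize by $p_j^{\delta_v}$:
\[
\frac{X'_v}{X_v}
=p^{\delta_v}\,\frac{(u'_j)^{d'_v}}{u_j^{d_v}}\,
\iota_j\!\left(\frac{m_v}{p_j^{\delta_v}}\right)\in\mathbb P_\pi,
\qquad \frac{m_v}{p_j^{\delta_v}}\in\mathbb P_j^0 .
\]

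Relatedly, ``by construction'' is not enough to show that the globally mutated variables are the homogeneous lifts. You need that $L_j(h)=(pu_j)^{\deg h}\iota_j(h/p_j^{\deg h})$ is multiplicative on homogeneous elements. Then it carries each local exchange relation, whose column is inherited, to the corresponding global one.

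Finally, the block matrix $G$ of Proposition~\ref{prop:compact-coefficients} that you announce at the start is never built. Checking Definition~\ref{def:fsb-quasi-equivalent-seeds} directly, as you end up doing, suffices.
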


\begin{proof}
	For each \(j\), let \(\mathbb P_j\) be the common coefficient
	group of the two component seed patterns, and put
$	\mathbb P_j^0
	=
	\{c\in\mathbb P_j:\deg c=0\}.$
	Fix the degree-one units \(p_j\in\mathbb P_j\) and
	\(p\in R_\pi\) satisfying
	\[
	\partial(p)=p_1\otimes\cdots\otimes p_c.
	\]
	We use the same notation \(\iota_j\) for the extension of
	\(\iota_j:R_j^0\hookrightarrow R_\pi\) to fraction fields.
	By \eqref{eq:fsb-component-frozen-lattice}, the coefficient
	groups of the two amalgamations coincide:
	\begin{equation}
		\label{eq:fsb-amalgamation-common-coefficients}
		\mathbb P_\Sigma
		=
		\langle p\rangle
		\prod_{j=1}^c\iota_j(\mathbb P_j^0)
		=
		\mathbb P_{\Sigma'}
		=
		\mathbb P_\pi.
	\end{equation}
	
	We first verify compatibility with mutation.
	For a homogeneous \(h\in R_j\) of degree \(d\), define
	\[
	L_j(h)
	=
	p^d u_j^d
	\iota_j\left(\frac{h}{p_j^d}\right).
	\]
	This formula extends linearly to an algebra homomorphism
	\(L_j:R_j\to R_\pi\), since
	\[
	L_j(hg)=L_j(h)L_j(g)
	\]
	for homogeneous \(h,g\).
	By Definition~\ref{def:necklace-amalgamation},
	\(L_j\) sends each local seed variable to its global
	counterpart. Moreover, the exchange column at a mutable
	vertex of component \(j\) is inherited from that component.
	Consequently, applying \(L_j\) to a local exchange relation
	gives the corresponding global exchange relation:
	\[
	x_kx_k^\ast=M_++M_-
	\quad\Longrightarrow\quad
	L_j(x_k)L_j(x_k^\ast)
	=
	L_j(M_+)+L_j(M_-).
	\]
	The factors defining \(u_j\) are frozen, so this compatibility
	persists under subsequent mutations.
	The same argument applies to the second amalgamation.
	
	There are no arrows between mutable vertices of distinct
	components. Hence mutations in distinct components commute,
	and any collection of component mutation sequences lifts
	to a global mutation sequence.
	By componentwise quasi-coincidence, we may therefore choose
	corresponding component seeds such that
	\[
	x'_v=c_vx_v,
	\qquad
	c_v\in\mathbb P_j,
	\qquad
	\widehat y'_{j,k}=\widehat y_{j,k}
	\]
	for every mutable vertex \(v\) and every mutable direction
	\(k\) in component \(j\).
	
	Write
	\[
	d_v=\deg x_v,
	\qquad
	d'_v=\deg x'_v,
	\qquad
	\delta_v=d'_v-d_v=\deg c_v.
	\]
	Let \(u_j\) and \(u'_j\) be the frozen factors in the two
	amalgamations. Their corresponding global variables satisfy
	\[
	X_v
	=
	p^{d_v}u_j^{d_v}
	\iota_j\left(\frac{x_v}{p_j^{d_v}}\right),
	\qquad
	X'_v
	=
	p^{d'_v}(u'_j)^{d'_v}
	\iota_j\left(\frac{x'_v}{p_j^{d'_v}}\right).
	\]

By \eqref{eq:fsb-amalgamation-common-coefficients}, we have
$\iota_j\left(\frac{c_v}{p_j^{\delta_v}}\right)
\in\mathbb P_\pi.$
Since \(p,u_j,u'_j\in\mathbb P_\pi\) and
\(\mathbb P_\pi\) is a multiplicative group, we conclude that
\[
\frac{X'_v}{X_v}
=
p^{\delta_v}
\frac{(u'_j)^{d'_v}}{u_j^{d_v}}
\iota_j\left(\frac{c_v}{p_j^{\delta_v}}\right)
\in\mathbb P_\pi.
\]

It remains to compare exchange ratios.
Fix a mutable vertex \(k\) of component \(j\).
Let \((x_v)_v\) be its extended cluster,
let \(d_v=\deg x_v\), and let \((b_{vk})_v\)
be the exchange column at \(k\).
Homogeneity of the exchange relation gives
\[
\sum_v b_{vk}d_v=0.
\]

Let \(\nu_j(v)\) denote the global vertex corresponding
to the local vertex \(v\), and write
\(\widetilde B^{\mathrm{glob}}=(B_{wK})\)
for the global exchange matrix.
Amalgamation identifies only frozen vertices from
different components. Thus \(\nu_j\) is injective,
and the arrows incident to \(\nu_j(k)\) are precisely
the images of the arrows incident to \(k\), with the
same orientations and multiplicities. Consequently,
\[
B_{w,\nu_j(k)}
=
\begin{cases}
	b_{vk}, & w=\nu_j(v),\\
	0, & w\notin\operatorname{im}(\nu_j).
\end{cases}
\]
Since \(X_{\nu_j(v)}=L_j(x_v)\), the global exchange
ratio at \(\nu_j(k)\) is
\[
\begin{aligned}
	\widehat Y_{j,k}
	&=\prod_w X_w^{B_{w,\nu_j(k)}}\\
	&=\prod_v L_j(x_v)^{b_{vk}}\\
	&=(pu_j)^{\sum_v b_{vk}d_v}
	\iota_j\left(
	p_j^{-\sum_v b_{vk}d_v}
	\prod_v x_v^{b_{vk}}
	\right)\\
	&=\iota_j\left(\prod_v x_v^{b_{vk}}\right)
	=\iota_j(\widehat y_{j,k}).
\end{aligned}
\]
Applying the same calculation to the second amalgamation
and using local quasi-equivalence, we obtain
$\widehat Y'_{j,k}
=
\iota_j(\widehat y'_{j,k})
=
\iota_j(\widehat y_{j,k})
=
\widehat Y_{j,k}.$

Thus the coefficient groups satisfy
$\mathbb P_\Sigma=\mathbb P_{\Sigma'}=\mathbb P_\pi.$
Moreover,
$\frac{X'_v}{X_v}$ lies in $\mathbb P_\pi$
for every global mutable vertex \(v\), and
\[
\widehat Y'_{j,k}=\widehat Y_{j,k}
\]
for each component \(j\) and each mutable vertex \(k\)
of that component.
These identities establish quasi-equivalence of the chosen
global seeds by Definition~\ref{def:fsb-quasi-equivalent-seeds}.
Hence the two amalgamated seed patterns are quasi-coincident.
\end{proof}

\begin{corollary}[The general case]
	\label{general case}
	Let \(\mathcal M_\pi\) be a loopless positroid,
	not necessarily connected, and let \(\rho\) be admissible
	for \(\pi\). Put
	\[
	\mu=\rho^{-1}\pi\rho.
	\]
	Let \(G\) and \(D\) be reduced plabic graphs with trip
	permutations \(\mu\) and \(\pi\), respectively.
	Then the identity map of \(R_\pi\) is a quasi-cluster
	isomorphism from the target-labelled cluster structure
	of \(G^\rho\) to either the source-labelled or the
	target-labelled cluster structure of \(D\).
\end{corollary}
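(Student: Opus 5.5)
We reduce the general statement to the connected case by working component by component. Let $[n]=S_1\sqcup\cdots\sqcup S_c$ be the component partition of $\mathcal M_\pi$. By Lemma~\ref{lem:fsb-local-components}, the sets $A_j=\rho^{-1}(S_j)$ are exactly the component ground sets of $\mathcal M_\mu$. After choosing cyclic origins, $\rho$ restricts to admissible relabelings $\rho_j$ of the component permutations $\pi_j$ whenever $n_j>1$. Singleton components are coloops, and there is nothing to compare.

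The first step is to decompose $G$ into connected component graphs $G_j$, one for each $A_j$. We may take $G$ to be a union of reduced plabic graphs $G_j$ with trip permutations $\mu_j$, embedded compatibly with the noncrossing partition $\{A_j\}$. All target seeds for $\mu$ are mutation-equivalent and related by plabic moves, so this choice costs only a mutation sequence. Then $G^\rho$ is the union of the relabeled graphs $G_j^{\rho_j}$, transported along $\beta_j$. Likewise, we replace $D$ by a union of component diagrams $D_j$ for the $\pi_j$.

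The key check is that the target seed of $G^\rho$ is the homogeneous amalgamation, in the sense of Definition~\ref{def:necklace-amalgamation}, of the target seeds of the $G_j^{\rho_j}$. The same must hold for the source and target seeds of $D$. For face labels this follows from $\partial(\Delta_J)=\bigotimes_j\Delta_{J\cap S_j}$ in Lemma~\ref{lem:fsb-component-coefficients}. A face $F$ of the union lies to the left of exactly the trips of each component whose local face contains it, so $I_F^{\rm tgt}\cap S_j$ is the local target label of $F_j$. For the quivers, we use the fact that $Q(G^\rho)$ is obtained by gluing the component quivers at shared boundary regions, with mutable columns inherited from the components.

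With the decomposition in place, Theorem~\ref{prop:necklace-connected-comparison} applied to each $(\pi_j,\rho_j)$ gives, for every $j$, a quasi-equivalence between the local relabeled target seed pattern and the local source (respectively target) seed pattern. Lemma~\ref{lem:fsb-amalgamation} then assembles these into a quasi-equivalence of the amalgamated global patterns. Its proof controls the coefficient groups through the identification $\mathbb P_\Sigma=\mathbb P_\pi$, the frozen rescalings through the factors $u_j$ and $p$, and the exchange ratios through $\widehat Y_{j,k}=\iota_j(\widehat y_{j,k})$. Since each local comparison is induced by the identity of $R_j$, and $\partial$ is a ring isomorphism, the global comparison is induced by the identity of $R_\pi$. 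The condition $C_0\in\operatorname{GL}$ follows from equality of coefficient groups, as in Proposition~\ref{prop:compact-coefficients}.

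The main obstacle is the claim that the global seeds really are homogeneous amalgamations with the same frozen factors $q_{\ell j}$ on both sides. In particular, the boundary region of $D_\ell$ containing component $j$ must carry a consistent local frozen coordinate in the relabeled and the ordinary structures. To handle this, we would first note that Lemma~\ref{lem:fsb-amalgamation} only requires each $u_j$ to lie in $\mathbb P_\pi$ and to have degree zero. This holds for any choice of local frozen coordinates, since by the Unit Necklace Theorem both local necklaces span $\mathbb P_j$. Differing choices of $q_{\ell j}$ between the two sides are therefore absorbed into the frozen rescaling, and it suffices to verify the face-label intersection formula directly.
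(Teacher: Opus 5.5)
Your proposal is correct and follows the paper's own route: Lemma~\ref{lem:fsb-local-components} reduces to admissible component relabelings, Theorem~\ref{prop:necklace-connected-comparison} handles each component with more than one element, and Lemma~\ref{lem:fsb-amalgamation} reassembles the global comparison. The paper only asserts that the global seed patterns are homogeneous amalgamations; your face-label intersection check, together with your observation that the lemma's proof already allows different frozen factors $u_j\neq u_j'$ (each in $\mathbb P_\pi$ and of degree zero), supplies exactly the justification that step needs.
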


\begin{proof}
	Let \(S_1,\ldots,S_c\) be the component ground sets
	of \(\mathcal M_\pi\), and put
	\[
	A_j=\rho^{-1}(S_j).
	\]
	By Lemma~\ref{lem:fsb-local-components},
	the sets \(A_j\) are the component ground sets of
	\(\mathcal M_\mu\). Moreover, suitable cyclic
	enumerations give local permutations satisfying
	\[
	\pi_j=\rho_j\mu_j\rho_j^{-1},
	\]
	with \(\rho_j\) admissible whenever \(|S_j|>1\).
	For each such component,
	Theorem~\ref{prop:necklace-connected-comparison}
	gives the required quasi-coincidence.
	If \(|S_j|=1\), then
$	R_j=\kk[p_j^{\pm1}],$
	and the local seeds have no mutable variables, so
	the comparison is the identity.
	
	The global seed patterns are the homogeneous
	amalgamations of their component seed patterns.
	Lemma~\ref{lem:fsb-amalgamation} therefore yields
	both asserted global comparisons.
\end{proof}

\FloatBarrier


\begin{thebibliography}{99}
	
	\bibitem{ARW}
	F. Ardila, F. Rinc\'on and L. Williams,
	\emph{Positroids and non-crossing partitions},
	Trans. Amer. Math. Soc. \textbf{368} (2016), 337--363.
	\href{https://arxiv.org/abs/1308.2698}{arXiv:1308.2698}.
	
	\bibitem{BKM}
	K. Baur, A. D. King and R. J. Marsh,
	\emph{Dimer models and cluster categories of Grassmannians},
	Proc. Lond. Math. Soc. (3) \textbf{113} (2016), no.~2, 213--260.
	
	\bibitem{Buchweitz}
	R.-O. Buchweitz,
	\emph{Maximal Cohen--Macaulay modules and Tate cohomology},
	Mathematical Surveys and Monographs \textbf{262},
	American Mathematical Society, Providence, RI, 2021.
	\href{https://doi.org/10.1090/surv/262}{doi:10.1090/surv/262}.
	
	\bibitem{CKP}
	\.I. \c{C}anak\c{c}{\i}, A. King and M. Pressland,
	\emph{Perfect matching modules, dimer partition functions and
		cluster characters},
	Adv. Math. \textbf{443} (2024), 109570.
	\href{https://arxiv.org/abs/2106.15924}{arXiv:2106.15924}.
	
	\bibitem{DehyKeller}
	R. Dehy and B. Keller,
	\emph{On the combinatorics of rigid objects in $2$-Calabi--Yau categories},
	Int. Math. Res. Not. IMRN (2008), rnn029.
	\href{https://arxiv.org/abs/0709.0882}{arXiv:0709.0882}.
	
	\bibitem{DWZII}
	H. Derksen, J. Weyman and A. Zelevinsky,
	\emph{Quivers with potentials and their representations II:
		applications to cluster algebras},
	J. Amer. Math. Soc. \textbf{23} (2010), 749--790.
	\href{https://arxiv.org/abs/0904.0676}{arXiv:0904.0676}.
	
	\bibitem{FG}
	M. Farber and P. Galashin,
	\emph{Weak separation, pure domains and cluster distance},
	Selecta Math. (N.S.) \textbf{24} (2018), 2093--2127.
	\href{https://arxiv.org/abs/1612.05387}{arXiv:1612.05387}.
	
	\bibitem{FZI}
	S. Fomin and A. Zelevinsky,
	\emph{Cluster algebras I: Foundations},
	J. Amer. Math. Soc. \textbf{15} (2002), no.~2, 497--529.
	
	\bibitem{FZIV}
	S. Fomin and A. Zelevinsky,
	\emph{Cluster algebras IV: Coefficients},
	Compos. Math. \textbf{143} (2007), 112--164.
	\href{https://arxiv.org/abs/math/0602259}{arXiv:math/0602259}.
	
	\bibitem{Fra16}
	C. Fraser,
	\emph{Quasi-homomorphisms of cluster algebras},
	Adv. in Appl. Math. \textbf{81} (2016), 40--77.
	\href{https://doi.org/10.1016/j.aam.2016.06.005}
	{doi:10.1016/j.aam.2016.06.005};
	\href{https://arxiv.org/abs/1509.05385}{arXiv:1509.05385}.
	
	\bibitem{FSB}
	C. Fraser and M. Sherman-Bennett,
	\emph{Positroid cluster structures from relabeled plabic graphs},
	Algebr. Comb. \textbf{5} (2022), 469--513.
	\href{https://doi.org/10.5802/alco.220}{doi:10.5802/alco.220}.
	
	
	\bibitem{FuGyoda}
	C. Fu and Y. Gyoda,
	\emph{Compatibility degree of cluster complexes},
	Ann. Inst. Fourier (Grenoble) \textbf{74} (2024), 663--718.
	\href{https://arxiv.org/abs/1911.07193}{arXiv:1911.07193}.
	
	\bibitem{GL}
	P. Galashin and T. Lam,
	\emph{Positroid varieties and cluster algebras},
	Ann. Sci. \'Ec. Norm. Sup\'er. (4)
	\textbf{56} (2023), no.~3, 859--884.
	\href{https://arxiv.org/abs/1906.03501}{arXiv:1906.03501}.
	
	
	\bibitem{IyamaYoshino}
	O. Iyama and Y. Yoshino,
	\emph{Mutation in triangulated categories and rigid
		Cohen--Macaulay modules},
	Invent. Math. \textbf{172} (2008), no.~1, 117--168.
	\href{https://doi.org/10.1007/s00222-007-0096-4}
	{doi:10.1007/s00222-007-0096-4}.
	
	\bibitem{JKS}
	B. T. Jensen, A. D. King and X. Su,
	\emph{A categorification of Grassmannian cluster algebras},
	Proc. Lond. Math. Soc. (3) \textbf{113} (2016), 185--212.
	\href{https://arxiv.org/abs/1309.7301}{arXiv:1309.7301}.
	
	\bibitem{JKSQuantum}
	B. T. Jensen, A. King and X. Su,
	\emph{Categorification and the quantum Grassmannian},
	Adv. Math. \textbf{406} (2022), Paper No.~108577.
	\href{https://doi.org/10.1016/j.aim.2022.108577}
	{doi:10.1016/j.aim.2022.108577}.
	
	\bibitem{KW}
	B. Keller and Y. Wu,
	\emph{Relative cluster categories and Higgs categories with
		infinite-dimensional morphism spaces},
	with an appendix by C. Fraser and B. Keller.
	\href{https://arxiv.org/abs/2307.12279}{arXiv:2307.12279}.
	
	\bibitem{Lam}
	T. Y. Lam,
	\emph{Lectures on Modules and Rings},
	Graduate Texts in Mathematics, vol.~189,
	Springer-Verlag, New York, 1999.
	
	\bibitem{MMMSV}
	K. M\'esz\'aros, G. Musiker, M. Sherman-Bennett and A. Vidinas,
	\emph{Dimer face polynomials in knot theory and cluster algebras},
	\href{https://arxiv.org/abs/2408.11156v2}{arXiv:2408.11156v2}, 2024.
	
	\bibitem{MS}
	G. Muller and D. E. Speyer,
	\emph{The twist for positroid varieties},
	Proc. Lond. Math. Soc. (3) \textbf{115} (2017), 1014--1071.
	\href{https://arxiv.org/abs/1606.08383}{arXiv:1606.08383}.
	
	\bibitem{PlaApplications}
	P.-G. Plamondon,
	\emph{Cluster algebras via cluster categories with
		infinite-dimensional morphism spaces},
	Compos. Math. \textbf{147} (2011), 1921--1954.
	\href{https://arxiv.org/abs/1004.0830}{arXiv:1004.0830}.
	
	\bibitem{Postnikov}
	A. Postnikov,
	\emph{Total positivity, Grassmannians, and networks},
	preprint, 2006.
	\href{https://arxiv.org/abs/math/0609764}{arXiv:math/0609764}.
	
	\bibitem{PresslandCY}
	M. Pressland,
	\emph{Calabi--Yau properties of Postnikov diagrams},
	Forum Math. Sigma \textbf{10} (2022), e56.
	\href{https://doi.org/10.1017/fms.2022.52}
	{doi:10.1017/fms.2022.52}.
	
	\bibitem{Pressland}
	M. Pressland,
	\emph{Quasi-coincidence of cluster structures on positroid varieties},
	J. Eur. Math. Soc., published online 2025.
	\href{https://doi.org/10.4171/JEMS/1736}{doi:10.4171/JEMS/1736};
	\href{https://arxiv.org/abs/2307.13369v3}{arXiv:2307.13369v3}.
	
	\bibitem{Rickard}
	J. Rickard,
	\emph{Morita theory for derived categories},
	J. Lond. Math. Soc. (2) \textbf{39} (1989), 436--456.
	\href{https://doi.org/10.1112/jlms/s2-39.3.436}
	{doi:10.1112/jlms/s2-39.3.436}.
	
\end{thebibliography}
\end{document}